\documentclass{article}

\newcommand{\vect}[1]{\boldsymbol{#1}} 
\newcommand{\R}{\mathbb{R}} 
\newcommand{\1}{\vect{1}} 
\newcommand{\0}{\vect{0}}
\newcommand{\lyp}{\mathcal{L}} 
\newcommand{\diag}[1]{\textrm{diag}\left({#1}\right)} 
\newcommand{\id}[1]{\vect{1}_{\{{#1}\}}} 

\usepackage{threeparttable}
\usepackage{enumerate}

\usepackage{microtype}
\usepackage{graphicx}
\usepackage{subcaption}
\usepackage{booktabs} 

\usepackage{amsmath}
\usepackage{amssymb}
\usepackage{mathtools}
\usepackage{amsthm}
\usepackage{algorithm}
\usepackage{algorithmic}
\usepackage{multirow}

\theoremstyle{plain}
\newtheorem{theorem}{Theorem}[section]
\newtheorem{proposition}[theorem]{Proposition}
\newtheorem{lemma}[theorem]{Lemma}

\theoremstyle{definition}

\theoremstyle{remark}

\usepackage{hyperref}

\usepackage[preprint]{neurips_2026}

\usepackage[utf8]{inputenc} 
\usepackage[T1]{fontenc}    
\usepackage{hyperref}       
\usepackage{url}            
\usepackage{booktabs}       
\usepackage{amsfonts}       
\usepackage{nicefrac}       
\usepackage{microtype}      
\usepackage{xcolor}         

\title{Newton Method for Fixed-Support Doubly Entropic Wasserstein Barycenter}

\author{%
  \textbf{Jianting Pan}\textsuperscript{1},
  \textbf{Sirong Dai}\textsuperscript{1},
  \textbf{Lei Yang}\textsuperscript{2},
  \textbf{Yaomin Wang}\textsuperscript{1},
  \textbf{Ji'an Li}\textsuperscript{1},
  \textbf{Ming Yan}\textsuperscript{1}
  \\
  \textsuperscript{1}The Chinese University of Hong Kong, Shenzhen
  \\
  \textsuperscript{2}Sun Yat-sen University
}

\begin{document}

\maketitle

\begin{abstract}
We study the fixed-support doubly regularized Wasserstein barycenter problem. Using the semi-dual formulation of entropic optimal transport, we reformulate the problem as a smooth, unconstrained, convex optimization problem in the dual variables. We then derive explicit expressions for the gradient and Hessian and develop an exact Newton method for high-accuracy barycenter computation. To improve scalability, we propose a sparse Newton variant that sparsifies the transport probability matrices, thereby reducing the cost of Hessian-vector products. We establish theoretical results for the proposed methods, including Hessian approximation bounds and convergence results. Experiments on synthetic and real datasets show that the sparse Newton method converges faster than representative state-of-the-art solvers. 

\end{abstract}

\section{Introduction}

The Wasserstein barycenter problem seeks a representative probability measure that minimizes the weighted sum of Wasserstein distances to a collection of input probability measures. Since its introduction in~\cite{agueh2011barycenters}, Wasserstein barycenters have become a fundamental tool for averaging and interpolating probability distributions while preserving underlying geometry. They have found applications in economics~\cite{carlier2010matching,chiappori2010hedonic}, physics~\cite{buttazzo2012optimal,cotar2013density}, statistics~\cite{bassetti2006minimum,boissard2015distribution}, image processing~\cite{rabin2011wasserstein}, computer graphics~\cite{solomon2015convolutional}, and machine learning.

In the discrete setting, the Wasserstein barycenter problem can be formulated as:
\begin{equation}
\label{equ:discrete_wb_lp}
    \min_{\vect{v} \in \Delta_n} \sum_{k=1}^K w_k \min_{X_k \in \Omega_k(\vect{v})} \langle C_k, X_k \rangle, \ \text{where}\ \Omega_k(\vect{v}) := \{X_k \in \R_{+}^{n \times m}\mid X_k \1=\vect{v}, X_k^{\top} \1=\vect{\mu}_k\},
\end{equation}
where $C_k\in\mathbb{R}_{+}^{n\times m}$ is the ground cost matrix, $\{\vect{\mu}_k\}_{k=1}^K$ are $K$ discrete probability measures with
$\vect{\mu}_k \in \Delta_m$, and $w_k>0$ are prescribed weights satisfying
$\sum_{k=1}^K w_k=1$. The set
$\Delta_n := \{\vect{\mu}\in\mathbb{R}_{+}^{n}\mid \sum_{i=1}^n \mu_i=1\}$
denotes the probability simplex. The goal is to compute a barycenter
$\vect{v}\in\Delta_n$ supported on a \emph{fixed} set of $n$ points. Although
\eqref{equ:discrete_wb_lp} is a linear programming formulation, directly solving it becomes computationally demanding for large-scale problems due to the high dimensionality of the transport plans and the coupling induced by the shared barycenter variable~\cite{lin2020fixed}.

To improve computational tractability, we consider the \emph{doubly regularized entropic Wasserstein barycenter} (WB) problem studied in~\cite{chizat2025doubly} under the fixed-support settings, which incorporates both an inner entropic regularization and an outer Kullback-Leibler (KL) regularization:
\begin{equation}
\label{opt: barycenter problem}
    \min_{\vect{v} \in \Delta_n}\ F(\vect{v}) := \sum_{k=1}^K w_k\;\text{OT}_{k, \eta}(\vect{v}, \vect{\mu}_k) + \tau\;\text{KL}(\vect{v}, \vect{\pi}),
\end{equation}
where $\eta>0$ is the inner entropic regularization parameter, $\tau>0$ controls the outer regularization, and $\vect{\pi}\in\Delta_n$ is a prescribed measure. For each $k$, the entropic transport cost is defined by
\begin{equation}
\label{opt: EOT}
    \text{OT}_{k,\eta}(\vect{v}, \vect{\mu}_k) := \min_{X_k \in \Omega_k(\vect{v})} \ \langle C_k, X_k \rangle + \eta\;\text{KL}(X_k, \vect{v}\vect{\mu}_k^\top),
\end{equation}
where $\text{KL}(\cdot, \cdot)$ denotes the generalized Kullback-Leibler divergence extended to matrices. We refer to the unique minimizer of~\eqref{opt: barycenter problem} as the $(\eta,\tau)$-barycenter. The uniqueness follows from the strict convexity introduced by the outer KL regularization when $\tau>0$ and $\vect{\pi}$ having full support~\cite{vaskevicius2023computational}.

The formulation~\eqref{opt: barycenter problem} unifies several regularized barycenter models in the literature. Inner-regularized barycenters with $(\eta,0)$ are known to exhibit a shrinking bias~\cite{rigollet2018entropic}, while the choice $(\eta,\eta)$, considered in~\cite{benamou2015iterative,cuturi2014fast,cuturi2018semidual,kroshnin2019complexity,lin2020fixed}, leads to barycenters with a blurring bias. More recently, the choice $(\eta,\eta/2)$ has attracted attention~\cite{chizat2025doubly,vaskevicius2023computational}: for smooth densities, it yields an approximation bias of order $\eta^2$, whereas $\tau=\eta$ gives a larger bias of order $\eta$~\cite{chizat2025doubly}. Therefore, small regularization parameters are important for accurately approximating unregularized Wasserstein barycenters and preserving sharp geometric structures. However, in this regime, first-order solvers such as Sinkhorn-type and Bregman projection methods often require many iterations to reach high accuracy~\cite{cuturi2014fast,peyre2019computational,vaskevicius2023computational}. This motivates the development of second-order methods for the doubly regularized barycenter problem~\eqref{opt: barycenter problem}.

In this paper, we develop Newton-type algorithms to solve the doubly regularized Wasserstein barycenter problem~\eqref{opt: barycenter problem}. We first introduce an exact Newton method that exploits the smoothness of the regularized objective and achieves fast local convergence. However, computing the Newton direction can be expensive for large-scale instances, as it requires solving a large linear system involving dense operations with the transport probability matrices. To address this bottleneck, we propose a sparsification strategy applied directly to these matrices. For small entropic regularization parameters, the transport probabilities are typically concentrated on a small set of dominant entries, while the remaining entries contribute negligibly. Retaining only these dominant entries reduces the cost of Hessian-vector products while preserving the main curvature information in Newton updates. The resulting algorithm combines the high accuracy of second-order methods with improved scalability for large-scale barycenter computation. We establish convergence guarantees for both algorithms and demonstrate their empirical advantages over representative state-of-the-art methods.

\noindent{\bf Contributions.} Our main contributions are summarized as follows:
\begin{itemize}  \setlength\itemsep{0pt}
    \item \textbf{Newton Methods and Sparsification:} We develop an exact Newton method, NWB, for the doubly regularized Wasserstein barycenter problem and a sparse variant, SNWB, that sparsifies the transport probability matrices to reduce the cost of Hessian-vector products.
    \item \textbf{Theoretical Analysis:} We establish structural and spectral properties of the exact and `sparse' Hessians, derive approximation error bounds, and prove convergence results, including local quadratic convergence under suitable conditions. Proofs of all theoretical results are deferred to Appendix~\ref{app: Proofs of Theoretical Results}.
    \item \textbf{Numerical Experiments:} We evaluate the proposed methods on synthetic and real benchmark problems. The results show that NWB and SNWB compare favorably with the state-of-the-art solvers, with SNWB providing substantial acceleration in many cases.
\end{itemize}

\noindent{\bf Notation.} For a vector $\vect{x}\in\mathbb{R}^n$, we denote its $i$-th entry by $x_i$. For a block vector $\vect{y}=[\vect{y}_1^\top, \ldots, \vect{y}_K^\top]^ \top \in \R^{mK}$, where $\vect{y}_k \in \R^{m}$, we denote the $i$-th entry of the $k$-th block by $[\vect{y}_k]_i$. We denote $[K]:=\{1,\ldots,K\}$ and $\1_n$ and $\0_n$ as the $n$-dimensional vectors of all ones and all zeros, respectively. We use $\lambda_{\min}^+(\cdot)$ and $\lambda_{\max}(\cdot)$ to represent the smallest positive and largest eigenvalues of real symmetric matrices, respectively. For a matrix $A \in \R^{m \times n}$, let $A_{i\cdot}$ be the vector of the $i$-th row of $A$. The kernel of the matrix $A$ is defined as $\text{ker}(A) := \{\vect{x} \in \R^{n}\mid A\vect{x} = \0_m\}$. We denote $\|\cdot\|_p$ as the $\ell_p$ norm of a vector or $\ell_p$ induced norm of a matrix. For simplicity, we use $\|\cdot\|$ to denote the Euclidean norm of a vector or the $\ell_2$ induced norm of a matrix.


\section{Background}
\label{sec: background}

In this section, we briefly review the dual formulation of entropic optimal transport (EOT) in Section~\ref{subsec: Entropic Optimal Transport} and analytically derive the unconstrained reformulation of the doubly regularized Wasserstein barycenter problem~\eqref{opt: barycenter problem} in Section~\ref{subsec: Formulation of the Doubly Entropic Wasserstein Barycenter Problem}. 

\subsection{Entropic Optimal Transport}
\label{subsec: Entropic Optimal Transport}

For a fixed barycenter candidate $\vect{v} \in \Delta_n$ and a target measure $\vect{\mu}_k \in \Delta_m$, the EOT problem~\eqref{opt: EOT} admits a standard dual formulation~\cite{peyre2019computational}. Introducing dual variables $\vect{\alpha}_k \in \mathbb{R}^n$ and $\vect{\beta}_k \in \mathbb{R}^m$ associated with the marginal constraints, the two-dual formulation is
\begin{equation}
\label{opt: two-dual formulation}
\max_{\vect{\alpha}_k \in \R^n, \vect{\beta}_k \in \R^m} \langle\vect{\alpha}_k, \vect{v} \rangle + \langle\vect{\beta}_k, \vect{\mu}_k \rangle + \eta - \eta \sum_{i=1}^n \sum_{j=1}^m v_i [\vect{\mu}_k]_{j} \exp\left(\frac{[\vect{\alpha}_k]_{i} + [\vect{\beta}_{k}]_{j} - [C_k]_{ij}}{\eta}\right).
\end{equation}
For a fixed $\vect{\beta}_k$, by the first-order optimality condition, the optimal $\vect{\alpha}_k$ that maximizes the objective in~\eqref{opt: two-dual formulation} is given by:
\begin{equation}
\label{equ: semi-dual expression}
[\vect{\alpha}_k(\vect{\beta_k})]_{i} := -\eta \log\left(\sum_{j=1}^m [\vect{\mu}_k]_{j}\exp\left(\frac{[\vect{\beta}_{k}]_{j}-[C_k]_{ij}}{\eta}\right)\right), \quad \forall \ i \in [n]. 
\end{equation}
Substituting $\vect{\alpha}_k (\vect{\beta}_k)$ into the two-dual formulation~\eqref{opt: two-dual formulation} yields the semi-dual problem:
\begin{equation}
\label{opt: semi-dual EOT}
    \text{OT}_{k,\eta}(\vect{v}, \vect{\mu}_k) = \max_{\vect{\beta}_k \in \R^m} \ \langle \vect{\alpha}_k(\vect{\beta}_k), \vect{v} \rangle + \langle \vect{\beta}_k, \vect{\mu}_k \rangle.
\end{equation}

Thus, EOT can be evaluated by maximizing an unconstrained, smooth, concave function. Let $\vect{\beta}_k^{\star}$ be an optimal solution of~\eqref{opt: semi-dual EOT}. The corresponding optimal transport plan can be recovered as $X_k^{\star} = \diag{\vect{v}}P_k(\vect{\beta}_k^{\star})$, where $P_k(\vect{\beta}_k)$ is a row-stochastic matrix
($P_k(\vect{\beta}_k)\1_m = \1_n$) with entries:
\begin{equation}
\label{equ: P matrix}
[P_{k}(\vect{\beta_k})]_{ij} = \frac{[\vect{\mu}_{k}]_{j} \exp\left(([\vect{\beta}_{k}]_{j} - [C_k]_{ij}) / \eta\right)}{\sum_{l=1}^m [\vect{\mu}_{k}]_{l} \exp\left( ([\vect{\beta}_{k}]_{l} - [C_k]_{il}) / \eta\right)}.
\end{equation}

\subsection{Formulation of the Doubly Entropic Wasserstein Barycenter Problem}
\label{subsec: Formulation of the Doubly Entropic Wasserstein Barycenter Problem}

Substituting the semi-dual representation~\eqref{opt: semi-dual EOT} into the doubly entropic Wasserstein barycenter problem~\eqref{opt: barycenter problem} leads to the following min-max formulation:
\begin{equation}
\label{opt: min-max problem}
    \min_{\vect{v} \in \Delta_n} \max_{\vect{\beta}_1, \dots, \vect{\beta}_K \in \R^m} \sum_{k=1}^K w_k \left( \langle \vect{\alpha}_k(\vect{\beta}_k), \vect{v} \rangle + \langle \vect{\beta}_k, \vect{\mu}_k \rangle \right) + \tau \text{KL}(\vect{v}, \vect{\pi}).
\end{equation}
Since the objective is continuous jointly in $\vect{v}$ and $\{\vect{\beta}_k\}_{k=1}^K$, strictly convex in $\vect{v} \in \Delta_n$, and concave in $\{\vect{\beta}_k\}_{k=1}^K$, we can invoke the minimax theorem~\cite{rockafellar1997convex}. The compactness of the simplex $\Delta_n$ guarantees that we can safely interchange the order of minimization and maximization. Grouping the terms involving $\vect{v}$ yields the inner minimization problem:
\begin{equation}
    \min_{\vect{v} \in \Delta_n} \ \langle \Phi(\vect{\beta}), \vect{v} \rangle + \tau \text{KL}(\vect{v}, \vect{\pi}),
\end{equation}
where $\vect{\beta} = [\vect{\beta}_1^\top, \dots, \vect{\beta}_K^\top]^\top$ and $\Phi(\vect{\beta}) = \sum_{k=1}^K w_k \vect{\alpha}_k (\vect{\beta}_k)$. Clearly, this inner problem admits a closed-form solution given fixed $\vect{\beta}$:
\begin{equation}
\label{equ: optimal v}
    [\vect{v}(\vect{\beta})]_{i} = \frac{\pi_i \exp\left(-[\Phi(\vect{\beta})]_i / \tau\right)}{\sum_{l=1}^n \pi_l \exp\left(-[\Phi(\vect{\beta})]_l / \tau\right)}, \quad \forall i \in [n].
\end{equation}
Substituting this back into the problem~\eqref{opt: min-max problem}, we obtain the following unconstrained problem:
\begin{equation}
\label{opt: optimization problem}
    \min_{\vect{\beta} \in \R^{mK}} \ \lyp(\vect{\beta}) := - \sum_{k=1}^K w_k \langle \vect{\beta}_k, \vect{\mu}_k \rangle + \tau \log \left(\sum_{i=1}^n \pi_i \exp\left(-[\Phi(\vect{\beta})]_i /\tau\right)\right).
\end{equation}
The above objective $\lyp(\vect{\beta})$ is smooth and convex, providing an unconstrained formulation that enables scalable, second-order optimization methods.

\section{The Exact Newton Method}
\label{sec: newton method}

We now develop an exact Newton method for problem~\eqref{opt: optimization problem}. We first derive the gradient and Hessian of the dual objective in Section~\ref{subsec: gradient and hessian}, and then present the algorithm in Section~\ref{subsec: The Proposed Algorithm of NWB}.

\subsection{Gradient and Hessian}
\label{subsec: gradient and hessian}

For notational simplicity, throughout this section, we write $\vect{v}=\vect{v}(\vect{\beta})$ and $P_k=P_k(\vect{\beta}_k)$, as defined in~\eqref{equ: optimal v} and~\eqref{equ: P matrix}, respectively. With this notation, the gradient of $\vect{g} := \nabla \lyp(\vect{\beta})$ with respect to the $k$-th block variable $\vect{\beta}_k$ takes the simple form:
\begin{equation}
\label{equ: gradient}
    \vect{g}_k := \nabla_{k} \lyp(\vect{\beta}) = w_k ((P_k)^\top \vect{v} - \vect{\mu}_k).
\end{equation}
Thus, each gradient block measures the discrepancy between the transported marginal $(P_k)^\top \vect{v}$ and the prescribed target marginal $\vect{\mu}_k$. At optimality, this marginal discrepancy vanishes for every $k$.

Let $H:=\nabla^2\lyp(\vect{\beta})$ and denote its $(k,r)$-th block by $H_{kr}:=\nabla^2_{kr}\lyp(\vect{\beta})$. Define $\vect{\gamma}_k:=P_k^\top\vect{v}$. Then the diagonal and off-diagonal blocks of $H$ are
\begin{align}
    H_{kk} &= \frac{w_k^2}{\tau} \left( (P_k)^\top \diag{\vect{v}} P_k - \vect{\gamma}_k (\vect{\gamma}_k)^\top \right) + \frac{w_k}{\eta} \left( \diag{\vect{\gamma}_k} - (P_k)^\top \diag{\vect{v}} P_k \right), \label{equ: hessian_diag} \\
    H_{kr} &= \frac{w_k w_r}{\tau} \left( (P_k)^\top \diag{\vect{v}} P_r - \vect{\gamma}_k (\vect{\gamma}_r)^\top \right) \quad \text{for } k \neq r. \label{equ: hessian_nondiag}
\end{align}

The Hessian consists of two types of curvature contributions. The terms scaled by $1/\tau$ describe the global coupling induced by the barycenter variable, while the terms scaled by $1/\eta$ appear only in the diagonal blocks and correspond to the local curvature of the individual entropic transport problems.

The block-wise representation also makes explicit an invariance of the dual formulation: adding a constant to any dual block does not change the induced transport plan. Consequently, the Hessian is singular along block-wise constant directions. The following proposition characterizes this structure.

\begin{proposition}
\label{prop: kernel space of hessian}
    The Hessian matrix $H$ is symmetric and positive semidefinite. Moreover, its kernel is exactly the $K$-dimensional subspace of block-wise constant vectors, namely, 
    \begin{equation}
        \text{ker}(H) = \left\{ \vect{d} = (\vect{d}_1^\top, \dots, \vect{d}_K^\top)^\top \in \mathbb{R}^{mK} \mid \vect{d}_k = c_k \1_m, \, c_k \in \mathbb{R}, \, \forall k \in [K] \right\}.
    \end{equation}
    Furthermore, the gradient $\vect{g}$ is orthogonal to this kernel, i.e., $\nabla \lyp(\beta) \in \text{ker}(H)^\perp$. Consequently, the Newton system is consistent.
\end{proposition}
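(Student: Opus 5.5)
The plan is to prove everything through an explicit decomposition of the quadratic form $\vect{d}^\top H\vect{d}$ as a sum of variances. Symmetry is immediate: $H$ is the Hessian of the $C^2$ function $\lyp$. Equivalently, from \eqref{equ: hessian_diag}–\eqref{equ: hessian_nondiag} one sees $H_{kk}^\top=H_{kk}$ and $H_{kr}^\top=H_{rk}$.

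\textbf{The variance decomposition.} Fix $\vect{d}=(\vect{d}_1^\top,\dots,\vect{d}_K^\top)^\top$ and set $\vect{u}_k:=P_k\vect{d}_k\in\R^n$ and $\vect{s}:=\sum_k w_k\vect{u}_k$. Since $\vect{\gamma}_k^\top\vect{d}_k=\vect{v}^\top P_k\vect{d}_k=\vect{v}^\top\vect{u}_k$, summing the $1/\tau$ terms over all blocks $(k,r)$ gives
\begin{equation*}
\frac{1}{\tau}\Big(\vect{s}^\top\diag{\vect{v}}\vect{s}-(\vect{v}^\top\vect{s})^2\Big)=\frac{1}{\tau}\sum_{i=1}^n v_i\big(s_i-\vect{v}^\top\vect{s}\big)^2 .
\end{equation*}
This is the variance of $\vect{s}$ under the probability vector $\vect{v}\in\Delta_n$.

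The $1/\eta$ terms give, for each $k$,
\begin{equation*}
\frac{w_k}{\eta}\Big(\vect{d}_k^\top\diag{\vect{\gamma}_k}\vect{d}_k-\vect{u}_k^\top\diag{\vect{v}}\vect{u}_k\Big)=\frac{w_k}{\eta}\sum_{i=1}^n v_i\Big(\sum_{j}[P_k]_{ij}[\vect{d}_k]_j^2-\big(\textstyle\sum_j[P_k]_{ij}[\vect{d}_k]_j\big)^2\Big).
\end{equation*}
Here I use $[\vect{\gamma}_k]_j=\sum_i v_i[P_k]_{ij}$. Each inner bracket is the variance of $\vect{d}_k$ under the row distribution $(P_k)_{i\cdot}$, which is a probability vector because $P_k\1_m=\1_n$. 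Hence $\vect{d}^\top H\vect{d}\ge0$, i.e., $H$ is positive semidefinite.

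\textbf{The kernel.} Since $H\succeq0$, we have $H\vect{d}=\0$ iff $\vect{d}^\top H\vect{d}=0$, and this forces every term above to vanish. I then need strict positivity of the weights. By \eqref{equ: optimal v}, $v_i>0$ since $\vect{\pi}$ has full support. By \eqref{equ: P matrix}, $[P_k]_{ij}>0$ provided $\vect{\mu}_k$ has full support. Under these conditions each row variance vanishes only if $\vect{d}_k$ is constant on the support of $(P_k)_{i\cdot}$, which is all of $[m]$; so $\vect{d}_k=c_k\1_m$.

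Conversely, if $\vect{d}_k=c_k\1_m$ for all $k$, then every row variance is zero. Moreover $\vect{u}_k=c_k\1_n$, so $\vect{s}$ is constant and its $\vect{v}$-variance is also zero. This gives $\vect{d}^\top H\vect{d}=0$, hence $H\vect{d}=\0$ by positive semidefiniteness. The block-wise constant vectors form a $K$-dimensional subspace, so this establishes the kernel characterization.

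\textbf{Gradient and consistency.} From \eqref{equ: gradient},
\begin{equation*}
\1_m^\top\vect{g}_k=w_k\big(\1_m^\top P_k^\top\vect{v}-\1_m^\top\vect{\mu}_k\big)=w_k\big(\1_n^\top\vect{v}-1\big)=0 ,
\end{equation*}
using $P_k\1_m=\1_n$ and $\vect{v},\vect{\mu}_k$ both in the simplex. Thus $\vect{g}$ is orthogonal to every block-wise constant vector, i.e., $\vect{g}\in\ker(H)^\perp$. Since $H$ is symmetric, $\ker(H)^\perp=\mathrm{range}(H)$, so $H\vect{d}=-\vect{g}$ is solvable and the Newton system is consistent.

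\textbf{Main obstacle.} The only delicate step is the algebraic bookkeeping showing that the $1/\tau$ cross blocks assemble exactly into the variance of $\vect{s}$; the rest is routine. I will also state explicitly that the exact-kernel claim relies on $\vect{\mu}_k$ having full support. If some $[\vect{\mu}_k]_j=0$, then column $j$ of $P_k$ vanishes, the coordinate $[\vect{d}_k]_j$ drops out of the quadratic form entirely, and the kernel is strictly larger than the block-wise constant subspace.
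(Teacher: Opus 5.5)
Your proof is correct and follows essentially the same route as the paper. Like the paper, you split $\vect{d}^\top H\vect{d}$ into a $1/\eta$ part and a $1/\tau$ part. Your row-wise variances under $(P_k)_{i\cdot}$ are exactly the paper's Laplacian form $\frac{1}{2}\sum_{j,l}[W_k]_{jl}([\vect{d}_k]_j-[\vect{d}_k]_l)^2$ with $W_k=P_k^\top\diag{\vect{v}}P_k$, and your $1/\tau$ part is the same $\vect{v}$-variance of $\sum_k w_kP_k\vect{d}_k$. You then use strict positivity to pin down the kernel and check $\1_m^\top\vect{g}_k=0$, as the paper does. Two of your steps make explicit what the paper leaves implicit, and both are correct: invoking $\ker(H)^\perp=\mathrm{range}(H)$ for consistency, and noting that the exact-kernel claim requires $\vect{\mu}_k$ and $\vect{\pi}$ to have full support.
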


Proposition~\ref{prop: kernel space of hessian} shows that the only singular directions of $H$ are the block-wise constant shifts of the dual variables. Since $\nabla \lyp(\vect{\beta}) \in \text{ker}(H)^\perp$, the Newton direction can be computed within $\text{ker}(H)^\perp$. The next result gives spectral bounds for the Hessian on this restricted subspace.

\begin{proposition}
\label{prop: the minimum and maximum eigenvalues of the hessian}
We obtain the following properties of the Hessian matrix $H$. 
\begin{equation}
\lambda_{\min}^{+}(H) \ge \frac{m}{\eta n} \min_{k} (w_k p_k^2), \quad \lambda_{\max}(H) \le (\max_{k} w_k) \left( \frac{1}{2\eta} + \frac{1}{\tau} \right),
\end{equation}
where $p_k = \min_{j} [P_{k}]_{i^\star j}$ for $i^\star\in {\arg\max}_i{[\vect{v}]}_i$.
\end{proposition}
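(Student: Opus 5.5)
Write the Hessian as $H = \frac{1}{\tau} A + \frac{1}{\eta} B$. Here $A$ is the block matrix with blocks $w_kw_r\bigl(P_k^\top \diag{\vect{v}} P_r - \vect{\gamma}_k\vect{\gamma}_r^\top\bigr)$, and $B$ is block diagonal with blocks $B_k = w_k\bigl(\diag{\vect{\gamma}_k} - P_k^\top\diag{\vect{v}}P_k\bigr)$. I will treat the two pieces separately, using the quadratic-form representations
\begin{equation*}
\vect{d}^\top A\vect{d} = \mathrm{Var}_{\vect{v}}\Bigl(\textstyle\sum_k w_k P_k\vect{d}_k\Bigr),\qquad
\vect{d}_k^\top B_k \vect{d}_k = w_k\sum_i v_i \mathrm{Var}_{[P_k]_{i\cdot}}(\vect{d}_k).
\end{equation*}
In the first, $\mathrm{Var}_{\vect{v}}$ is the variance of the vector $\vect{u}=\sum_k w_kP_k\vect{d}_k\in\R^n$ under the probability weights $\vect{v}$. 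In the second, $\mathrm{Var}_{[P_k]_{i\cdot}}(\vect{d}_k)$ is the variance of the entries of $\vect{d}_k$ under the probability row $[P_k]_{i\cdot}$. Both identities follow directly from expanding \eqref{equ: hessian_diag}–\eqref{equ: hessian_nondiag}, since $\vect{\gamma}_k = P_k^\top\vect{v}$ and the rows of $P_k$ sum to one. These are the same identities behind Proposition~\ref{prop: kernel space of hessian}, so I take them as available.

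\textbf{Upper bound.} First, $\mathrm{Var}_{\vect{v}}(\vect{u}) \le \sum_i v_i u_i^2$. By convexity (Jensen, using $\sum_k w_k=1$ and the row-stochasticity of $P_k$), $\sum_i v_i u_i^2 \le \sum_k w_k \sum_i v_i (P_k\vect{d}_k)_i^2 \le \sum_k w_k\, \vect{\gamma}_k^\top (\vect{d}_k\circ\vect{d}_k)$. This is at most $\sum_k w_k\|\vect{d}_k\|_\infty^2$, which is too weak as stated. Instead I will use $\vect{\gamma}_k\le \1$ entrywise together with $\sum_j[\vect{\gamma}_k]_j=1$, so $\vect{\gamma}_k^\top(\vect{d}_k\circ\vect{d}_k)\le\|\vect{d}_k\|^2$. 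That gives $\vect{d}^\top A\vect{d}\le (\max_k w_k)\|\vect{d}\|^2$.

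For $B$, each row variance is bounded by $\tfrac12\sum_j p_{ij}d_j^2 + \dots$. The cleanest route is the Gershgorin/Laplacian argument: $B_k/w_k$ is a graph Laplacian with edge weights $\sum_i v_i p_{ij}p_{il}$. Its largest eigenvalue is at most twice the maximum weighted degree, which is $2\max_j \sum_i v_i p_{ij}(1-p_{ij})$. Getting the claimed factor $\tfrac12$ will need the sharper estimate $\mathrm{Var}_{q}(\vect{x})\le \tfrac12\sum_j q_j x_j^2+\dots$. I expect to use $\lambda_{\max}\bigl(\diag{\vect{q}}-\vect{q}\vect{q}^\top\bigr)\le \tfrac12$ for a probability vector $\vect{q}$. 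The $2\times2$ case shows that $\tfrac12$ is attained, and in general $\lambda_{\max}\le \max_j q_j(1-q_j)+\dots$; I would verify it through $\mathrm{Var}_q(\vect{x})\le \sum_j q_j(x_j-c)^2$ with a good choice of $c$. Then $\vect{d}_k^\top B_k\vect{d}_k \le \frac{w_k}{2}\sum_i v_i\|\vect{d}_k\|^2 = \frac{w_k}{2}\|\vect{d}_k\|^2$. Summing over $k$ gives $\lambda_{\max}(H)\le \max_k w_k\,(\tfrac1{2\eta}+\tfrac1\tau)$.

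\textbf{Lower bound.} Since $A\succeq 0$, it suffices to bound $B$ from below on $\ker(H)^\perp$, i.e. on vectors with $\1^\top\vect{d}_k=0$ for every $k$. Keeping only the row $i^\star$ gives
\begin{equation*}
\vect{d}_k^\top B_k\vect{d}_k \ge w_k v_{i^\star}\mathrm{Var}_{[P_k]_{i^\star\cdot}}(\vect{d}_k).
\end{equation*}
Because the barycenter weight $\vect{v}$ sums to one over its $n$ entries, the maximal entry satisfies $v_{i^\star}\ge 1/n$. Using $[P_k]_{i^\star j}\ge p_k$, I will write
\begin{equation*}
\mathrm{Var}_q(\vect{x}) = \tfrac12\sum_{j,l}q_jq_l(x_j-x_l)^2 \ge \tfrac{p_k^2}{2}\sum_{j,l}(x_j-x_l)^2 = p_k^2\, m\|\vect{x}\|^2
\end{equation*}
whenever $\1^\top\vect{x}=0$. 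Combining these gives $\vect{d}^\top H\vect{d}\ge \frac{m}{\eta n}\min_k(w_kp_k^2)\|\vect{d}\|^2$ on $\ker(H)^\perp$, which bounds $\lambda_{\min}^+$.

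The main obstacle is the constant $\tfrac12$ in the bound on $B$; the lower bound and the bound on $A$ are routine.
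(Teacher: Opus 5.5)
Your proposal is correct and is essentially the paper's proof: the same split of $\vect{d}^\top H\vect{d}$ into its $1/\eta$ and $1/\tau$ parts; the lower bound by discarding the $1/\tau$ part, keeping only row $i^\star$ with $v_{i^\star}\ge 1/n$, using $[P_k]_{i^\star j}[P_k]_{i^\star l}\ge p_k^2$ and $\sum_{j,l}(x_j-x_l)^2=2m\|\vect{x}\|^2$ for zero-mean blocks; and the $1/\tau$ upper bound by dropping the squared mean, applying Jensen, and using $\vect{\gamma}_k\le\1$. The step you flag as the main obstacle is already closed by your own Laplacian/Gershgorin estimate: $p_{ij}(1-p_{ij})\le\frac14$ and $\sum_i v_i=1$ give $2\max_j\sum_i v_i p_{ij}(1-p_{ij})\le\frac12$, which is exactly the paper's computation via $(a-b)^2\le 2(a^2+b^2)$ and $x(1-x)\le\frac14$, so the ``good choice of $c$'' detour is unnecessary (and the bound $\sum_k w_k\|\vect{d}_k\|_\infty^2$ that you dismissed as too weak is in fact stronger than the one you need).
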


These bounds make explicit how the conditioning of the restricted Newton system depends on the regularization parameters, the barycentric weights, and the transport probability matrices. These properties ensure that $H$ is well-conditioned on $\text{ker}(H)^\perp$. Together, the gradient and Hessian formulas provide the basis for an exact Newton method and also reveal the structure exploited by the sparse Newton method in Section~\ref{sec: The Sparse Newton method}.


\subsection{The Proposed Algorithm}
\label{subsec: The Proposed Algorithm of NWB}

We now present the Newton method for the Wasserstein barycenters (NWB), summarized in Algorithm~\ref{alg: NWB}. The method applies a regularized Newton iteration to the smooth unconstrained formulation~\eqref{opt: optimization problem}, and recovers the barycenter variable through the closed-form relation~\eqref{equ: optimal v}.


The algorithm is initialized by any first-order barycenter solver, such as IBP~\cite{benamou2015iterative} for $(\eta,\eta)$-barycenters or EDSWB~\cite{vaskevicius2023computational} for $(\eta,\eta/2)$-barycenters. This preliminary stage provides a coarse approximation of the dual variable, denoted by $\vect{\beta}^{0}$, which serves as a warm start for the subsequent Newton iterations. 

At each iteration $t$, the algorithm computes the semi-dual potentials $\{\vect{\alpha}_k^t\}_{k=1}^K$, the barycenter mass vector $\vect{v}^t$, the transport probability matrices $\{P_k^t\}_{k=1}^K$, and the induced transported marginals $\{\vect{\gamma}_k^t\}_{k=1}^K$. These quantities are then used to compute the gradient by~\eqref{equ: gradient}. The Hessian matrix itself is not formed explicitly. Instead, we use the block expressions in~\eqref{equ: hessian_diag} and~\eqref{equ: hessian_nondiag} to define an implicit linear operator $z\mapsto H^t z$, which is sufficient for the conjugate gradient solver to compute the Newton direction. To improve numerical stability, we solve a shifted Newton system
\begin{align*}
\Delta \vect{\beta}^{t}
=
-\left(H^t+\|\vect{g}^t\| I\right)^{-1}\vect{g}^t.
\end{align*}
The shift vanishes as the iterates approach a stationary point, so the method gradually recovers the exact Newton direction in the local convergence regime. Similar regularization mechanisms have been adopted in recent Newton-type methods for large-scale optimization~\cite{tang2024safe,wang2025sparse,pan2026inexact}. A stepsize is then selected by Armijo backtracking to ensure sufficient descent of the objective.

\begin{algorithm}[t]
\caption{Newton method for Wasserstein Barycenter (NWB)}
\label{alg: NWB}
\begin{algorithmic}[1]
    \STATE {\bfseries Input:} $\vect{w} \in \Delta_K$, $\eta,\ \tau > 0$, $\{\vect{\mu}_k\}_{k=1}^K \in \Delta_m$, $\{C_k\}_{k=1}^K \in \R_+^{n \times m}$.
    \STATE Use a first-order solver, such as IBP~\cite{benamou2015iterative} or EDSWB~\cite{vaskevicius2023computational}, to obtain an initial dual variable $\vect{\beta}^{0}=[(\vect{\beta}_1^0)^\top,\ldots,(\vect{\beta}_K^0)^\top]^\top\in \R^{mK}$.
    \STATE Normalize each block by $\vect{\beta}_k^{0} \gets \vect{\beta}_k^{0} - (\frac{1}{m}\1_m^\top \vect{\beta}_k^{0})\1_m$ for $k \in [K]$. 
    \FOR{$t=0,1,..., T$}
    \STATE Calculate $[\vect{\alpha}_{k}^{t}]_{i} \gets -\eta \log\left(\sum_{j=1}^m [\vect{\mu}_k]_{j}\exp\left(([\vect{\beta}^{t}_{k}]_{j}-[C_k]_{ij})/\eta\right)\right)$ for $k \in [K]$. 
    \STATE Construct the weighted sum $\vect{\Phi}^t \gets \sum_k w_k \vect{\alpha}_k^t$. 
    \STATE Compute barycenter mass $\vect{v}$ by $  v_i^t \gets \frac{\pi_i \exp(-\Phi^t_i/\tau)}{\sum_{l} \pi_l \exp(-\Phi^t_l/\tau)}$ for $i \in [n]$. 
    \STATE Compute $P_k^t$ by $[P^t_{k}]_{ij} \gets \frac{[\vect{\mu}_k]_{j} \exp\left(([\vect{\beta}^{t}_{k}]_{j} - [C_k]_{ij}) / \eta\right)}{\sum_{l=1}^m [\vect{\mu}_k]_{l} \exp\left( ([\vect{\beta}^{t}_{k}]_{l} - [C_k]_{il}) / \eta\right)}$ for $k \in [K]$. 
    \STATE Compute the marginal $\vect{\gamma}^t_k \gets (P^t_k)^\top \vect{v}^t$ for $k \in [K]$. 
    \STATE Calculate the gradient $\vect{g}^t \gets \nabla \lyp (\vect{\beta}^t)$.
    \STATE Construct the Hessian matrix $H^t$.
    \STATE Compute $\Delta \vect{\beta}^t \gets -(H^t + \|\vect{g}^t\|I)^{-1}\vect{g}^t$ by the conjugate gradient.
    \STATE Select a stepsize $s^t$ by Armijo backtracking.
    \STATE Update $\vect{\beta}^{t+1} \gets \vect{\beta}^t + s^t \Delta \vect{\beta}^t$.
    \ENDFOR
    \STATE {\bfseries Output:} $\vect{v}^T$.
\end{algorithmic}
\end{algorithm}

\section{The Sparse Newton Method}
\label{sec: The Sparse Newton method}

In this section, we introduce a sparsification strategy to accelerate the computation of Newton directions. We first describe the construction of sparse transport probability matrices in Section~\ref{subsec: Sparsification Strategy}, and then present the resulting sparse Newton algorithm in Section~\ref{subsec: The Proposed Algorithm of SNWB}.

\subsection{Sparsification Strategy}
\label{subsec: Sparsification Strategy}

The Newton method in Algorithm~\ref{alg: NWB} requires solving the linear system $(H+\|\vect{g}\|I) (\Delta\vect{\beta}) = -\vect{g}$ using the conjugate gradient (CG), where the main cost of each iteration comes from Hessian-vector products involving the transport probability matrices $\{P_k\}_{k=1}^K$. For large-scale problems, treating these matrices as dense can be computationally expensive.

Our sparsification strategy is motivated by the structure of entropic transport. When $\eta$ is small, the transport plans and the corresponding probability matrices typically contain many near-zero entries, with most mass carried by a few dominant entries. Since the Hessian structure in~\eqref{equ: hessian_diag}-\eqref{equ: hessian_nondiag} is governed by these matrices, replacing $P_k$ with a normalized sparse approximation can reduce the cost of Hessian-vector products while retaining the main curvature information. 

For each $k$, we construct $P_{k,\rho}$ by thresholding small entries of $P_k$ and renormalizing each row. The row indexed by $i^\star\in\arg\max_{i\in[n]}v_i$ is kept unchanged to preserve the spectral lower bound used in the analysis. The procedure is summarized in Algorithm~\ref{alg: Sparsifying the transport probability matrix}.

\begin{algorithm}[t]
\caption{Sparsifying the transport probability matrices}
\label{alg: Sparsifying the transport probability matrix}
\begin{algorithmic}[1]
    \STATE {\bfseries Input:} $\{P_k\}_{k=1}^K$, threshold parameter $\rho \geq 0$, $i^\star \in \arg \max_{i \in [n]} v_i$.
    \FOR{$k=1,2,...,K$}
    \STATE Construct the sparsified matrix $P_{k, \rho}$ by setting 
    $$
    [P_{k,\rho}]_{ij} \gets \frac{[P_k]_{ij}\id{[P_k]_{ij} \geq \rho}}{\sum_{l=1}^m [P_k]_{il}\id{[P_k]_{il} \geq \rho}}\ 
    \forall i \ne i^{\star},\quad \ [P_{k,\rho}]_{i^{\star} \cdot} \gets [P_k]_{i^{\star}\cdot}.
    $$ 
    \ENDFOR
    \STATE {\bfseries Output:} $\{P_{k,\rho}\}_{k=1}^K$.
\end{algorithmic}
\end{algorithm}

Given the sparsified probability matrices $\{P_{k,\rho}\}_{k=1}^K$, we define $\vect{\gamma}_{k,\rho} = (P_{k,\rho})^\top \vect{v}$ and construct $H_\rho$ by replacing $P_k$ and $\vect{\gamma}_k$ in~\eqref{equ: hessian_diag}-\eqref{equ: hessian_nondiag} with $P_{k,\rho}$ and $\vect{\gamma}_{k,\rho}$. As in Section~\ref{subsec: The Proposed Algorithm of NWB}, $H_\rho$ need not be formed explicitly; the sparse matrices $\{P_{k,\rho}\}_{k=1}^K$ are sufficient for efficient Hessian-vector products. The block-wise expression of $H_\rho$ is given in~\eqref{equ: sparse hessian construction}.

The next results show that this construction preserves the singular structure of the exact Hessian and provides a controlled approximation error.

\begin{proposition}
\label{prop: sparse_hessian_kernel}
The sparsified Hessian $H_\rho$ has the same kernel as the exact Hessian $H$, namely, $\text{ker}(H_\rho) = \text{ker}(H)$. Moreover, $H_\rho$ satisfies the same positive semidefiniteness and spectral-type properties as those stated in Propositions~\ref{prop: kernel space of hessian} and~\ref{prop: the minimum and maximum eigenvalues of the hessian}.
\end{proposition}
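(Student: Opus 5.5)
The plan is to show that the proofs of Propositions~\ref{prop: kernel space of hessian} and~\ref{prop: the minimum and maximum eigenvalues of the hessian} use only structural features of the matrices $P_k$, and that these features survive sparsification. Those features are: (i) each $P_k$ is nonnegative and row-stochastic; (ii) $\vect{v}\in\Delta_n$ has strictly positive entries; (iii) the quadratic form of $H$ splits into a ``global'' part and ``local'' parts. For (i), every row of $P_{k,\rho}$ with $i\neq i^\star$ is renormalized, so it sums to one. The retained set is nonempty, since some entry of a stochastic row is at least $1/m$, provided $\rho\le 1/m$, which I would assume or impose. Row $i^\star$ is untouched. Hence $P_{k,\rho}\1_m=\1_n$ and $P_{k,\rho}\ge 0$.

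Next I write the quadratic form. For $\vect{d}=(\vect{d}_1,\dots,\vect{d}_K)$, set $\vect{u}=\sum_k w_k P_{k,\rho}\vect{d}_k\in\R^n$. Expanding the blocks gives
\begin{align*}
\vect{d}^\top H_\rho \vect{d} = \frac{1}{\tau}\Big(\sum_i v_i u_i^2 - \big(\textstyle\sum_i v_i u_i\big)^2\Big) + \sum_k \frac{w_k}{\eta}\sum_i v_i\Big(\sum_j [P_{k,\rho}]_{ij}[\vect{d}_k]_j^2 - \big([P_{k,\rho}]_{i\cdot}\vect{d}_k\big)^2\Big),
\end{align*}
using $\vect{\gamma}_{k,\rho}=P_{k,\rho}^\top\vect{v}$. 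The first term is a variance under $\vect{v}$. Each inner local term is a variance of $\vect{d}_k$ under the row distribution $[P_{k,\rho}]_{i\cdot}$. So $H_\rho\succeq0$, and symmetry is clear.

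For the kernel, block-constant vectors satisfy $P_{k,\rho}(c_k\1)=c_k\1$. This makes $\vect{u}$ constant, so every variance vanishes, which gives ``$\supseteq$''. Conversely, if the form vanishes, every local variance is zero. In particular row $i^\star$, which equals the dense row of $P_k$ and has all entries positive (entries of \eqref{equ: P matrix} are positive when $\vect{\mu}_k>0$), forces $\vect{d}_k$ to be constant on all of $[m]$, since $v_{i^\star}>0$. This is exactly why row $i^\star$ is kept dense. The gradient claim concerns $\vect{g}$, not $H_\rho$, so it transfers verbatim.

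For the spectral bounds, the upper bound repeats the earlier argument. Each row variance is at most $\tfrac14(\max_j d_j-\min_j d_j)^2\le \tfrac12\|\vect{d}_k\|^2$, and the global variance is at most $\sum_i v_i u_i^2\le\sum_k w_k\sum_i v_i [P_{k,\rho}]_{i\cdot}\vect{d}_k^{2}$ by Jensen. Combining these as in Proposition~\ref{prop: the minimum and maximum eigenvalues of the hessian} gives $(\max_k w_k)(1/(2\eta)+1/\tau)$; the only input used is row-stochasticity. For the lower bound, I drop the nonnegative global term and the rows $i\neq i^\star$, keeping only $\frac{w_k}{\eta}v_{i^\star}\mathrm{Var}_{[P_k]_{i^\star\cdot}}(\vect{d}_k)$. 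Then I use $v_{i^\star}\ge 1/n$ and $[P_k]_{i^\star j}\ge p_k$. This shows the variance is at least $p_k^2\, m\,\|\vect{d}_k\|^2$ for $\vect{d}_k\perp\1_m$, by comparing it with a $p_k$-weighted uniform variance, which is the computation already done for Proposition~\ref{prop: the minimum and maximum eigenvalues of the hessian}. Since row $i^\star$ of $P_{k,\rho}$ coincides with that of $P_k$, the same constant $p_k$ appears, giving $\lambda_{\min}^+(H_\rho)\ge\frac{m}{\eta n}\min_k w_kp_k^2$.

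The main obstacle is the lower bound: checking that the original proof truly uses only row $i^\star$ and nonnegativity of the remaining terms, and nothing else about the other rows. If it used other rows, I would redo the estimate from the displayed decomposition as above. A secondary point is ensuring that no row of $P_{k,\rho}$ becomes empty, handled by $\rho\le 1/m$.
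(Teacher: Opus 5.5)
Your proposal is correct and follows essentially the same route as the paper. The paper proves Theorems~\ref{thm: kernel space of hessian} and~\ref{thm: the minimum and maximum eigenvalues of the hessian} directly for all $\rho\ge 0$: it splits $\vect{d}^\top H_\rho\vect{d}$ into the $\vect{v}$-variance of $\sum_k w_kP_{k,\rho}\vect{d}_k$ plus the local row variances, uses the untouched dense row $i^\star$ (with $v_{i^\star}\ge 1/n$ and the zero-mean identity) for the kernel and the lower bound, and uses Jensen plus an elementary variance bound for the upper bound. Your remark that every row must retain at least one entry (e.g.\ $\rho\le 1/m$), so that $P_{k,\rho}$ is well defined and row-stochastic, is a valid point that the paper's ``for any $\rho\ge 0$'' silently assumes.
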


\begin{theorem}
\label{thm: hessian diff}
For any $\rho \geq 0$, we have
\begin{equation*}
    \|H - H_\rho\| \le 8mn \rho (\max_k w_k) \left( \frac{1}{\tau} + \frac{1}{\eta} \right).
\end{equation*}
\end{theorem}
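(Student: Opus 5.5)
The plan is to bound the perturbation of each Hessian block separately and then assemble. The starting point is an entrywise bound on $E_k := P_k - P_{k,\rho}$.

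\textbf{Row perturbation.} Fix a row $i \neq i^\star$. Let $s_i = \sum_{l} [P_k]_{il}\id{[P_k]_{il}\ge\rho}$. Since row $i$ of $P_k$ sums to one and at most $m$ entries are dropped, each below $\rho$, we have $1-s_i \le m\rho$. For a retained entry $j$,
\[
|[P_{k,\rho}]_{ij}-[P_k]_{ij}| = [P_k]_{ij}\Bigl(\tfrac{1}{s_i}-1\Bigr).
\]
For a dropped entry the difference is at most $\rho$. Summing over $j$ gives
\[
\|(E_k)_{i\cdot}\|_1 \le \frac{1-s_i}{s_i}\, s_i + (1-s_i) = 2(1-s_i) \le 2m\rho .
\]
Row $i^\star$ contributes nothing. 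Hence $\|E_k\|_\infty \le 2m\rho$, and every entry of $E_k$ is at most $2m\rho$ in absolute value. This exact form of the row-sum bound avoids dividing by $s_i$, so no lower bound on $s_i$ is needed.

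\textbf{Block decomposition.} Write each block of $H$ through the matrices
\[
A_{kr} = P_k^\top \mathrm{diag}(\vect{v}) P_r, \qquad \vect{\gamma}_k \vect{\gamma}_r^\top, \qquad \mathrm{diag}(\vect{\gamma}_k).
\]
Use telescoping identities such as
\[
P_k^\top D P_r - P_{k,\rho}^\top D P_{r,\rho} = E_k^\top D P_r + P_{k,\rho}^\top D E_r ,
\]
with $D=\mathrm{diag}(\vect{v})$, and the analogous identity for $\vect{\gamma}_k\vect{\gamma}_r^\top$ using $\vect{\gamma}_k-\vect{\gamma}_{k,\rho} = E_k^\top \vect{v}$. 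The pieces are bounded in convenient norms:
\begin{itemize}
\item $\|E_k^\top \vect{v}\|_1 \le \sum_i v_i \|(E_k)_{i\cdot}\|_1 \le 2m\rho$;
\item $\|\vect{\gamma}_k\|_1 = 1$;
\item both $P_k$ and $P_{k,\rho}$ are row-stochastic, so $\|P\|_\infty = 1$ and $\|P^\top D\|_\infty \le 1$.
\end{itemize}
I will bound each difference in the $\ell_1$ and $\ell_\infty$ induced norms and use $\|M\|\le\sqrt{\|M\|_1\|M\|_\infty}$. Alternatively, and crudely, I will use $\|M\|\le \sqrt{mn}\,\max|M_{ij}|\cdot\ldots$ or the Frobenius norm. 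Since the target carries a generous factor $8mn$, a crude entrywise or Frobenius route should suffice. Each of the three matrix types should give a difference of order $m\rho$ up to a dimension factor of at most $n$.

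\textbf{Assembly.} The full difference splits as $H-H_\rho = \frac{1}{\tau}W(\mathcal{A}-\mathcal{A}_\rho)W + \frac{1}{\eta}\,\mathrm{blockdiag}(\ldots)$ with $W=\mathrm{diag}(w_k I_m)$. Here the coupling part is the global matrix $\mathcal{A} = \mathcal{P}^\top D \mathcal{P} - \Gamma\Gamma^\top$, where $\mathcal{P}=[P_1,\ldots,P_K]$ and $\Gamma = [\vect{\gamma}_1^\top,\ldots,\vect{\gamma}_K^\top]^\top$.
\begin{itemize}
\item The $1/\eta$ part is block diagonal, so its norm is the maximum over $k$ of $w_k\|\mathrm{diag}(\vect{\gamma}_k-\vect{\gamma}_{k,\rho}) - (A_{kk}-A_{kk,\rho})\|$.
\item For the $1/\tau$ part, write $\mathcal{P}^\top D\mathcal{P} = (D^{1/2}\mathcal{P})^\top(D^{1/2}\mathcal{P})$ and bound $\|D^{1/2}\mathcal{P}\|$ using row-stochasticity and $\sum_k w_k=1$. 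Pulling out the weights this way avoids a factor of $K$.
\end{itemize}

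\textbf{Main obstacle.} The hardest step is making this $1/\tau$ coupling term come out with only $\max_k w_k$ and no factor of $K$, via the factorization through $W$ and $\sum_k w_k=1$. Once the per-entry and row bounds are in place, collecting constants should give at most $8mn\rho\max_k w_k(1/\tau+1/\eta)$.
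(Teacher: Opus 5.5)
Your route is sound, but it differs from the paper's in how the norms are taken and in how the factor $K$ is removed.

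\textbf{The paper's route.} It works block by block in the spectral norm. It imports $\|E_k\|_\infty\le 2m\rho$, $\|E_k\|_1\le 2mn\rho$ and $\|E_k\|\le 2m\sqrt{n}\rho$ from a prior lemma, and uses $\|P\|\le\sqrt{n}$ for row-stochastic $P$. This gives $\|N_{kk}-N_{kk,\rho}\|\le 6mn\rho$ and $\|M_{kr}-M_{kr,\rho}\|\le 8mn\rho$. It then assembles with the block lemma $\|A\|\le\max_k\sum_r\|A_{kr}\|$, where $\sum_r w_kw_r=w_k$ is what eliminates $K$.

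\textbf{Your route.} You derive the truncation bound yourself and keep $D=\mathrm{diag}(\vect{v})$ attached to the stochastic factor. This gives $\|P^\top D\|_\infty\le 1$, $\|E_k^\top D\|_\infty\le 2m\rho$ and $\|E_k^\top\vect{v}\|_1\le 2m\rho$. Combined with $\|X\|\le\sqrt{\|X\|_1\|X\|_\infty}$, you get $\|E_k^\top DP_r\|\le 2m\rho$, $\|\mathrm{diag}(E_k^\top\vect{v})\|\le 2m\rho$, and rank-one differences of at most $4m\rho$. Every block error is therefore $O(m\rho)$ with no factor $n$. Carried through, your argument yields $\|H-H_\rho\|\le 8m\rho\max_k w_k(1/\tau+1/\eta)$, which is a factor $n$ sharper than the statement. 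The paper loses that $n$ only because it separates $D$ from the stochastic factors, via $\|P\|\le\sqrt{n}$, $\|E_k\|\le 2m\sqrt{n}\rho$ and $\|E_k\|_1\|\vect{v}\|_\infty\le 2mn\rho$.

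\textbf{The step that needs care.} The price of your route is the step you flagged yourself. Read literally (bound $\|D^{1/2}\mathcal{P}\|$, then pull $W$ out), it does not avoid $K$. The quantity $\|D^{1/2}\mathcal{P}\|^2=\|\sum_k D^{1/2}P_kP_k^\top D^{1/2}\|$ can be of order $K$, for example when all $P_k$ are equal. You would then get $K(\max_k w_k)^2$, which exceeds $\max_k w_k$ by the factor $K\max_k w_k$, and that can be as large as $K$ for unequal weights.

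The fix is to keep $W$ inside the factor. With $\mathcal{E}=[E_1,\ldots,E_K]$:
\begin{itemize}
\item $\|D^{1/2}\mathcal{P}W\|^2\le\sum_k w_k^2\|P_k^\top DP_k\|\le\sum_k w_k^2\le\max_k w_k$, where Jensen gives $\|P_k^\top DP_k\|\le\max_j[\vect{\gamma}_k]_j\le 1$.
\item $\|D^{1/2}\mathcal{E}W\|^2\le\sum_k w_k^2\|E_k^\top DE_k\|\le 4m^2\rho^2\max_k w_k$.
\item Hence $\|W\mathcal{E}^\top D\mathcal{P}W\|\le 2m\rho\max_k w_k$.
\end{itemize}
Alternatively, take the $\ell_\infty$ norm of the whole weighted $mK\times mK$ matrix, whose row $(k,j)$ sums to at most $2m\rho\, w_k\sum_r w_r$, or simply reuse the paper's block lemma.

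\textbf{Drop the crude fallback.} Finally, discard the entrywise or Frobenius alternative. On the $m\times m$ blocks it gives $O(m^2\rho)$, or $O(m^{3/2}\rho)$ via Frobenius, and neither need be below $mn\rho$ when $m\gg n$.
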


Proposition~\ref{prop: sparse_hessian_kernel} shows that sparsification preserves the kernel structure of the exact Hessian, while Theorem~\ref{thm: hessian diff} bounds the induced perturbation. The error scales linearly with the threshold $\rho$, providing a direct trade-off between computational efficiency and second-order accuracy.

\subsection{The Proposed Algorithm}
\label{subsec: The Proposed Algorithm of SNWB}

Algorithm~\ref{alg: SNWB} summarizes the sparse Newton method for the WB problem~\eqref{opt: barycenter problem}. It follows the same framework as NWB, including the first-order warm start, barycenter update, and Armijo line search. Its main difference is in the Newton direction: instead of using the exact Hessian $H^t$, SNWB uses a `sparse' approximation $H_\rho^t$ constructed from the sparsified transport probability matrices $\{P_{k,\rho}^t\}_{k=1}^K$.

At each iteration, the sparsification threshold is chosen adaptively as $\rho^t = C_\rho \|\vect{g}^t\|$, where $C_\rho>0$ is a prescribed parameter. This choice makes the approximation more aggressive when the iterate is far from optimal and gradually reduces the sparsification error as the gradient norm decreases. Consequently, the `sparse' Hessian becomes increasingly accurate when the iterate approaches the solution, which is important for preserving the local behavior of the Newton method. 
 
\begin{algorithm}[t]
\caption{Sparse Newton method for Wasserstein Barycenter (SNWB)}
\label{alg: SNWB}
\begin{algorithmic}[1]
    \STATE {\bfseries Input:} $\vect{w} \in \Delta_K$, $\eta,\ \tau > 0$, $\{\vect{\mu}_k\}_{k=1}^K \in \Delta_m$, $\{C_k\}_{k=1}^K \in \R_+^{n \times m}$, $C_{\rho} > 0$. 
    \STATE Follow the warm-start procedure in Algorithm~\ref{alg: NWB} to construct the initial dual variable $\vect{\beta}^0$.
    \FOR{$t=0,1,...,T$}
    \STATE Perform Steps 5-10 in Algorithm~\ref{alg: NWB} to calculate the gradient $\vect{g}^t, \vect{v}^t$ and $\{P_k^t\}_{k\in [K]}$.
    \STATE Update $\rho \gets C_\rho \|\vect{g}^t\|$.
    \STATE Construct $\{P_{k,\rho}\}_{k\in [K]}$ by Algorithm~\ref{alg: Sparsifying the transport probability matrix} and compute $\vect{\gamma}_k^t \gets (P_{k,\rho})^\top \vect{v}^t$. 
    \STATE Construct the sparse Hessian $H_\rho^t$ using~\eqref{equ: sparse hessian construction}.
    \STATE Compute $\Delta \vect{\beta}^t \gets -(H_{\rho}^t + \|\vect{g}^t\|I)^{-1}\vect{g}^t$ by the conjugate gradient.
    \STATE Update $\vect{\beta}^{t+1}$ by Steps 13-14 in Algorithm~\ref{alg: NWB}.
    \ENDFOR
    \STATE {\bfseries Output:} $\vect{v}^T$.
\end{algorithmic}
\end{algorithm}

\section{Theoretical Analysis}
\label{sec: Theoretical Analysis}

This section establishes the convergence of the proposed Newton-type methods. The analysis covers both NWB (Algorithm~\ref{alg: NWB}), corresponding to $\rho=0$, and SNWB (Algorithm~\ref{alg: SNWB}), corresponding to $\rho>0$. Since the gradient is orthogonal to the kernel space of $H$, all convergence statements are formulated on the restricted subspace orthogonal to its kernel.
We first show that the iterates remain in the restricted subspace once the initial point is chosen from it.

\begin{lemma}
\label{lem: sequence restrict in the subspace}
For any $\rho \geq 0$, let $\{\vect{\beta}^{t}\}_{t \geq 0}$ be the sequence generated by Algorithm~\ref{alg: NWB} or~\ref{alg: SNWB} with $\vect{\beta}^{0} \in \text{ker}(H_\rho)^\perp$, then $\vect{\beta}^{t} \in \text{ker}(H_\rho)^\perp$ for $\forall t\in [T]$.
\end{lemma}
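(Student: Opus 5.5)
The argument is an induction on $t$. It rests on two facts: the subspace $\mathcal{K}:=\text{ker}(H_\rho)^\perp$ does not depend on $t$, and every update direction $\Delta\vect{\beta}^t$ lies in $\mathcal{K}$.

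\textbf{The subspace is fixed.} By Proposition~\ref{prop: sparse_hessian_kernel}, $\text{ker}(H_\rho^t)=\text{ker}(H^t)$ for every $t$. By Proposition~\ref{prop: kernel space of hessian}, this kernel is the subspace of block-wise constant vectors $\{\vect{d}:\vect{d}_k=c_k\1_m\}$. That subspace does not depend on $\vect{\beta}^t$, $\vect{v}^t$, $P_k^t$ or $\rho$. Hence
\[
\mathcal{K}=\{\vect{d}\in\R^{mK}:\1_m^\top\vect{d}_k=0,\ \forall k\in[K]\}
\]
is the same for all iterations and both algorithms. The normalization step in Algorithm~\ref{alg: NWB} places $\vect{\beta}^0$ in $\mathcal{K}$, consistent with the hypothesis.

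\textbf{The direction stays in $\mathcal{K}$.} Suppose $\vect{\beta}^t\in\mathcal{K}$, and write $M:=H_\rho^t+\|\vect{g}^t\|I$, with $\rho=0$ for NWB. By Proposition~\ref{prop: kernel space of hessian}, $\vect{g}^t\in\mathcal{K}$; this can also be read off directly from~\eqref{equ: gradient}, since $\1_m^\top P_k^\top\vect{v}=\1_n^\top\vect{v}=1=\1_m^\top\vect{\mu}_k$. The matrix $H_\rho^t$ is symmetric positive semidefinite by Proposition~\ref{prop: sparse_hessian_kernel}, so it maps $\mathcal{K}$ into $\text{range}(H_\rho^t)=\text{ker}(H_\rho^t)^\perp=\mathcal{K}$ and annihilates $\mathcal{K}^\perp$. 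Therefore $M$ leaves both $\mathcal{K}$ and $\mathcal{K}^\perp$ invariant. If $\vect{g}^t\neq\0$, then $M$ is positive definite. Its inverse then also leaves $\mathcal{K}$ invariant, so $\Delta\vect{\beta}^t=-M^{-1}\vect{g}^t\in\mathcal{K}$.

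This also holds for the CG iterate. Started from $\0$, CG produces iterates in the Krylov space $\text{span}\{\vect{g}^t,M\vect{g}^t,\dots\}$, which lies in $\mathcal{K}$, so the conclusion survives inexact solves. If $\vect{g}^t=\0$, the algorithm is stationary: the CG solution is $\0$, and the iterate does not move.

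\textbf{Conclusion and main obstacle.} The update is $\vect{\beta}^{t+1}=\vect{\beta}^t+s^t\Delta\vect{\beta}^t$ with scalar $s^t$. Since $\mathcal{K}$ is a linear subspace, $\vect{\beta}^{t+1}\in\mathcal{K}$, and the induction closes. The Armijo line search only selects the scalar and plays no role. The only delicate point is the identification $\text{range}(H_\rho^t)=\mathcal{K}$ uniformly in $t$. This requires that the kernel of $H_\rho^t$ be exactly the block-constant subspace for every $\rho$, which is precisely what Proposition~\ref{prop: sparse_hessian_kernel} supplies. Everything else is linear algebra of the symmetric operator $M$.
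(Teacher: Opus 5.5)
Your proof is correct and follows essentially the same route as the paper. Both argue by induction on $t$, using that $\vect{g}^t\in\ker(H_\rho)^\perp$ and that $H_\rho^t$ is symmetric with a fixed block-constant kernel, so the shifted Newton direction stays in that subspace; the paper shows this by left-multiplying $(H_\rho^t+\|\vect{g}^t\|I)\Delta\vect{\beta}^t=-\vect{g}^t$ by the block indicators $\vect{u}_k$, while you argue via invariance of $\mathcal{K}$ under $M^{-1}$, and your extra Krylov-space remark covering inexact CG solves started at $\0$ is a useful addition the paper does not make.
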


Building upon this invariant property, which ensures the sequence is perpetually confined to the orthogonal complement of the kernel, we are now positioned to establish the global convergence of both algorithms within this restricted subspace.

\begin{theorem}
\label{thm: convergence}
Let $\{\vect{\beta}^t\}_{t\geq 0}$ be the sequence generated by Algorithm~\ref{alg: NWB} or~\ref{alg: SNWB} with initial point $\vect{\beta}^0 \in \text{ker}(H_\rho)^\perp$. Then we have  $\|\vect{g}^t\| \rightarrow 0$ as $t \rightarrow \infty$ and $\vect{\beta}^t \rightarrow \vect{\beta}^\star$, where $\vect{\beta}^\star$ is the unique minimizer of $\mathcal{L}(\vect{\beta})$ in the restricted subspace $\text{ker}(H_\rho)^\perp$.
\end{theorem}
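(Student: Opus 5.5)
We will prove Theorem~\ref{thm: convergence} with the standard descent-method template on the subspace $\mathcal{S}:=\text{ker}(H_\rho)^\perp=\text{ker}(H)^\perp$; the two kernels coincide by Proposition~\ref{prop: sparse_hessian_kernel}. By Lemma~\ref{lem: sequence restrict in the subspace}, all iterates lie in $\mathcal{S}$. By Proposition~\ref{prop: kernel space of hessian}, all gradients lie in $\mathcal{S}$ as well, and since $H_\rho+\|\vect{g}\|I$ maps $\mathcal{S}$ to itself, the directions $\Delta\vect{\beta}^t$ lie in $\mathcal{S}$ too.

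\textbf{Step 1: level-set compactness.} We first show that $\lyp$ restricted to $\mathcal{S}$ is coercive, so the sublevel set $\mathcal{K}:=\{\vect{\beta}\in\mathcal{S}:\lyp(\vect{\beta})\le\lyp(\vect{\beta}^0)\}$ is compact. On $\mathcal{K}$, Proposition~\ref{prop: the minimum and maximum eigenvalues of the hessian} gives $\lambda^+_{\min}(H)\ge \frac{m}{\eta n}\min_k w_kp_k^2$. The entries of $P_k$ are bounded below by a positive constant when $\vect{\beta}$ ranges over a bounded set, but coercivity must come first, so we argue it directly. Along any ray $\vect{\beta}=s\vect{d}$ with $\vect{d}\in\mathcal{S}$ and $\|\vect{d}\|=1$, the recession function of $\lyp$ is
\[
\lim_{s\to\infty}\frac{\lyp(s\vect{d})}{s}=-\sum_k w_k\langle\vect{d}_k,\vect{\mu}_k\rangle+\max_i\sum_k w_k\max_j[\vect{d}_k]_j .
\]
This follows from $\alpha_k(s\vect{d}_k)/s\to-\max_j[\vect{d}_k]_j$ and log-sum-exp asymptotics. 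Because $\vect{\mu}_k$ has full support (which we assume), the recession function is $\ge\sum_k w_k(\max_j[\vect{d}_k]_j-\langle\vect{d}_k,\vect{\mu}_k\rangle)>0$ unless every $\vect{d}_k$ is constant, i.e.\ unless $\vect{d}\in\ker(H)$, which is excluded since $\vect{d}\in\mathcal{S}$. A convex function with positive recession function in every direction is coercive. Strict convexity on $\mathcal{S}$, which holds because $H\succ0$ on $\mathcal{S}$, then yields a unique minimizer $\vect{\beta}^\star\in\mathcal{S}$.

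\textbf{Step 2: uniform bounds and a sufficient-decrease direction.} On the compact set $\mathcal{K}$, the entries of $P_k$ are bounded below by some $\underline{p}>0$ and $\vect{v}$ is bounded away from zero. Hence there are uniform constants $0<\ell\le L$ with $\ell I\preceq H\preceq LI$ on $\mathcal{S}$; for $H_\rho$, Proposition~\ref{prop: sparse_hessian_kernel} transfers the same bounds. The row $i^\star$ is kept unsparsified precisely so that the lower bound of Proposition~\ref{prop: the minimum and maximum eigenvalues of the hessian}, which depends only on $p_k$ of that row, survives. Let $M^t:=H^t_\rho+\|\vect{g}^t\|I$. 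Since $\|\vect{g}^t\|$ is bounded on $\mathcal{K}$ by some $G$, the eigenvalues of $M^t$ on $\mathcal{S}$ lie in $[\ell,L+G]$. Therefore
\[
-\langle \vect{g}^t,\Delta\vect{\beta}^t\rangle\ge \frac{\|\vect{g}^t\|^2}{L+G},\qquad \|\Delta\vect{\beta}^t\|\le \frac{\|\vect{g}^t\|}{\ell},
\]
so the directions are gradient-related.

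\textbf{Step 3: Armijo and conclusion.} Since $\nabla\lyp$ is Lipschitz on a neighborhood of $\mathcal{K}$ (the Hessian is bounded by Proposition~\ref{prop: the minimum and maximum eigenvalues of the hessian}), standard Armijo analysis gives accepted stepsizes bounded below by some $\bar s>0$. This yields the decrease estimate $\lyp(\vect{\beta}^{t+1})\le\lyp(\vect{\beta}^t)-c\|\vect{g}^t\|^2$ with $c>0$. The iterates therefore stay in $\mathcal{K}$, which justifies using the bounds of Step~2 at every iteration by induction. Summing the decrease estimate shows $\sum_t\|\vect{g}^t\|^2<\infty$, so $\|\vect{g}^t\|\to0$. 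Every limit point of $\{\vect{\beta}^t\}$ lies in the compact set $\mathcal{K}\subset\mathcal{S}$ and is stationary, hence equals $\vect{\beta}^\star$ by uniqueness. Thus $\vect{\beta}^t\to\vect{\beta}^\star$.

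The main obstacle is Step~1, the coercivity on $\mathcal{S}$, together with obtaining uniform Hessian lower bounds; the rest is routine. If full support of $\vect{\mu}_k$ is not available, we would instead restrict attention to the coordinates in the support of $\vect{\mu}_k$.
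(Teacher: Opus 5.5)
Your proof is correct. Its skeleton matches the paper's:
\begin{itemize}
\item coercivity on $\ker(H)^\perp$ and compactness of the restricted sublevel set;
\item a uniform bound $\lambda^+_{\min}(H_\rho)\ge\underline{C}$, obtained through the unsparsified row $i^\star$;
\item a uniform lower bound on the Armijo step from the descent lemma with the global constant $L$;
\item a telescoping sum giving $\sum_t\|\vect{g}^t\|^2<\infty$.
\end{itemize}
You also get the induction order right: the bounds hold on the sublevel set, so the step is bounded below, so the objective decreases, so the iterates stay in the set.

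You differ from the paper in two sub-steps.

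\emph{Coercivity.} The paper does not use recession functions. It applies Jensen's inequality to the outer log-sum-exp and keeps only the largest term in each inner log-sum-exp. This gives the global bound $\lyp(\vect{\beta})\ge\sum_k w_k\bigl(\max_j[\vect{\beta}_k]_j-\langle\vect{\beta}_k,\vect{\mu}_k\rangle\bigr)-C_{\max}+\eta\log\mu_{\min}$. The zero-mean constraint $\1_m^\top\vect{\beta}_k=0$ then turns this into the explicit linear growth $\lyp(\vect{\beta})\ge c\|\vect{\beta}\|-C_{\text{const}}$, with $c=\frac{\sqrt{m}\,\mu_{\min}}{m-1}\min_k w_k$. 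The quantity the paper bounds below is exactly your recession function evaluated at $\vect{\beta}$, so the two arguments share the same core. Yours is shorter but relies on the recession-cone characterization of bounded level sets and is purely qualitative. The paper's is elementary and yields an explicit bound on the size of the sublevel set.

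\emph{Convergence of the iterates.} The paper converts $\|\vect{g}^t\|\to0$ into $\vect{\beta}^t\to\vect{\beta}^\star$ through the strong-convexity error bound $\|\vect{g}^t\|\ge\underline{C}\|\vect{\beta}^t-\vect{\beta}^\star\|$ on $S(\vect{\beta}^0)$. That bound is quantitative and is reused later in the phase-transition estimate. Your compactness-plus-unique-limit-point argument is softer, needing only continuity of $\nabla\lyp$, convexity, and uniqueness of the minimizer on the subspace.

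Both versions need $\vect{\mu}_k$ to have full support. You state this explicitly; the paper uses it through $\mu_{\min}>0$ without listing it among the hypotheses.
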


While Theorem~\ref{thm: convergence} guarantees that the iterates asymptotically approach the unique optimal solution from any valid starting point, characterizing the algorithm's asymptotic efficiency requires analyzing its local behavior. The following theorem formally addresses this by demonstrating that, once the iterates enter a sufficiently small neighborhood of the optimum, the proposed methods attain a strict local quadratic rate of convergence.

\begin{theorem}
\label{thm: local_quadratic}
Let $\{\vect{\beta}^t\}_{t\geq 0}$ be generated by Algorithm~\ref{alg: NWB} or~\ref{alg: SNWB} and let $\vect{\beta}^\star$ denote the unique optimal solution of $\lyp(\vect{\beta})$ over the restricted subspace $\ker(H_\rho)^\perp$. If the Armijo parameter satisfies $\sigma \in (0, 1/2)$, when the iterates $\vect{\beta}^t$ are sufficiently close to $\vect{\beta}^\star$ and the Armijo line search accepts $s^t=1$, there exists a constant $M>0$ such that
\begin{equation*}
    \|\vect{\beta}^{t+1} - \vect{\beta}^\star\| \le M \|\vect{\beta}^t - \vect{\beta}^\star\|^2.
\end{equation*}
\end{theorem}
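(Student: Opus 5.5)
The plan is the classical local analysis of an inexact (regularized, perturbed-Hessian) Newton step, carried out entirely in the subspace $\mathcal{S}:=\ker(H)^\perp=\ker(H_\rho)^\perp$ (equal by Proposition~\ref{prop: sparse_hessian_kernel}). By Lemma~\ref{lem: sequence restrict in the subspace}, $\vect{\beta}^t\in\mathcal{S}$, and $\vect{\beta}^\star\in\mathcal{S}$. By Proposition~\ref{prop: kernel space of hessian}, $\vect{g}^t\in\mathcal{S}$. Since $H_\rho^t+\|\vect{g}^t\|I$ leaves $\mathcal{S}$ invariant, the direction $\Delta\vect{\beta}^t$ lies in $\mathcal{S}$, and every operator below may be read as a restriction to $\mathcal{S}$. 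With $s^t=1$ and $\vect{e}^t:=\vect{\beta}^t-\vect{\beta}^\star$, I would write
\begin{equation*}
\vect{e}^{t+1}=(H_\rho^t+\|\vect{g}^t\|I)^{-1}\Big[(H_\rho^t-H^t)\vect{e}^t+\|\vect{g}^t\|\vect{e}^t+\big(H^t\vect{e}^t-(\vect{g}^t-\nabla\lyp(\vect{\beta}^\star))\big)\Big],
\end{equation*}
which uses $\nabla\lyp(\vect{\beta}^\star)=\0$.

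I would bound the pieces in order. \emph{(i) Inverse.} Proposition~\ref{prop: sparse_hessian_kernel}, transferring Proposition~\ref{prop: the minimum and maximum eigenvalues of the hessian}, gives $\lambda_{\min}^+(H_\rho^t)\ge \frac{m}{\eta n}\min_k w_k p_k^2$ on $\mathcal{S}$. Since $p_k$ is a minimum entry of a row of $P_k$, which is a continuous, strictly positive function of $\vect{\beta}$, this quantity is bounded below by some $\underline{\lambda}>0$ uniformly on a neighborhood of $\vect{\beta}^\star$. Hence $\|(H_\rho^t+\|\vect{g}^t\|I)^{-1}|_{\mathcal{S}}\|\le 1/\underline{\lambda}$. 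I would pay attention to the fact that $i^\star$ may change with $t$, but any row gives a positive minimum, so taking the minimum over all rows avoids the issue. \emph{(ii) Taylor remainder.} $\lyp$ is $C^\infty$ (a composition of log-sum-exp maps), so $\nabla^2\lyp$ is Lipschitz on a compact ball around $\vect{\beta}^\star$ with some constant $L_H$. The standard integral form then gives $\|H^t\vect{e}^t-\vect{g}^t\|\le \frac{L_H}{2}\|\vect{e}^t\|^2$. \emph{(iii) Regularization.} $\|\vect{g}^t\|=\|\nabla\lyp(\vect{\beta}^t)-\nabla\lyp(\vect{\beta}^\star)\|\le L_g\|\vect{e}^t\|$, with $L_g=\lambda_{\max}$ bounded by Proposition~\ref{prop: the minimum and maximum eigenvalues of the hessian}. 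So $\|\vect{g}^t\|\|\vect{e}^t\|\le L_g\|\vect{e}^t\|^2$. \emph{(iv) Sparsification.} Theorem~\ref{thm: hessian diff} with $\rho^t=C_\rho\|\vect{g}^t\|$ gives $\|H^t-H_\rho^t\|\le 8mnC_\rho(\max_k w_k)(\tfrac1\tau+\tfrac1\eta)L_g\|\vect{e}^t\|$. For NWB ($\rho=0$) this term vanishes.

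Combining these yields $\|\vect{e}^{t+1}\|\le M\|\vect{e}^t\|^2$ with
\begin{equation*}
M=\underline{\lambda}^{-1}\Big(\tfrac{L_H}{2}+L_g+8mnC_\rho(\max_k w_k)(\tfrac1\tau+\tfrac1\eta)L_g\Big).
\end{equation*}
Choosing the neighborhood radius $r<1/M$ also keeps the next iterate inside the ball, so the constants remain valid for all subsequent iterations. The hypothesis $\sigma\in(0,1/2)$ and the acceptance of $s^t=1$ are taken as given by the statement. If desired, one could verify that the unit step is accepted via the standard Dennis–Moré argument, since the direction is asymptotically the Newton direction.

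The main obstacle I expect is step (i): getting a \emph{uniform} lower bound for the restricted smallest eigenvalue of the perturbed, shifted matrix. This requires the spectral bound of Proposition~\ref{prop: sparse_hessian_kernel} to hold for $H_\rho$ with constants that do not degenerate as $\rho$ and $i^\star$ vary along the iterates. If that transfer proves delicate, the fallback is to use $\lambda_{\min}^+(H^t)\ge\underline{\lambda}$ together with Weyl's inequality and step (iv): near $\vect{\beta}^\star$, $\|H^t-H^t_\rho\|\le\underline{\lambda}/2$, so $\lambda_{\min}^+(H_\rho^t)\ge\underline{\lambda}/2$ on $\mathcal{S}$.
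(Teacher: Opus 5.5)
Your proposal is correct and follows essentially the same route as the paper: the same decomposition of $\vect{\beta}^{t+1}-\vect{\beta}^\star$ through $(H_\rho^t+\|\vect{g}^t\|I)^{-1}$ restricted to $\ker(H_\rho)^\perp$, the same four bounds (restricted inverse, Lipschitz-Hessian Taylor remainder, $\|\vect{g}^t\|\le L\|\vect{\beta}^t-\vect{\beta}^\star\|$, and Theorem~\ref{thm: hessian diff} with $\rho^t=C_\rho\|\vect{g}^t\|$), and the identical constant $M=\underline{C}^{-1}\big(L_H/2+(C_E+1)L\big)$. The only difference is that you use local constants on a ball around $\vect{\beta}^\star$, whereas the paper uses the global Hessian Lipschitz constant of Lemma~\ref{lem: global lipschitz hessian} and the uniform eigenvalue bound $\underline{C}$ over the sublevel set $S(\vect{\beta}^0)$ from Lemma~\ref{lem: uniform lower bound hessian}.
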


Finally, we provide an explicit bound on the number of iterations required to enter the pure Newton phase, where $s^t=1$ is accepted, and local quadratic convergence holds.
\begin{proposition}
\label{prop: phase_transition_complexity}
Assume $m=n, \eta = \mathcal{O}(\tau)$ and the barycentric weights satisfy $w_k = \Theta(1/K)$. For a given regularization parameter $\eta > 0$, the number of iterations required for the exact Newton method to transition into the region of pure quadratic convergence is bounded by
\begin{equation*}
    \mathcal{O}\left( \left( \frac{K^3}{\eta} + \frac{n\eta}{K} \right) \exp\left(\frac{\Delta}{\eta}\right) \right),
\end{equation*}
where $\Delta$ is a problem-dependent constant.
\end{proposition}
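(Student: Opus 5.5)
The plan follows the standard damped-Newton complexity template, à la Boyd--Vandenberghe. Let $L:=\lambda_{\max}$ and $\mu:=\lambda_{\min}^+$ be uniform curvature constants on $\ker(H)^\perp$ over a sublevel set. Let $M_H$ be a Lipschitz constant of the Hessian there. The damped phase then decreases $\lyp$ by at least a constant $c(\mu,L,\sigma)$ per iteration, until $\|\vect{g}^t\|\le \epsilon_0:=c'\mu^2/M_H$. At that point $s^t=1$ is accepted and Theorem~\ref{thm: local_quadratic} applies. The iteration count is therefore bounded by $(\lyp(\vect{\beta}^0)-\lyp(\vect{\beta}^\star))/c$. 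The work consists of making $\mu$, $L$, $M_H$ and the initial gap explicit under $m=n$, $\eta=\mathcal{O}(\tau)$, $w_k=\Theta(1/K)$.

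\textbf{Step 1 (curvature).} From Proposition~\ref{prop: the minimum and maximum eigenvalues of the hessian}, $L\le \max_k w_k(\frac{1}{2\eta}+\frac1\tau)=\mathcal{O}(\frac{1}{K\eta})$, using $\eta=\mathcal{O}(\tau)$. With $m=n$ the lower bound is $\mu\ge \min_k w_k p_k^2/\eta=\Omega(p^2/(K\eta))$. To make $p_k$ uniform along the trajectory I will restrict to the sublevel set $\{\lyp\le\lyp(\vect{\beta}^0)\}\cap\ker(H)^\perp$, which is bounded by Theorem~\ref{thm: convergence}. On this set the block-centered potentials satisfy $\max_j[\vect{\beta}_k]_j-\min_j[\vect{\beta}_k]_j\le D$. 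Here $D$ depends on $\|C_k\|_\infty$, $\log\min\mu_k$ and $\log\min\pi$. From \eqref{equ: P matrix} this gives $[P_k]_{ij}\ge \min_j[\vect{\mu}_k]_j\exp(-(D+\|C_k\|_\infty)/\eta)$. Hence $p^2\gtrsim \exp(-\Delta/\eta)$, where $\Delta$ absorbs $2(D+\max_k\|C_k\|_\infty)$ and the log-masses. This is the source of the $\exp(\Delta/\eta)$ factor.

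\textbf{Step 2 (Hessian Lipschitz constant and damped decrease).} Differentiate \eqref{equ: hessian_diag}--\eqref{equ: hessian_nondiag} once more. The entries of $P_k$ and $\vect{v}$ are softmaxes with temperatures $\eta$ and $\tau$, so third derivatives scale like $w_k/\eta^2$ (plus $w_k w_r/\tau^2$ coupling terms). This gives $M_H=\mathcal{O}(\frac{1}{K\eta^2})$ up to dimension factors in operator norm. For the damped phase, the Armijo step with regularized direction $-(H+\|\vect{g}\|I)^{-1}\vect{g}$ satisfies a standard estimate. With $\lambda_{\max}(H+\|\vect{g}\|I)\le L+\|\vect{g}\|$, the accepted step yields
\[
\lyp(\vect{\beta}^{t+1})\le \lyp(\vect{\beta}^t)-c_\sigma\frac{\mu}{(L+\|\vect{g}^t\|)^2}\|\vect{g}^t\|^2 .
\]
While $\|\vect{g}^t\|>\epsilon_0$, the per-iteration decrease is therefore at least $c_\sigma\mu\epsilon_0^2/L^2$.

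\textbf{Step 3 (initial gap and assembly).} I will bound $\lyp(\vect{\beta}^0)-\lyp^\star\le \|\vect{g}^0\|^2/(2\mu)$ by restricted strong convexity on the sublevel set. Alternatively I will use a direct bound $\mathcal{O}(\max_k\|C_k\|_\infty+\tau\log n)$ from the primal--dual gap. The second is what produces an $n$-dependent term; with $\tau\gtrsim\eta$ it should give the $n\eta/K$ piece once combined with $\mu\sim e^{-\Delta/\eta}/(K\eta)$. Dividing the gap by the per-step decrease and inserting $L,\mu,M_H$ produces a sum of two terms. The first is driven by $L^2M_H^2/\mu^3$-type ratios and should yield the $K^3/\eta$ term. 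The second, the gap term, yields $n\eta/K$. Polynomial powers of $p$ are absorbed into $\exp(\Delta/\eta)$ by enlarging $\Delta$.

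\textbf{Main obstacle.} The main difficulty is Step 1: obtaining a trajectory-uniform lower bound on $p_k$, i.e.\ a bounded-oscillation estimate for centered dual potentials on the sublevel set. Without it, $\mu$ could degenerate along the iterates. I would first try the classical Sinkhorn-type argument: at any $\vect{\beta}$ with bounded objective, the $c$-transform structure of \eqref{equ: semi-dual expression} forces $\operatorname{osc}(\vect{\beta}_k)\le\|C_k\|_\infty+\eta\log(1/\min\mu_k)$ up to the gap. The exact polynomial bookkeeping in $K$ and $n$ in Step 3 is secondary and can be loosened into $\Delta$.
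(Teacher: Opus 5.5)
Your plan follows the paper's own template. There is an Armijo phase with a uniform step-size floor and a decrease proportional to $\|\vect{g}^t\|^2$, and a gradient threshold below which $s^t=1$ is accepted. The iteration count is the initial gap divided by the per-step decrease. One then substitutes $L=\mathcal{O}(1/(K\eta))$, $\underline{C}=\Omega(p^2/(K\eta))$, a Hessian Lipschitz constant and $p\sim e^{-\Delta/\eta}$, absorbing every power of $p$ into $\exp(\Delta/\eta)$.

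Some of your ingredients are sharper than the paper's and are fine:
\begin{itemize}
\item Your decrease $\mu\|\vect{g}^t\|^2/(L+\|\vect{g}^t\|)^2$ follows from $\|\Delta\vect{\beta}^t\|^2\le(\mu+\|\vect{g}^t\|)^{-1}(\vect{g}^t)^\top(H^t+\|\vect{g}^t\|I)^{-1}\vect{g}^t$. It beats the paper's decrease, which is roughly $\underline{C}^2\|\vect{g}^t\|^2/L^3$.
\item Your $M_H=\mathcal{O}(1/(K\eta^2))$ is correct. The paper's $K$-free term $6/\tau^2$ arises only because it uses $|R_i(\vect{y})|\le\|\vect{y}\|$ instead of $\sqrt{W_{\max}}\|\vect{y}\|$.
\item Your Step 1 does more than the paper, which simply asserts $p_{\min}\sim e^{-\Delta/\eta}$. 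You need no $c$-transform argument there. The chain of inequalities in the coercivity proof of Lemma~\ref{lem: bounded level set + bounded below} gives $w_k\mu_{\min}\operatorname{osc}(\vect{\beta}_k)\le\mathcal{L}(\vect{\beta}^0)+C_{\max}+\eta\log(1/\mu_{\min})$ on the sublevel set. So $\Delta$ depends on $K$, $n$ and $\vect{\beta}^0$, which the statement tolerates.
\end{itemize}

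Two things need repair.

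\textbf{Entry into the pure Newton phase.} Saying that $s^t=1$ is accepted at $\|\vect{g}^t\|\le\epsilon_0\sim\mu^2/M_H$ ``and Theorem~\ref{thm: local_quadratic} applies'' does not show that later steps remain full. Translating that theorem back to gradients gives $\|\vect{g}^{t+1}\|\le(LM/\underline{C}^2)\|\vect{g}^t\|^2$. At $\|\vect{g}^t\|=\epsilon_0$ the factor $(LM/\underline{C}^2)\epsilon_0$ is at least of order $L/\mu\sim p^{-2}\gg 1$, so the next gradient can leave the full-step region. Your $\epsilon_0$ is the paper's $\epsilon_{\text{Newton}}$. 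The paper's binding threshold is the smaller $\epsilon_{\text{contract}}=\underline{C}^2/(LM)$, introduced precisely to force $\|\vect{g}^{t+1}\|\le\|\vect{g}^t\|$. You have two fixes:
\begin{itemize}
\item Adopt $\epsilon_{\text{contract}}$, which costs only more powers of $p$.
\item Run the Boyd--Vandenberghe recursion directly in gradient norm. For the exact method $\vect{g}^t+H^t\Delta\vect{\beta}^t=-\|\vect{g}^t\|\Delta\vect{\beta}^t$, hence $\|\vect{g}^{t+1}\|\le(M_H/(2\mu^2)+1/\mu)\|\vect{g}^t\|^2$. This contracts once $\|\vect{g}^t\|\le c\min\{\mu^2/M_H,\mu\}$.
\end{itemize}

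\textbf{The $n\eta/K$ term in Step 3.} The count is a product of order $G\,L^2M_H^2/\mu^5$, not a $\mu^3$ ratio and not a sum. A gap bound $\mathcal{O}(C_{\max}+\tau\log n)$ cannot yield $n\eta/K$. It even grows with $\tau$, which the hypothesis $\eta=\mathcal{O}(\tau)$ does not cap. In the paper that term is a sparsification penalty: $C_\rho=\Theta(\eta/n^{1.5})$ gives $C_E=\mathcal{O}(\sqrt n/K)$, which inflates $M$ and shrinks $\epsilon_{\text{contract}}$. For the exact method it is simply absent.

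You do not need it, since the claim is an upper bound. The paper treats $\mathcal{L}(\vect{\beta}^0)-\underline{\mathcal{L}}$ as $\mathcal{O}(1)$. With that and $L_H=\mathcal{O}(1/\eta^2)$, your template returns $K^3/(\eta p^{10})$, which is the paper's leading term. With your sharper $M_H$ you get $K/(\eta p^{10})$; with your gap bound $\|\vect{g}^0\|^2/(2\mu)=\mathcal{O}(\eta/p^2)$ you get $K^3/p^{12}$. All of these lie inside the target for $\eta\le 1$.
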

This complexity bound depends entirely on the intrinsic problem parameters, indicating that the algorithm guarantees a transition into a phase of rapid, quadratic convergence after a finite number of damped iterations.

\section{Numerical Experiments}
\label{sec: Numerical Experiments}

This section presents numerical experiments on synthetic and real datasets to assess the performance of the proposed algorithms. Unless otherwise specified, we consider the regularization setting $\tau=\eta$ and compare against three representative fixed-support Wasserstein barycenter solvers: IBP~\cite{benamou2015iterative}, FastIBP~\cite{lin2020fixed}, and SmWB~\cite{cuturi2018semidual}. The experiments are implemented in Python~3.12 and run on a MacBook Pro with macOS~13.0, an Apple M2 Pro processor, and 16 GB of RAM. In implementation, we take $C_\rho = 10^5 \cdot \frac{\eta}{\sqrt{n}m}$. Additional experiments, including ablation studies on $\eta$ and the sparsification level, as well as results for the setting $\tau=\eta/2$, are reported in Appendix~\ref{app: Additional Experiment Details}.

\subsection{Synthetic Data}
\label{subsec: Synthetic Data}

We first examine the qualitative behavior of the proposed methods on the classical nested ellipses benchmark~\cite{cuturi2014fast}. The dataset consists of $K=5$ discrete probability measures represented as grayscale images on $40\times40$ and $64\times64$ pixel grids. Each image contains a pair of nested ellipses with randomly generated centers, orientations, and axis lengths. The ground cost is defined as the squared Euclidean distance between pixel locations. Unless otherwise specified, we use uniform weights $w_k=1/5$ for $k \in [5]$ and set $\eta=\tau=10^{-3}$.

Figures~\ref{fig: Nested ellipse benchmark. Top: Dataset of 5 nested ellipses with 40 pixel grids. Bottom: Barycenters computed by different methods.} and~\ref{fig: Nested ellipse benchmark. Top: Dataset of 5 nested ellipses with 64 pixel grids. Bottom: Barycenters computed by different methods} compare the barycenters computed by different methods under the two grid resolutions. The proposed NWB and SNWB  produce barycenters that preserve the nested elliptical structure with clear contours and limited background artifacts. In contrast, SmWB exhibits noticeable background noise, whereas first-order methods yield slightly more diffuse barycenters. Although the dense NWB method achieves high-quality solutions, its full Hessian computations incur additional costs. SNWB alleviates this bottleneck by leveraging Hessian sparsification, resulting in a substantial reduction in runtime while maintaining comparable barycenter quality. These results suggest that the sparse Newton strategy improves scalability without compromising the geometric fidelity of the computed barycenter. Additional synthetic numerical experiments are provided in Appendix~\ref{app: Additional Synthetic Experiments}.

\begin{figure*}[t]
\centering

\includegraphics[width=0.16\textwidth]{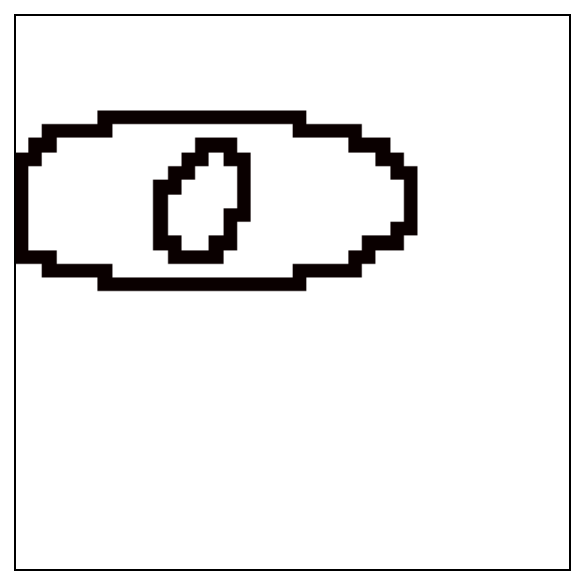}\hfill
\includegraphics[width=0.16\textwidth]{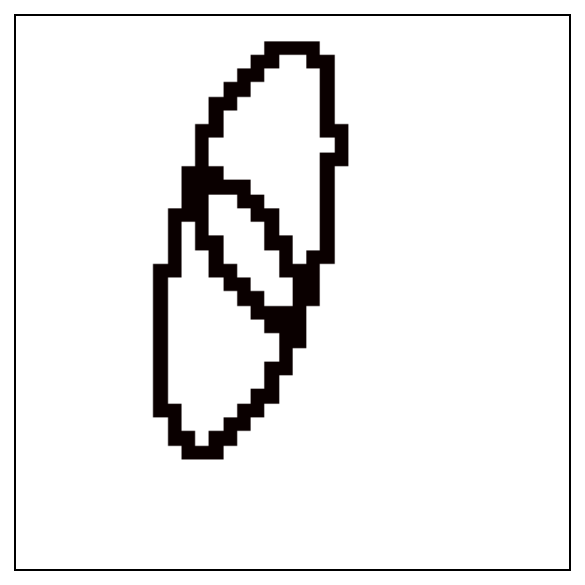}\hfill
\includegraphics[width=0.16\textwidth]{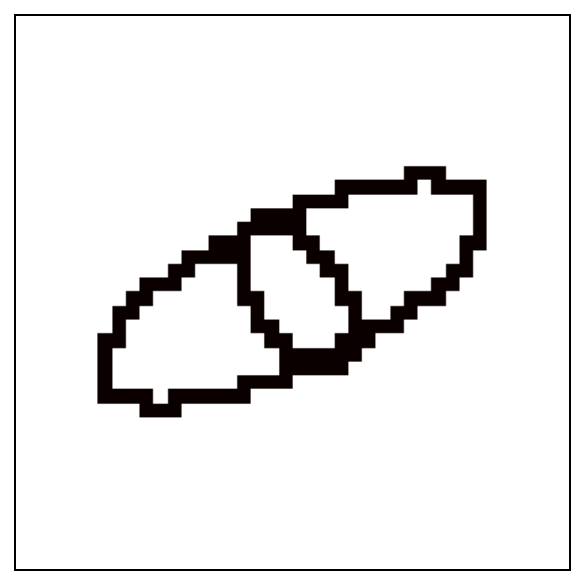}\hfill
\includegraphics[width=0.16\textwidth]{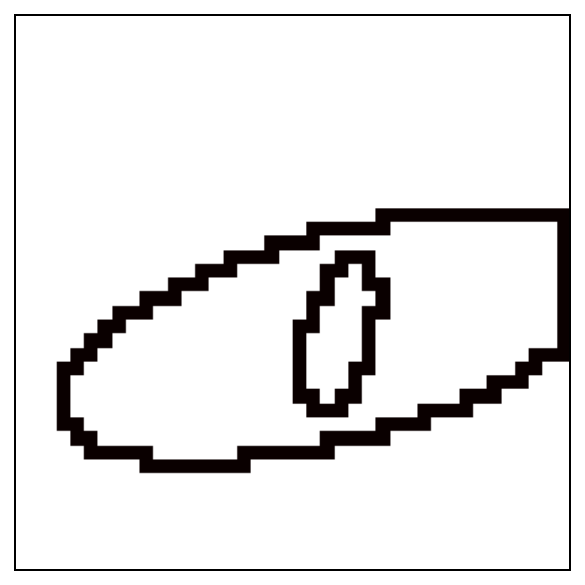}\hfill
\includegraphics[width=0.16\textwidth]{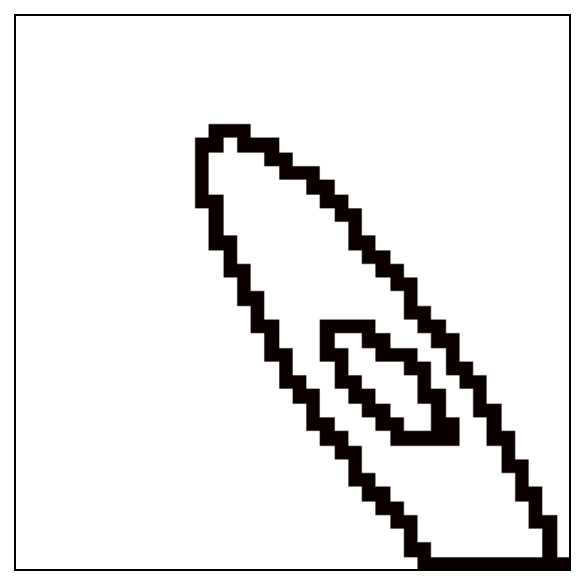}

\includegraphics[width=0.16\textwidth]{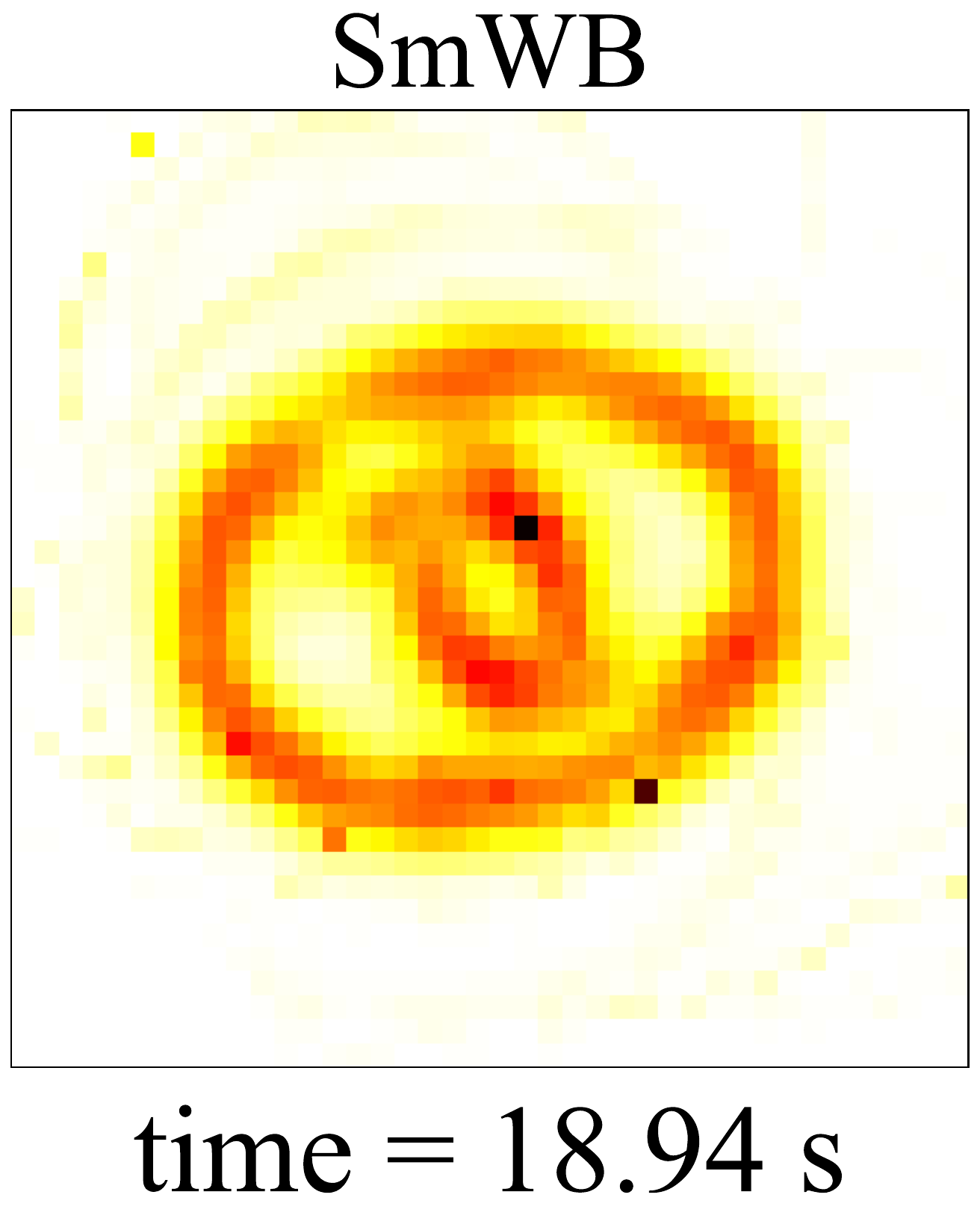}\hfill
\includegraphics[width=0.16\textwidth]{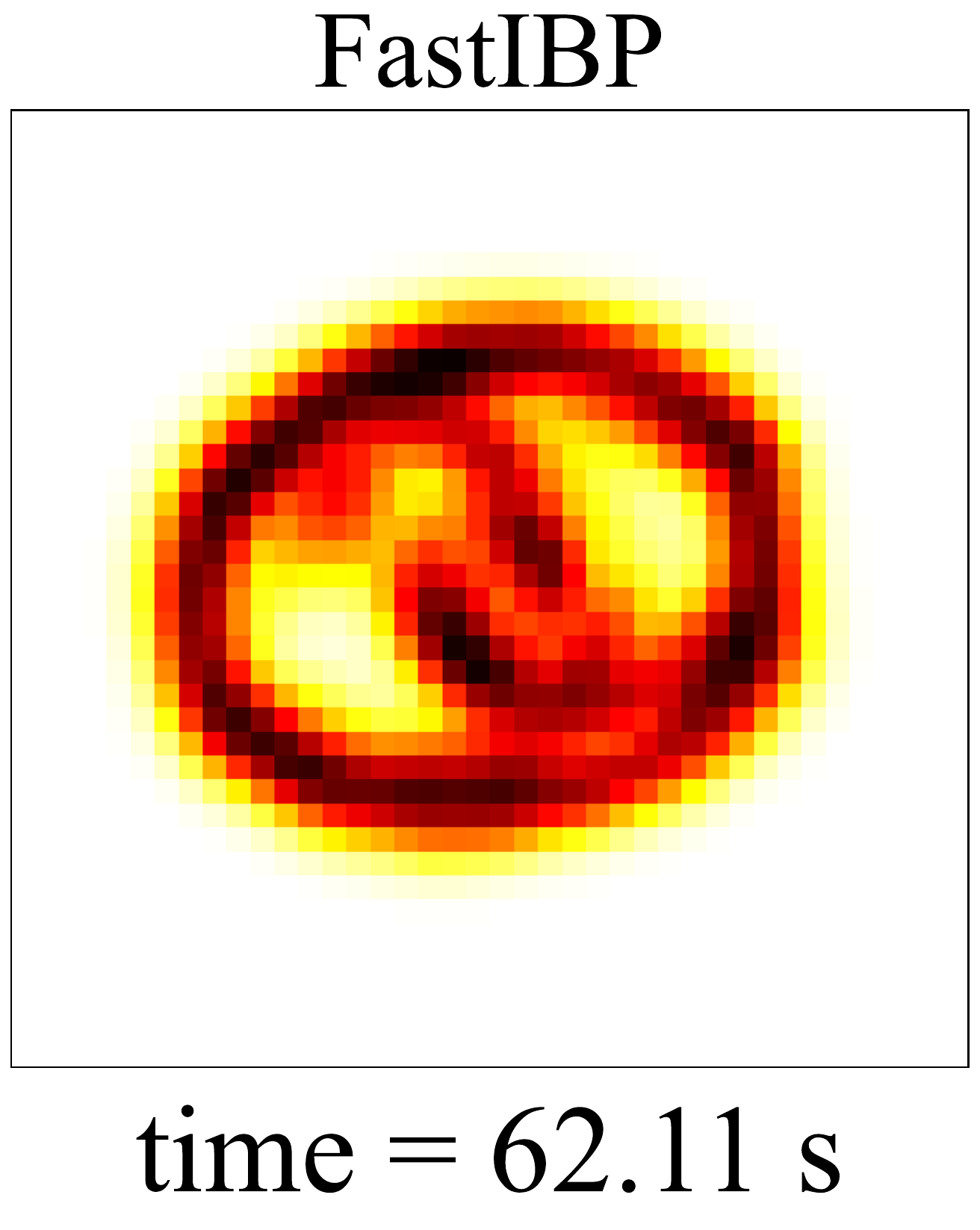}\hfill
\includegraphics[width=0.16\textwidth]{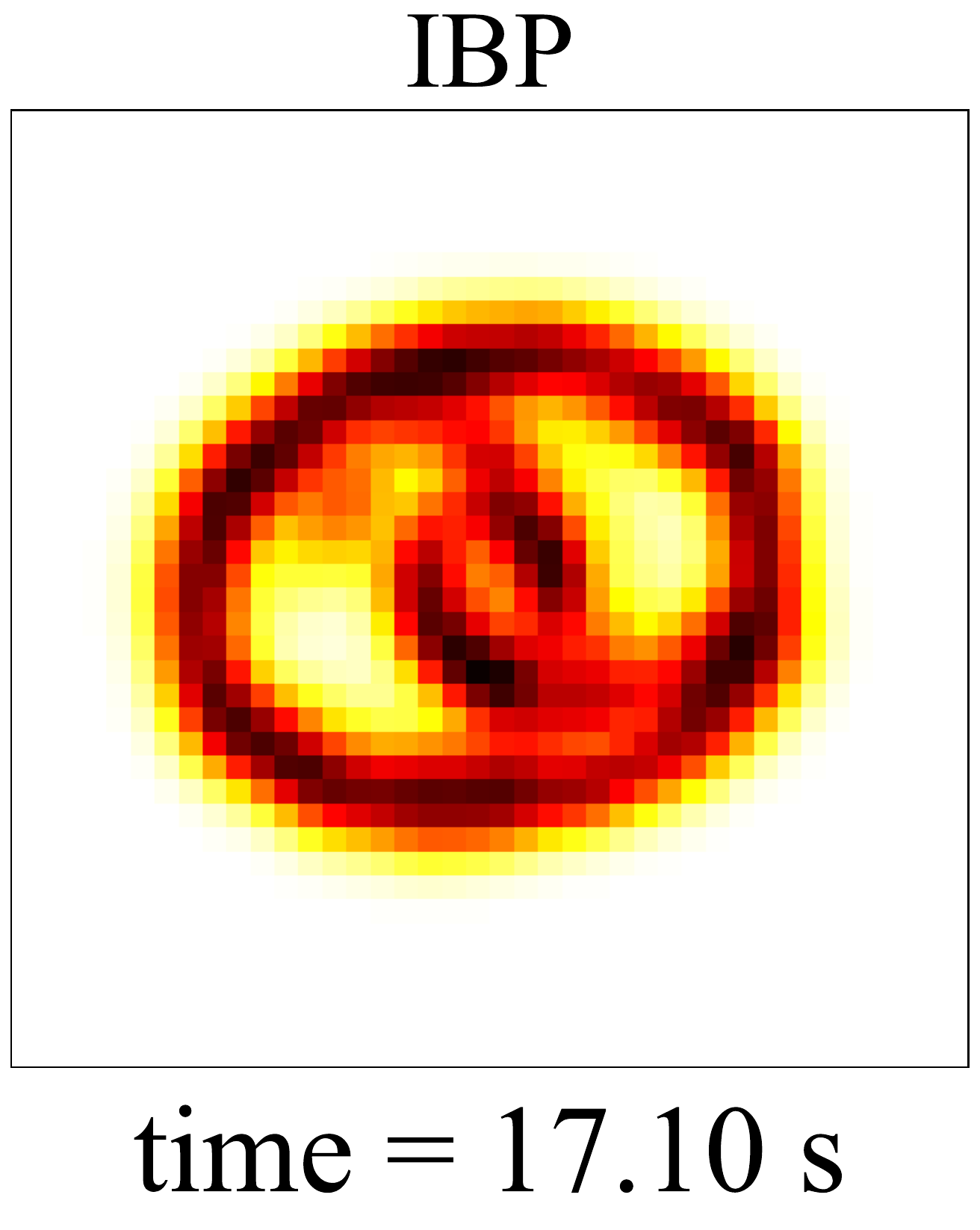}\hfill
\includegraphics[width=0.16\textwidth]{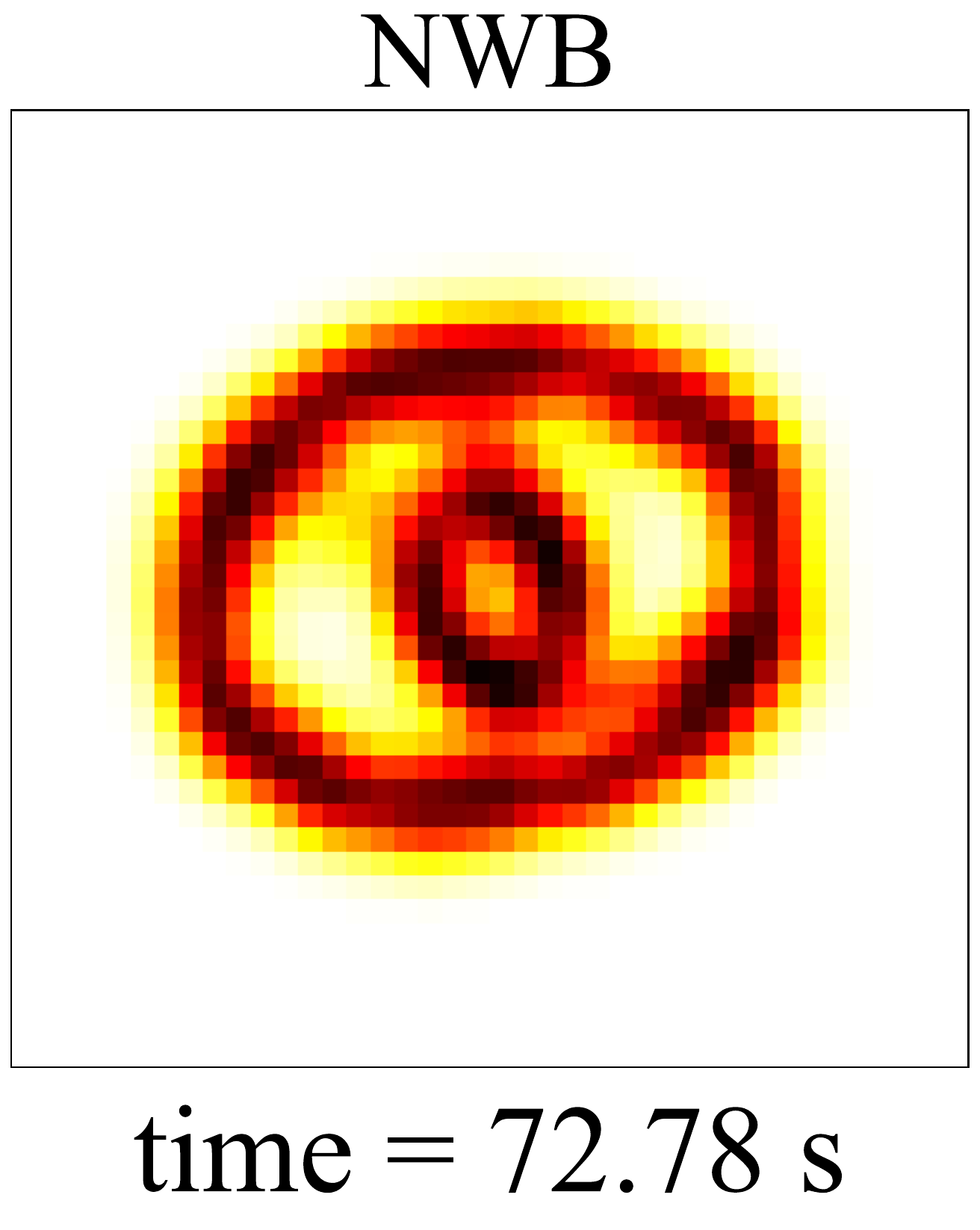}\hfill
\includegraphics[width=0.16\textwidth]{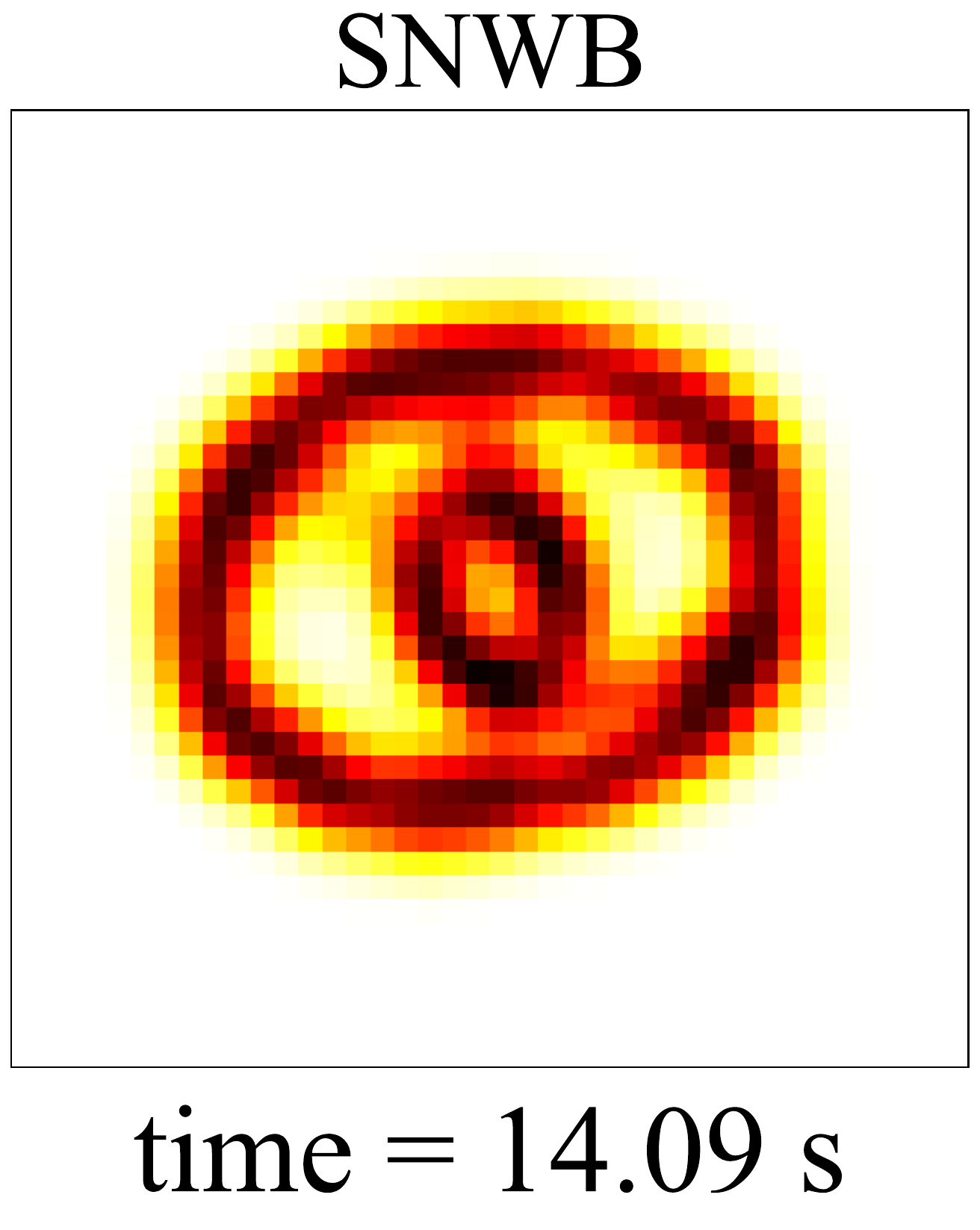}
\caption{Nested ellipses benchmark on a $40\times40$ pixel grid. Top: Dataset of 5 nested ellipses. Bottom: Barycenters computed by different methods.
}
\label{fig: Nested ellipse benchmark. Top: Dataset of 5 nested ellipses with 40 pixel grids. Bottom: Barycenters computed by different methods.}
\end{figure*}

\begin{figure*}[t]
\centering

\includegraphics[width=0.16\textwidth]{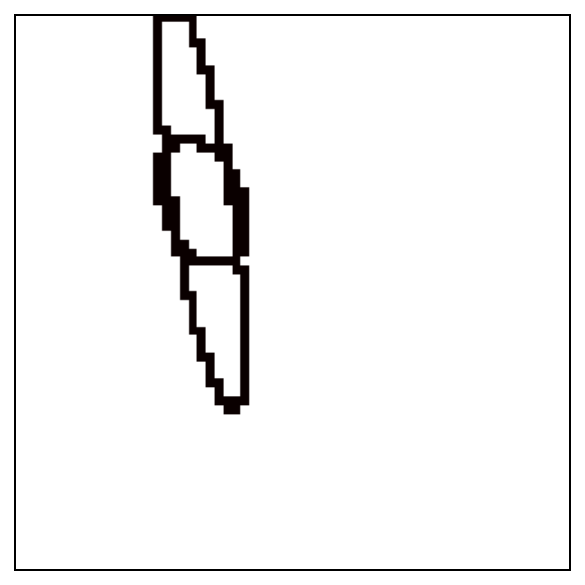}\hfill
\includegraphics[width=0.16\textwidth]{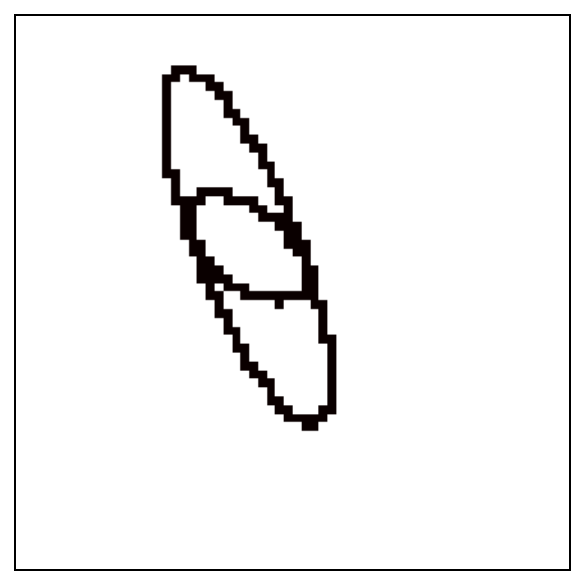}\hfill
\includegraphics[width=0.16\textwidth]{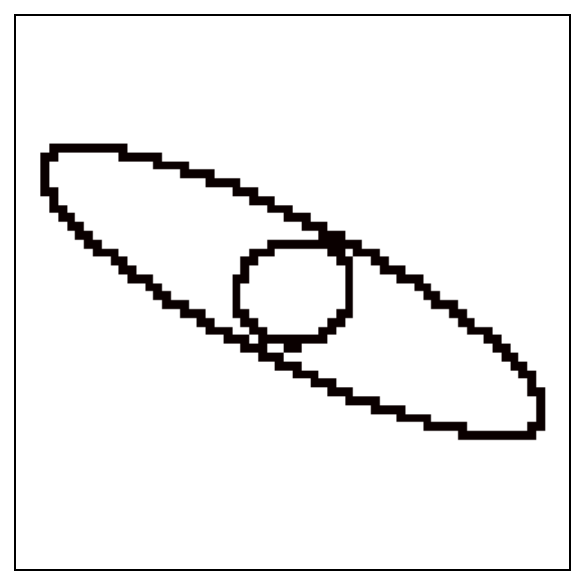}\hfill
\includegraphics[width=0.16\textwidth]{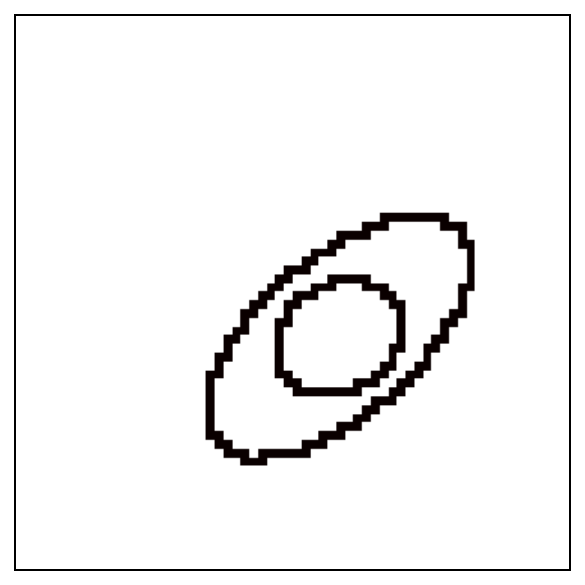}\hfill
\includegraphics[width=0.16\textwidth]{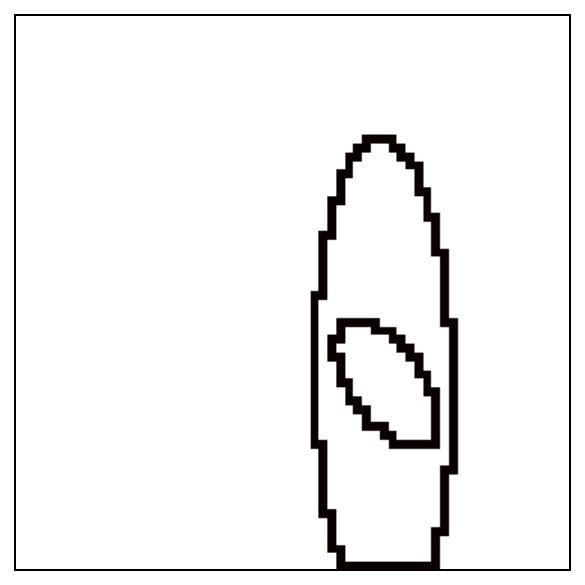}

\includegraphics[width=0.16\textwidth]{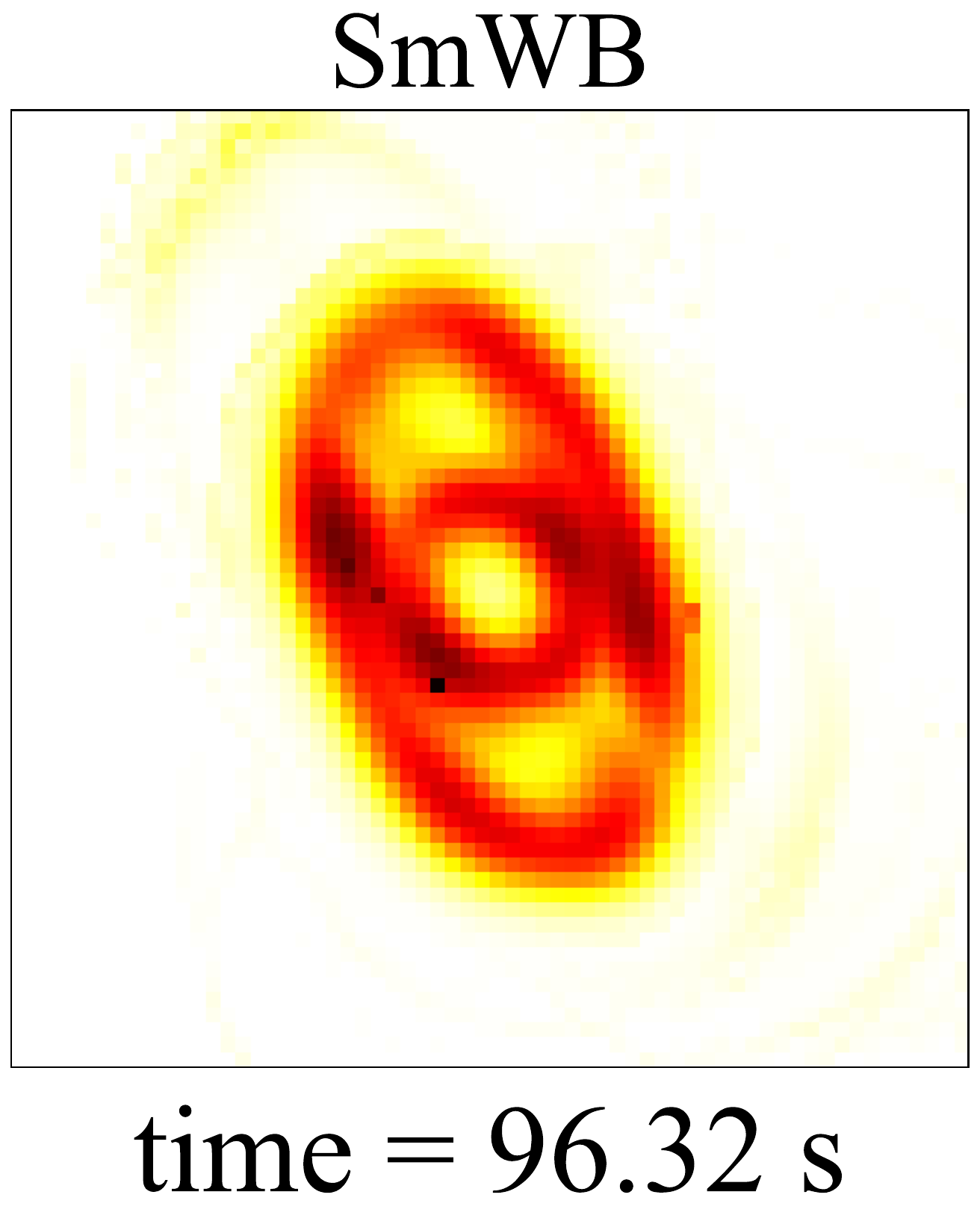}\hfill
\includegraphics[width=0.16\textwidth]{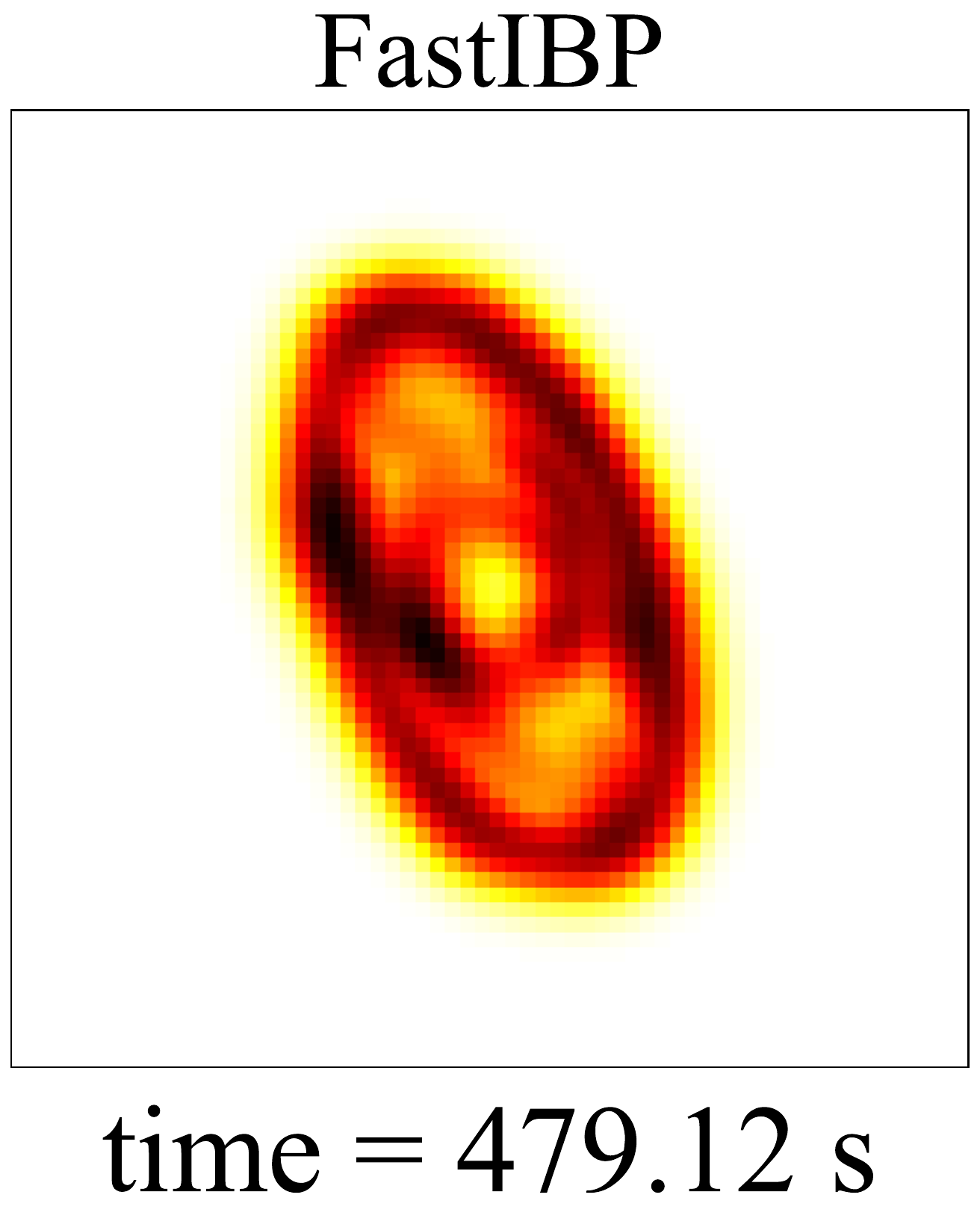}\hfill
\includegraphics[width=0.16\textwidth]{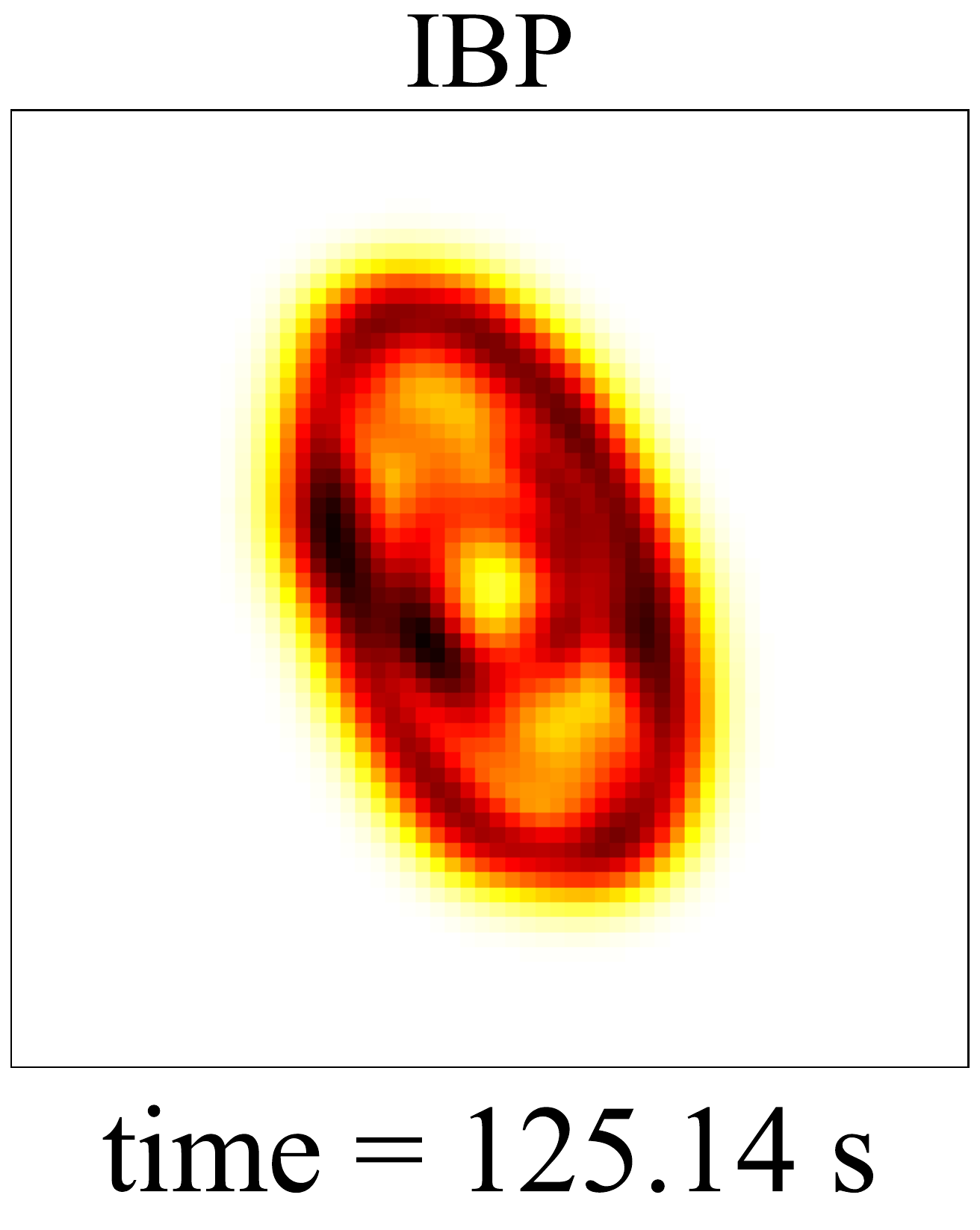}\hfill
\includegraphics[width=0.16\textwidth]{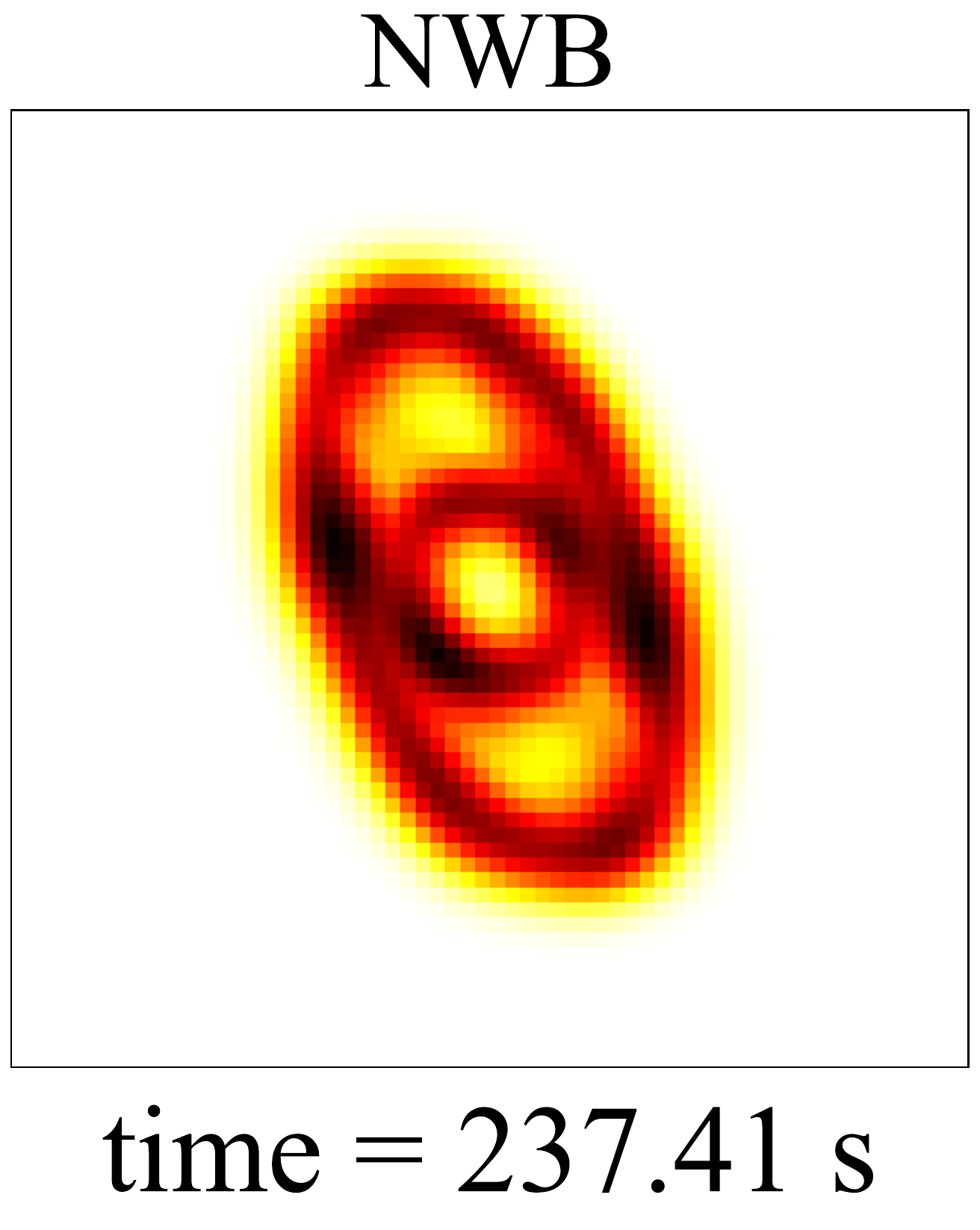}\hfill
\includegraphics[width=0.16\textwidth]{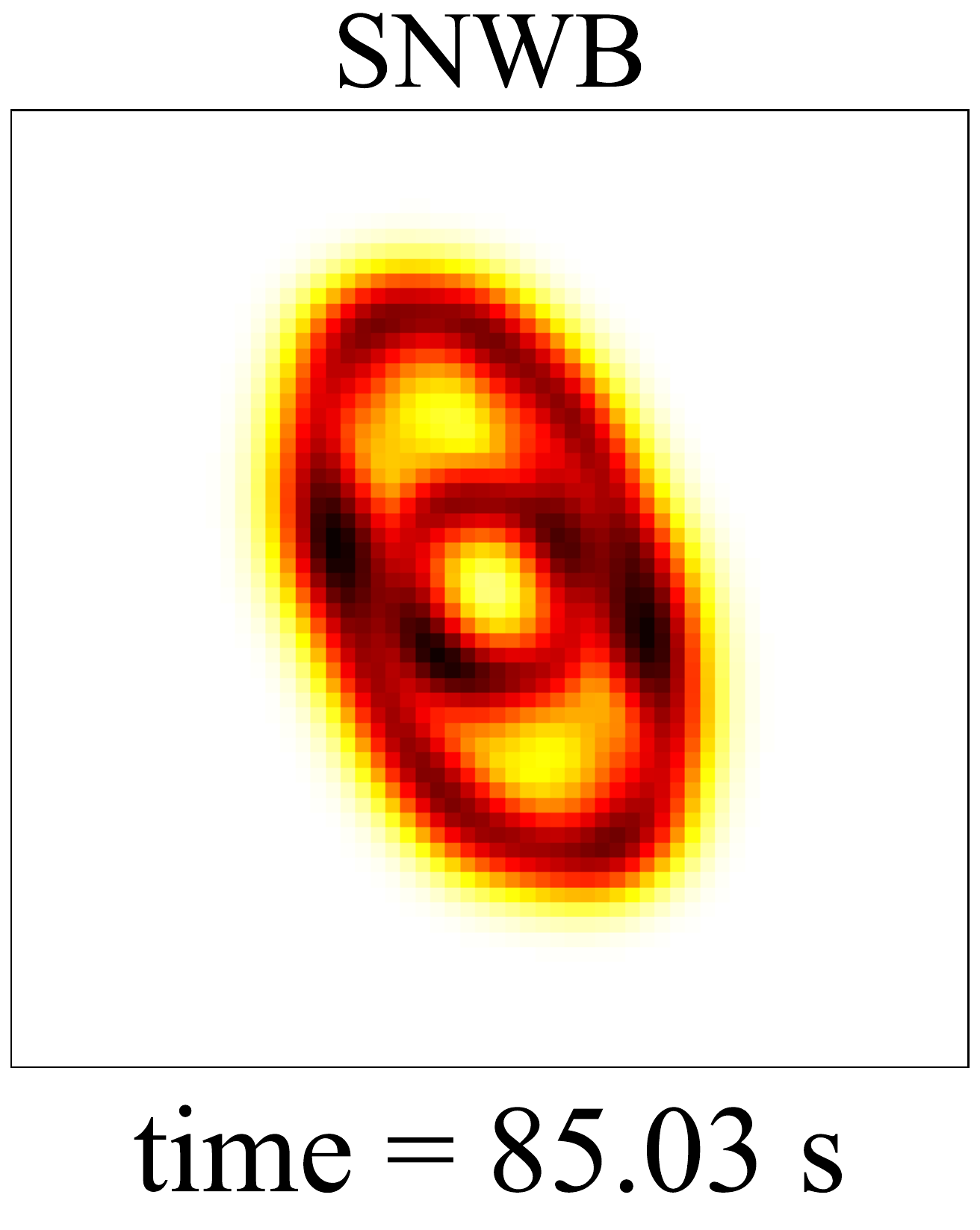}
\caption{Nested ellipses benchmark on a $64 \times 64$ pixel grid. Top: Dataset of 5 nested ellipses. Bottom: Barycenters computed by different methods.
}
\label{fig: Nested ellipse benchmark. Top: Dataset of 5 nested ellipses with 64 pixel grids. Bottom: Barycenters computed by different methods}
\end{figure*}

\subsection{Real Data}
\label{subsec: Real Data}

We next evaluate the proposed methods on real image datasets with meaningful spatial structures. We consider MNIST~\cite{lecun2002gradient} and Fashion-MNIST~\cite{xiao2017fashion}, two standard benchmarks consisting of grayscale images of handwritten digits and fashion products, respectively. 

For each dataset, we randomly select $K=10$ images from a prescribed class and resize each image to a $70\times70$ pixel grid. Each image is vectorized and normalized into a probability vector $\vect{\mu}_k$, and the cost matrix is constructed using the squared Euclidean distances between pixel locations. We use uniform weights $w_k=1/10$ for $k \in [10]$. The regularization parameters are set to $\eta=\tau=7\times10^{-4}$ for MNIST and $\eta=\tau=8\times10^{-4}$ for Fashion-MNIST. To assess convergence accuracy, we compute a high-precision reference solution $\vect{v}^{\star}$ using SNWB with a stopping criterion $\|\vect{g}\|<10^{-7}$.

Figure~\ref{fig:fashionmnist_and_mnist_performance_evaluation} reports the results on MNIST and Fashion-MNIST, respectively. The dense NWB method converges faster than FastIBP and the L-BFGS-B-based SmWB method, demonstrating the benefits of using second-order information. However, inverting the full Hessian remains expensive, making it slower than IBP in some cases. By contrast, SNWB substantially reduces the cost of the Newton updates through Hessian sparsification and achieves the fastest convergence among the tested methods. These results confirm that the proposed sparsification strategy improves computational efficiency while preserving the accuracy and stability of the Newton framework. Additional comparisons using other measures are reported in Appendix~\ref{app:real-data-residual}.

\begin{figure*}[!t]
\centering
\includegraphics[width=0.23\textwidth, trim=10 10 10 10, clip]{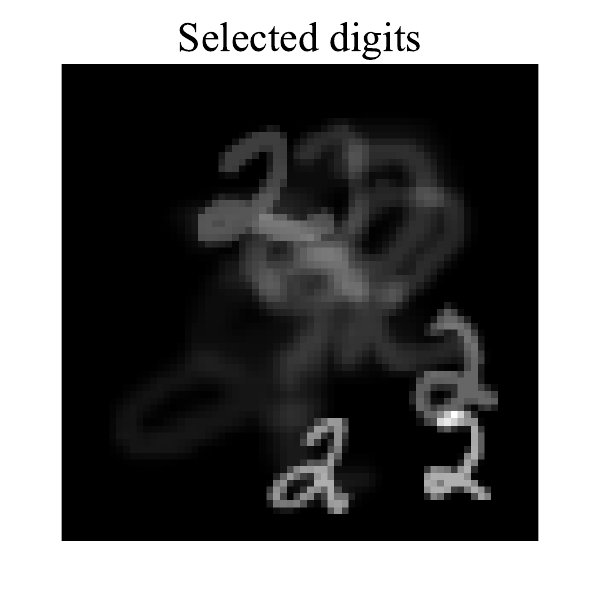}\hfill
\includegraphics[width=0.23\textwidth, trim=10 10 10 10, clip]{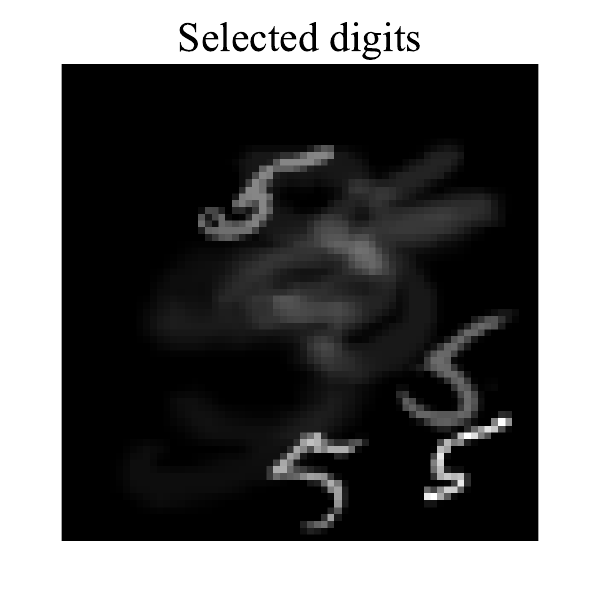}\hfill
\includegraphics[width=0.23\textwidth, trim=10 10 10 10, clip]{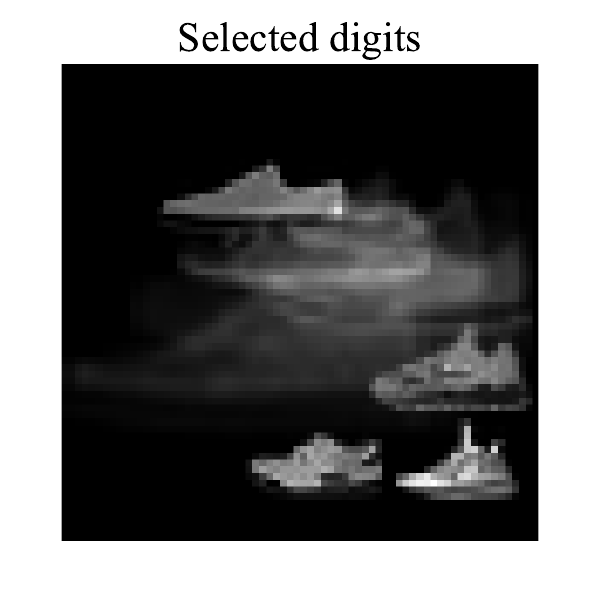}\hfill
\includegraphics[width=0.23\textwidth, trim=10 10 10 10, clip]{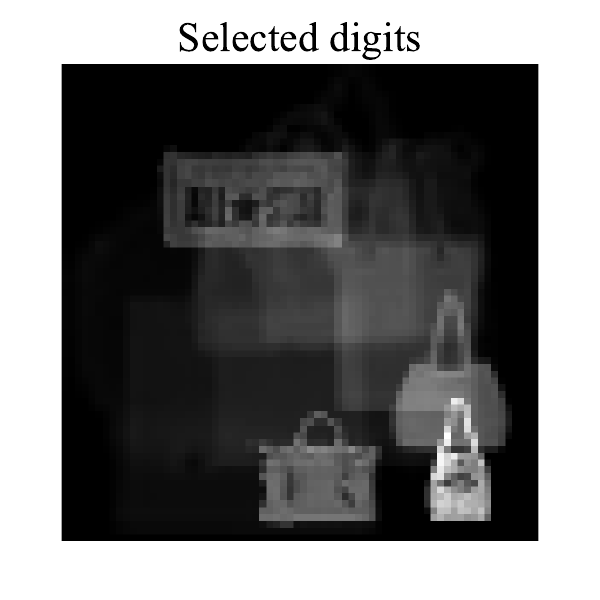}

\includegraphics[width=0.23\textwidth, trim=10 10 10 10, clip]{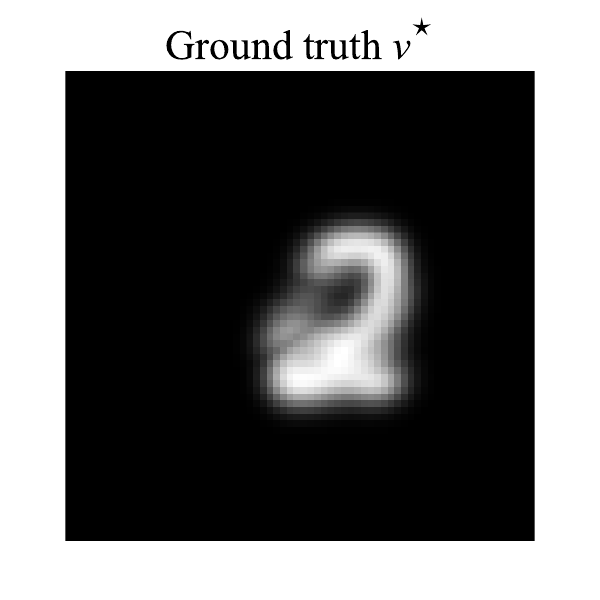}\hfill
\includegraphics[width=0.23\textwidth, trim=10 10 10 10, clip]{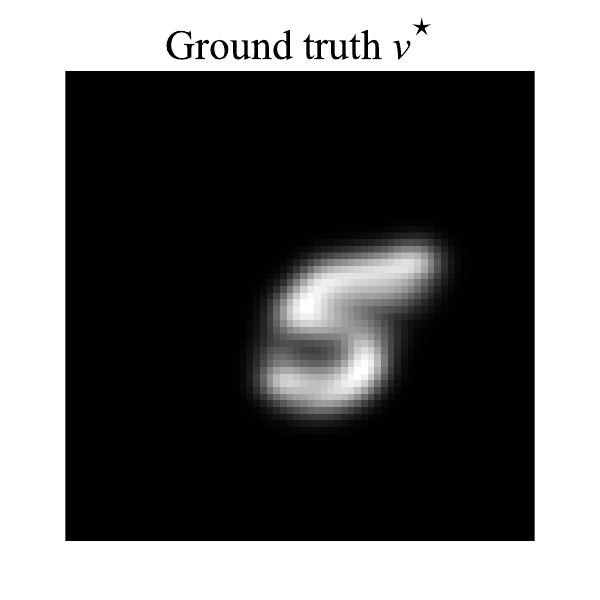}\hfill
\includegraphics[width=0.23\textwidth, trim=10 10 10 10, clip]{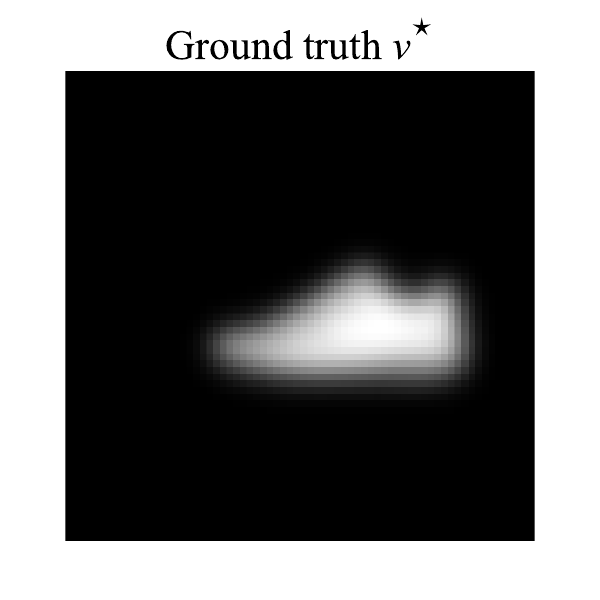}\hfill
\includegraphics[width=0.23\textwidth, trim=10 10 10 10, clip]{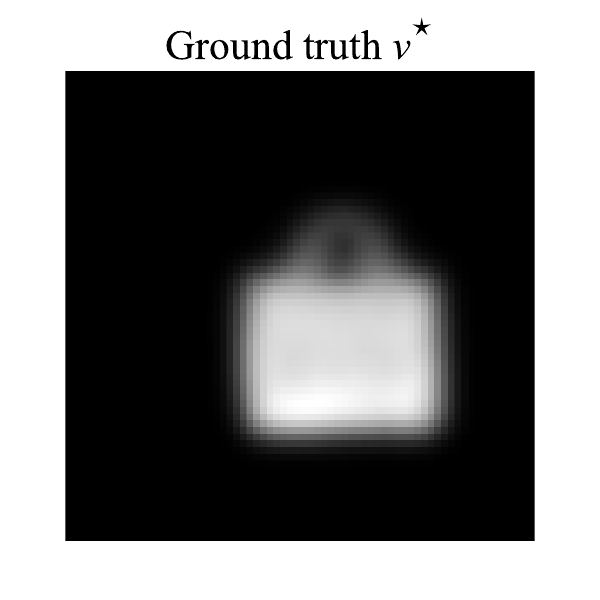}

\includegraphics[width=0.23\textwidth, trim=10 10 10 10, clip]{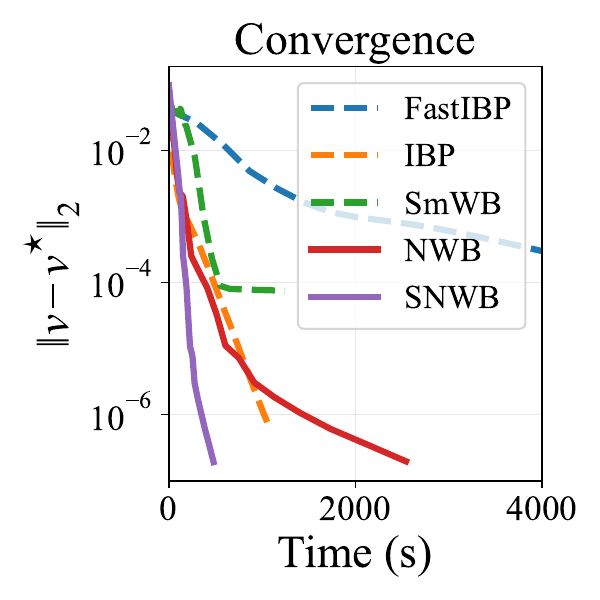}\hfill
\includegraphics[width=0.23\textwidth, trim=10 10 10 10, clip]{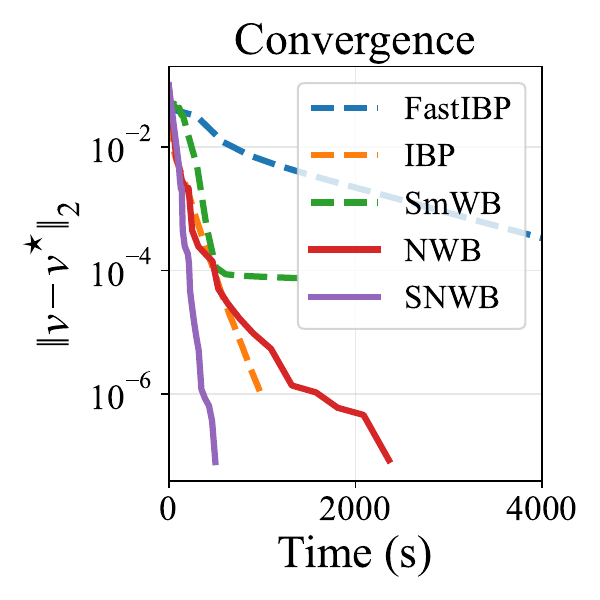}\hfill
\includegraphics[width=0.23\textwidth, trim=10 10 10 10, clip]{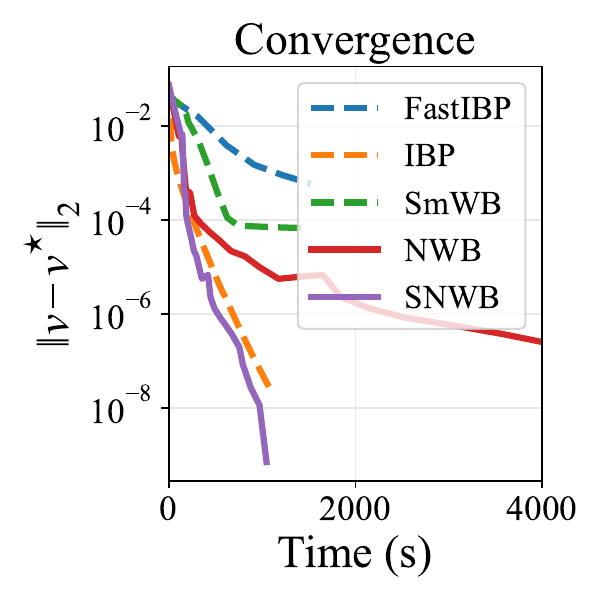}\hfill
\includegraphics[width=0.23\textwidth, trim=10 10 10 10, clip]{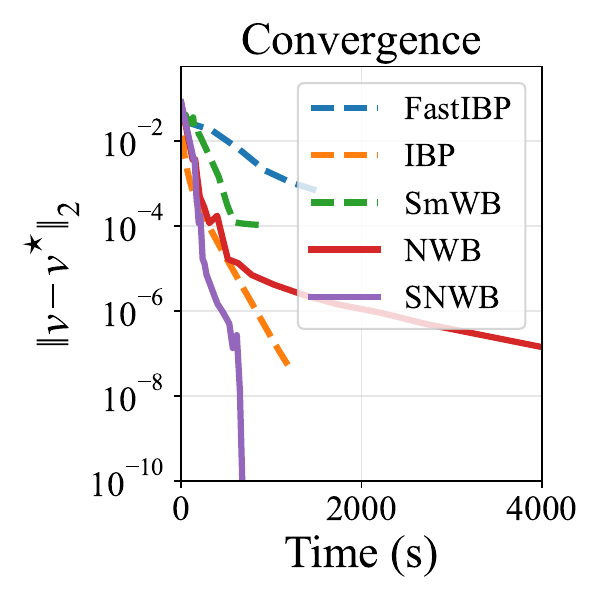}
\caption{Qualitative and quantitative performance evaluation on MNIST and Fashion-MNIST benchmarks. From top to bottom, the rows report the selected input samples, the corresponding ground-truth barycenter $v^\star$, and the convergence curves of different methods.}
\label{fig:fashionmnist_and_mnist_performance_evaluation}
\end{figure*}

\section{Conclusion}
\label{sec: conclusion}

We studied the fixed-support doubly entropic Wasserstein barycenter problem and reformulated it as a smooth, unconstrained, convex optimization problem via the semi-dual representation of entropic optimal transport. Based on explicit gradient and Hessian formulas, we developed an exact Newton method (Algorithm~\ref{alg: NWB}) and its sparse variant (Algorithm~\ref{alg: SNWB}) that sparsifies the transport probability matrices to accelerate Hessian-vector products. Theoretical analysis establishes structural properties, approximation bounds, and convergence results for the proposed methods. Numerical experiments on synthetic and real datasets show that SNWB achieves faster convergence than representative state-of-the-art solvers while maintaining high solution accuracy. Limitations of the proposed methods and possible directions for future work are discussed in Appendix~\ref{app: limitations}.

\newpage
\bibliography{reference}
\bibliographystyle{plain}

\newpage
\appendix

\section{Limitations}
\label{app: limitations}

The proposed methods have two main limitations. First, the conditioning of the Newton system depends on the transport probability matrices; when some entries become very small, the system may be ill-conditioned and require more conjugate gradient iterations. Second, the efficiency of SNWB relies on the quality of the sparse approximation. If the transport probability matrices remain dense, sparsification may provide limited computational savings. Developing effective preconditioners and adaptive sparsification rules is therefore an important direction for future work.
\section{Proofs of Theoretical Results}
\label{app: Proofs of Theoretical Results}
This section details the proofs of the theoretical results. To facilitate the subsequent analysis, we first introduce a block-wise decomposition of the `sparsified' Hessian matrix $H_\rho$. 

Given the sparsified transport probability matrices $\{P_{k,\rho}\}_{k\in [K]}$, we define the corresponding marginal vectors as:
\begin{equation}
\label{equ: gamma}
    \vect{\gamma}_{k,\rho} = (P_{k,\rho})^\top \vect{v},
\end{equation}
and introduce the auxiliary block matrices $M_{kr,\rho}$ and $N_{kk,\rho}$ such that:
\begin{equation}
    \label{equ: hessian diagonal decomposition}
    M_{kr,\rho} = (P_{k,\rho})^\top \text{diag}(\vect{v}) P_{r,\rho} - \vect{\gamma}_{k,\rho} (\vect{\gamma}_{r,\rho})^\top , \quad N_{kk,\rho} = \text{diag}(\vect{\gamma}_{k,\rho}) - (P_{k,\rho})^\top \text{diag}(\vect{v}) P_{k,\rho}.
\end{equation}

Using these components, the $(k, r)$-th block of the `sparsified' Hessian matrix $H_\rho$, originally defined in \eqref{equ: hessian_diag} and \eqref{equ: hessian_nondiag}, can be concisely rewritten as:
\begin{equation}
\label{equ: sparse hessian construction}
    H_{kr,\rho} = \frac{w_k w_r}{\tau} M_{kr,\rho} + \frac{w_k}{\eta}N_{kk,\rho}\id{k=r}.
\end{equation}

As a special case, when $\rho=0$, the sparsification step leaves the transport probability matrices unchanged, i.e., $P_{k,\rho}=P_k$ for all $k\in[K]$. Consequently,
\[
    N_{kk,\rho}=N_{kk},\qquad
    M_{kr,\rho}=M_{kr},\qquad
    H_\rho=H,
\]
and the `sparsified' Hessian reduces to the exact Hessian.

\subsection{Structural and Spectral Properties of the Sparsified Hessian Matrix}

In this subsection, we characterize the kernel space and establish bounds on the eigenvalues of the `sparsified' Hessian matrix $H_\rho$, including $\rho = 0$ and $\rho \geq 0$. Consequently, the theoretical results derived here encompass the properties of the exact Hessian as a special case, allowing us to recover the standard propositions (e.g., Proposition~\ref{prop: kernel space of hessian}, \ref{prop: the minimum and maximum eigenvalues of the hessian}, and \ref{prop: sparse_hessian_kernel}).

\begin{theorem}
\label{thm: kernel space of hessian}
    For any $\rho \geq 0$, the sparsified Hessian matrix $H_\rho$ is symmetric and positive semidefinite. Moreover, its kernel is exactly the $K$-dimensional subspace of block-wise constant vectors, namely, 
    \begin{equation}
    \label{equ: kernel space}
        \ker(H_\rho) = \left\{ \vect{d} = (\vect{d}_1^\top, \dots, \vect{d}_K^\top)^\top \in \mathbb{R}^{mK} \mid \vect{d}_k = c_k \1_m, \, c_k \in \mathbb{R}, \, \forall k \in [K] \right\}.
    \end{equation}
    Furthermore, the gradient $\vect{g}$ is strictly orthogonal to this kernel, i.e., $\nabla \lyp(\beta) \in \ker(H_\rho)^\perp$. Consequently, the associated Newton system is consistent.
\end{theorem}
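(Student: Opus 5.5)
Throughout we fix $\rho\ge 0$ and use that each $P_{k,\rho}$ is row-stochastic: $P_{k,\rho}\1_m=\1_n$. This holds by construction in Algorithm~\ref{alg: Sparsifying the transport probability matrix}, provided each row keeps at least one entry; its largest entry is at least $1/m$, so this is automatic whenever $\rho\le 1/m$. The plan is to write the quadratic form of $H_\rho$ as a sum of variances. Symmetry is immediate from \eqref{equ: sparse hessian construction}, since $M_{kr,\rho}^\top=M_{rk,\rho}$ and $N_{kk,\rho}$ is symmetric.

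For $\vect{d}\in\R^{mK}$, set $\vect{u}_k:=P_{k,\rho}\vect{d}_k\in\R^n$. Since $\vect{\gamma}_{k,\rho}^\top\vect{d}_k=\vect{v}^\top\vect{u}_k$, the coupling part of the quadratic form is
\[
\sum_{k,r}\frac{w_kw_r}{\tau}\vect{d}_k^\top M_{kr,\rho}\vect{d}_r=\frac1\tau\Big(\sum_i v_i y_i^2-\big(\textstyle\sum_i v_i y_i\big)^2\Big),\qquad \vect{y}:=\sum_k w_k\vect{u}_k .
\]
This is $\tfrac1\tau$ times the variance of $\vect{y}$ under $\vect{v}\in\Delta_n$, so it is nonnegative. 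For the diagonal part, use $\vect{\gamma}_{k,\rho}=P_{k,\rho}^\top\vect{v}$ to get
\[
\vect{d}_k^\top N_{kk,\rho}\vect{d}_k=\sum_i v_i\Big(\sum_j[P_{k,\rho}]_{ij}[\vect{d}_k]_j^2-\big(\textstyle\sum_j[P_{k,\rho}]_{ij}[\vect{d}_k]_j\big)^2\Big)\ge 0 .
\]
Each bracket is the variance of $\vect{d}_k$ under the probability row $[P_{k,\rho}]_{i\cdot}$. Hence $\vect{d}^\top H_\rho\vect{d}\ge0$, which gives positive semidefiniteness.

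For the kernel, first take $\vect{d}_k=c_k\1_m$. Then $\vect{u}_k=c_k\1_n$, so $\vect{y}$ is constant and both variances vanish. The vector also lies in the kernel itself, not merely in the zero set of the form: $N_{kk,\rho}\1_m=\vect{\gamma}_{k,\rho}-P_{k,\rho}^\top\vect{v}=\0$ and $M_{kr,\rho}\1_m=\vect{\gamma}_{k,\rho}-\vect{\gamma}_{k,\rho}(\vect{\gamma}_{r,\rho}^\top\1_m)=\0$, using $\vect{\gamma}_{r,\rho}^\top\1_m=\vect{v}^\top\1_n=1$.

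Conversely, since $H_\rho\succeq0$, $H_\rho\vect{d}=\0$ iff $\vect{d}^\top H_\rho\vect{d}=0$, which forces every term above to vanish. This step is the main obstacle, because it needs positivity of certain entries. The row $i^\star$ is left unsparsified, and $v_{i^\star}>0$ since $\vect{v}$ in \eqref{equ: optimal v} has full support because $\vect{\pi}$ does. So the variance of $\vect{d}_k$ under $[P_k]_{i^\star\cdot}$ must be zero. By \eqref{equ: P matrix} this row has all entries strictly positive when $\vect{\mu}_k>0$, so $\vect{d}_k$ is constant. This holds for every $k$, which gives \eqref{equ: kernel space}. If $\vect{\mu}_k$ has zeros, the statement must be read on its support, and I would note that assumption explicitly.

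Finally, for the gradient, \eqref{equ: gradient} gives $\1_m^\top\vect{g}_k=w_k(\vect{v}^\top P_k\1_m-\1_m^\top\vect{\mu}_k)=w_k(1-1)=0$. So $\vect{g}$ is orthogonal to every block-wise constant vector, i.e.\ $\vect{g}\in\ker(H_\rho)^\perp=\mathrm{range}(H_\rho)$ since $H_\rho$ is symmetric. Therefore $H_\rho\vect{d}=-\vect{g}$ is solvable, and the shifted system is solvable trivially.
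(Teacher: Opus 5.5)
Your proof is correct and takes essentially the same route as the paper. You split the quadratic form into the $\tfrac1\tau$-variance of $\vect{y}=\sum_k w_kP_{k,\rho}\vect{d}_k$ under $\vect{v}$ plus the $\tfrac{w_k}{\eta}$-blocks, use the unsparsified, strictly positive row $i^\star$ to force each $\vect{d}_k$ to be constant, and check the gradient by column sums, just as the paper does; the paper writes $N_{kk,\rho}=\text{diag}(W_{k,\rho}\1_m)-W_{k,\rho}$ as a Laplacian with all-positive weights $W_{k,\rho}=P_{k,\rho}^\top\text{diag}(\vect{v})P_{k,\rho}$, which is equivalent to your row-wise variance. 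Your added caveats are correct refinements that the paper leaves implicit: $P_{k,\rho}$ is row-stochastic only if every row keeps an entry (automatic for $\rho\le 1/m$, but not for every $\rho\ge0$ as stated), $\vect{\mu}_k>0$ is needed, and the gradient involves the exact $P_k$ rather than $P_{k,\rho}$.
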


\begin{proof}
By definition, each block $H_{kk,\rho}$ is symmetric and $H_{kr,\rho}^\top=H_{rk,\rho}$; therefore, the full matrix $H_\rho$ is symmetric.

For any arbitrary vector $\vect{d} = [\vect{d}_1^\top, \vect{d}_2^\top, \dots , \vect{d}_K^\top]^\top \in \mathbb{R}^{mK}$, the quadratic form evaluates to:
\begin{equation*}
\begin{aligned}
Q = \vect{d}^\top H_\rho \vect{d} = \sum_{k=1}^K \sum_{r=1}^K (\vect{d}_k)^\top H_{kr,\rho} \vect{d}_r = \underbrace{\sum_{k=1}^K \frac{w_k}{\eta} (\vect{d}_k)^\top N_{kk,\rho} \vect{d}_k}_{Q_{\eta}} + \underbrace{\sum_{k=1}^K \sum_{r=1}^K \frac{w_k w_r}{\tau}(\vect{d}_k)^\top M_{kr,\rho} \vect{d}_r}_{Q_{\tau}}.
\end{aligned}    
\end{equation*}

We first analyze the term $Q_\eta$. Define $W_{k,\rho} = P_{k,\rho}^\top \text{diag}(\vect{v}) P_{k,\rho}$. Because $P_{k,\rho}$ is row-stochastic ($P_{k,\rho} \1_m = \1_n$), we have $W_{k,\rho} \1_m = P_{k,\rho}^\top \vect{v} = \vect{\gamma}_{k,\rho}$. Therefore, $N_{kk,\rho} = \text{diag}(W_{k,\rho} \1_m) - W_{k,\rho}$. It follows that:
\begin{equation*}
    Q_{\eta} = \sum_{k=1}^K \frac{w_k}{\eta} \vect{d}_k^\top N_{kk,\rho} \vect{d}_k = \sum_{k=1}^K \frac{w_k}{2\eta} \sum_{j=1}^m \sum_{l=1}^m [W_{k,\rho}]_{jl} ([\vect{d}_k]_{j} - [\vect{d}_k]_{l})^2 \geq 0.
\end{equation*}
Since all entries of $\vect{v}$ are strictly positive, and by the sparsification rule there exists an index $i^\star$ such that $[P_{k,\rho}]_{i^{\star}j}>0$ for all $j\in [m]$, it follows from the definition of $W_{k,\rho}$ that $[W_{k,\rho}]_{ij}>0$ for all $i,j\in [m]$. Thus, $Q_{\eta} = 0$ if and only if $\vect{d}_k = c_k \1_m$ for some constants $c_k \in \mathbb{R}$.

Next, we evaluate $Q_\tau$. By the definition of $M_{kr,\rho}$, we obtain:
\begin{equation}
\label{equ: Q_tau}
\begin{aligned}
Q_{\tau} & = \frac{1}{\tau}\sum_{k=1}^K (w_kP_{k,\rho}\vect{d}_k)^\top \left[\text{diag}(\vect{v}) - \vect{v}\vect{v}^\top\right] \sum_{r=1}^K (w_rP_{r,\rho}\vect{d}_r) \\
& := \frac{1}{\tau}\left(\sum_{i=1}^n v_i y_i^2 - \left(\sum_{i=1}^n v_i y_i\right)^2\right) \geq 0,
\end{aligned}
\end{equation}
where we define $\vect{y} = \sum_{k=1}^K w_kP_{k,\rho}\vect{d}_k$ in the second equality. The final inequality holds due to the Cauchy-Schwarz inequality, noting that $\vect{v} \in \Delta_n$. Since all entries of $\vect{v}$ are strictly positive, equality holds if and only if $\vect{y}$ is a constant vector; i.e., there exists $c_0 \in \mathbb{R}$ such that $y_i = c_0$ for all $i \in [n]$. In particular, if $\vect{d}_k=c_k \1_m$ for all $k$, then:
\begin{align*}
y_i = \sum_{k=1}^K w_k \sum_{j=1}^m [P_{k,\rho}]_{ij} c_k = \sum_{k=1}^K w_k c_k \left( \sum_{j=1}^m [P_{k,\rho}]_{ij}\right) = \sum_{k=1}^K w_k c_k, \quad \forall i \in [n],
\end{align*}
which implies there exists a constant $c_0 := \sum_{k=1}^K w_k c_k$ such that $y_i = c_0$ for all $i$, yielding $Q_{\tau}=0$. Since $Q_\eta=0$ holds if and only if every block $\vect{d}_k$ is constant, and such vectors simultaneously satisfy $Q_\tau=0$, the kernel of $H_\rho$ is precisely given by~\eqref{equ: kernel space}. 

It remains to show that the gradient is orthogonal to this kernel space. Take any $\vect{d} \in \ker(H_\rho)$. Then $\vect{d}_k=c_k \1_m$ for some constants $c_k \in \mathbb{R}$. Hence,
\begin{align*}
\langle \vect{g}_k, \vect{d}_k \rangle = c_k \langle \vect{g}_k, \1_m \rangle = c_k w_k \langle (P_{k,\rho})^\top \vect{v} - \vect{\mu}_k, \1_m \rangle = 0, \quad \forall k \in [K],
\end{align*}
where the final equality holds because $\vect{v}^\top P_{k,\rho} \1_m = \vect{v}^\top \1_n = 1$ and $\vect{\mu}_k^\top \1_m = 1$. Summing over $k=1,\dots,K$ yields $\langle \vect{g},\vect{d}\rangle = 0$. This completes the proof.
\end{proof}

\begin{theorem}
\label{thm: the minimum and maximum eigenvalues of the hessian}
For any $\rho \ge 0$, the minimum strictly positive eigenvalue and the maximum eigenvalue of the sparsified Hessian matrix $H_\rho$ satisfy the following bounds: 
\begin{equation}
\lambda_{\min}^{+}(H_\rho) \ge \frac{m}{\eta n} \min_{k} (w_k p_k^2), \quad \lambda_{\max}(H_\rho) \le (\max_{k} w_k) \left( \frac{1}{2\eta} + \frac{1}{\tau} \right),
\end{equation}
where $p_k = \min_{j} [P_{k,\rho}]_{i^\star j}$ with $i^\star \in \arg\max_i [\vect{v}]_i$.
\end{theorem}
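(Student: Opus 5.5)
We reuse the decomposition $\vect{d}^\top H_\rho \vect{d} = Q_\eta + Q_\tau$ from the proof of Theorem~\ref{thm: kernel space of hessian} and bound the two pieces separately.

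\textbf{Upper bound.} For $Q_\eta$ we use the Laplacian form $Q_\eta=\sum_k \frac{w_k}{2\eta}\sum_{j,l}[W_{k,\rho}]_{jl}([\vect{d}_k]_j-[\vect{d}_k]_l)^2$. Writing $N_{kk,\rho}=\sum_i v_i\big(\diag{P_{i\cdot}}-P_{i\cdot}^\top P_{i\cdot}\big)$ with $P=P_{k,\rho}$, each summand is the covariance matrix of a categorical distribution. Hence $\vect{d}_k^\top N_{kk,\rho}\vect{d}_k=\sum_i v_i\,\mathrm{Var}_{P_{i\cdot}}(\vect{d}_k)$. Each variance is at most $\sum_j P_{ij}[\vect{d}_k]_j^2$, which gives $\vect{d}_k^\top N_{kk,\rho}\vect{d}_k\le \vect{\gamma}_{k,\rho}^\top(\vect{d}_k\circ\vect{d}_k)\le \|\vect{d}_k\|^2$. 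To obtain the constant $\tfrac12$ as stated, we instead use the bound $\lambda_{\max}(\diag{p}-pp^\top)\le \max_j p_j\le 1$, refined via Gershgorin to $\max_j 2p_j(1-p_j)\le \tfrac12$. Summed with weights $v_i$, this gives $Q_\eta\le \frac{\max_k w_k}{2\eta}\|\vect{d}\|^2$.

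For $Q_\tau$ we use $\vect{y}=\sum_k w_kP_{k,\rho}\vect{d}_k$ and $Q_\tau=\frac1\tau\mathrm{Var}_{\vect{v}}(\vect{y})\le \frac1\tau\sum_i v_iy_i^2$. By convexity of the square (since $\sum_k w_k=1$ and the rows of $P$ are probability vectors), $y_i^2\le \sum_k w_k (P_{k,\rho}\vect{d}_k)_i^2\le \sum_k w_k\sum_j [P_{k,\rho}]_{ij}[\vect{d}_k]_j^2$. Hence $\sum_i v_i y_i^2\le \sum_k w_k\vect{\gamma}_{k,\rho}^\top(\vect{d}_k\circ\vect{d}_k)\le (\max_k w_k)\|\vect{d}\|^2$, using $\vect{\gamma}_{k,\rho}\le \1$ entrywise. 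Adding the two pieces gives the $\lambda_{\max}$ bound.

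\textbf{Lower bound.} Take $\vect{d}\perp\ker(H_\rho)$, so by Theorem~\ref{thm: kernel space of hessian} each block satisfies $\1_m^\top\vect{d}_k=0$. We drop $Q_\tau\ge0$ and keep only the contribution of row $i^\star$ to $N_{kk,\rho}$, i.e.\ $[W_{k,\rho}]_{jl}\ge v_{i^\star}[P_{k,\rho}]_{i^\star j}[P_{k,\rho}]_{i^\star l}\ge v_{i^\star}p_k^2$. Then
\[
Q_\eta\ge \sum_k \frac{w_k v_{i^\star}p_k^2}{2\eta}\sum_{j,l}([\vect{d}_k]_j-[\vect{d}_k]_l)^2=\sum_k\frac{w_kv_{i^\star}p_k^2}{\eta}\,m\|\vect{d}_k\|^2,
\]
where the equality uses the complete-graph Laplacian identity $\sum_{j,l}(x_j-x_l)^2=2m\|x\|^2-2(\1^\top x)^2$ together with $\1_m^\top\vect{d}_k=0$. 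Finally, $v_{i^\star}=\max_i v_i\ge 1/n$ because $\vect{v}\in\Delta_n$. This yields $\lambda_{\min}^+(H_\rho)\ge \frac{m}{\eta n}\min_k(w_kp_k^2)$. This step requires that row $i^\star$ is left unsparsified, which is exactly what Algorithm~\ref{alg: Sparsifying the transport probability matrix} guarantees, so that $p_k>0$.

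\textbf{Main obstacle.} The delicate part is getting the constant $\frac1{2\eta}$ rather than $\frac1\eta$ in the upper bound. The bound $\mathrm{Var}\le E[x^2]$ only gives $\frac{1}{\eta}$, so the plan is to obtain $\frac12$ from the Gershgorin estimate on $\diag{p}-pp^\top$: row $j$ has diagonal $p_j(1-p_j)$ and off-diagonal absolute sum $p_j(1-p_j)$. The resulting bound $2p_j(1-p_j)\le\tfrac12$ is exactly the needed constant, so this route should work.
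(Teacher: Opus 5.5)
Your proposal is correct and matches the paper's proof step for step. You use the same $Q_\eta+Q_\tau$ split. For the lower bound you isolate row $i^\star$ with $v_{i^\star}\ge 1/n$ and apply the zero-mean identity $\sum_{j,l}([\vect{d}_k]_j-[\vect{d}_k]_l)^2=2m\|\vect{d}_k\|^2$. For $Q_\tau$ you use the same Jensen argument together with $\vect{\gamma}_{k,\rho}\le\1$. Your Gershgorin estimate on $\diag{\vect{p}}-\vect{p}\vect{p}^\top$ is the paper's $(a-b)^2\le 2(a^2+b^2)$ step in another form, since both give $\vect{d}_k^\top N_{kk,\rho}\vect{d}_k\le 2\sum_i v_i\sum_j [P_{k,\rho}]_{ij}(1-[P_{k,\rho}]_{ij})[\vect{d}_k]_j^2\le\tfrac12\|\vect{d}_k\|^2$.
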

\begin{proof}
For any arbitrary vector $\vect{d} = [\vect{d}_1^\top, \vect{d}_2^\top, \dots , \vect{d}_K^\top]^\top \in \mathbb{R}^{mK}$, following the derivation in Theorem~\ref{thm: kernel space of hessian}, the quadratic form is $Q = \vect{d}^\top H_\rho \vect{d} = Q_{\eta} + Q_{\tau}$. Since $Q_\tau \ge 0$, we can safely bound $Q$ from below by $Q_\eta$:
\begin{align*}
Q \ge Q_\eta = \sum_{k=1}^K \frac{w_k}{\eta} (\vect{d}_k)^\top N_{kk,\rho} \vect{d}_k.
\end{align*}
Expanding the local variance block yields:
\begin{align*}
(\vect{d}_k)^\top N_{kk,\rho} \vect{d}_k = \frac{1}{2} \sum_{i=1}^n v_i \sum_{j=1}^m \sum_{l=1}^m [P_{k,\rho}]_{ij} [P_{k,\rho}]_{il} ([\vect{d}_k]_{j} - [\vect{d}_k]_{l})^2.
\end{align*}
For $i^\star \in \arg\max_i [\vect{v}]_i$, it is guaranteed that $v_{i^\star} \ge 1/n$. By defining $p_k = \min_j [P_{k,\rho}]_{i^\star j}$ and isolating the $i^\star$ term, we obtain:
\begin{align*}
(\vect{d}_k)^\top N_{kk,\rho} \vect{d}_k \ge \frac{1}{2n} \sum_{j=1}^m \sum_{l=1}^m [P_{k,\rho}]_{i^\star j} [P_{k,\rho}]_{i^\star l}  ([\vect{d}_k]_{j} - [\vect{d}_k]_{l})^2 \ge \frac{p_k^2}{2n} \sum_{j=1}^m \sum_{l=1}^m ([\vect{d}_k]_{j} - [\vect{d}_k]_{l})^2.
\end{align*}

For $\vect{d} \in \ker(H_\rho)^\perp$, the vector must be orthogonal to each block-wise constant basis vector, implying $\1_m^\top \vect{d}_k = 0$ for all $k \in [K]$. This zero-mean property yields the geometric identity $\sum_{j,l} ([\vect{d}_k]_{j} - [\vect{d}_k]_{l})^2 = 2m \|\vect{d}_k\|^2$ (see, e.g., \cite[Lemma B.1]{pan2026inexact}). Substituting this relation gives:
\begin{align*}
(\vect{d}_k)^\top N_{kk,\rho} \vect{d}_k \ge \frac{p_k^2}{2n} (2m \|\vect{d}_k\|^2) = \frac{m p_k^2}{n} \|\vect{d}_k\|^2.
\end{align*}

Reinserting this into the bound for $Q_\eta$ provides the global lower bound:
\begin{align*}
Q \ge \sum_{k=1}^K \frac{w_k}{\eta} \frac{m p_k^2}{n} \|\vect{d}_k\|^2 \ge \frac{m}{\eta n} \min_k (w_k p_k^2) \|\vect{d}\|^2.
\end{align*}

To establish the upper bound, we evaluate $Q_\eta$ and $Q_\tau$ individually. For $Q_\eta$, we have:
\begin{align*}
(\vect{d}_k)^\top N_{kk,\rho} \vect{d}_k & = \frac{1}{2} \sum_{i=1}^n v_i \sum_{j=1}^m \sum_{l \neq j} [P_{k,\rho}]_{ij} [P_{k,\rho}]_{il} ([\vect{d}_k]_{j} - [\vect{d}_k]_{l})^2 \\
& \le \sum_{i=1}^n v_i \sum_{j=1}^m \sum_{l \neq j} [P_{k,\rho}]_{ij} [P_{k,\rho}]_{il} ([\vect{d}_k]_{j}^2 + [\vect{d}_k]_{l}^2) \\
& = 2 \sum_{i=1}^n v_i \sum_{j=1}^m [\vect{d}_k]_{j}^2 [P_{k,\rho}]_{ij} \left( \sum_{l \neq j} [P_{k,\rho}]_{il}\right)\\
& = 2 \sum_{i=1}^n v_i \sum_{j=1}^m [\vect{d}_k]_{j}^2 [P_{k,\rho}]_{ij} (1 - [P_{k,\rho}]_{ij}) \\
& \leq 2 \sum_{i=1}^n v_i \sum_{j=1}^m [\vect{d}_k]_{j}^2 \left(\frac{1}{4}\right) = \frac{1}{2} \|\vect{d}_k\|^2,
\end{align*}
where the first inequality utilizes $(a-b)^2 \le 2(a^2 + b^2)$ and the final inequality relies on the fact that $x(1-x) \le 1/4$ for $x \in [0,1]$. Thus, $Q_\eta \le \sum_{k=1}^K \frac{w_k}{2\eta} \|\vect{d}_k\|^2$. 

For $Q_\tau$, revisiting Equation~\eqref{equ: Q_tau} and omitting the non-positive squared term yields:
\begin{align*}
Q_\tau & \le \frac{1}{\tau}\sum_{k=1}^K (w_kP_{k,\rho}\vect{d}_k)^\top \text{diag}(\vect{v}) \sum_{r=1}^K (w_rP_{r,\rho}\vect{d}_r) \\
& = \frac{1}{\tau}\sum_{i=1}^n v_i \left(\sum_{k=1}^K w_k \sum_{j=1}^m [P_{k,\rho}]_{ij} [\vect{d}_k]_{j}\right)^2 \\
& \leq \frac{1}{\tau}\sum_{i=1}^n v_i \sum_{k=1}^K w_k \sum_{j=1}^m [P_{k,\rho}]_{ij} [\vect{d}_k]_{j}^2 \\
& \leq \sum_{k=1}^K \frac{w_k}{\tau} \|\vect{d}_k\|^2.
\end{align*}
The second inequality follows from applying Jensen's inequality to the convex combination (since $\sum_{k=1}^K w_k \sum_{j=1}^m [P_{k,\rho}]_{ij} = 1$). The final inequality holds because $\sum_{i=1}^n v_i [P_{k,\rho}]_{ij} = [\vect{\gamma}_{k,\rho}]_j \le 1$ for all $j, k$. Summing these individual bounds yields the final upper bound on the spectrum:
\begin{align*}
Q \le \sum_{k=1}^K w_k \left( \frac{1}{2\eta} + \frac{1}{\tau} \right) \|\vect{d}_k\|^2 \le (\max_k w_k) \left( \frac{1}{2\eta} + \frac{1}{\tau} \right) \|\vect{d}\|^2.
\end{align*}
This successfully proves the upper bound.
\end{proof}

\subsection{Proof of Theorem~\ref{thm: hessian diff}}

To establish the overarching error bound for the sparsified Hessian, we first introduce two supporting lemmas that characterize the spectral norms of the constituent matrices.

\begin{lemma}
\label{lem: row stochastic norm}
For any row-stochastic matrix $P\in\mathbb{R}^{n\times m}_+$ (i.e., satisfying $P\1_m=\1_n$), its spectral norm satisfies $\|P\|_2 \leq \sqrt{n}$.
\end{lemma}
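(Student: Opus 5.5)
We bound the spectral norm through the $\ell_1$ and $\ell_\infty$ induced norms, using the classical interpolation inequality $\|P\|_2 \le \sqrt{\|P\|_1\,\|P\|_\infty}$. This inequality follows from $\|P\|_2^2 = \lambda_{\max}(P^\top P) \le \|P^\top P\|_\infty \le \|P^\top\|_\infty \|P\|_\infty = \|P\|_1\|P\|_\infty$. For a nonnegative row-stochastic matrix, $\|P\|_\infty$ is the largest absolute row sum. Nonnegativity and $P\1_m=\1_n$ give $\|P\|_\infty = 1$.

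Next, $\|P\|_1$ is the largest absolute column sum, $\max_j \sum_{i=1}^n P_{ij}$. Every entry lies in $[0,1]$, since it is a nonnegative entry of a row summing to one. Hence each column sum is at most $n$, and $\|P\|_1 \le n$. Combining the two estimates gives $\|P\|_2 \le \sqrt{n\cdot 1} = \sqrt{n}$, which is the claim.

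An alternative route avoids the interpolation inequality. For any $\vect{x}\in\R^m$, the $i$-th entry of $P\vect{x}$ is a convex combination of the entries of $\vect{x}$. Jensen's inequality then gives $(P\vect{x})_i^2 \le \sum_j P_{ij}x_j^2$. Summing over $i$ yields $\|P\vect{x}\|^2 \le \sum_j \big(\sum_i P_{ij}\big) x_j^2 \le n\|\vect{x}\|^2$. I would present this Jensen version, since it mirrors the argument already used for the upper bound on $Q_\tau$ in Theorem~\ref{thm: the minimum and maximum eigenvalues of the hessian}. There is no real obstacle here; the only point requiring care is the column-sum bound $\sum_i P_{ij}\le n$. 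That bound is crude but suffices for the stated $\sqrt{n}$ constant.
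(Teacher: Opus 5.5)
Your first argument is exactly the paper's proof. You get $\|P\|_\infty = 1$ from the row sums and $\|P\|_1 \le n$ from entries in $[0,1]$, then combine them with $\|P\|_2 \le \sqrt{\|P\|_1\|P\|_\infty}$. The paper simply quotes that inequality as standard, whereas you also justify it via $\lambda_{\max}(P^\top P) \le \|P^\top P\|_\infty$.

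The Jensen version you say you would present is also correct, and it is a more elementary, self-contained route. In effect it proves the interpolation inequality (the Schur test) directly for nonnegative row-stochastic matrices. It reaches the same intermediate bound $\|P\|_2^2 \le \max_j \sum_i P_{ij}$, so neither route is stronger. The Jensen route just avoids citing an external norm inequality and, as you note, matches the style of the paper's $Q_\tau$ bound.

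One small correction to your closing remark: the column-sum bound is not crude in the worst case. If every row of $P$ equals $\vect{e}_1^\top$, then $P = \1_n \vect{e}_1^\top$ and $\|P\|_2 = \|\1_n\| = \sqrt{n}$, so the constant in the lemma is sharp.
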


\begin{proof}
By the definition of matrix norms, we have: 
\begin{align*}
\|P\|_{\infty} = \max_i \sum_{j=1}^m |P_{ij}| = 1, \quad \text{and} \quad \|P\|_{1} = \max_j \sum_{i=1}^n |P_{ij}| \leq n.
\end{align*}
Applying the standard norm inequality $\|P\|_2 \leq \sqrt{\|P\|_{\infty}\|P\|_{1}}$, we immediately obtain $\|P\|_2 \leq \sqrt{n}$.
\end{proof}

\begin{lemma}
\label{lem: spectral norm upper bound}
Let $A$ be a symmetric block matrix composed of $K \times K$ blocks, where the $(k, r)$-th block is denoted as $A_{kr}$. Then its spectral norm is bounded by:
\begin{align*}
\|A\|_2 \le \max_k \sum_{r=1}^K \|A_{kr}\|_2.
\end{align*}
\end{lemma}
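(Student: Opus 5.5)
Since $A$ is symmetric, its spectral norm equals its largest eigenvalue in absolute value. I would therefore take any unit vector $\vect{x}=[\vect{x}_1^\top,\dots,\vect{x}_K^\top]^\top$, split it according to the block structure of $A$, and bound $|\vect{x}^\top A\vect{x}|$; the maximum of this over unit $\vect{x}$ is $\|A\|_2$.

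Expanding gives
\[
|\vect{x}^\top A\vect{x}| \le \sum_{k,r}|\vect{x}_k^\top A_{kr}\vect{x}_r| \le \sum_{k,r}\|A_{kr}\|_2\,\|\vect{x}_k\|\,\|\vect{x}_r\|.
\]
This is a quadratic form $\vect{u}^\top B\vect{u}$, where $\vect{u}=(\|\vect{x}_1\|,\dots,\|\vect{x}_K\|)^\top$ is a unit vector in $\R^K$. The matrix $B\in\R^{K\times K}$ has entries $B_{kr}=\|A_{kr}\|_2$, so it is nonnegative. It is also symmetric, because $A_{rk}=A_{kr}^\top$ and $\|A_{kr}^\top\|_2=\|A_{kr}\|_2$.

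Next, apply $ab\le \tfrac12(a^2+b^2)$ to each cross term:
\[
\sum_{k,r}B_{kr}u_ku_r \le \tfrac12\sum_{k,r}B_{kr}(u_k^2+u_r^2) = \sum_k u_k^2\sum_r B_{kr}.
\]
The equality uses the symmetry of $B$. The right-hand side is at most $\max_k\sum_r\|A_{kr}\|_2$, since $\sum_k u_k^2=1$. This is precisely the Schur test, i.e.\ the observation $\lambda_{\max}(B)\le\|B\|_\infty$.

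Taking the supremum over unit $\vect{x}$ then gives the claim. The only step needing care is the symmetrization, and it relies on the hypothesis that $A$ is symmetric. Without that hypothesis, one would use the general bound $\|A\|_2\le\sqrt{\|B\|_1\|B\|_\infty}$ instead, and symmetry makes the two quantities coincide.
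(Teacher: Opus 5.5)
Your proof is correct. It shares the paper's core reduction: replace each block by its spectral norm to get a nonnegative symmetric $K\times K$ matrix, then bound by its maximum row sum. The mechanics of both steps differ, though.

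The paper works with $\|A\vect{x}\|_2$ rather than the quadratic form. It bounds $\|(A\vect{x})_k\|_2\le\sum_r\|A_{kr}\|_2\|\vect{x}_r\|_2$ and concludes $\|A\|_2\le\|M\|_2$ for the block-norm matrix $M$. It then invokes $\|M\|_2\le\sqrt{\|M\|_1\|M\|_\infty}=\|M\|_\infty$, using symmetry only in that final equality.

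You instead use the symmetry of $A$ at the outset, writing $\|A\|_2=\max_{\|\vect{x}\|=1}|\vect{x}^\top A\vect{x}|$. You then prove the Schur-type bound $\vect{u}^\top B\vect{u}\le\|B\|_\infty$ by hand, via $ab\le\frac12(a^2+b^2)$.

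Your route is fully self-contained. It avoids both the Perron--Frobenius appeal and the norm-interpolation inequality. The Perron--Frobenius step is in fact unnecessary even in the paper's argument, since $\sup_{\vect{y}\ge 0,\,\|\vect{y}\|_2=1}\|M\vect{y}\|_2\le\|M\|_2$ holds trivially.

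The paper's route has the advantage that $\|A\|_2\le\|M\|_2$ holds for arbitrary block matrices. So the non-symmetric bound $\sqrt{\|M\|_1\|M\|_\infty}$ you mention at the end comes for free there. Your quadratic-form step, by contrast, genuinely needs $A$ to be symmetric.
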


\begin{proof}
Consider a partitioned vector $\vect{x} = [\vect{x}_1^\top, \dots, \vect{x}_K^\top]^\top$ such that $\|\vect{x}\|_2 = 1$. The $k$-th block of the product $A\vect{x}$ satisfies $\|(A\vect{x})_k\|_2 \le \sum_{r=1}^K \|A_{kr}\|_2 \|\vect{x}_r\|_2$. We construct a $K \times K$ non-negative matrix $M$ with entries $M_{kr} = \|A_{kr}\|_2$, and a vector $\vect{y} \in \mathbb{R}^K$ with entries $y_r = \|\vect{x}_r\|_2$. By definition, $\|\vect{y}\|_2 = 1$. It follows that $\|A\vect{x}\|_2 \le \|M\vect{y}\|_2$. Consequently, we have:
\begin{align*}
\|A\|_2 = \sup_{\|\vect{x}\|_2=1}\|A\vect{x}\|_2 \leq \sup_{\|\vect{x}\|_2=1}\|M\vect{y}\|_2 = \sup_{\vect{y} \in \mathcal{S}_+} \|M\vect{y}\|_2 = \|M\|_2,
\end{align*}
where $\mathcal{S}_+ = \{ \vect{y} \in \mathbb{R}^K \mid \vect{y} \ge 0, \ \|\vect{y}\|_2 = 1 \}$, and the final equality holds by the Perron-Frobenius theorem for non-negative matrices. Furthermore, because $A$ is block symmetric, $M$ is symmetric. Therefore, $\|M\|_1 = \|M\|_{\infty}$, yielding:
\begin{align*}
\|M\|_2 \leq \sqrt{\|M\|_1 \|M\|_{\infty}} \leq \|M\|_{\infty} = \max_{k} \sum_{r=1}^K\|A_{kr}\|_2.
\end{align*}
Combining these inequalities completes the proof. 
\end{proof}

\begin{proof}[Proof of Theorem~\ref{thm: hessian diff}]
We systematically decompose the block matrices and bound the approximation error for each term. Note that we use $\|\cdot\|$ to denote the spectral norm $\|\cdot\|_2$ unless otherwise specified.

\textbf{Step 1: Bound the truncation error of $P_k$.} 
Let $E_{k} = P_k - P_{k,\rho}$ denote the sparsification error matrix. Based on the bounds derived in~\cite[Lemma B.2]{pan2026inexact}, we have:
\begin{align*}
\|E_{k}\|_{\infty} \leq 2m\rho, \quad \|E_{k}\|_1 \leq 2mn\rho, \quad \text{and}\quad \|E_{k}\|\leq 2m\sqrt{n}\rho \quad \forall k\in [K].
\end{align*}

\textbf{Step 2: Bound the block-wise errors $N_{kk}$ and $M_{kr}$.}
\begin{itemize}
    \item \textbf{Error of the local variance block $N_{kk}$:}
    Recall that $N_{kk, \rho} = \text{diag}(\vect{\gamma}_{k,\rho}) - (P_{k,\rho})^\top \text{diag}(\vect{v}) P_{k,\rho}$. By the triangle inequality, the approximation error satisfies:
    \begin{align*}
        \|N_{kk} - N_{kk, \rho}\| &= \|(\text{diag}(\vect{\gamma}_k) - (P_k)^\top \text{diag}(\vect{v}) P_k)-(\text{diag}(\vect{\gamma}_{k,\rho}) - (P_{k,\rho})^\top \text{diag}(\vect{v}) P_{k,\rho})\| \\
        &\leq \|\text{diag}(\vect{\gamma}_k)-\text{diag}(\vect{\gamma}_{k,\rho})\| + \|(P_{k, \rho})^\top \text{diag}(\vect{v}) P_{k, \rho} - (P_k)^\top \text{diag}(\vect{v}) P_k \|.
    \end{align*}
    We bound each term individually. For the diagonal matrix, we utilize the property that its spectral norm equals its infinity norm:
    \begin{align*}
        \|\text{diag}(\vect{\gamma}_k)-\text{diag}(\vect{\gamma}_{k,\rho})\| &= \|\vect{\gamma}_k-\vect{\gamma}_{k,\rho}\|_\infty \\
        &= \|(P_k)^\top \vect{v}-(P_{k,\rho})^\top \vect{v}\|_\infty \\
        &\leq \|P_k^\top-P_{k,\rho}^\top\|_\infty\|\vect{v}\|_\infty \\
        &= \|E_k\|_1 \|\vect{v}\|_\infty \\
        &\leq 2mn\rho.
    \end{align*}
    For the cross term, standard algebraic manipulation yields:
    \begin{align*}
        \|(P_{k, \rho})^\top \text{diag}(\vect{v}) P_{k, \rho} - (P_k)^\top \text{diag}(\vect{v}) P_k \| 
        & = \| - (P_{k, \rho})^\top \text{diag}(\vect{v}) E_{k} - (E_{k})^\top \text{diag}(\vect{v}) P_k \| \\
        & \leq \|(P_{k, \rho})^\top \text{diag}(\vect{v}) E_{k}\| + \|(E_{k})^\top \text{diag}(\vect{v}) P_k\| \\
        & \leq \|P_{k, \rho}\| \|\vect{v}\|_{\infty} \|E_{k}\| + \|E_{k}\| \|\vect{v}\|_{\infty} \|P_k\| \\
        & \leq \sqrt{n} \cdot 1 \cdot (2m\sqrt{n}\rho) + (2m\sqrt{n}\rho) \cdot 1 \cdot \sqrt{n} \\
        & = 4mn\rho,
    \end{align*}
    where we used the inequality $\|\vect{v}\|_{\infty} \leq 1$. Combining these bounds, the error for the $N_{kk}$ block is:
    \begin{equation*}
        \|N_{kk} - N_{kk, \rho}\| \leq 6mn\rho.
    \end{equation*}

    \item \textbf{Error of the cross-covariance block $M_{kr}$:} 
    Recall that $M_{kr, \rho} = (P_{k,\rho})^\top \text{diag}(\vect{v}) P_{r, \rho} - \vect{\gamma}_{k,\rho} (\vect{\gamma}_{r,\rho})^\top$. Applying the triangle inequality gives:
    \begin{align*}
        \|M_{kr} - M_{kr,\rho}\| & = \|((P_k)^\top \text{diag}(\vect{v}) P_r - \vect{\gamma}_k (\vect{\gamma}_r)^\top)-((P_{k,\rho})^\top \text{diag}(\vect{v}) P_{r, \rho} - \vect{\gamma}_{k,\rho} (\vect{\gamma}_{r,\rho})^\top)\| \\
        &\leq \|\vect{\gamma}_k (\vect{\gamma}_r)^\top-\vect{\gamma}_{k,\rho} (\vect{\gamma}_{r,\rho})^\top\| + \|P_k^\top \text{diag}(\vect{v}) P_r - P_{k, \rho}^\top \text{diag}(\vect{v}) P_{r, \rho} \|.
    \end{align*}
    We bound the rank-one term as follows:
    \begin{align*}
        \|\vect{\gamma}_k (\vect{\gamma}_r)^\top-\vect{\gamma}_{k,\rho} (\vect{\gamma}_{r,\rho})^\top\|
        &= \|(P_k)^\top \vect{v}\vect{v}^\top P_r-(P_{k,\rho})^\top \vect{v}\vect{v}^\top P_{r,\rho}\| \\
        &= \|(P_k)^\top \vect{v}\vect{v}^\top E_r + (E_k)^\top \vect{v}\vect{v}^\top P_{r,\rho}\| \\
        &\leq \|P_k\|\|\vect{v}\vect{v}^\top\|\|E_r\|+\|E_k\|\|\vect{v}\vect{v}^\top\|\|P_{r,\rho}\| \\
        &\leq 4mn\rho,
    \end{align*}
    where we applied Lemma~\ref{lem: row stochastic norm} and noted that $\|\vect{v}\vect{v}^\top\|_2 \leq \|\vect{v}\|_2^2 \leq 1$ (since $\vect{v} \in \Delta_n$).
    
    Similarly, the remaining matrix term is bounded by:
    \begin{align*}
        \|P_k^\top \text{diag}(\vect{v}) P_r - P_{k, \rho}^\top \text{diag}(\vect{v}) P_{r, \rho} \|
        & = \|P_k^\top \text{diag}(\vect{v}) E_r + E_k^\top \text{diag}(\vect{v}) P_{r, \rho}\| \\
        & \leq \|P_k\|\|\vect{v}\|_{\infty} \|E_r\| + \|E_{k}\|\|\vect{v}\|_{\infty} \|P_{r, \rho}\| \\
        & \leq 4mn\rho.
    \end{align*}
    Combining these inequalities, the bound for the cross-covariance block is:
    \begin{equation*}
        \|M_{kr} - M_{kr,\rho}\|\leq 8mn\rho.
    \end{equation*}
\end{itemize}

\textbf{Step 3: Establish the global Hessian error bound.} 

Based on Lemma~\ref{lem: spectral norm upper bound}, we synthesize the block-wise errors. Relaxing the coefficient $6mn\rho \leq 8mn\rho$ to facilitate a cleaner algebraic factorization, we obtain:
\begin{align*}
\|H - H_{\rho}\|_2 & \le \max_{k} \sum_{r=1}^K \|(H - H_{\rho})_{kr}\|_2 \\
& \leq \max_k \sum_{r=1}^K \left(\frac{w_kw_r}{\tau}\|M_{kr} - M_{kr, \rho}\| + \frac{w_k}{\eta}\|N_{kk} - N_{kk,\rho}\| \id{k=r} \right) \\
& \leq \max_k \sum_{r=1}^K \left(\frac{w_kw_r}{\tau}8mn\rho + \frac{w_k}{\eta}6mn\rho \id{k=r} \right) \\
& \leq 8mn\rho \max_k \sum_{r=1}^K \left(\frac{w_kw_r}{\tau} + \frac{w_k}{\eta}\id{k=r} \right) \\
& = 8mn \rho (\max_k w_k) \left( \frac{1}{\tau} + \frac{1}{\eta} \right),
\end{align*}
where the final equality follows from the fact that $\sum_{r=1}^K w_r = 1$. This completes the proof.
\end{proof}

\subsection{Proof of Lemma~\ref{lem: sequence restrict in the subspace}}

\begin{proof}
Let $\{\vect{u}_k\}_{k=1}^K$ denote an orthogonal basis for $\ker(H_\rho)$, where each basis vector $\vect{u}_k \in \R^{mK}$ is constructed such that its $k$-th block (of size $m$) is the all-ones vector $\1_m$, and all remaining $K-1$ blocks are strictly zero vectors $\0_m$. Based on the characterization of the kernel space in~\eqref{equ: kernel space}, establishing that $\vect{\beta}^t \in \ker(H_\rho)^\perp$ is mathematically equivalent to proving that $\vect{u}_k^\top \vect{\beta}^t = 0$ for all $k \in [K]$.

We proceed by mathematical induction. The base case holds trivially at $t=0$ by our initialization assumption. Assume that at a given iteration $t$, the iterate satisfies $\vect{u}_k^\top \vect{\beta}^t = 0$ for all $k \in [K]$. By Theorem~\ref{thm: kernel space of hessian}, the gradient lies in the orthogonal complement of the kernel, meaning $\vect{g}^t \in \ker(H_\rho)^\perp$. Thus, $\vect{u}_k^\top \vect{g}^t = 0$ for all $k \in [K]$. 

Recall that the regularized Newton direction $\Delta \vect{\beta}^t$ is obtained by solving the linear system:
\begin{align*}
\big(H^{t}_\rho+\|\vect{g}^{t}\|I\big)\Delta \vect{\beta}^{t} = -\vect{g}^{t}.
\end{align*}
Multiplying both sides from the left by $\vect{u}_k^\top$ yields:
\begin{align*}
\vect{u}_k^\top H^t_\rho \Delta\vect{\beta}^t + \|\vect{g}^t\| (\vect{u}_k^\top \Delta\vect{\beta}^t) = -\vect{u}_k^\top \vect{g}^t = 0.
\end{align*}
Because $\vect{u}_k \in \ker(H_\rho^t)$ and $H_\rho^t$ is symmetric, we have $\vect{u}_k^\top H^t_\rho = \0_{mK}^\top$. Consequently, the first term vanishes. Assuming $\|\vect{g}^t\| > 0$ (otherwise the algorithm has converged, yielding $\Delta \vect{\beta}^t = \0$), it strictly follows that $\vect{u}_k^\top \Delta \vect{\beta}^{t} = 0$ for all $k \in [K]$. 

Therefore, for any accepted step size $s^t$, the subsequent iterate satisfies:
\begin{align*}
\vect{u}_k^\top \vect{\beta}^{t+1} = \vect{u}_k^\top (\vect{\beta}^t + s^t \Delta \vect{\beta}^t) = 0.
\end{align*}
Since this equality holds for every $k \in [K]$, we conclude that $\vect{\beta}^{t+1} \in \ker(H_\rho)^\perp$. This completes the induction.
\end{proof}

\subsection{Proof of Theorem~\ref{thm: convergence}}

\begin{lemma}
\label{lem: bounded level set + bounded below}
The sublevel set $S(\vect{\beta}^0) := \{\vect{\beta} \mid \lyp(\vect{\beta}) \leq \lyp(\vect{\beta}^0),\ \vect{\beta} \in \ker(H_{\rho})^\perp\}$ is bounded, closed, and convex. Furthermore, the objective function $\lyp(\vect{\beta})$ is bounded below and admits a unique minimizer $\vect{\beta}^\star\in S(\vect{\beta}^0)$ on the restricted subspace $\ker(H_{\rho})^\perp$.
\end{lemma}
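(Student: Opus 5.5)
Closedness and convexity are immediate: $\lyp$ is continuous and convex (the Hessian is PSD by Theorem~\ref{thm: kernel space of hessian}), so its sublevel set is closed and convex, and intersecting with the linear subspace $\ker(H_\rho)^\perp=\{\vect{\beta}:\1_m^\top\vect{\beta}_k=0\ \forall k\}$ preserves both properties. For boundedness below I will use weak duality from the min-max derivation. For every $\vect{\beta}$, $\lyp(\vect{\beta})=-\min_{\vect{v}\in\Delta_n}[\langle\Phi(\vect{\beta}),\vect{v}\rangle+\tau\text{KL}(\vect{v},\vect{\pi})]-\sum_k w_k\langle\vect{\beta}_k,\vect{\mu}_k\rangle$. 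Plugging in any fixed $\vect{v}$ and the semi-dual inequality $\langle\vect{\alpha}_k(\vect{\beta}_k),\vect{v}\rangle+\langle\vect{\beta}_k,\vect{\mu}_k\rangle\le \text{OT}_{k,\eta}(\vect{v},\vect{\mu}_k)$ gives $\lyp(\vect{\beta})\ge -F(\vect{v})>-\infty$ for, e.g., $\vect{v}=\vect{\pi}$.

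The main step is boundedness of $S(\vect{\beta}^0)$, which I will establish by coercivity on the subspace. Since $\lyp$ is convex, it suffices to show that $\lyp$ has no nonzero direction of recession within $\ker(H_\rho)^\perp$, i.e.\ that $\lyp(\vect{\beta}+s\vect{d})\to\infty$ as $s\to\infty$ for every unit $\vect{d}\perp\ker$. I plan to compute the asymptotic slope $\lim_{s\to\infty}\lyp(s\vect{d})/s$. Using $\alpha_k(s\vect{d}_k)_i/s\to -\max_j [\vect{d}_k]_j$ (log-sum-exp limit, using the full support of $\vect{\mu}_k$ and finite $C_k$), we get $\Phi(s\vect{d})_i/s\to-\sum_k w_k\max_j[\vect{d}_k]_j$, which is constant in $i$. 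Then $\tau\log\sum_i\pi_i e^{-\Phi_i/\tau}$ has slope $\sum_k w_k\max_j[\vect{d}_k]_j$. Hence the recession slope equals $\sum_k w_k(\max_j[\vect{d}_k]_j-\langle\vect{d}_k,\vect{\mu}_k\rangle)$. Each term is nonnegative, and it is strictly positive unless $\vect{d}_k$ is constant on the support of $\vect{\mu}_k$. Assuming $\vect{\mu}_k$ has full support (implicit in the paper, since $P_{k}$ has all positive entries), this forces $\vect{d}_k=c\1_m$, and then $\1_m^\top\vect{d}_k=0$ gives $\vect{d}_k=\0$. Thus every nonzero direction in the subspace has positive slope. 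A convex function whose recession function is positive on all nonzero directions of a subspace has bounded sublevel sets there (by the standard result in Rockafellar, Thm.~27.1/8.7). This slope computation is the step I expect to be delicate, mainly in justifying the limits uniformly and in the full-support assumption.

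Existence then follows from Weierstrass: $S(\vect{\beta}^0)$ is nonempty (it contains $\vect{\beta}^0$) and compact, so a minimizer exists. For uniqueness I will use Theorem~\ref{thm: kernel space of hessian}. On $\ker(H_\rho)^\perp$, with $\rho=0$ giving the true Hessian $H$ (note $\ker H_\rho=\ker H$), we have $\vect{d}^\top\nabla^2\lyp(\vect{\beta})\vect{d}>0$ for all nonzero $\vect{d}$ in the subspace. Hence $\lyp$ restricted to the subspace is strictly convex, so the minimizer $\vect{\beta}^\star$ is unique. Since $\lyp(\vect{\beta}^\star)\le\lyp(\vect{\beta}^0)$, we get $\vect{\beta}^\star\in S(\vect{\beta}^0)$.
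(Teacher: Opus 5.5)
Your proposal is correct, but it proves boundedness of $S(\vect{\beta}^0)$ by a different route than the paper. The paper derives an explicit, non-asymptotic coercivity estimate. It first applies Jensen's inequality to the log-sum-exp term, which is exactly your variational bound evaluated at $\vect{v}=\vect{\pi}$. It then bounds each $-[\vect{\alpha}_k(\vect{\beta}_k)]_i$ below by its largest summand, giving $\lyp(\vect{\beta})\ge\sum_k w_k(\beta_{k,\max}-\langle\vect{\beta}_k,\vect{\mu}_k\rangle)-C_{\max}+\eta\log\mu_{\min}$. Finally it uses $\mu_{\min}>0$ and $\1_m^\top\vect{\beta}_k=0$ to reach $\lyp(\vect{\beta})\ge c\|\vect{\beta}\|-C_{\mathrm{const}}$ with an explicit $c>0$. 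Lower boundedness is then simply read off from coercivity, with no separate argument.

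Your recession slope $\sum_k w_k(\max_j[\vect{d}_k]_j-\langle\vect{d}_k,\vect{\mu}_k\rangle)$ is the asymptotic version of that same inequality, and Rockafellar's recession-cone theorems replace the hand-built linear lower bound. Both arguments need full support of $\vect{\mu}_k$; the paper uses it through $\mu_{\min}>0$.

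What your route buys: it is more conceptual. It pins down exactly the recession directions of $\lyp$ (the block-wise constants, along which $\lyp$ is in fact invariant). It needs only per-direction limits, so the uniformity you were worried about is not required. And your weak-duality bound $\lyp(\vect{\beta})\ge -F(\vect{v})$ holds on all of $\R^{mK}$ and has a clean primal-dual meaning.

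What the paper's route buys: an explicit coercivity modulus, and hence an explicit radius for $S(\vect{\beta}^0)$ in terms of $\lyp(\vect{\beta}^0)$, $m$, $\mu_{\min}$ and $\min_k w_k$. That is self-contained and usable for quantitative estimates.

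On uniqueness, you are more careful than the paper. Strict convexity must come from the exact Hessian $H$ (the case $\rho=0$), not from $H_\rho$, which is not the Hessian of $\lyp$ when $\rho>0$. Since $\ker H_\rho=\ker H$, the restricted subspace is unchanged, and your argument is the right one.
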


\begin{proof}
First, it is evident that $\lyp(\vect{\beta})$ is continuously differentiable. According to Theorem~\ref{thm: kernel space of hessian}, the sparsified Hessian matrix $H_\rho$ is strictly positive definite on the restricted subspace $\ker(H_{\rho})^\perp$. This property guarantees that $\lyp(\vect{\beta})$ is strictly convex on this subspace. Consequently, the sublevel set $S_c := \{\vect{\beta} \mid \lyp(\vect{\beta}) \leq c,\ \vect{\beta} \in \ker(H_{\rho})^\perp\}$ is closed and convex. 

We now prove that $\lyp(\vect{\beta})$ is coercive when restricted to the subspace $\ker(H_\rho)^\perp$. Specifically, we aim to show that as $\|\vect{\beta}\| \to \infty$ subject to $\vect{\beta} \in \ker(H_\rho)^\perp$, the objective value satisfies $\mathcal{L}(\vect{\beta}) \to \infty$. 

Recall the definition of the objective function:
\begin{equation*}
    \lyp(\vect{\beta}) = - \sum_{k=1}^K w_k \langle \vect{\beta}_k, \vect{\mu}_k \rangle + \tau \log \left(\sum_{i=1}^n \pi_i \exp\left(-[\Phi(\vect{\beta})]_i /\tau\right)\right),
\end{equation*}
where $\Phi(\vect{\beta}) = \sum_{k=1}^K w_k \vect{\alpha}_k (\vect{\beta}_k)$. 

\textbf{Step 1: Lower bound via Jensen's inequality.}
Since $f(x) = \log(x)$ is a concave function, we can apply Jensen's inequality to the log-sum-exp term. Noting that $\vect{\pi} \in \Delta_n$ defines a valid probability distribution, we obtain:
\begin{equation*}
\begin{aligned}
    \tau \log \left( \sum_{i=1}^n \pi_i \exp(-[\Phi(\vect{\beta})]_i/\tau) \right) & \ge \tau \sum_{i=1}^n \pi_i \left( -\frac{[\Phi(\vect{\beta})]_i}{\tau} \right) = -\sum_{i=1}^n \pi_i [\Phi(\vect{\beta})]_i\\
    & = -\sum_{k=1}^K w_k \sum_{i=1}^n \pi_i [\vect{\alpha}_k (\vect{\beta}_k)]_i.
\end{aligned}
\end{equation*}
Substituting this relation back into $\mathcal{L}(\vect{\beta})$, we establish the lower bound:
\begin{equation*}
    \mathcal{L}(\vect{\beta}) \ge \sum_{k=1}^K w_k \left[ -\langle \vect{\beta}_k, \vect{\mu}_k \rangle - \sum_{i=1}^n \pi_i [\vect{\alpha}_k (\vect{\beta}_k)]_i \right].
\end{equation*}

\textbf{Step 2: Bound the dual potentials.}
Note that $[\vect{\alpha}_k(\vect{\beta_k})]_{i} = -\eta \log\left(\sum_{j=1}^m [\vect{\mu}_k]_{j}\exp\left(\frac{[\vect{\beta}_{k}]_{j}-[C_k]_{ij}}{\eta}\right)\right)$. 
The summation inside the logarithm is strictly bounded below by its maximum term. Let $\beta_{k,\max} = \max_j [\vect{\beta}_k]_j$ and let $j^\star$ denote the index where this maximum is achieved. We then have:
\begin{align*}
    -[\vect{\alpha}_k(\vect{\beta_k})]_{i} &= \eta \log\left(\sum_{j=1}^m [\vect{\mu}_k]_{j}\exp\left(\frac{[\vect{\beta}_{k}]_{j}-[C_k]_{ij}}{\eta}\right)\right) \nonumber \\
    &\ge \eta \log \left( [\vect{\mu}_{k}]_{j^\star} \exp\left( \frac{\beta_{k,\max} - [C_k]_{ij^\star}}{\eta} \right) \right) \nonumber \\
    &= \beta_{k,\max} - [C_k]_{ij^\star} + \eta \log [\vect{\mu}_{k}]_{j^\star} \nonumber \\
    &\ge \beta_{k,\max} - C_{\max} + \eta \log \mu_{\min},
\end{align*}
where $C_{\max} = \max_{k,i,j} [C_k]_{ij}$ and $\mu_{\min} = \min_{k,j} [\vect{\mu}_k]_{j} > 0$.

Substituting this bound back, and utilizing the property that $\sum_{i=1}^n \pi_i = 1$, we obtain:
\begin{equation*}
    \lyp(\vect{\beta}) \ge \sum_{k=1}^K w_k \left( \beta_{k,\max} - \langle \vect{\beta}_k, \vect{\mu}_k \rangle \right) - C_{\max} + \eta \log \mu_{\min}.
\end{equation*}

\textbf{Step 3: Exploit the subspace restriction.}
We now analyze the term $\beta_{k,\max} - \langle \vect{\beta}_k, \vect{\mu}_k \rangle$. Given that $\vect{\mu}_k \in \Delta_m$ (its components sum to 1), we can rewrite this difference as:
\begin{equation*}
    \beta_{k,\max} - \langle \vect{\beta}_k, \vect{\mu}_k \rangle = \sum_{j=1}^m [\vect{\mu}_{k}]_{j} (\beta_{k,\max} - [\vect{\beta}_{k}]_{j}).
\end{equation*}
Because $\beta_{k,\max} - [\vect{\beta}_{k}]_{j} \ge 0$ for all $j$, we can safely bound this summation from below using the minimum marginal weight $\mu_{\min}$:
\begin{equation*}
    \sum_{j=1}^m [\vect{\mu}_{k}]_{j} (\beta_{k,\max} - [\vect{\beta}_{k}]_{j}) \ge \mu_{\min} \sum_{j=1}^m (\beta_{k,\max} - [\vect{\beta}_{k}]_{j}) = \mu_{\min} \left( m \beta_{k,\max} - \sum_{j=1}^m [\vect{\beta}_{k}]_{j} \right).
\end{equation*}

Crucially, because $\vect{\beta}$ is restricted to the orthogonal complement of the kernel, $\vect{\beta} \in \ker(H_\rho)^\perp$, the block-wise sum must vanish: $\sum_{j=1}^m [\vect{\beta}_{k}]_{j} = 0$ for all $k \in [K]$. Thus, the inequality simplifies to:
\begin{equation*}
    \beta_{k,\max} - \langle \vect{\beta}_k, \vect{\mu}_k \rangle \ge m \mu_{\min} \beta_{k,\max}.
\end{equation*}

\textbf{Step 4: Relate $\beta_{k,\max}$ to the vector norm.}
Because the components of $\vect{\beta}_k$ sum to zero, the magnitude of the most negative component is bounded by the positive components. Specifically, letting $\beta_{k,\min} := \min_j [\vect{\beta}_{k}]_{j}$, we have $-\beta_{k,\min} \le (m-1)\beta_{k,\max}$. 
Therefore, for any index $j$, $|[\vect{\beta}_{k}]_{j}| \le (m-1)\beta_{k,\max}$. 
This relationship allows us to bound the Euclidean norm of the block:
\begin{equation*}
    \|\vect{\beta}_k\|^2 \le m \left( (m-1)\beta_{k,\max} \right)^2 \implies \beta_{k,\max} \ge \frac{1}{\sqrt{m}(m-1)} \|\vect{\beta}_k\|.
\end{equation*}

\textbf{Step 5: Final coercivity bound.}
Combining the derived bounds, we arrive at the following inequality:
\begin{equation*}
    \mathcal{L}(\vect{\beta}) \ge \left( \frac{m \mu_{\min}}{\sqrt{m}(m-1)} \min_k w_k \right) \sum_{k=1}^K \|\vect{\beta}_k\| - C_{\text{const}},
\end{equation*}
where $C_{\text{const}} = C_{\max} - \eta \log \mu_{\min}$.

Let $c = \frac{\sqrt{m} \mu_{\min}}{m-1} \min_k w_k > 0$. Since the sum of the block norms upper bounds the total Euclidean norm, $\sum_{k=1}^K \|\vect{\beta}_k\| \ge \|\vect{\beta}\|$, we deduce:
\begin{equation*}
    \mathcal{L}(\vect{\beta}) \ge c \|\vect{\beta}\| - C_{\text{const}}.
\end{equation*}

Because $c > 0$, it strictly follows that $\mathcal{L}(\vect{\beta}) \to \infty$ as $\|\vect{\beta}\| \to \infty$ within the subspace $\ker(H_\rho)^\perp$, confirming that $\mathcal{L}(\vect{\beta})$ is coercive on $\ker(H_\rho)^\perp$.

As a continuous and coercive function defined on a closed subspace, its sublevel set $S(\vect{\beta}^0)$ is guaranteed to be bounded and closed. Furthermore, because $\mathcal{L}(\vect{\beta})$ is strictly convex on $\ker(H_\rho)^\perp$, it admits a unique global minimizer $\vect{\beta}^\star \in S(\vect{\beta}^0)$. Consequently, $\mathcal{L}(\vect{\beta})$ possesses a finite lower bound on $S(\vect{\beta}^0)$, which completes the proof. 
\end{proof}

\begin{lemma}
\label{lem: uniform lower bound hessian}
For any $\rho \geq 0$, there exists a uniform constant $\underline{C} > 0$ such that $\lambda_{\min}^+\left(H_{\rho}\right) \ge \underline{C}$ for all $\vect{\beta} \in S(\vect{\beta}^0)$.
\end{lemma}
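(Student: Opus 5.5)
The plan is to combine the explicit eigenvalue bound of Theorem~\ref{thm: the minimum and maximum eigenvalues of the hessian} with the compactness of $S(\vect{\beta}^0)$ from Lemma~\ref{lem: bounded level set + bounded below}. Theorem~\ref{thm: the minimum and maximum eigenvalues of the hessian} gives
\[
\lambda_{\min}^{+}(H_\rho) \ge \frac{m}{\eta n}\min_k (w_k p_k^2), \qquad p_k=\min_j [P_{k,\rho}]_{i^\star j}.
\]
By Algorithm~\ref{alg: Sparsifying the transport probability matrix}, the row $i^\star$ is never sparsified, so $[P_{k,\rho}]_{i^\star j}=[P_k(\vect{\beta}_k)]_{i^\star j}$. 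This holds for every $\rho\ge 0$ and for whichever maximizing index $i^\star$ is selected. It therefore suffices to produce a constant $\underline{p}>0$ with $[P_k(\vect{\beta}_k)]_{ij}\ge \underline{p}$ for all $i,j,k$ and all $\vect{\beta}\in S(\vect{\beta}^0)$. We may then take $\underline{C}=\frac{m}{\eta n}(\min_k w_k)\underline{p}^2$.

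To get $\underline{p}$, use the explicit formula~\eqref{equ: P matrix}. Since $S(\vect{\beta}^0)$ is bounded by Lemma~\ref{lem: bounded level set + bounded below}, there is $B<\infty$ with $\|\vect{\beta}\|_\infty\le \|\vect{\beta}\|\le B$ on $S(\vect{\beta}^0)$. Write $C_{\max}=\max_{k,i,j}[C_k]_{ij}$ and $\mu_{\min}=\min_{k,j}[\vect{\mu}_k]_j>0$; the latter is the same positivity assumption used in Lemma~\ref{lem: bounded level set + bounded below}. Bound the numerator of~\eqref{equ: P matrix} below by $\mu_{\min}\exp((-B-C_{\max})/\eta)$. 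Bound the denominator above by $\sum_l[\vect{\mu}_k]_l\exp(B/\eta)=\exp(B/\eta)$, using $C_k\ge 0$. This gives
\[
[P_k]_{ij}\ge \mu_{\min}\exp\!\left(-\frac{2B+C_{\max}}{\eta}\right)=:\underline{p},
\]
which is uniform in $i$, $j$, $k$ and $\vect{\beta}\in S(\vect{\beta}^0)$.

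An equivalent route is a continuity and compactness argument. The map $\vect{\beta}\mapsto \min_{i,j,k}[P_k(\vect{\beta}_k)]_{ij}$ is continuous and strictly positive. Since $S(\vect{\beta}^0)$ is closed and bounded, this map attains a positive minimum there. The explicit bound has the advantage of making the dependence on $B$ and $\eta$ visible.

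The only delicate point is whether Theorem~\ref{thm: the minimum and maximum eigenvalues of the hessian} is being applied legitimately for every $\vect{\beta}$. Two things need checking. First, $\vect{v}(\vect{\beta})$ has strictly positive entries by~\eqref{equ: optimal v}, since $\vect{\pi}$ has full support; this positivity was needed for the kernel characterization. Second, $\lambda_{\min}^+$ is taken on $\ker(H_\rho)^\perp$, which by Theorem~\ref{thm: kernel space of hessian} is the same fixed subspace for all $\vect{\beta}$ and all $\rho$. With both checks in place, no further dependence on $\rho$ enters, and the bound holds uniformly.
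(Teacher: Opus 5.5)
Your proposal is correct and follows the paper's argument: combine the lower bound of Theorem~\ref{thm: the minimum and maximum eigenvalues of the hessian} with the fact that row $i^\star$ is never sparsified, then bound the entries of $P_k$ away from zero uniformly over $S(\vect{\beta}^0)$. The paper does this last step by continuity and compactness. Your explicit estimate $[P_k]_{ij}\ge \mu_{\min}\exp\bigl(-(2B+C_{\max})/\eta\bigr)$ uses only the boundedness of the sublevel set, and it makes the constant $\underline{C}$ explicit.
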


\begin{proof}
By Lemma~\ref{lem: bounded level set + bounded below}, the restricted sublevel set $S(\vect{\beta}^0)$ is bounded and closed, so it's a compact set. Recall from Theorem~\ref{thm: the minimum and maximum eigenvalues of the hessian} that the smallest strictly positive eigenvalue of $H_\rho$ is bounded below by:
\begin{equation*}
    \lambda_{\min}^{+}(H_\rho) \ge \frac{m}{\eta n} \min_{k} (w_k p_k^2),
\end{equation*}
where $p_k = \min_{j} [P_{k,\rho}]_{i^\star j}$ for some index $i^\star \in \arg\max_i [\vect{v}]_i$. 

According to the sparsification rule, the elements of the $i^\star$-th row of $P_{k,\rho}$ are preserved exactly from the unsparsified transport probability matrix $P_k$; that is, $[P_{k,\rho}]_{i^\star j} = [P_k]_{i^\star j}$ for all $j \in [m]$. By definition, the entries of $P_k$ are compositions of exponential functions, ensuring they are strictly positive and continuous with respect to $\vect{\beta}$. Consequently, $p_k$ is bounded from below by the global minimum of the matrix entries:
\begin{equation*}
    p_k \ge \min_{i,j} [P_k(\vect{\beta})]_{ij} > 0.
\end{equation*}

Because $S(\vect{\beta}^0)$ is compact and the mapping $\vect{\beta} \mapsto \min_{i,j} [P_k(\vect{\beta})]_{ij}$ is strictly positive and continuous, this mapping attains a strictly positive minimum over $S(\vect{\beta}^0)$. Therefore, the term $\min_k (w_k p_k^2)$ is uniformly bounded away from zero across the entire sublevel set. We conclude that there exists a uniform constant $\underline{C} > 0$ such that $\lambda_{\min}^+(H_{\rho}) \ge \underline{C}$ for all $\vect{\beta} \in S(\vect{\beta}^0)$.
\end{proof}

\begin{lemma}
\label{lem: backtracking}
Let $\{\vect{\beta}^t\}_{t\in [T]}$ be the sequence generated by Algorithm~\ref{alg: NWB} or~\ref{alg: SNWB} satisfying $\vect{\beta}^0 \in \ker(H_\rho)^\perp$. Let $\Delta \vect{\beta}^t$ be the update direction for $\lyp$ and suppose the step size $s^t$ is chosen via Armijo backtracking. Then the following holds:
\begin{itemize}
    \item[(i)] $\Delta\vect{\beta}^t$ is a strictly descent direction for $\mathcal{L}(\vect{\beta})$.
    \item[(ii)] The backtracking terminates finitely with a step size $s^t$ bounded strictly away from zero by:
    \begin{equation}
        s^t \ge s_{\min} := \min\left\{1, c \frac{2(1-\sigma)\underline{C}^2}{L(L + G_{\max})}\right\} > 0,
    \end{equation}
    where $\underline{C}$ is the uniform lower bound established in Lemma~\ref{lem: uniform lower bound hessian}, $L \coloneqq (\max_k w_k)(\frac{1}{2\eta} + \frac{1}{\tau})$, $c \in (0,1)$ is the contraction factor, $\sigma \in (0,1)$ is the sufficient decrease parameter, and $G_{\max} \coloneqq 2\sqrt{\sum_{k=1}^K w_k^2}$.
\end{itemize}
\end{lemma}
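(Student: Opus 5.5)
For (i), I will use the spectral decomposition of the shifted matrix $A^t := H_\rho^t + \|\vect{g}^t\| I$. By Theorem~\ref{thm: kernel space of hessian}, $H_\rho^t$ is symmetric positive semidefinite, so $A^t$ is symmetric positive definite whenever $\vect{g}^t \neq \0$. Therefore
\[
\langle \vect{g}^t, \Delta\vect{\beta}^t\rangle = -(\vect{g}^t)^\top (A^t)^{-1}\vect{g}^t < 0,
\]
which shows that $\Delta\vect{\beta}^t$ is a strict descent direction.

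For a quantitative version, I restrict to $\ker(H_\rho)^\perp$. By Lemma~\ref{lem: sequence restrict in the subspace}, the iterates and the gradients lie there, and by Theorem~\ref{thm: kernel space of hessian} the direction does too. On this subspace the eigenvalues of $A^t$ lie in $[\underline{C}, L + \|\vect{g}^t\|]$. The lower end comes from Lemma~\ref{lem: uniform lower bound hessian}, which is valid because the Armijo monotonicity keeps $\vect{\beta}^t\in S(\vect{\beta}^0)$. The upper end comes from Theorem~\ref{thm: the minimum and maximum eigenvalues of the hessian}.

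Next I bound the gradient uniformly. Each block $\vect{g}_k = w_k(\vect{\gamma}_k - \vect{\mu}_k)$ is $w_k$ times the difference of two probability vectors, so $\|\vect{g}_k\| \le \|\vect{\gamma}_k - \vect{\mu}_k\|_1 w_k \le 2w_k$. Hence $\|\vect{g}^t\|\le 2\sqrt{\sum_k w_k^2}=G_{\max}$, and all eigenvalues of $A^t$ on the subspace are at most $L+G_{\max}$. This yields the two estimates
\[
-\langle \vect{g}^t,\Delta\vect{\beta}^t\rangle \ge \frac{\|\vect{g}^t\|^2}{L+G_{\max}}, \qquad \|\Delta\vect{\beta}^t\| \le \frac{\|\vect{g}^t\|}{\underline{C}}.
\]

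For (ii), the key ingredient is an $L$-smoothness bound for $\lyp$ along the segment. The upper eigenvalue bound of Theorem~\ref{thm: the minimum and maximum eigenvalues of the hessian} for the exact Hessian ($\rho=0$) holds at every $\vect{\beta}$, so $\nabla\lyp$ is globally $L$-Lipschitz. The descent lemma then gives
\[
\lyp(\vect{\beta}^t+s\Delta\vect{\beta}^t)\le \lyp(\vect{\beta}^t)+s\langle\vect{g}^t,\Delta\vect{\beta}^t\rangle+\tfrac{L s^2}{2}\|\Delta\vect{\beta}^t\|^2.
\]
The Armijo condition $\lyp(\vect{\beta}^t+s\Delta\vect{\beta}^t)\le \lyp(\vect{\beta}^t)+\sigma s\langle\vect{g}^t,\Delta\vect{\beta}^t\rangle$ therefore holds as soon as
\[
\tfrac{Ls}{2}\|\Delta\vect{\beta}^t\|^2\le (1-\sigma)\bigl(-\langle\vect{g}^t,\Delta\vect{\beta}^t\rangle\bigr).
\]
Substituting the two estimates, this is implied by $s\le 2(1-\sigma)\underline{C}^2/(L(L+G_{\max}))$. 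Backtracking from $s=1$ with factor $c$ first drops below this threshold after finitely many reductions, and the accepted step can be at most a factor $c$ below the threshold. This gives $s^t\ge \min\{1, c\cdot 2(1-\sigma)\underline{C}^2/(L(L+G_{\max}))\}$.

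The main subtlety I expect is the uniformity of $\underline{C}$. Lemma~\ref{lem: uniform lower bound hessian} only applies on $S(\vect{\beta}^0)$, so I need an induction: Armijo acceptance keeps each iterate in the sublevel set, which in turn justifies the bound at the next step. A secondary point is that $L$-smoothness is needed along the whole segment, not just at the endpoints; I will handle this by noting that the upper bound in Theorem~\ref{thm: the minimum and maximum eigenvalues of the hessian} is global in $\vect{\beta}$.
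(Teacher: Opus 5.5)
Your proposal is correct and follows essentially the same route as the paper: descent from positive definiteness of $H_\rho^t+\|\vect{g}^t\|I$, then the estimates $-\langle\vect{g}^t,\Delta\vect{\beta}^t\rangle\ge\|\vect{g}^t\|^2/(L+\|\vect{g}^t\|)$ and $\|\Delta\vect{\beta}^t\|\le\|\vect{g}^t\|/\underline{C}$ on $\ker(H_\rho)^\perp$, then the descent lemma with the global bound $\lambda_{\max}\le L$, and finally $\|\vect{g}^t\|\le G_{\max}$ to make the step-size threshold uniform. Your explicit induction keeping $\vect{\beta}^t\in S(\vect{\beta}^0)$ and your use of the exact Hessian ($\rho=0$) for $L$-smoothness along the segment are slightly more careful than the paper's wording, but they do not change the argument.
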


\begin{proof}
(i) To verify that the update provides a descent direction, we evaluate its inner product with the gradient $\vect{g}^t$:
$$
(\vect{g}^t)^\top \Delta\vect{\beta}^t = -(\vect{g}^t)^\top (H_\rho^t + \|\vect{g}^t\|I)^{-1} \vect{g}^t < 0.
$$
This strict inequality holds because Theorem~\ref{thm: kernel space of hessian} guarantees that $H_\rho^t$ is positive semidefinite. Consequently, the shifted matrix $(H_\rho^t + \|\vect{g}^t\|I)$ is strictly positive definite whenever $\|\vect{g}^t\| \ne 0$. Thus, $\Delta\vect{\beta}^t$ is a strictly descent direction.

(ii) By Theorem~\ref{thm: the minimum and maximum eigenvalues of the hessian}, the maximum eigenvalue of the Hessian is globally bounded by $\lambda_{\max}(H_\rho^t) \le (\max_k w_k)(\frac{1}{2\eta} + \frac{1}{\tau}) = L$. This implies that the gradient $\nabla \lyp (\vect{\beta})$ is uniformly $L$-Lipschitz continuous. Applying the standard quadratic upper bound for functions with Lipschitz continuous gradients, we have:
\begin{align*}
\mathcal{L}(\vect{\beta}^t + s\Delta\vect{\beta}^t) \le \mathcal{L}(\vect{\beta}^t) + s(\vect{g}^t)^\top \Delta\vect{\beta}^t + \frac{L}{2}s^2\|\Delta\vect{\beta}^t\|^2.
\end{align*}
For the Armijo sufficient decrease condition $\mathcal{L}(\vect{\beta}^t + s\Delta\vect{\beta}^t) \le \mathcal{L}(\vect{\beta}^t) + \sigma s(\vect{g}^t)^\top \Delta\vect{\beta}^t$ to hold, it is sufficient that:
\begin{align*}
s(\vect{g}^t)^\top \Delta\vect{\beta}^t + \frac{L}{2}s^2\|\Delta\vect{\beta}^t\|^2 \le \sigma s(\vect{g}^t)^\top \Delta\vect{\beta}^t.
\end{align*}
Dividing by $s > 0$ and rearranging the terms reveals that any step size $s \in (0, \bar{s}^t]$ is acceptable, where:
\begin{align*}
\bar{s}^t := \frac{2(1-\sigma)(-(\vect{g}^t)^\top \Delta\vect{\beta}^t)}{L \|\Delta\vect{\beta}^t\|^2}.
\end{align*}

We now bound the numerator and denominator of $\bar{s}^t$ separately. For the numerator, we obtain the lower bound:
\begin{align*}
-(\vect{g}^t)^\top \Delta\vect{\beta}^t = (\vect{g}^t)^\top (H_\rho^t + \|\vect{g}^t\|I)^{-1} \vect{g}^t \ge \frac{1}{\lambda_{\max}(H_\rho^t) + \|\vect{g}^t\|} \|\vect{g}^t\|^2 \ge \frac{1}{L + \|\vect{g}^t\|} \|\vect{g}^t\|^2.
\end{align*}

For the denominator, we must bound the norm of the update direction $\|\Delta\vect{\beta}^t\|$. Because $H_\rho^t$ is symmetric and positive semidefinite, the space $\mathbb{R}^{mK}$ admits an orthogonal decomposition into $\ker(H_\rho^t)$ and its orthogonal complement $\ker(H_\rho^t)^\perp$. By Theorem~\ref{thm: kernel space of hessian}, the gradient satisfies $\vect{g}^t \in \ker(H_\rho^t)^\perp$. Consequently, the action of the inverse operator $(H_\rho^t + \|\vect{g}^t\|I)^{-1}$ on $\vect{g}^t$ is entirely confined to the invariant subspace $\ker(H_\rho^t)^\perp$. 

On this restricted subspace, the eigenvalues of $H_\rho^t$ are strictly positive and bounded below by $\lambda_{\min}^+(H_\rho^t)$. Applying the Rayleigh quotient theorem~\cite{Horn_Johnson_2012} to the shifted inverse operator on this subspace yields the exact spectral norm bound:
\begin{equation*}
    \sup_{\vect{d} \in \ker(H_\rho)^\perp} \frac{\|(H_\rho^t + \|\vect{g}^t\|I)^{-1} \vect{d}\|}{\|\vect{d}\|} = \frac{1}{\lambda_{\min}^+(H_\rho^t) + \|\vect{g}^t\|}.
\end{equation*}

Since the Armijo line search ensures that all iterates $\{\vect{\beta}^t\}_{t\in [T]}$ remain within the sublevel set $S(\vect{\beta}^0)$, Lemma~\ref{lem: uniform lower bound hessian} provides the uniform lower bound $\lambda_{\min}^+(H_\rho^t) \ge \underline{C} > 0$. This yields the following upper bound for the denominator:
\begin{align*}
\|\Delta\vect{\beta}^t\| \le \frac{1}{\lambda_{\min}^+(H_\rho^t) + \|\vect{g}^t\|} \|\vect{g}^t\| \le \frac{1}{\underline{C} + \|\vect{g}^t\|} \|\vect{g}^t\| \le \frac{1}{\underline{C}} \|\vect{g}^t\|.
\end{align*}

Substituting these two bounds back into the expression for $\bar{s}^t$ produces:
\begin{align*}
\bar{s}^t \ge \frac{2(1-\sigma) \frac{\|\vect{g}^t\|^2}{L + \|\vect{g}^t\|}}{L \frac{\|\vect{g}^t\|^2}{\underline{C}^2}} = \frac{2(1-\sigma)\underline{C}^2}{L(L + \|\vect{g}^t\|)}.
\end{align*}

Finally, recall from \eqref{equ: gradient} that $\nabla_k \lyp (\vect{\beta}) = w_k ((P_k)^\top \vect{v} - \vect{\mu}_k)$. Because $(P_k)^\top \vect{v}$ and $\vect{\mu}_k$ are both valid probability vectors (non-negative and summing to $1$), the maximum Euclidean distance between them is strictly bounded by $\|(P_k)^\top \vect{v} - \vect{\mu}_k\| \le 2$. Consequently, the total gradient norm admits the global upper bound $\|\vect{g}^t\| = \sqrt{\sum_{k=1}^K w_k^2 \|(P_k)^\top \vect{v} - \vect{\mu}_k\|^2} \le 2\sqrt{\sum_{k=1}^K w_k^2} = G_{\max}$. 

Replacing $\|\vect{g}^t\|$ with its maximum bound $G_{\max}$, we obtain a uniform lower bound for the theoretical acceptable step size:
\begin{align*}
\bar{s}^t \ge \frac{2(1-\sigma)\underline{C}^2}{L(L + G_{\max})}.
\end{align*}
Because the backtracking procedure iteratively shrinks the step size by a factor of $c \in (0,1)$ starting from $1$, the final accepted step size $s^t$ must be strictly larger than $c \bar{s}^t$ (unless the full Newton step $s^t = 1$ is immediately accepted). We thus conclude:
\begin{align*}
s^t \ge \min\left\{1, c \frac{2(1-\sigma)\underline{C}^2}{L(L + G_{\max})}\right\} > 0.
\end{align*}
\end{proof}

Now we are ready to prove the convergence.

\begin{proof}[Proof of Theorem~\ref{thm: convergence}]
(i) From Lemma~\ref{lem: backtracking}, the Armijo backtracking line search guarantees that the sequence of step sizes $s^t$ is bounded below by a uniform constant $s_{\min} > 0$. Furthermore, it satisfies the sufficient decrease condition:
\begin{equation*}
    \mathcal{L}(\vect{\beta}^{t+1}) - \mathcal{L}(\vect{\beta}^t) \le \sigma s^t (\vect{g}^t)^\top \Delta\vect{\beta}^t.
\end{equation*}

From the proof of Lemma~\ref{lem: backtracking}, we established the following lower bound on the directional derivative:
\begin{equation*}
    -(\vect{g}^t)^\top \Delta\vect{\beta}^t = (\vect{g}^t)^\top (H_\rho^t + \|\vect{g}^t\|I)^{-1} \vect{g}^t \ge \frac{1}{L + \|\vect{g}^t\|} \|\vect{g}^t\|^2 \ge \frac{1}{L + G_{\max}} \|\vect{g}^t\|^2.
\end{equation*}

Substituting this bound into the Armijo condition yields:
\begin{equation*}
    \mathcal{L}(\vect{\beta}^{t+1}) - \mathcal{L}(\vect{\beta}^t) \le - \sigma s_{\min} \frac{1}{L + G_{\max}} \|\vect{g}^t\|^2.
\end{equation*}

This inequality demonstrates that the sequence of objective values $\{\mathcal{L}(\vect{\beta}^t)\}_{t \ge 0}$ is monotonically decreasing. Combined with Lemma~\ref{lem: sequence restrict in the subspace}, which ensures that $\vect{\beta}^t \in \ker(H_\rho)^\perp$ for all $t$, it strictly follows that the entire sequence of iterates $\{\vect{\beta}^t\}_{t \ge 0}$ remains confined within the initial sublevel set $S(\vect{\beta}^0)$. 

By Lemma~\ref{lem: bounded level set + bounded below}, $\mathcal{L}(\vect{\beta})$ is bounded below on $S(\vect{\beta}^0)$. Let $\underline{\mathcal{L}}$ denote this global minimum. Summing the sufficient decrease inequality over the first $T$ iterations yields a telescoping sum:
\begin{equation*}
    \sum_{t=0}^T \|\vect{g}^t\|^2 \le \frac{L + G_{\max}}{\sigma s_{\min}} (\mathcal{L}(\vect{\beta}^0) - \mathcal{L}(\vect{\beta}^{T+1})) \leq \frac{L + G_{\max}}{\sigma s_{\min}} (\mathcal{L}(\vect{\beta}^0) - \underline{\mathcal{L}}) < \infty.
\end{equation*}
Taking the limit as $T \to \infty$, we obtain the convergent infinite series $\sum_{t=0}^\infty \|\vect{g}^t\|^2 < \infty$. Therefore, we conclude that $\|\vect{g}^t\| \rightarrow 0$ as $t \rightarrow \infty$.

(ii) Let $\vect{\beta}^\star$ be the unique optimal solution in the restricted subspace $\ker(H_\rho)^\perp$, the existence of which is guaranteed by Lemma~\ref{lem: bounded level set + bounded below}. The first-order optimality condition requires that the projected gradient vanishes, meaning the full gradient must be orthogonal to the subspace: $\nabla \lyp(\vect{\beta}^\star) \in \ker(H_\rho)$. However, from the exact gradient formula~\eqref{equ: gradient}, the block-wise sum always satisfies $\1_m^\top \nabla_k \lyp(\vect{\beta}^\star) = 0$, strictly placing $\nabla \lyp(\vect{\beta}^\star) \in \ker(H_\rho)^\perp$. Then we conclude $\nabla \lyp(\vect{\beta}^\star) = \0_{mK}$. 

From Lemma~\ref{lem: uniform lower bound hessian}, there exists a uniform lower bound $\underline{C} > 0$ such that the smallest strictly positive eigenvalue satisfies $\lambda_{\min}^+(H) \ge \underline{C}$ for all $\vect{\beta} \in S(\vect{\beta}^0)$ (noting that the exact Hessian corresponds to $\rho=0$). Consequently, the objective function $\mathcal{L}(\vect{\beta})$ is strongly convex with parameter $\underline{C}$ on the restricted subspace within $S(\vect{\beta}^0)$. This strong convexity condition provides a direct bound relating the norm of the gradient to the distance to the optimum:
\begin{equation*}
    \|\vect{g}^t\| = \|\nabla \lyp(\vect{\beta}^t) - \nabla \lyp(\vect{\beta}^\star)\| \ge \underline{C} \|\vect{\beta}^t - \vect{\beta}^\star\|.
\end{equation*}

Since we established in part (i) that $\|\vect{g}^t\| \to 0$ as $t \to \infty$, it immediately follows from this strong convexity bound that $\|\vect{\beta}^t - \vect{\beta}^\star\| \to 0$. Thus, the sequence of iterates globally converges to the unique optimal solution: $\vect{\beta}^t \to \vect{\beta}^\star$. 
\end{proof}

\subsection{Proof of Theorem~\ref{thm: local_quadratic}}

We first show the globally Lipschitz continuity of the exact Hessian $H(\vect{\beta})$. 

\begin{lemma}
\label{lem: global lipschitz hessian}
The exact Hessian $H(\vect{\beta})$ of the objective function $\mathcal{L}(\vect{\beta})$ is globally Lipschitz continuous over $\mathbb{R}^{mK}$. Specifically, with $W_{\max} = \max_k w_k$, the global Lipschitz constant $L_H$ is:
\begin{equation}
    \label{equ: hessian lipschitz constant}
    L_H \coloneqq 6 \frac{W_{\max}}{\eta^2} + 10 \frac{W_{\max}}{\eta \tau} + \frac{6}{\tau^2}.
\end{equation}
\end{lemma}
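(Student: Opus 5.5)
We bound the operator norm of the third derivative of $\lyp$ uniformly: if for every $\vect{\beta}$ and every unit direction $\vect{d}$ we have $\|\frac{d}{ds}H(\vect{\beta}+s\vect{d})|_{s=0}\|\le L_H$, then the mean value theorem along the segment gives $\|H(\vect{\beta})-H(\vect{\beta}')\|\le L_H\|\vect{\beta}-\vect{\beta}'\|$ on all of $\R^{mK}$. So the plan is to differentiate the block formulas~\eqref{equ: hessian_diag}--\eqref{equ: hessian_nondiag} in a direction $\vect{d}$. We then control each piece using only row-stochasticity of $P_k$, $\vect{v}\in\Delta_n$, and $\|\vect{\gamma}_k\|_1=1$. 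These bounds are independent of $\vect{\beta}$, so the estimate is automatically global.

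\textbf{Step 1: first-order sensitivities of the building blocks.} Write $D_k=\diag{\vect{d}_k}$ and $\vect{y}=\sum_r w_r P_r\vect{d}_r$. From~\eqref{equ: P matrix},
\[
\dot P_k=\tfrac{1}{\eta}\bigl(P_kD_k-\diag{P_k\vect{d}_k}P_k\bigr).
\]
From~\eqref{equ: semi-dual expression} we get $\dot{\vect{\alpha}}_k=-P_k\vect{d}_k$, hence $\dot\Phi=-\vect{y}$. From~\eqref{equ: optimal v},
\[
\dot{\vect{v}}=\tfrac{1}{\tau}\bigl(\diag{\vect{v}}-\vect{v}\vect{v}^\top\bigr)\vect{y}.
\]
We use the norms $\|P_k\vect{d}_k\|_\infty\le\|\vect{d}_k\|_\infty\le\|\vect{d}\|$ and $\|\vect{y}\|_\infty\le\|\vect{d}\|$ (row-stochastic averages). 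We also use $\|\dot{\vect{v}}\|_1\le\frac{2}{\tau}\|\vect{y}\|_\infty$, together with the fact that each row of $\dot P_k$ has $\ell_1$ norm at most $\frac{2}{\eta}\|\vect{d}_k\|_\infty$.

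\textbf{Step 2: differentiate the Hessian entries.} It is convenient to write the quadratic form $\vect{z}^\top H\vect{z}$ as in the proof of Theorem~\ref{thm: kernel space of hessian}:
\[
\vect{z}^\top H\vect{z}=\sum_k\frac{w_k}{2\eta}\sum_i v_i\sum_{j,l}[P_k]_{ij}[P_k]_{il}\bigl([\vect{z}_k]_j-[\vect{z}_k]_l\bigr)^2+\frac{1}{\tau}\mathrm{Var}_{\vect{v}}\Bigl(\sum_k w_kP_k\vect{z}_k\Bigr).
\]
Since $H$ is symmetric, $\|\dot H\|=\sup_{\|\vect{z}\|=1}|\vect{z}^\top\dot H\vect{z}|$. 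Differentiating the first term yields pieces where $\vect{v}$ is differentiated (factor $\frac{1}{\eta}\cdot\frac{1}{\tau}$) and pieces where one $P_k$ is differentiated (factor $\frac{1}{\eta}\cdot\frac{1}{\eta}$), each weighted by $w_k$. Differentiating the variance term yields derivatives of $\vect{v}$ (factor $\frac{1}{\tau^2}$) and of the $P_k$ inside $\vect{y}$ (factor $\frac{1}{\tau\eta}$, with weights $w_k$).

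Each piece is a third central moment or a covariance under the probability weights $v_iP_{ij}$, and is bounded by a small integer multiple of $\|\vect{z}\|^2\|\vect{d}\|$. The argument is the same as the Jensen/$x(1-x)\le1/4$ arguments in Theorem~\ref{thm: the minimum and maximum eigenvalues of the hessian}, combined with $(a-b)^2\le2(a^2+b^2)$ and $\sum_i v_i[P_k]_{ij}\le1$. We then collect the constants: $\eta^{-2}$ terms carry $W_{\max}$ (from the $w_k/\eta$ prefactor with $\sum$ over disjoint blocks), mixed terms carry $W_{\max}$ after using $\sum_r w_r=1$, and the pure $\tau^{-2}$ term needs no weight since $\sum_k w_k=1$ absorbs it. This gives the coefficients $6,10,6$.

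\textbf{Main obstacle.} The delicate part is the bookkeeping in Step 2: the weights must be handled so that the $\eta^{-2}$ term scales with $W_{\max}$ rather than with $K$, and no dimension factor ($m$ or $n$) may enter. Trying operator-norm bounds of $P_k$ as in Lemma~\ref{lem: row stochastic norm} would introduce $\sqrt n$, which the target constant does not allow. I would therefore work entirely through the quadratic-form/moment representation above and bound the weighted sums over $i,j,l$ by $\ell_\infty$--$\ell_1$ duality.

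For the cross term in $\vect{y}$, I would first use Cauchy--Schwarz in the form $\sum_i v_i(\sum_k w_k a_{ik})^2\le\sum_k w_k\sum_i v_i a_{ik}^2$, so that all blocks decouple. I expect the exact constants $6,10,6$ to require somewhat careful pairing of terms. If the count comes out slightly different, the argument still yields a constant of the stated form.
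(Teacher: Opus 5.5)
Your plan is essentially the paper's proof. It also reduces to bounding $|\nabla_{\vect{u}}(\vect{y}^\top H(\vect{\beta})\vect{y})|$ through the $Q_\eta/Q_\tau$ split, using the same softmax derivatives of $P_k$ and $\vect{v}$ together with $\ell_\infty$/convex-combination bounds, so no $\sqrt{n}$ enters. The paper's bookkeeping gives four pieces: $\frac{2W_{\max}}{\eta\tau}$ from differentiating $\vect{v}$ in $Q_\eta$, $\frac{6W_{\max}}{\eta^2}$ from differentiating $P_k$ in $Q_\eta$, and $\frac{6}{\tau^2}$ and $\frac{8W_{\max}}{\eta\tau}$ from the same two operations in $Q_\tau$. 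These constants come from crude term-by-term triangle inequalities, not from careful pairing. Any smaller count you obtain still proves the lemma, since a Lipschitz constant can always be enlarged; your hedge only matters if your count comes out larger.
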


\begin{proof}
To establish global Lipschitz continuity, we evaluate the spectral norm of the third derivative tensor. Recall that for a scalar-valued multivariate function $f(\vect{\beta})$, its directional derivative along a vector $\vect{u}$ is defined as $\nabla_{\vect{u}} f(\vect{\beta}) = \langle \nabla f(\vect{\beta}), \vect{u} \rangle = \lim_{\epsilon \to 0} \frac{f(\vect{\beta} + \epsilon \vect{u}) - f(\vect{\beta})}{\epsilon}$. By treating the quadratic form $Q(\vect{\beta}) \coloneqq \vect{y}^\top H(\vect{\beta}) \vect{y}$ as a scalar field parameterized by $\vect{\beta}$, establishing Lipschitz continuity of the Hessian is mathematically equivalent to showing that for any arbitrary, fixed vectors $\vect{y}, \vect{u} \in \mathbb{R}^{mK}$, the third-order directional derivative of the objective is bounded by a uniform constant $L_H$:
\begin{equation*}
    |\nabla_{\vect{u}} (\vect{y}^\top H(\vect{\beta}) \vect{y})| \le L_H \|\vect{y}\|^2 \|\vect{u}\|, \quad \forall\  \vect{\beta} \in \mathbb{R}^{mK}.
\end{equation*}

\textbf{Step 1: Notation and deterministic operators.} To streamline the algebra, we define a deterministic weighted sum operator $S_{k,i}: \mathbb{R}^m \to \mathbb{R}$, which represents the inner product with the $i$-th row of the transport probability matrix $P_k$:
\begin{equation*}
    S_{k,i}(\vect{x}) = \sum_{j=1}^m [P_k]_{ij} x_j.
\end{equation*}
Because the rows of $P_k$ are discrete probability distributions (i.e., $[P_k]_{ij} > 0$ and $\sum_{j=1}^m [P_k]_{ij} = 1$), the operator $S_{k,i}$ computes a convex combination. Therefore, its magnitude is universally bounded by the infinity norm, and consequently the Euclidean norm, of the input vector: 
\begin{equation}
\label{eq: bound_S}
    |S_{k,i}(\vect{x})| \le \|\vect{x}\|_\infty \le \|\vect{x}\|.
\end{equation}

We also define an aggregated block operator $R_i(\vect{x}) = \sum_{k=1}^K w_k S_{k,i}(\vect{x}_k)$. By applying the Cauchy-Schwarz inequality and noting that $\sum_{k=1}^K w_k = 1$, its magnitude is globally bounded by:
\begin{equation}
\label{eq: bound_R}
    |R_i(\vect{x})| \le \sum_{k=1}^K w_k \|\vect{x}_k\| \le \left(\sum_{k=1}^K w_k\right)^{1/2} \left(\sum_{k=1}^K w_k \|\vect{x}_k\|^2\right)^{1/2} \le \sqrt{W_{\max}} \|\vect{x}\| \le \|\vect{x}\|.
\end{equation}

Using these operators, the exact quadratic form $Q(\vect{\beta}) := \vect{y}^\top H(\vect{\beta}) \vect{y}$ decomposes into $Q_\eta(\vect{\beta})$ and $Q_\tau(\vect{\beta})$ components based on the block definitions of the Hessian established in Theorem~\ref{thm: kernel space of hessian}:
\begin{align}
    Q_\eta(\vect{\beta}) &= \sum_{k=1}^K \frac{w_k}{\eta} \sum_{i=1}^n v_i \Big[ S_{k,i}(\vect{y}_k^2) - (S_{k,i}(\vect{y}_k))^2 \Big], \label{eq: Q_eta} \\
    Q_\tau(\vect{\beta}) &= \frac{1}{\tau} \left[ \sum_{i=1}^n v_i (R_i(\vect{y}))^2 - \Big( \sum_{i=1}^n v_i R_i(\vect{y}) \Big)^2 \right], \label{eq: Q_tau}
\end{align}
where $\vect{y}_k^2$ denotes the Hadamard (element-wise) square of the vector $\vect{y}_k$. Here, $Q_\eta(\vect{\beta})$ and $Q_\tau(\vect{\beta})$ are explicitly written as functions of $\vect{\beta}$ to emphasize that the subsequent directional derivatives are taken with respect to $\vect{\beta}$, while $\vect{y}$ and $\vect{u}$ remain arbitrary, fixed vectors. Let $\bar{R}(\vect{y}) = \sum_{i=1}^n v_i R_i(\vect{y})$ denote the weighted average of $R_i(\vect{y})$. Because the barycenter weight vector satisfies $\vect{v} \in \Delta_n$, we trivially have $|\bar{R}(\vect{y})| \le \max_i |R_i(\vect{y})| \le \|\vect{y}\|$.

\textbf{Step 2: Derivation of fundamental directional derivatives.} Before differentiating the quadratic forms, we explicitly calculate the directional derivatives of the fundamental softmax weights. 
For the transport probability matrix, its entries are defined as:
\begin{equation*}
    [P_k]_{ij} = \frac{[\vect{\mu}_k]_j \exp(([\vect{\beta}_k]_{j} - [C_k]_{ij})/\eta)}{[\vect{z_k}]_{i}}, \quad \text{where} \quad [\vect{z_k}]_{i} = \sum_{l=1}^m [\vect{\mu}_k]_l \exp\left(\frac{[\vect{\beta}_k]_{l} - [C_k]_{il}}{\eta}\right).
\end{equation*}
Taking the logarithm yields:
\begin{equation*}
    \ln [P_k]_{ij} = \ln [\vect{\mu}_k]_j + \frac{[\vect{\beta}_k]_{j} - [C_k]_{ij}}{\eta} - \ln [\vect{z}_k]_{i}.
\end{equation*}
Differentiating both sides along the direction $\vect{u}$ eliminates the constant target marginal term $\ln [\vect{\mu}_k]_j$:
\begin{align}
    \nabla_{\vect{u}} \ln [\vect{z_k}]_{i} &= \frac{1}{[\vect{z_k}]_{i}} \sum_{l=1}^m [\vect{\mu}_k]_l \exp\left(\frac{[\vect{\beta}_k]_{l} - [C_k]_{il}}{\eta}\right) \frac{[\vect{u_k}]_{l}}{\eta} = \frac{1}{\eta} \sum_{l=1}^m [P_k]_{il} [\vect{u_k}]_{l} = \frac{1}{\eta} S_{k,i}(\vect{u}_k), \nonumber \\
    \nabla_{\vect{u}} [P_k]_{ij} &= [P_k]_{ij} \nabla_{\vect{u}} (\ln [P_k]_{ij}) = \frac{1}{\eta} [P_k]_{ij} \Big( [\vect{u_k}]_{j} - S_{k,i}(\vect{u}_k) \Big). \label{eq: deriv_Pk}
\end{align}

Similarly, the barycenter weights are defined as $v_i = \pi_i \exp(-\Phi_i/\tau) / \tilde{z}$, where $\Phi_i = -\eta \sum_{k=1}^K w_k \ln [\vect{z_k}]_{i}$ and the normalization constant is $\tilde{z} = \sum_{l=1}^n \pi_l \exp(-\Phi_l/\tau)$. 
The directional derivative of the potential is algebraically derived as $\nabla_{\vect{u}} \Phi_i = -\eta \sum_{k=1}^K w_k (\frac{1}{\eta} S_{k,i}(\vect{u}_k)) = -R_i(\vect{u})$. Differentiating the logarithm of $v_i$ yields:
\begin{align}
    \nabla_{\vect{u}} \ln \tilde{z} &= \frac{1}{\tilde{z}} \sum_{l=1}^n \pi_l \exp(-\Phi_l/\tau) \left( -\frac{\nabla_{\vect{u}} \Phi_l}{\tau} \right) = \frac{1}{\tau} \sum_{l=1}^n v_l R_l(\vect{u}) = \frac{1}{\tau} \bar{R}(\vect{u}), \nonumber \\
    \nabla_{\vect{u}} v_i &= v_i \nabla_{\vect{u}} (\ln v_i) = \frac{1}{\tau} v_i \Big( R_i(\vect{u}) - \bar{R}(\vect{u}) \Big). \label{eq: deriv_v}
\end{align}
Using the bounds from \eqref{eq: bound_R}, we establish the absolute magnitude bound: $|\nabla_{\vect{u}} v_i| \le \frac{2}{\tau} v_i \|\vect{u}\|$.

\textbf{Step 3: Bounding the local component $\nabla_{\vect{u}} Q_\eta(\vect{y})$.} Differentiating \eqref{eq: Q_eta} with respect to $\vect{\beta}$ along the direction $\vect{u}$ generates two distinct summation terms: one from differentiating $v_i$ (Part A), and one from differentiating the bracketed operation (Part B).

\textit{Part A (Differentiating $v_i$):}
The bracketed term satisfies $0 \le S_{k,i}(\vect{y}_k^2) - (S_{k,i}(\vect{y}_k))^2 \le S_{k,i}(\vect{y}_k^2) \le \|\vect{y}_k^2\|_\infty \le \|\vect{y}_k\|^2$. Multiplying this by the bound for $|\nabla_{\vect{u}} v_i|$, we obtain:
\begin{equation*}
    \sum_{k=1}^K \frac{w_k}{\eta} \sum_{i=1}^n |\nabla_{\vect{u}} v_i| \|\vect{y}_k\|^2 \le \frac{2}{\eta \tau} \|\vect{u}\| \sum_{k=1}^K w_k \|\vect{y}_k\|^2 \le \frac{2 W_{\max}}{\eta \tau} \|\vect{y}\|^2 \|\vect{u}\|.
\end{equation*}

\textit{Part B (Differentiating the Bracket):}
Let $B_{k,i} = S_{k,i}(\vect{y}_k^2) - (S_{k,i}(\vect{y}_k))^2$. By the linearity of the operator, $\nabla_{\vect{u}} S_{k,i}(\vect{x}) = \sum_j (\nabla_{\vect{u}} [P_k]_{ij}) x_j$. Applying \eqref{eq: deriv_Pk}, we have an explicit formula for the derivative of the operator:
\begin{equation*}
    \nabla_{\vect{u}} S_{k,i}(\vect{x}) = \frac{1}{\eta} \Big[ S_{k,i}(\vect{x} \odot \vect{u}_k) - S_{k,i}(\vect{x})S_{k,i}(\vect{u}_k) \Big],
\end{equation*}
where $\odot$ denotes the Hadamard product. Differentiating $B_{k,i}$ yields:
\begin{equation*}
    \nabla_{\vect{u}} B_{k,i} = \frac{1}{\eta} \Big[ S_{k,i}(\vect{y}_k^2 \odot \vect{u}_k) - S_{k,i}(\vect{y}_k^2)S_{k,i}(\vect{u}_k) - 2 S_{k,i}(\vect{y}_k) \big( S_{k,i}(\vect{y}_k \odot \vect{u}_k) - S_{k,i}(\vect{y}_k)S_{k,i}(\vect{u}_k) \big) \Big].
\end{equation*}
Because $|S_{k,i}(\vect{y}_k^2 \odot \vect{u}_k)| \le \|\vect{y}_k^2 \odot \vect{u}_k\|_\infty \le \|\vect{y}_k\|^2 \|\vect{u}_k\|$ and $|S_{k,i}(\vect{y}_k \odot \vect{u}_k)| \le \|\vect{y}_k\| \|\vect{u}_k\|$, applying the triangle inequality bounds the entire bracket strictly:
\begin{equation*}
    |\nabla_{\vect{u}} B_{k,i}| \le \frac{1}{\eta} \Big[ \|\vect{y}_k\|^2 \|\vect{u}_k\| + \|\vect{y}_k\|^2 \|\vect{u}_k\| + 2\|\vect{y}_k\| \big( \|\vect{y}_k\|\|\vect{u}_k\| + \|\vect{y}_k\|\|\vect{u}_k\| \big) \Big] = \frac{6}{\eta} \|\vect{y}_k\|^2 \|\vect{u}_k\|.
\end{equation*}
Summing this strictly positive bound over $i$ (where $\sum_{i=1}^n v_i = 1$) and $k$ gives:
\begin{equation*}
    \sum_{k=1}^K \frac{w_k}{\eta} \frac{6}{\eta} \|\vect{y}_k\|^2 \|\vect{u}_k\| \le \frac{6 W_{\max}}{\eta^2} \|\vect{u}\| \sum_{k=1}^K \|\vect{y}_k\|^2 \le \frac{6 W_{\max}}{\eta^2} \|\vect{y}\|^2 \|\vect{u}\|.
\end{equation*}
Combining Parts A and B, the absolute bound for the local component is:
\begin{equation}
\label{eq: bound_Q_eta}
    |\nabla_{\vect{u}} Q_\eta(\vect{y})| \le \left( \frac{2 W_{\max}}{\eta \tau} + \frac{6 W_{\max}}{\eta^2} \right) \|\vect{y}\|^2 \|\vect{u}\|.
\end{equation}

\textbf{Step 4: Bounding the global component $\nabla_{\vect{u}} Q_\tau(\vect{y})$.} We differentiate \eqref{eq: Q_tau}, expanding the terms as:
\begin{equation*}
    \nabla_{\vect{u}} Q_\tau(\vect{y}) = \frac{1}{\tau} \sum_{i=1}^n (\nabla_{\vect{u}} v_i) \big[ R_i(\vect{y})^2 - 2 \bar{R}(\vect{y}) R_i(\vect{y}) \big] + \frac{2}{\tau} \sum_{i=1}^n v_i \big( R_i(\vect{y}) - \bar{R}(\vect{y}) \big) \nabla_{\vect{u}} R_i(\vect{y}).
\end{equation*}

\textit{Part C (Terms with $\nabla_{\vect{u}} v_i$):}
Because $|R_i(\vect{y})| \le \|\vect{y}\|$ and $|\bar{R}(\vect{y})| \le \|\vect{y}\|$, the bracketed expression is algebraically bounded by $|R_i(\vect{y})^2 - 2 \bar{R}(\vect{y}) R_i(\vect{y})| \le 3\|\vect{y}\|^2$. Substituting the bound for $|\nabla_{\vect{u}} v_i|$ yields:
\begin{equation*}
    \frac{1}{\tau} \sum_{i=1}^n \left( \frac{2}{\tau} v_i \|\vect{u}\| \right) \left( 3 \|\vect{y}\|^2 \right) = \frac{6}{\tau^2} \|\vect{y}\|^2 \|\vect{u}\|.
\end{equation*}

\textit{Part D (Terms with $\nabla_{\vect{u}} R_i$):}
By the linearity of the operator $R_i$, we have $\nabla_{\vect{u}} R_i(\vect{y}) = \sum_k w_k \nabla_{\vect{u}} S_{k,i}(\vect{y}_k)$. Based on our previous derivation for the operator derivative, $|\nabla_{\vect{u}} S_{k,i}(\vect{y}_k)| \le \frac{2}{\eta} \|\vect{y}_k\| \|\vect{u}_k\|$. Therefore, $|\nabla_{\vect{u}} R_i(\vect{y})| \le \frac{2 W_{\max}}{\eta} \|\vect{y}\| \|\vect{u}\|$.
The leading term satisfies $|R_i(\vect{y}) - \bar{R}(\vect{y})| \le 2\|\vect{y}\|$. Multiplying these elements together bounds Part D:
\begin{equation*}
    \frac{2}{\tau} \sum_{i=1}^n v_i \Big( 2 \|\vect{y}\| \Big) \left( \frac{2 W_{\max}}{\eta} \|\vect{y}\| \|\vect{u}\| \right) = \frac{8 W_{\max}}{\eta \tau} \|\vect{y}\|^2 \|\vect{u}\|.
\end{equation*}

Combining Parts C and D, the absolute bound for the global component is:
\begin{equation}
\label{eq: bound_Q_tau}
    |\nabla_{\vect{u}} Q_\tau(\vect{y})| \le \left( \frac{6}{\tau^2} + \frac{8 W_{\max}}{\eta \tau} \right) \|\vect{y}\|^2 \|\vect{u}\|.
\end{equation}

\textbf{Step 5: Final synthesis.} By the triangle inequality, the total directional derivative is bounded by $|\nabla_{\vect{u}} Q(\vect{y})| \le |\nabla_{\vect{u}} Q_\eta(\vect{y})| + |\nabla_{\vect{u}} Q_\tau(\vect{y})|$. Summing the bounds from equations \eqref{eq: bound_Q_eta} and \eqref{eq: bound_Q_tau} establishes the global maximum spectral norm, strictly confirming the global Lipschitz constant $L_H$:
\begin{equation*}
    \sup_{\|\vect{y}\|=1, \|\vect{u}\|=1} |\nabla_{\vect{u}} (\vect{y}^\top H(\vect{\beta}) \vect{y})| \le 6 \frac{W_{\max}}{\eta^2} + 10 \frac{W_{\max}}{\eta \tau} + \frac{6}{\tau^2} \eqqcolon L_H.
\end{equation*}
This explicitly establishes the global Lipschitz continuity of the exact Hessian matrix $\nabla^2 \lyp(\vect{\beta})$.
\end{proof}

Having established globally Lipschitz continuity of the exact Hessian and the global convergence of the sequence generated by Algorithm~\ref{alg: NWB} and~\ref{alg: SNWB} in Theorem~\ref{thm: convergence}, we now turn our attention to its local convergence behavior. A critical requirement for Newton-type methods to achieve their characteristic rapid convergence rates near the optimum is that the line search mechanism must not prematurely truncate the updates. The following lemma guarantees that after a sufficient number of iterations, the Armijo backtracking procedure naturally accepts the full Newton step ($s^t = 1$) without premature truncation.

\begin{lemma}
\label{lem: full_step}
Let $\{\vect{\beta}^t\}_{t \ge 0}$ be the sequence generated by Algorithm~\ref{alg: NWB} or~\ref{alg: SNWB}. If the Armijo parameter satisfies $\sigma \in (0, 1/2)$, then for sufficiently large $t$, the Armijo line search condition is strictly satisfied with the full Newton step; that is, $s^t = 1$ is eventually accepted.
\end{lemma}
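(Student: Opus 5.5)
The plan is the standard argument that a Newton-type direction with asymptotically exact curvature eventually passes the Armijo test with $s^t=1$ when $\sigma<1/2$. We work on the invariant subspace $\ker(H_\rho)^\perp$, which is legitimate by Lemma~\ref{lem: sequence restrict in the subspace} and Theorem~\ref{thm: kernel space of hessian}. Write $d^t:=\Delta\vect{\beta}^t$ and $B^t:=H_\rho^t+\|\vect{g}^t\|I$, so that $B^t d^t=-\vect{g}^t$.

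\textbf{Step 1: the direction vanishes.} By Theorem~\ref{thm: convergence}, $\|\vect{g}^t\|\to0$ and $\vect{\beta}^t\to\vect{\beta}^\star$. The proof of Lemma~\ref{lem: backtracking} gives $\|d^t\|\le\|\vect{g}^t\|/\underline{C}$, hence $d^t\to0$.

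\textbf{Step 2: the model matrix approaches the true Hessian.} By Theorem~\ref{thm: hessian diff} with $\rho^t=C_\rho\|\vect{g}^t\|$ (and $\rho=0$ for NWB),
\[
\|B^t-H^t\|\le \epsilon_t:=\|\vect{g}^t\|\Bigl(1+8mnC_\rho(\max_k w_k)\bigl(\tfrac1\tau+\tfrac1\eta\bigr)\Bigr)\to0 .
\]

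\textbf{Step 3: Taylor expansion with the Lipschitz Hessian.} Using Lemma~\ref{lem: global lipschitz hessian},
\[
\lyp(\vect{\beta}^t+d^t)\le\lyp(\vect{\beta}^t)+(\vect{g}^t)^\top d^t+\tfrac12 (d^t)^\top H^t d^t+\tfrac{L_H}{6}\|d^t\|^3 .
\]
Substitute $(d^t)^\top H^t d^t=(d^t)^\top B^t d^t+(d^t)^\top(H^t-B^t)d^t$ and $(d^t)^\top B^t d^t=-(\vect{g}^t)^\top d^t$. The Armijo condition with $s=1$ then reduces to
\[
\bigl(\tfrac12-\sigma\bigr)\bigl(-(\vect{g}^t)^\top d^t\bigr)\ \ge\ \tfrac12\epsilon_t\|d^t\|^2+\tfrac{L_H}{6}\|d^t\|^3 .
\]

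\textbf{Step 4: lower-bound the left side by $\|d^t\|^2$.} This is where a bound in terms of $\|d^t\|^2$ rather than $\|\vect{g}^t\|^2$ is needed. Since $d^t\in\ker(H_\rho)^\perp$, we have
\[
-(\vect{g}^t)^\top d^t=(d^t)^\top B^t d^t\ge(\lambda^+_{\min}(H^t_\rho)+\|\vect{g}^t\|)\|d^t\|^2\ge\underline{C}\|d^t\|^2,
\]
where $\underline{C}$ comes from Lemma~\ref{lem: uniform lower bound hessian}, valid on $S(\vect{\beta}^0)$ where the iterates stay. It then suffices that
\[
\bigl(\tfrac12-\sigma\bigr)\underline{C}\ge\tfrac12\epsilon_t+\tfrac{L_H}{6}\|d^t\|,
\]
which holds for all large $t$ because the right side tends to $0$ by Steps 1 and 2 and $\sigma<1/2$. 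Since backtracking tries $s=1$ first, $s^t=1$ is accepted.

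The main obstacle is the uniformity in Step 4. The constant $\underline{C}$ must hold for the sparsified $H_\rho$ along the whole tail of the sequence, and the quadratic form must be restricted to $\ker(H_\rho)^\perp$ so that the kernel directions of $H_\rho$ do not weaken the bound. Lemma~\ref{lem: uniform lower bound hessian}, together with the fact that the $i^\star$ row of $P_{k,\rho}$ is kept intact, provides both, with the constant depending only on the compact set $S(\vect{\beta}^0)$.
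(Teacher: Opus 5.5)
Your proposal is correct and follows essentially the same argument as the paper: a cubic Taylor bound from Lemma~\ref{lem: global lipschitz hessian}, substitution of the shifted Newton system, the restricted lower bound $\underline{C}$ from Lemma~\ref{lem: uniform lower bound hessian}, and the $O(\|\vect{g}^t\|)$ Hessian-approximation error from Theorem~\ref{thm: hessian diff}, with every error term vanishing as $\|\vect{g}^t\|\to 0$. The only differences are cosmetic: you absorb the shift $\|\vect{g}^t\|I$ into the error $\|B^t-H^t\|$, which discards its favorable sign harmlessly, and you control the cubic term by $\Delta\vect{\beta}^t\to 0$, whereas the paper keeps $-(1-\sigma)\|\vect{g}^t\|\|\Delta\vect{\beta}^t\|^2$ as a separate term and bounds the cubic term through $\|\Delta\vect{\beta}^t\|\le\|\vect{g}^t\|/\underline{C}$.
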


\begin{proof}
Since the exact Hessian $H(\vect{\beta})$ is globally Lipschitz continuous with the constant $L_H$ in Lemma~\ref{lem: global lipschitz hessian}, applying the standard second-order Taylor expansion bound for functions with Lipschitz continuous Hessians (e.g., \cite[Lemma 1.2.4]{nesterov2018lectures}), we have:
\begin{equation*}
    \mathcal{L}(\vect{\beta}^t + \Delta\vect{\beta}^t) - \mathcal{L}(\vect{\beta}^t) - (\vect{g}^t)^\top \Delta\vect{\beta}^t - \frac{1}{2}(\Delta\vect{\beta}^t)^\top H^t \Delta\vect{\beta}^t \le \frac{L_H}{6} \|\Delta\vect{\beta}^t\|^3.
\end{equation*}

To demonstrate that the full step $s^t = 1$ is accepted, we define the Armijo difference $D_{\text{Armijo}}$ and must show that $D_{\text{Armijo}} \le 0$:
\begin{align*}
    D_{\text{Armijo}} &:= \mathcal{L}(\vect{\beta}^t + \Delta\vect{\beta}^t) - \mathcal{L}(\vect{\beta}^t) - \sigma (\vect{g}^t)^\top \Delta\vect{\beta}^t \\
    &\le (1-\sigma)(\vect{g}^t)^\top \Delta\vect{\beta}^t + \frac{1}{2}(\Delta\vect{\beta}^t)^\top H^t \Delta\vect{\beta}^t + \frac{L_H}{6} \|\Delta\vect{\beta}^t\|^3.
\end{align*}

Recall the regularized Newton system from Algorithm~\ref{alg: NWB} or~\ref{alg: SNWB}: $(H_\rho^t + \|\vect{g}^t\|I)\Delta\vect{\beta}^t = -\vect{g}^t$. Multiplying both sides from the left by $(\Delta\vect{\beta}^t)^\top$ yields:
\begin{equation*}
    (\vect{g}^t)^\top \Delta\vect{\beta}^t = -(\Delta\vect{\beta}^t)^\top H_\rho^t \Delta\vect{\beta}^t - \|\vect{g}^t\| \|\Delta\vect{\beta}^t\|^2.
\end{equation*}

Substituting this relation into the bound for $D_{\text{Armijo}}$, we obtain:
\begin{align}
    D_{\text{Armijo}} &\le -(1-\sigma) \left( (\Delta\vect{\beta}^t)^\top H_\rho^t \Delta\vect{\beta}^t + \|\vect{g}^t\| \|\Delta\vect{\beta}^t\|^2 \right) + \frac{1}{2}(\Delta\vect{\beta}^t)^\top H^t \Delta\vect{\beta}^t + \frac{L_H}{6} \|\Delta\vect{\beta}^t\|^3 \nonumber \\
    &= \frac{1}{2}(\Delta\vect{\beta}^t)^\top (H^t - H_\rho^t) \Delta\vect{\beta}^t - \left(\frac{1}{2} - \sigma\right)(\Delta\vect{\beta}^t)^\top H_\rho^t \Delta\vect{\beta}^t \nonumber \\
    &\quad - (1-\sigma)\|\vect{g}^t\| \|\Delta\vect{\beta}^t\|^2 + \frac{L_H}{6} \|\Delta\vect{\beta}^t\|^3. \label{eq:armijo_bound_2}
\end{align}

We now systematically bound each term in this inequality. First, from Theorem~\ref{thm: hessian diff} and the algorithmic updating rule $\rho = C_\rho \|\vect{g}^t\|$, the Hessian approximation error is bounded by:
\begin{equation*}
    \|H^t - H_\rho^t\| \le 8mn\rho \left(\max_k w_k\right) \left(\frac{1}{\tau} + \frac{1}{\eta}\right) = C_E \|\vect{g}^t\|,
\end{equation*}
where $C_E \coloneqq 8mn C_\rho (\max_k w_k)(\frac{1}{\tau} + \frac{1}{\eta})$ is a strictly positive constant. Thus, the first term is bounded above by $\frac{1}{2} C_E \|\vect{g}^t\| \|\Delta\vect{\beta}^t\|^2$.

Second, from Lemma~\ref{lem: uniform lower bound hessian}, we established the existence of a uniform lower bound $\underline{C}>0$ such that $\lambda_{\min}^+(H_\rho) \ge \underline{C}$ for any $\vect{\beta} \in S(\vect{\beta}^0)$. Since the entire sequence of iterates $\{\vect{\beta}^t\}_{t \ge 0}$ remains confined to $S(\vect{\beta}^0)$ and natively resides in $\ker(H_\rho)^\perp$, we have the quadratic lower bound $(\Delta\vect{\beta}^t)^\top H_\rho^t \Delta\vect{\beta}^t \ge \underline{C} \|\Delta\vect{\beta}^t\|^2$. Furthermore, the proof of Lemma~\ref{lem: backtracking} provides the norm bound $\|\Delta\vect{\beta}^t\| \le \frac{1}{\underline{C}} \|\vect{g}^t\|$. 

Substituting these structural bounds back into the inequality for $D_{\text{Armijo}}$ yields:
\begin{align*}
    D_{\text{Armijo}} &\le \frac{1}{2} C_E \|\vect{g}^t\| \|\Delta\vect{\beta}^t\|^2 - \left(\frac{1}{2} - \sigma\right)\underline{C} \|\Delta\vect{\beta}^t\|^2 - (1-\sigma)\|\vect{g}^t\| \|\Delta\vect{\beta}^t\|^2 + \frac{L_H}{6\underline{C}} \|\vect{g}^t\| \|\Delta\vect{\beta}^t\|^2 \\
    &= \left[ \left( \frac{C_E}{2} - (1-\sigma) + \frac{L_H}{6\underline{C}} \right) \|\vect{g}^t\| - \left(\frac{1}{2} - \sigma\right)\underline{C} \right] \|\Delta\vect{\beta}^t\|^2 
\end{align*}

Because $\|\vect{g}^t\| \to 0$ as $t \to \infty$, the coefficient enclosed in the square brackets approaches $-\left(\frac{1}{2} - \sigma\right)\underline{C}$. Given that the Armijo parameter is restricted to $\sigma \in (0, 1/2)$ and the uniform lower bound satisfies $\underline{C} > 0$, this asymptotic limit is strictly negative. Therefore, for all sufficiently large $t$, the term in the brackets becomes strictly less than zero, implying $D_{\text{Armijo}} \le 0$. This guarantees that the Armijo sufficient decrease condition is strictly satisfied with $s^t = 1$, completing the proof.
\end{proof}

We now show the local quadratic convergence of the proposed algorithms. 

\begin{proof}[Proof of Theorem~\ref{thm: local_quadratic}]
From Lemma~\ref{lem: global lipschitz hessian}, the exact Hessian $H(\vect{\beta})$ is globally Lipschitz continuous with the constant $L_H$, we have the following bound in the neighborhood of the optimum:
\begin{equation*}
    \|\vect{g}^t - \vect{g}^\star - H^t(\vect{\beta}^t - \vect{\beta}^\star)\| \le \frac{L_H}{2} \|\vect{\beta}^t - \vect{\beta}^\star\|^2,
\end{equation*}
where $\vect{g}^\star = \nabla \lyp(\vect{\beta}^\star)$.

By Theorem~\ref{thm: the minimum and maximum eigenvalues of the hessian}, the maximum eigenvalue of the exact Hessian is globally bounded by $L = (\max_k w_k)(\frac{1}{2\eta} + \frac{1}{\tau})$. This implies that the gradient is $L$-Lipschitz continuous. Since the first-order optimality condition yields $\vect{g}^\star = \0_{mK}$, we obtain the error bound:
\begin{equation*}
    \|\vect{g}^t\| = \|\vect{g}^t - \vect{g}^\star\| \le L \|\vect{\beta}^t - \vect{\beta}^\star\|.
\end{equation*}

For notational convenience, let $B_\rho^t = H_\rho^t + \|\vect{g}^t\|I$ denote the regularized sparsified Hessian utilized in Algorithm~\ref{alg: NWB} or~\ref{alg: SNWB}. Assuming the iterates are sufficiently close to the optimum such that the full Newton step $s^t = 1$ is accepted, we evaluate the distance to the optimum at the subsequent step:
\begin{align*}
    \|\vect{\beta}^{t+1} - \vect{\beta}^\star\| &= \|\vect{\beta}^t + \Delta\vect{\beta}^t - \vect{\beta}^\star\| \\
    &= \|\vect{\beta}^t - (B_\rho^t)^{-1}\vect{g}^t - \vect{\beta}^\star\| \\
    &= \|(B_\rho^t)^{-1} [ B_\rho^t(\vect{\beta}^t - \vect{\beta}^\star) - \vect{g}^t ]\|. 
\end{align*}

By Lemma~\ref{lem: sequence restrict in the subspace}, we know that $\vect{\beta}^t, \vect{\beta}^\star, \vect{g}^t \in \ker(H_\rho)^\perp$ and it follows that $B_\rho^t(\vect{\beta}^t - \vect{\beta}^\star) - \vect{g}^t \in \ker(H_\rho)^\perp$. As established in the proof of Lemma~\ref{lem: backtracking}, the restricted spectral norm of $(B_\rho^t)^{-1}$ on this invariant subspace yields the following bound:
\begin{equation*}
    \|(B_\rho^t)^{-1} [ B_\rho^t(\vect{\beta}^t - \vect{\beta}^\star) - \vect{g}^t ]\| \leq \frac{1}{\lambda_{\min}^+(H_\rho^t) + \|\vect{g}^t\|}\|B_\rho^t(\vect{\beta}^t - \vect{\beta}^\star) - \vect{g}^t\| \leq \frac{1}{\underline{C}}\|B_\rho^t(\vect{\beta}^t - \vect{\beta}^\star) - \vect{g}^t\|.
\end{equation*}

By injecting the optimality condition $\vect{g}^\star = \0_{mK}$, we can algebraically rewrite the inner norm term as:
\begin{equation*}
    \|B_\rho^t(\vect{\beta}^t - \vect{\beta}^\star) - \vect{g}^t\| = \|\vect{g}^t - \vect{g}^\star - H^t(\vect{\beta}^t - \vect{\beta}^\star) + (H^t - B_\rho^t)(\vect{\beta}^t - \vect{\beta}^\star)\|.
\end{equation*}

Next, we bound the approximation error between the exact Hessian $H^t$ and the shifted sparsified Hessian $B_\rho^t$. Using the algorithmic updating rule $\rho = C_\rho \|\vect{g}^t\|$ and the structural error bound from Theorem~\ref{thm: hessian diff}, we have $\|H^t - H_\rho^t\| \le C_E \|\vect{g}^t\|$, where $C_E = 8mn C_\rho (\max_k w_k)(\frac{1}{\tau} + \frac{1}{\eta})$. Therefore:
\begin{align*}
    \|H^t - B_\rho^t\| &= \|H^t - (H_\rho^t + \|\vect{g}^t\|I)\| \\
    &\le \|H^t - H_\rho^t\| + \|\vect{g}^t\| \\
    &\le C_E \|\vect{g}^t\| + \|\vect{g}^t\| \\
    &= (C_E + 1)\|\vect{g}^t\| \\
    &\le (C_E + 1)L \|\vect{\beta}^t - \vect{\beta}^\star\|.
\end{align*}

Applying the triangle inequality to the decomposed norm and substituting all derived bounds back into the fundamental inequality for $\|\vect{\beta}^{t+1} - \vect{\beta}^\star\|$, we obtain:
\begin{align*}
    \|\vect{\beta}^{t+1} - \vect{\beta}^\star\| &\le \frac{1}{\underline{C}} \left( \|\vect{g}^t - \vect{g}^\star - H^t(\vect{\beta}^t - \vect{\beta}^\star)\| + \|H^t - B_\rho^t\| \cdot \|\vect{\beta}^t - \vect{\beta}^\star\| \right) \\
    &\le \frac{1}{\underline{C}} \left( \frac{L_H}{2} \|\vect{\beta}^t - \vect{\beta}^\star\|^2 + (C_E + 1)L \|\vect{\beta}^t - \vect{\beta}^\star\|^2 \right) \\
    &= \frac{1}{\underline{C}} \left( \frac{L_H}{2} + (C_E + 1)L \right) \|\vect{\beta}^t - \vect{\beta}^\star\|^2.
\end{align*}

By defining the uniform constant $M = \frac{1}{\underline{C}} \left( \frac{L_H}{2} + (C_E + 1)L \right)$, we obtain the final bound $\|\vect{\beta}^{t+1} - \vect{\beta}^\star\| \le M \|\vect{\beta}^t - \vect{\beta}^\star\|^2$. This successfully demonstrates that the iterates achieve local quadratic convergence.
\end{proof}

\subsection{Proof of Proposition~\ref{prop: phase_transition_complexity}}
We give a more formal theorem to prove the proposition~\ref{prop: phase_transition_complexity}.
\begin{theorem}
Let $\{\vect{\beta}^t\}_{t\geq 0}$ be generated by Algorithm~\ref{alg: NWB} or~\ref{alg: SNWB}, the Armijo parameter satisfies $\sigma \in (0, 1/2)$ and let
$\underline{\mathcal{L}}:=\mathcal{L}(\vect{\beta}^\star)$ be the global minimum. Define
$C_A \coloneqq \frac{C_E}{2}-(1-\sigma)+\frac{L_H}{6\underline{C}}$.

If $C_A\leq 0$, then the Armijo line search accepts the full Newton step, i.e., $s^t=1$, for all $t\geq 0$.

If $C_A>0$, define
\begin{equation*}
    \epsilon_{\mathrm{phase}} \coloneqq
    \min\left\{
    \epsilon_{\text{Newton}}, \ \epsilon_{\text{contract}}
    \right\}, \quad  \text{where}\ \epsilon_{\text{Newton}} \coloneqq \frac{\left(\frac{1}{2} - \sigma\right)\underline{C}}{C_A}, \qquad
    \epsilon_{\text{contract}} \coloneqq \frac{\underline{C}^2}{LM}.
\end{equation*}
Then, once $\|\vect{g}^t\|\leq \epsilon_{\mathrm{phase}}$, all subsequent iterations accept the full Newton step and enter the local quadratic convergence regime. Moreover, the number of iterations before this phase is reached is bounded by
\begin{equation*}
    T_{\mathrm{global}}
    \leq
    \left\lfloor
    \frac{
    \big(\mathcal{L}(\vect{\beta}^0)-\underline{\mathcal{L}}\big)(L+G_{\max})
    }{
    \sigma s_{\min}\epsilon_{\mathrm{phase}}^2
    }
    \right\rfloor .
\end{equation*}
\end{theorem}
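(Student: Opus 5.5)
The plan is to reuse the Armijo estimate from the proof of Lemma~\ref{lem: full_step} essentially verbatim and then count damped iterations via the sufficient-decrease inequality from the proof of Theorem~\ref{thm: convergence}. Recall that Lemma~\ref{lem: full_step} gives, for every iterate in $S(\vect{\beta}^0)$,
\begin{equation*}
D_{\text{Armijo}} \le \Big[ C_A \|\vect{g}^t\| - \big(\tfrac{1}{2}-\sigma\big)\underline{C} \Big] \|\Delta\vect{\beta}^t\|^2 .
\end{equation*}
That argument used only the uniform bounds $\underline{C}$, $C_E$, $L_H$ and $\|\Delta\vect{\beta}^t\|\le \|\vect{g}^t\|/\underline{C}$, valid on the whole sublevel set, and not the asymptotic fact $\|\vect{g}^t\|\to 0$. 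So I would first observe that if $C_A\le 0$ the bracket is at most $-(\frac12-\sigma)\underline{C}<0$ for every $t$, and $s^t=1$ is accepted from the start.

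For $C_A>0$, the bracket is nonpositive as soon as $\|\vect{g}^t\|\le \epsilon_{\text{Newton}}$, so the full step is accepted at any such iterate. The issue is to make this \emph{persistent}: I must show that the next gradient is again below $\epsilon_{\mathrm{phase}}$. For this I would use the contraction estimate from the proof of Theorem~\ref{thm: local_quadratic}. With $s^t=1$,
\begin{equation*}
\|\vect{\beta}^{t+1}-\vect{\beta}^\star\|\le M\|\vect{\beta}^t-\vect{\beta}^\star\|^2 .
\end{equation*}
Strong convexity on the restricted subspace gives $\|\vect{\beta}^t-\vect{\beta}^\star\|\le \|\vect{g}^t\|/\underline{C}$, and $L$-smoothness gives $\|\vect{g}^{t+1}\|\le L\|\vect{\beta}^{t+1}-\vect{\beta}^\star\|$. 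Chaining these,
\begin{equation*}
\|\vect{g}^{t+1}\|\le \frac{LM}{\underline{C}^2}\|\vect{g}^t\|^2 \le \|\vect{g}^t\| \quad\text{whenever } \|\vect{g}^t\|\le \epsilon_{\text{contract}} .
\end{equation*}
Hence the condition $\|\vect{g}^t\|\le\epsilon_{\mathrm{phase}}$ propagates by induction, every later step is a full step, and Theorem~\ref{thm: local_quadratic} applies. (If needed, strict contraction below $\epsilon_{\text{contract}}$ can be had by shrinking the threshold by a constant factor, but the stated $\le$ suffices for invariance.)

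For the iteration count, I would use the per-step decrease established in the proof of Theorem~\ref{thm: convergence}:
\begin{equation*}
\mathcal{L}(\vect{\beta}^{t+1})\le \mathcal{L}(\vect{\beta}^t)-\frac{\sigma s_{\min}}{L+G_{\max}}\|\vect{g}^t\|^2 .
\end{equation*}
Suppose the first $T$ iterates all satisfy $\|\vect{g}^t\|>\epsilon_{\mathrm{phase}}$. Telescoping gives
\begin{equation*}
T\,\sigma s_{\min}\,\epsilon_{\mathrm{phase}}^2/(L+G_{\max}) < \mathcal{L}(\vect{\beta}^0)-\underline{\mathcal{L}},
\end{equation*}
which yields the floor bound on $T_{\mathrm{global}}$.

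The main obstacle is the invariance step. It requires the Newton-phase constants $M$, $\underline{C}$, $L$ to be uniform over $S(\vect{\beta}^0)$; they are, since the iterates stay there by monotone descent and Lemma~\ref{lem: sequence restrict in the subspace}. It also requires that the quadratic-contraction inequality of Theorem~\ref{thm: local_quadratic} needs only $s^t=1$ and not an additional closeness assumption. I would check that its proof indeed uses only global Lipschitz bounds, namely Lemma~\ref{lem: global lipschitz hessian} and Theorem~\ref{thm: hessian diff}, so that it holds at every iterate accepting the full step.
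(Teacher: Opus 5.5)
Your proposal is correct and follows the paper's proof essentially step for step. Like the paper, you read off the two cases and $\epsilon_{\text{Newton}}$ from the bracket $C_A\|\vect{g}^t\| - (\tfrac12-\sigma)\underline{C}$ in Lemma~\ref{lem: full_step}, obtain $\epsilon_{\text{contract}}$ and invariance by induction from $\|\vect{g}^{t+1}\|\le \frac{LM}{\underline{C}^2}\|\vect{g}^t\|^2$, and bound $T_{\mathrm{global}}$ by telescoping the uniform sufficient-decrease inequality. You also verify explicitly that the Armijo bracket and the quadratic contraction use only constants uniform over $S(\vect{\beta}^0)$, not $\|\vect{g}^t\|\to 0$ or an extra closeness assumption; the paper relies on these facts only implicitly.
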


\begin{proof}
To guarantee that the algorithm permanently transitions into the pure Newton phase, it is sufficient to reach a state where the full Newton step ($s^t = 1$) is unconditionally accepted and the subsequent gradient norm does not increase ($\|\vect{g}^{t+1}\| \le \|\vect{g}^t\|$). This ensures that once the iterates enter this local convergence region, they will not escape back into the global phase where the backtracking line search is required.

First, we revisit the condition for accepting the full Newton step. In the proof of Lemma~\ref{lem: full_step}, we established that the Armijo condition is naturally satisfied for $s^t = 1$ when the difference $D_{\text{Armijo}} \le 0$. We demonstrated this is algebraically guaranteed when:
\begin{equation*}
    C_A \|\vect{g}^t\| - \left(\frac{1}{2} - \sigma\right)\underline{C} \le 0.
\end{equation*}

This leads to two distinct cases. If $C_A \le 0$, the inequality holds trivially for any valid gradient norm because the subtracted term is strictly positive ($\sigma < 1/2$ and $\underline{C} > 0$). In this scenario, the Armijo condition never truncates the step, and the algorithm executes pure Newton steps globally from $t=0$. 

Conversely, if $C_A > 0$, isolating $\|\vect{g}^t\|$ yields the explicit finite threshold $\epsilon_{\text{Newton}}$. Thus, whenever $\|\vect{g}^t\| \le \epsilon_{\text{Newton}}$, the full step $s^t = 1$ is guaranteed to be accepted.

Second, we determine the non-increasing gradient threshold. From Theorem~\ref{thm: local_quadratic}, a full Newton step yields the local quadratic bound $\|\vect{\beta}^{t+1} - \vect{\beta}^\star\| \le M \|\vect{\beta}^t - \vect{\beta}^\star\|^2$. Using the $L$-Lipschitz continuity of the gradient and the strong convexity parameter $\underline{C}$ on the restricted subspace, we relate this distance bound back to the gradient norm:
\begin{equation*}
    \|\vect{g}^{t+1}\| \le L \|\vect{\beta}^{t+1} - \vect{\beta}^\star\| \le LM \|\vect{\beta}^t - \vect{\beta}^\star\|^2 \le \frac{LM}{\underline{C}^2} \|\vect{g}^t\|^2.
\end{equation*}
For the sequence of gradient norms to be non-increasing (i.e., $\|\vect{g}^{t+1}\| \le \|\vect{g}^t\|$), it is sufficient that the coefficient satisfies $\frac{LM}{\underline{C}^2} \|\vect{g}^t\| \le 1$, which algebraically reduces to $\|\vect{g}^t\| \le \frac{\underline{C}^2}{LM} \eqqcolon \epsilon_{\text{contract}}$.

Therefore, assuming $C_A > 0$, if an iterate satisfies the joint condition $\|\vect{g}^t\| \le \min(\epsilon_{\text{Newton}}, \epsilon_{\text{contract}}) \eqqcolon \epsilon_{\text{phase}}$, the algorithm accepts the full step $s^t = 1$ and ensures that $\|\vect{g}^{t+1}\| \le \|\vect{g}^t\| \le \epsilon_{\text{phase}}$. By mathematical induction, all subsequent iterations will remain bounded by this threshold, executing pure Newton steps indefinitely.

Consequently, to mathematically bound the maximum number of iterations $T_{\text{global}}$ prior to this permanent transition, we analyze the worst-case trajectory where the iterates have not yet satisfied this joint sufficient condition, meaning $\|\vect{g}^t\| > \epsilon_{\text{phase}}$. 

We now recall the uniform sufficient decrease condition established in the proof of Theorem~\ref{thm: convergence}:
\begin{equation*}
    \mathcal{L}(\vect{\beta}^t) - \mathcal{L}(\vect{\beta}^{t+1}) \ge \sigma s_{\min} \frac{1}{L + G_{\max}} \|\vect{g}^t\|^2.
\end{equation*}
Coupling this with the condition $\|\vect{g}^t\| > \epsilon_{\text{phase}}$, the objective function is guaranteed to decrease by a strictly bounded minimum amount at every step prior to the guaranteed phase transition:
\begin{equation*}
    \mathcal{L}(\vect{\beta}^t) - \mathcal{L}(\vect{\beta}^{t+1}) > \frac{\sigma s_{\min}}{L + G_{\max}} \epsilon_{\text{phase}}^2 \eqqcolon \Delta_{\min}.
\end{equation*}

Because the objective function evaluates to $\mathcal{L}(\vect{\beta}^0)$ at initialization and is bounded globally from below by $\underline{\mathcal{L}}$, the cumulative decrease over the first $T_{\text{global}}$ iterations cannot exceed $\mathcal{L}(\vect{\beta}^0) - \underline{\mathcal{L}}$. 

Summing the functional decreases over these initial global iterations yields:
\begin{equation*}
    T_{\text{global}} \cdot \Delta_{\min} \le \sum_{t=0}^{T_{\text{global}}-1} (\mathcal{L}(\vect{\beta}^t) - \mathcal{L}(\vect{\beta}^{t+1})) \le \mathcal{L}(\vect{\beta}^0) - \underline{\mathcal{L}}.
\end{equation*}

Rearranging this inequality for $T_{\text{global}}$ and substituting the definition of $\Delta_{\min}$ yields the final bound on the number of iterations required to guarantee reaching the permanent quadratic phase:
\begin{equation*}
    T_{\text{global}} \le \frac{\mathcal{L}(\vect{\beta}^0) - \underline{\mathcal{L}}}{\Delta_{\min}} = \frac{(\mathcal{L}(\vect{\beta}^0) - \underline{\mathcal{L}})(L + G_{\max})}{\sigma s_{\min} \epsilon_{\text{phase}}^2}.
\end{equation*}
Taking the floor of this bound completes the proof.
\end{proof}

We now utilize the established theorem to explicitly derive the asymptotic iteration complexity for the exact Newton method, proving Proposition \ref{prop: phase_transition_complexity}.

\begin{proof}[Proof of Proposition \ref{prop: phase_transition_complexity}]
To evaluate the theoretical bound $T_{\text{global}} \le \mathcal{O}\left( \frac{L + G_{\max}}{s_{\min}\epsilon_{\text{phase}}^2} \right)$, we establish the asymptotic order of the core parameters. 

Assume the barycentric weights satisfy $w_k = \Theta(1/K)$, implying that no single target distribution dominates the barycenter. Thus, both $W_{\max}$ and $W_{\min}$ are $\Theta(1/K)$. This strictly bounds the maximum gradient norm: $G_{\max} = 2\sqrt{\sum_k w_k^2} = \mathcal{O}(1/\sqrt{K}) = \mathcal{O}(1)$.

Setting $m = n$ and assuming the outer regularization scales proportionally to the inner regularization, $\tau = \Theta(\eta)$, the gradient and Hessian Lipschitz constants scale as:
\begin{equation*}
    L = W_{\max}\left(\frac{1}{2\eta} + \frac{1}{\tau}\right) = \mathcal{O}\left(\frac{1}{\eta K}\right).
\end{equation*}
\begin{equation*}
    L_H = 6 \frac{W_{\max}}{\eta^2} + 10 \frac{W_{\max}}{\eta \tau} + \frac{6}{\tau^2} = \mathcal{O}\left(\frac{1}{\eta^2}\right).
\end{equation*}
Notice that the $6/\tau^2$ term in $L_H$ is independent of $K$, making it the dominant term asymptotically. Because $L \to \infty$ as $\eta \to 0$, the gradient bound is dominated, yielding $(L + G_{\max}) = \mathcal{O}(L)$. 

Furthermore, since the target distributions $\vect{\mu}_k$ are fixed prescribed inputs, their minimum positive entry is an $\mathcal{O}(1)$ constant. Thus, the minimum transport probability strictly scales with the maximum exponent variation $\Delta$:
\begin{equation*}
    p_{\min} \ge \min_{i,j} [P_{k,\rho}]_{ij} = \mathcal{O}\left( \exp\left(-\frac{\Delta}{\eta}\right) \right).
\end{equation*}
This establishes the strong convexity parameter $\underline{C} = \frac{1}{\eta} \min_k (w_k p_k^2) = \mathcal{O}\left( \frac{p_{\min}^2}{\eta K} \right)$. 

For the sparse variant, the algorithmic sparsification threshold is defined as $C_\rho = \Theta\left(\frac{\eta}{n^{1.5}}\right)$. The resulting Hessian approximation error $C_E$ evaluates to:
\begin{equation*}
    C_E = 8n^2 C_\rho W_{\max} \left(\frac{1}{\tau} + \frac{1}{\eta}\right) = \mathcal{O}\left( \frac{n^2}{\eta} \left(\frac{\eta}{n^{1.5}}\right) \frac{1}{K} \right) = \mathcal{O}\left( \frac{\sqrt{n}}{K} \right).
\end{equation*}
This approximation error inflates the intermediate contraction constant $M$:
\begin{equation*}
    M = \mathcal{O}\left( \frac{L_H + C_E L}{\underline{C}} \right) = \mathcal{O}\left( \frac{\eta K}{p_{\min}^2} \left[ \frac{1}{\eta^2} + \frac{\sqrt{n}}{\eta K^2} \right] \right) = \mathcal{O}\left( \frac{K^2 + \sqrt{n}\eta}{\eta K p_{\min}^2} \right).
\end{equation*}

The binding threshold for entering the local quadratic phase is governed by $\epsilon_{\text{contract}}$ (which strictly dominates $\epsilon_{\text{Newton}}$ as $p_{\min} \to 0$):
\begin{equation*}
    \epsilon_{\text{phase}} = \epsilon_{\text{contract}} = \frac{\underline{C}^2}{L M} = \mathcal{O}\left( \frac{p_{\min}^4 / (\eta^2 K^2)}{ \frac{1}{\eta K} \cdot \frac{K^2 + \sqrt{n}\eta}{\eta K p_{\min}^2} } \right) = \mathcal{O}\left( \frac{p_{\min}^6}{K^2 + \sqrt{n}\eta} \right).
\end{equation*}
Additionally, utilizing $(L + G_{\max}) = \mathcal{O}(L)$, the minimum acceptable step size scales as:
\begin{equation*}
    s_{\min} = \mathcal{O}\left(\frac{\underline{C}^2}{L^2}\right) = \mathcal{O}\left( \frac{p_{\min}^4 / (\eta^2 K^2)}{1 / (\eta^2 K^2)} \right) = \mathcal{O}\left( p_{\min}^4 \right).
\end{equation*}

Substituting these explicit rates into the global iteration bound yields:
\begin{equation*}
    T_{\text{global}} = \mathcal{O}\left( \frac{L}{s_{\min}\epsilon_{\text{phase}}^2} \right) = \mathcal{O}\left( \frac{1/(\eta K)}{p_{\min}^4 \cdot \frac{p_{\min}^{12}}{(K^2 + \sqrt{n}\eta)^2}} \right) = \mathcal{O}\left( \frac{(K^2 + \sqrt{n}\eta)^2}{\eta K p_{\min}^{16}} \right).
\end{equation*}

To simplify the asymptotic notation, we apply the standard quadratic inequality $(a+b)^2 \le 2(a^2 + b^2)$ to decompose the squared summation, explicitly isolating the exact Newton complexity from the sparsification penalty:
\begin{equation*}
    T_{\text{global}} = \mathcal{O}\left( \frac{K^4 + n\eta^2}{\eta K p_{\min}^{16}} \right) = \mathcal{O}\left( \left( \frac{K^3}{\eta} + \frac{n\eta}{K} \right) \frac{1}{p_{\min}^{16}} \right).
\end{equation*}

Inserting the exponential decay rate of $p_{\min}$, we conclude that the number of iterations sufficient to guarantee entry into the purely quadratic convergence phase is strictly bounded by:
\begin{equation*}
    T_{\text{global}} = \mathcal{O}\left( \left( \frac{K^3}{\eta} + \frac{n\eta}{K} \right) \exp\left(\frac{\Delta}{\eta}\right) \right).
\end{equation*}
\end{proof}

\section{Additional Experiment Details}
\label{app: Additional Experiment Details}

\subsection{Additional Convergence Comparison on Real Data}
\label{app:real-data-residual}

In this subsection, we present an additional comparison on the MNIST and Fashion-MNIST datasets using the residual as the convergence metric. Specifically, we define the residual by

$$
\text{Residual} := \max_k\|X_k\1 - \vect{v}\| + \max_k \|X_k^\top \1 - \vect{\mu}_k\|. 
$$
This quantity measures the violation of the marginal constraints and therefore provides a direct indicator of how close the current iterate is to satisfying the optimality conditions.

Figure~\ref{fig:real-data-residual} shows the decay of the residual with respect to runtime on four representative real-data instances. The results indicate that SNWB consistently reduces the residual faster than the competing methods. Compared with NWB, SNWB attains a similar or faster convergence rate while requiring noticeably less computational time, which highlights the practical advantage of the proposed sparsification strategy.

\begin{figure*}[t]
\centering
\subcaptionbox{Digit ``2''}{
\resizebox*{0.235\textwidth}{!}{\includegraphics{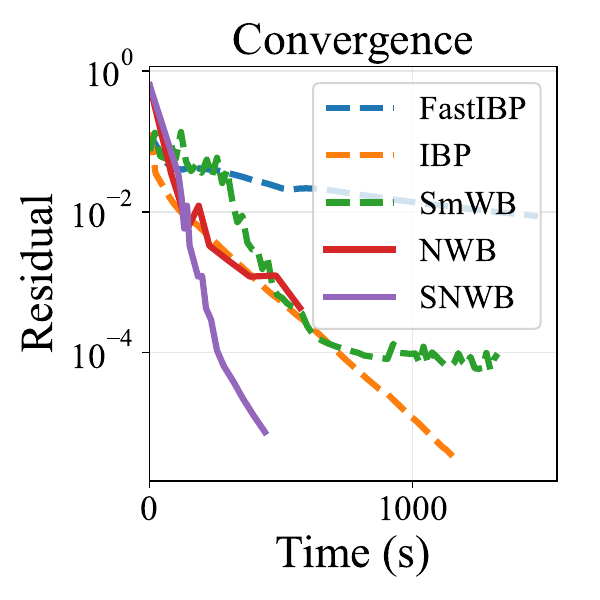}}}
\hfill
\subcaptionbox{Digit ``5''}{
\resizebox*{0.235\textwidth}{!}{\includegraphics{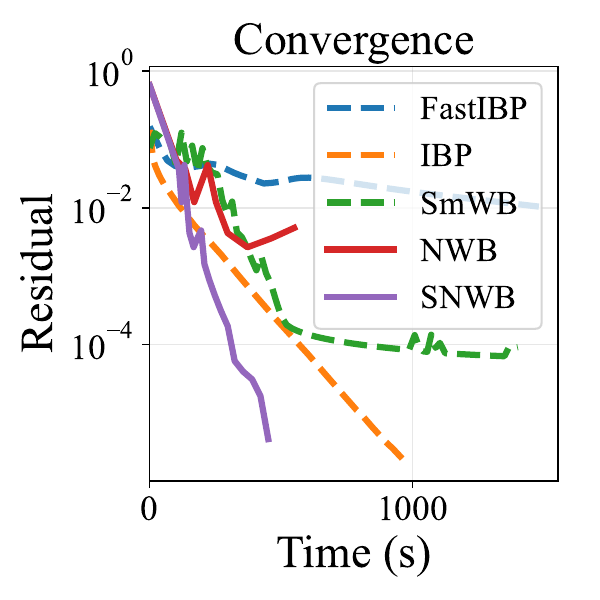}}}
\hfill
\subcaptionbox{Item ``Sneaker''}{
\resizebox*{0.235\textwidth}{!}{\includegraphics{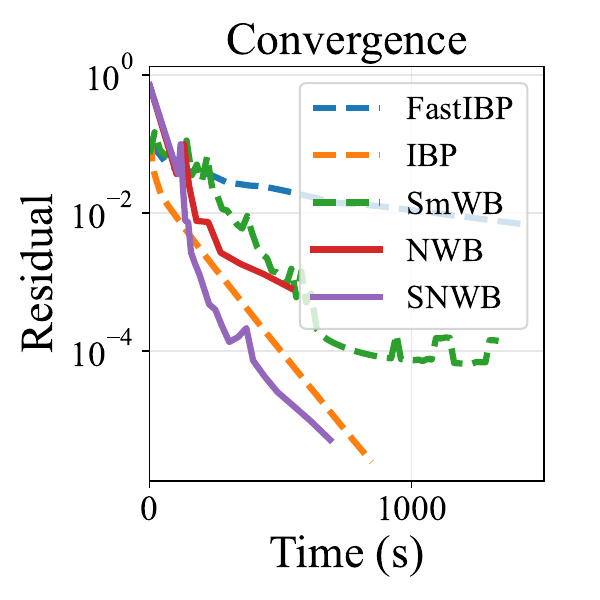}}}
\hfill
\subcaptionbox{Item ``Bag''}{
\resizebox*{0.235\textwidth}{!}{\includegraphics{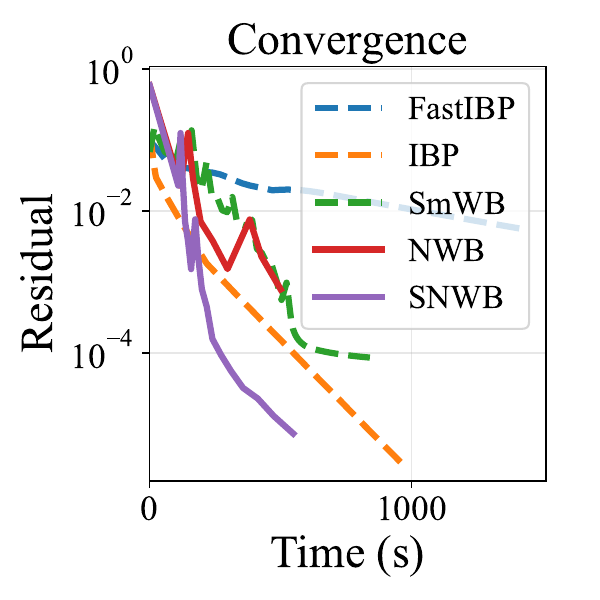}}}

\caption{Residual convergence on MNIST and Fashion-MNIST datasets. The corresponding input images are shown in Figure~\ref{fig:fashionmnist_and_mnist_performance_evaluation}.}
\label{fig:real-data-residual}
\end{figure*}

\subsection{Additional Synthetic Experiments}
\label{app: Additional Synthetic Experiments}

We further evaluate the proposed methods on synthetic Wasserstein barycenter instances following the experimental settings in~\cite{lin2020fixed,yang2021fast}. Each instance consists of $K$ discrete probability distributions in a three-dimensional space, with each distribution supported on $n$ points.

The datasets are generated as follows. For each input distribution, the support points are sampled independently from a five-component Gaussian mixture model. The corresponding probability weights $\{\vect{\mu}_k\}_{k=1}^K$ are sampled from the uniform distribution on $(0,1)$ and then normalized to have unit sum. To construct the fixed support of the barycenter, we aggregate all support points from the $K$ input distributions and apply K-means clustering to the pooled point cloud. The resulting $n$ centroids are used as the barycenter support points. For each $k$, the cost matrix $C_k$ is defined by the squared Euclidean distances between the barycenter support points and the support points of the $k$-th input distribution. The barycentric weights $\{w_k\}_{k=1}^K$ are also sampled from the uniform distribution on $(0,1)$ and normalized so that $\sum_{k=1}^K w_k=1$. In all experiments in this subsection, we set $\tau=\eta$.

The experimental results are shown in Figures~\ref{fig: Performance of different algorithms on synthetic data under 102} and~\ref{fig: Performance of different algorithms on synthetic data under 5104}. Overall, NWB and SNWB consistently achieve faster convergence than the baseline methods across different choices of the regularization parameter. When $\eta$ is relatively large, NWB can be slightly more efficient than SNWB, since the transport probability matrices are less sparse and the benefit of sparsification is less pronounced. As $\eta$ decreases, the transport plans become more concentrated, and the computational advantage of SNWB becomes increasingly evident. These results support the effectiveness of the proposed sparse Newton strategy, particularly in the small-regularization regime.

\begin{figure*}[t]
{
\resizebox*{0.32 \textwidth}{!}{\includegraphics{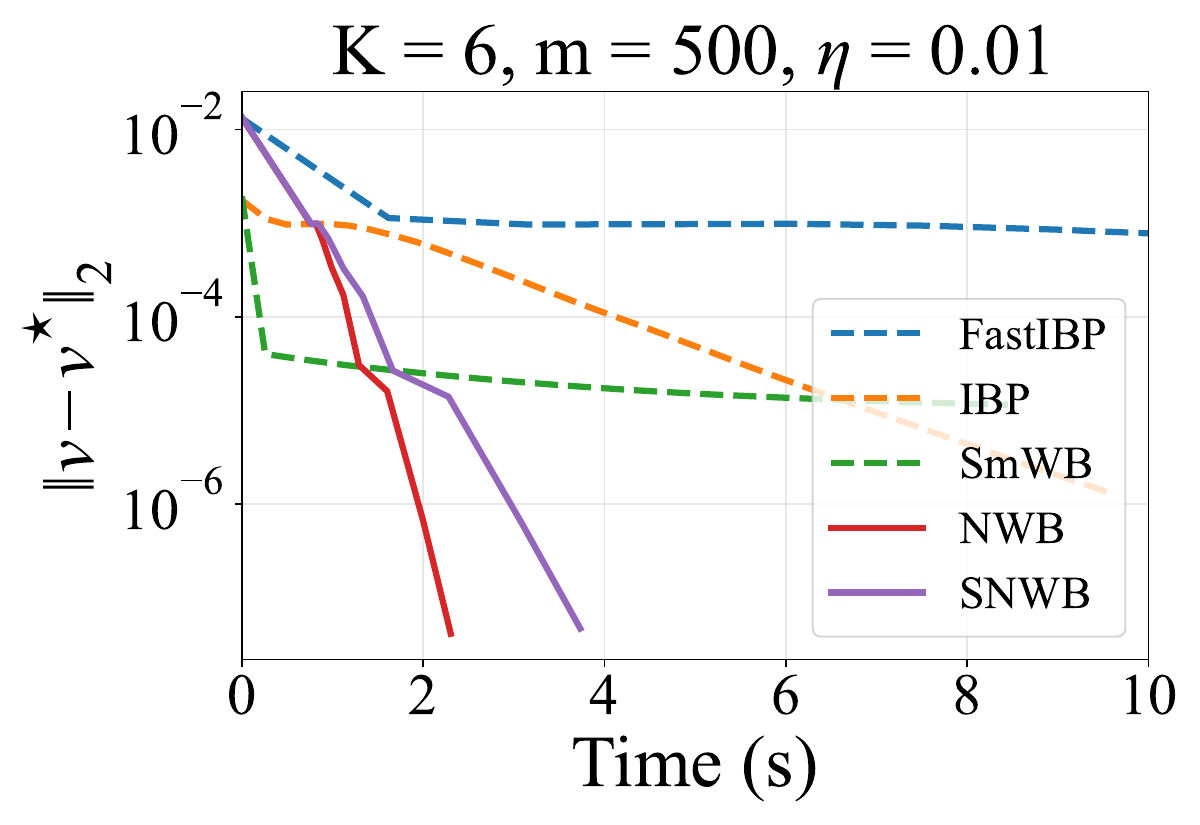}}}\hfill
{
\resizebox*{0.32 \textwidth}{!}{\includegraphics{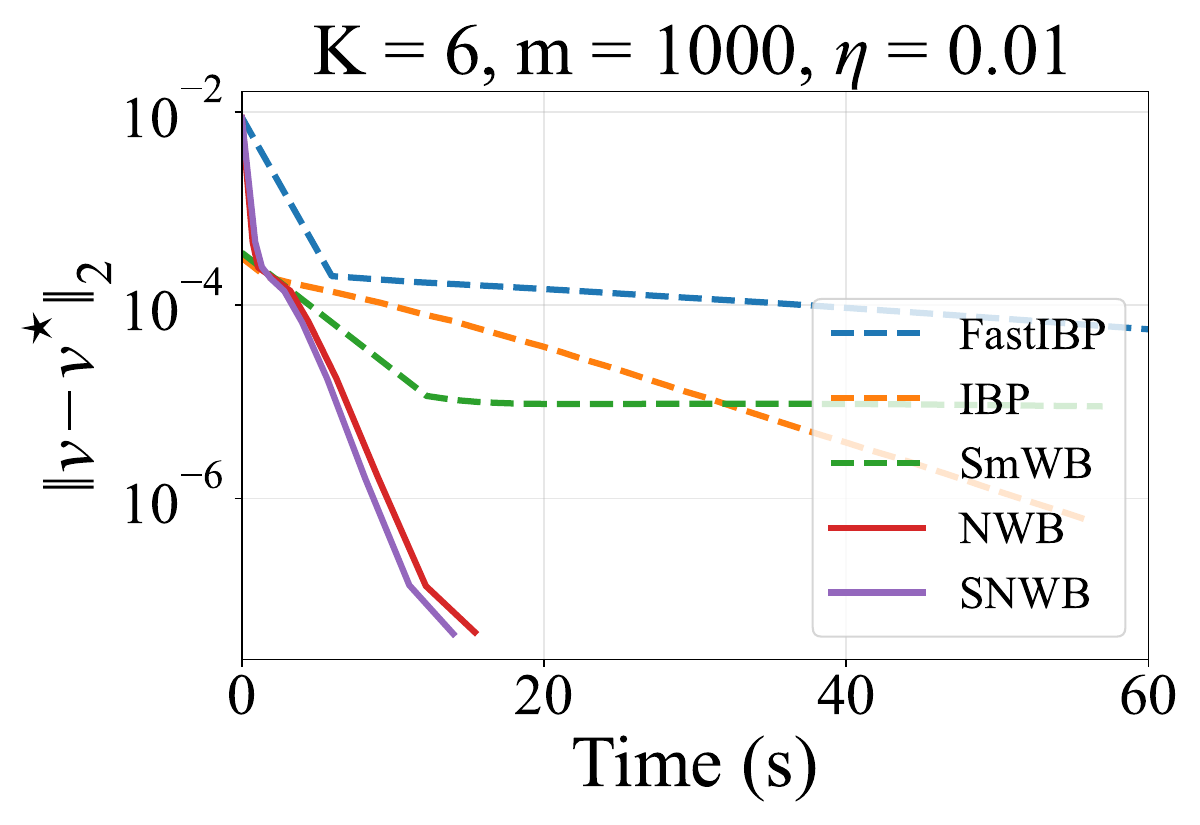}}}\hfill
{
\resizebox*{0.32 \textwidth}{!}{\includegraphics{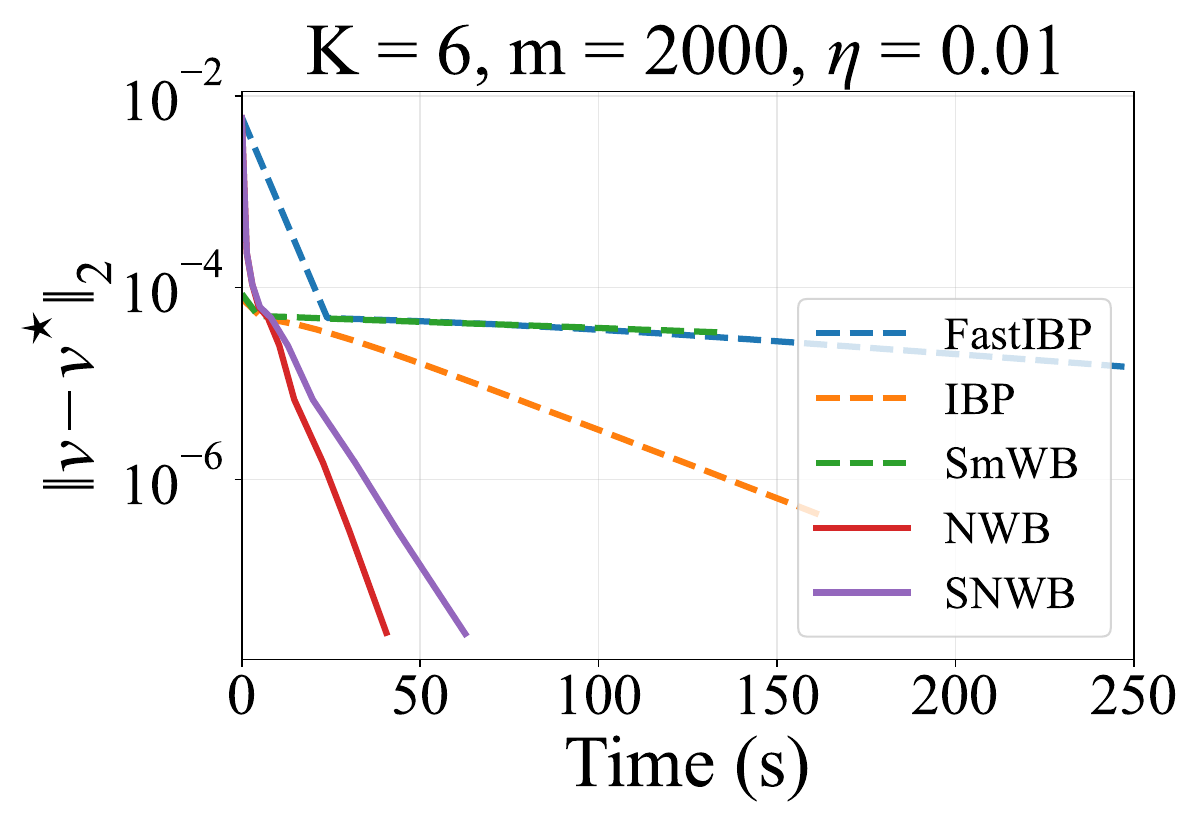}}}
{
\resizebox*{0.32 \textwidth}{!}{\includegraphics{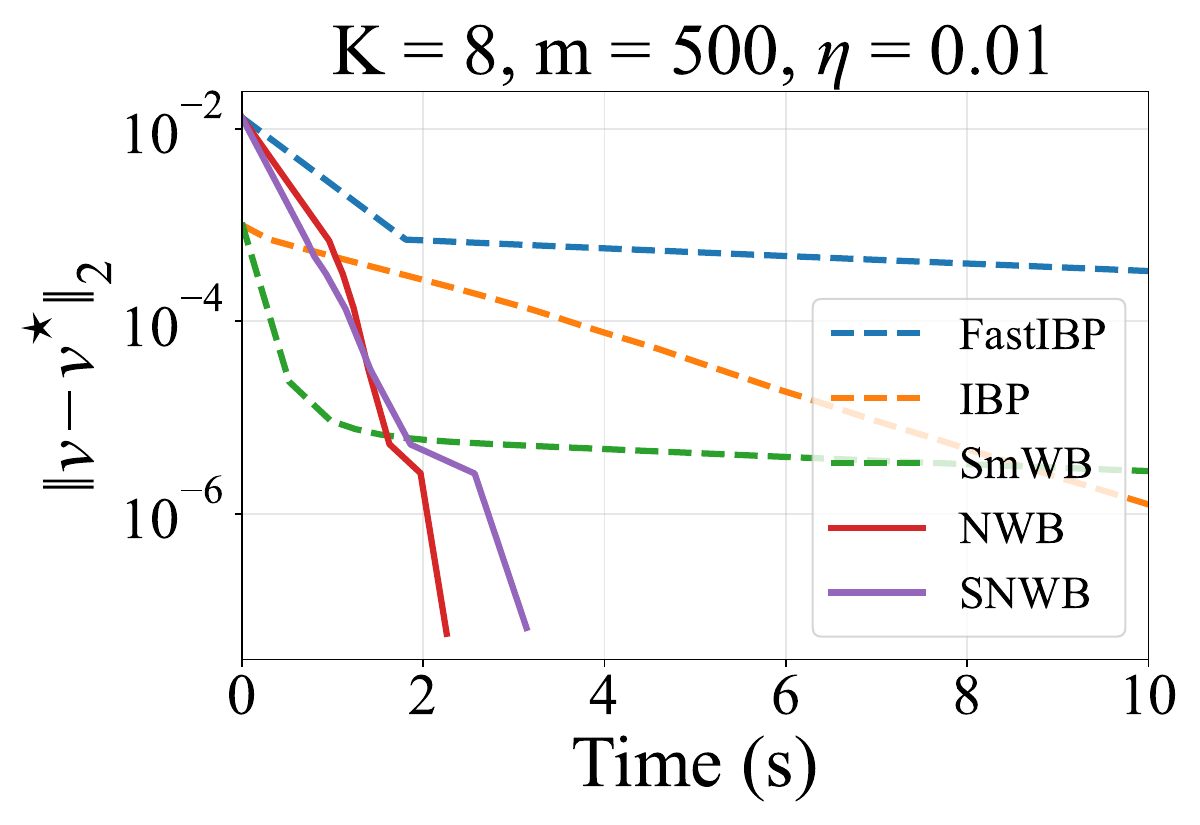}}}\hfill
{
\resizebox*{0.32 \textwidth}{!}{\includegraphics{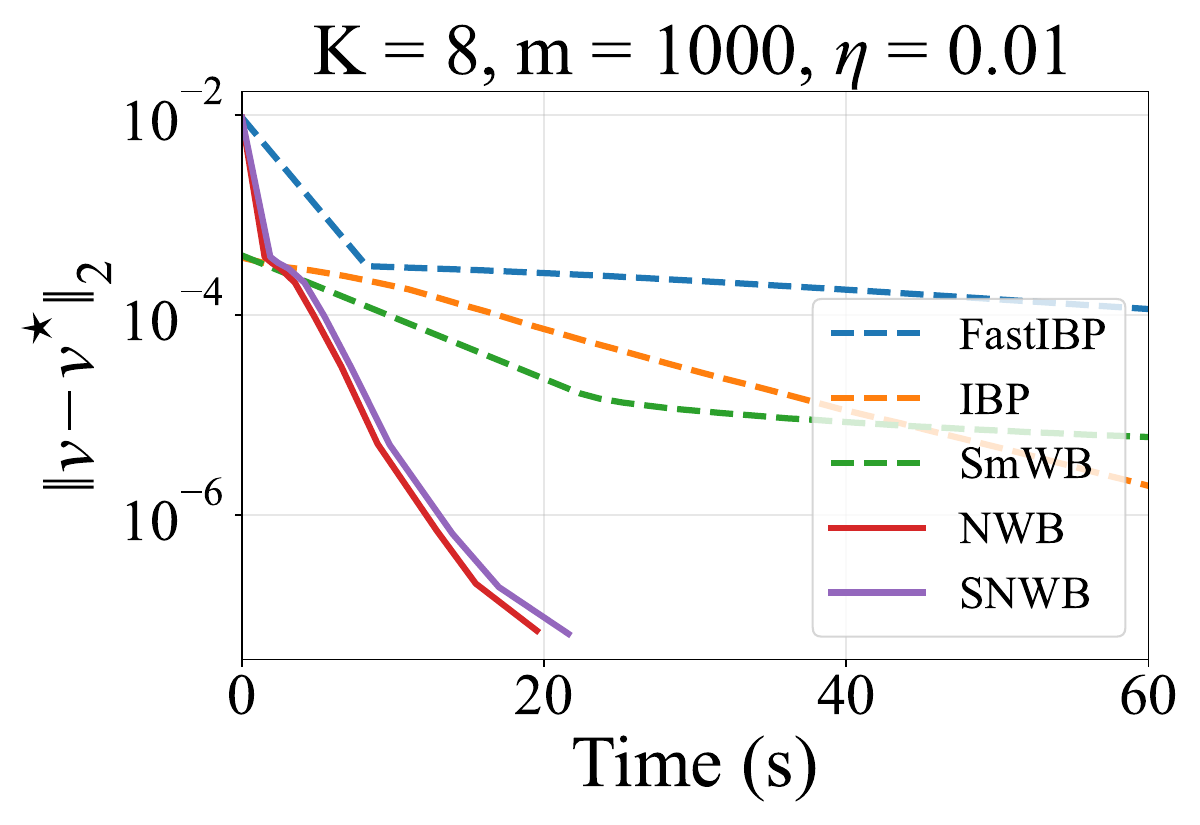}}}\hfill
{
\resizebox*{0.32 \textwidth}{!}{\includegraphics{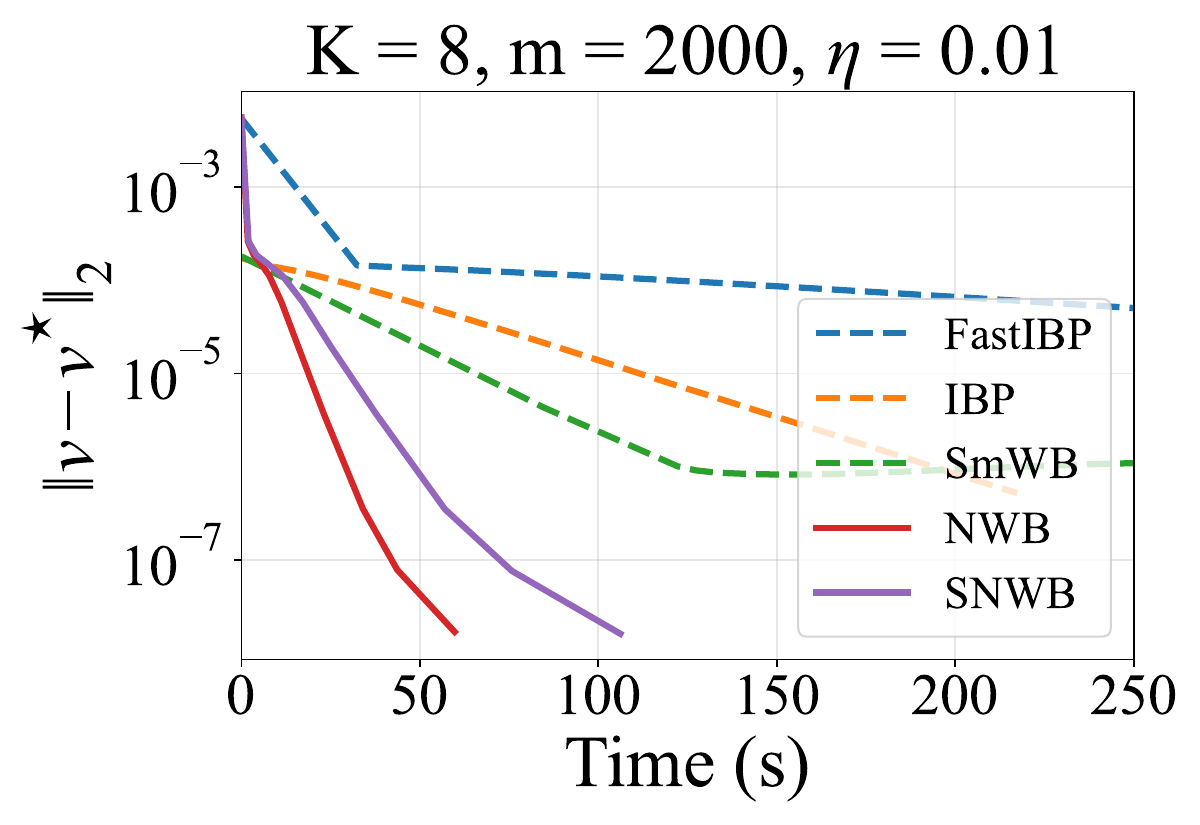}}}
\caption{Performance of different algorithms on synthetic data under $\eta = \tau = 10^{-2}$. Top: $K=6$. Bottom: $K=8$. }
\label{fig: Performance of different algorithms on synthetic data under 102}
\end{figure*}

\begin{figure*}[t]
{
\resizebox*{0.32 \textwidth}{!}{\includegraphics{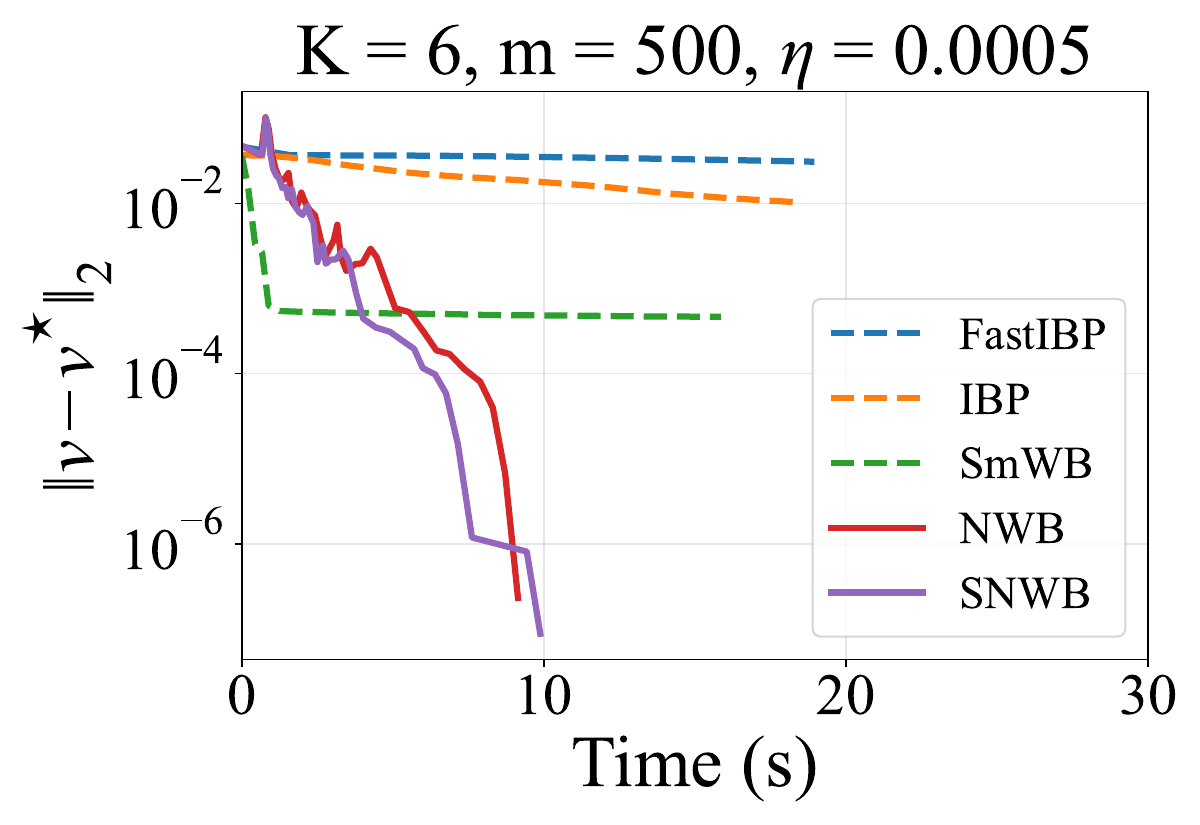}}}\hfill
{
\resizebox*{0.32 \textwidth}{!}{\includegraphics{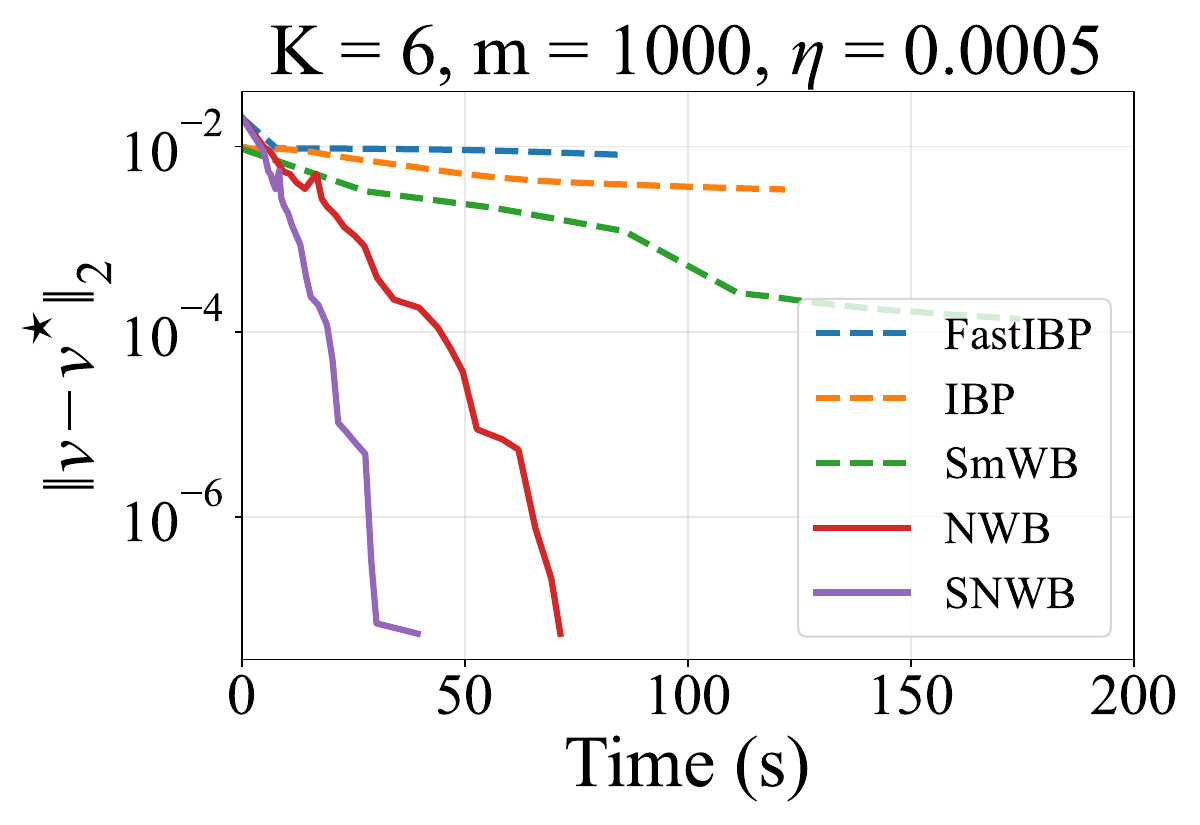}}}\hfill
{
\resizebox*{0.32 \textwidth}{!}{\includegraphics{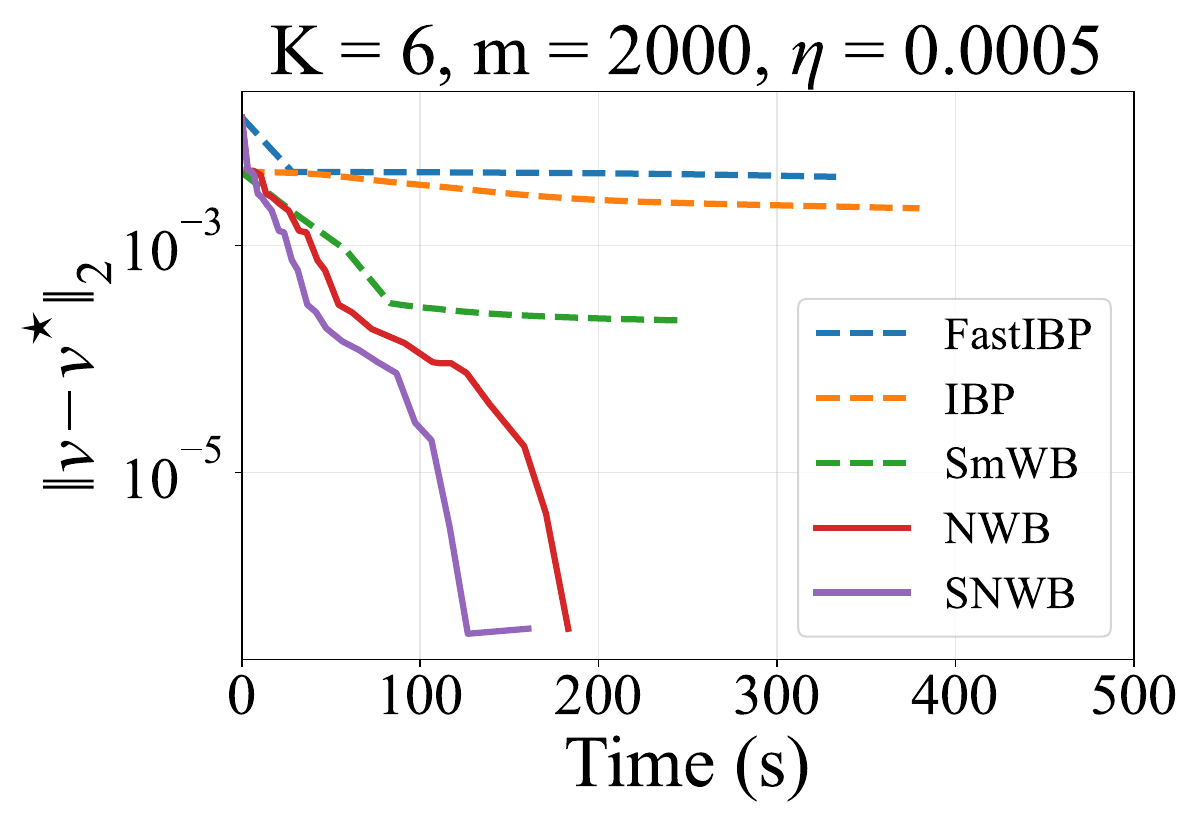}}}
{
\resizebox*{0.32 \textwidth}{!}{\includegraphics{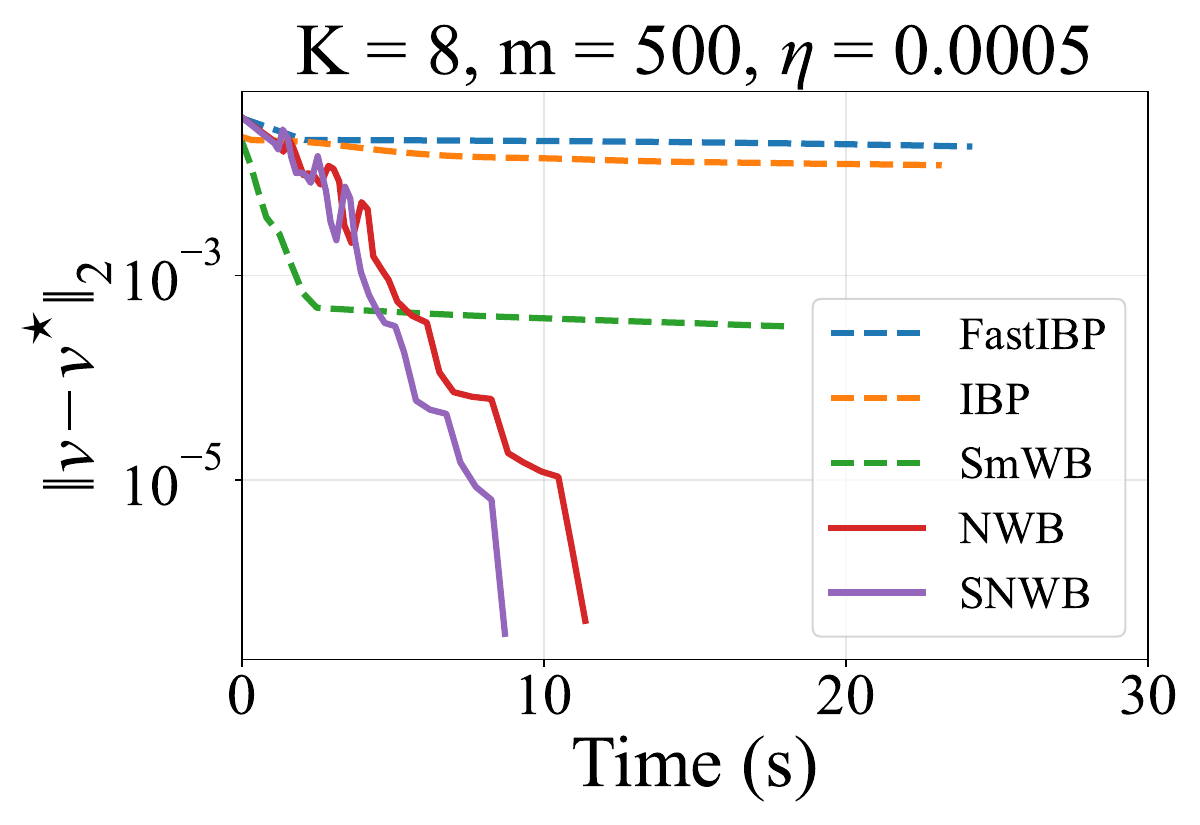}}}\hfill
{
\resizebox*{0.32 \textwidth}{!}{\includegraphics{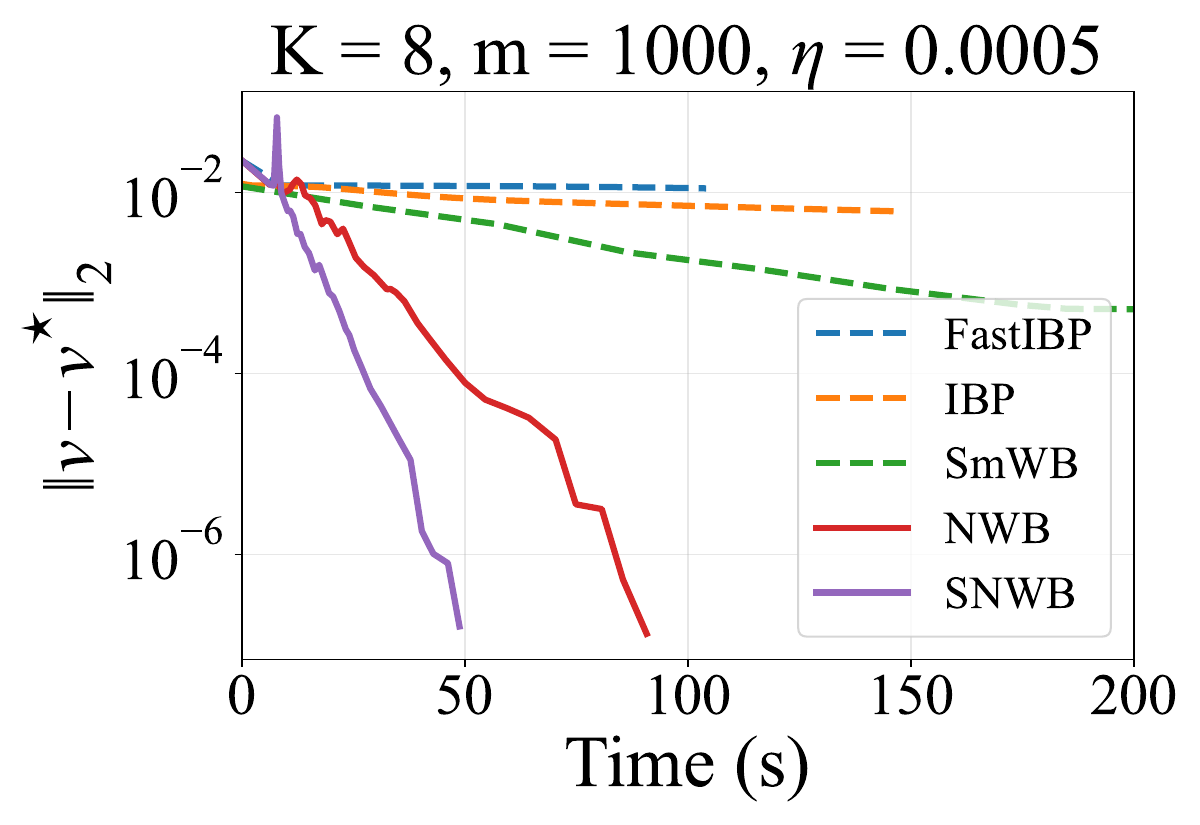}}}\hfill
{
\resizebox*{0.32 \textwidth}{!}{\includegraphics{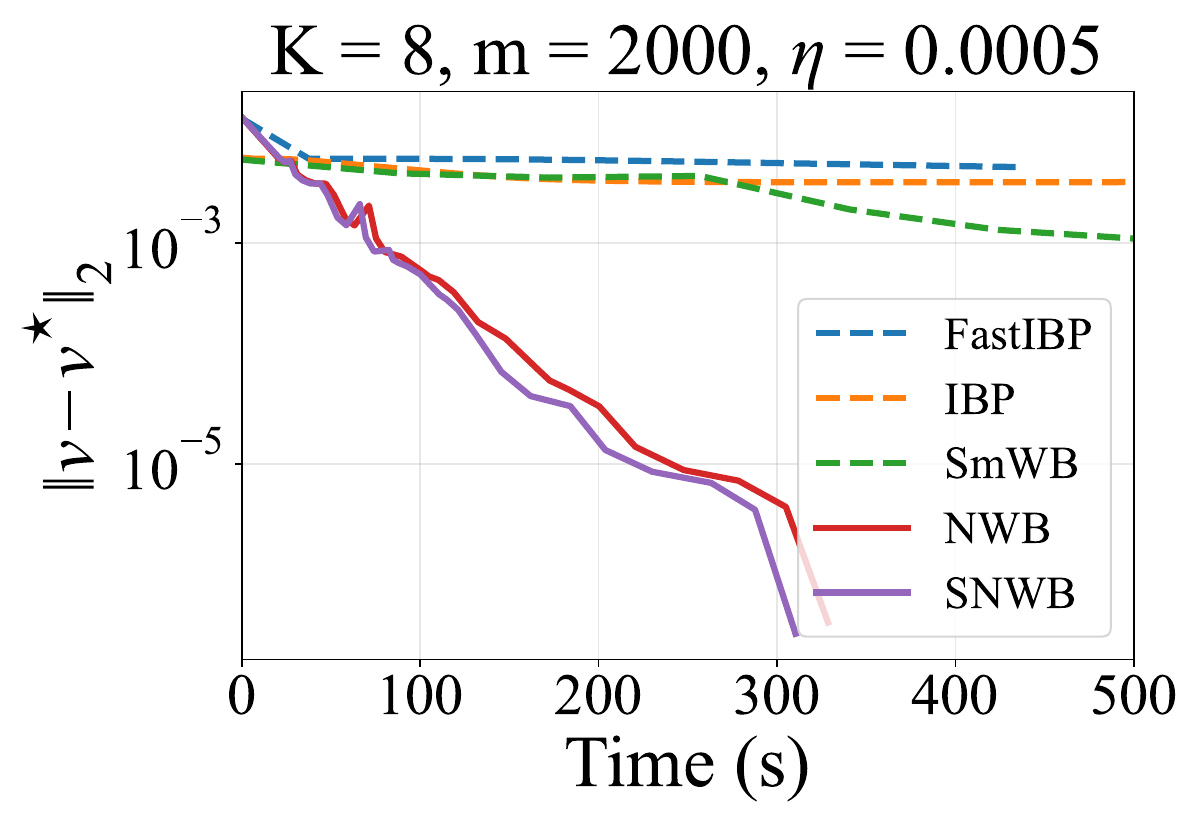}}}
\caption{Performance of different algorithms on synthetic data under $\eta = \tau = 5 \times 10^{-4}$. Top: $K=6$. Bottom: $K=8$. }
\label{fig: Performance of different algorithms on synthetic data under 5104}
\end{figure*}

\subsection{Ablation Study of $\eta$}
\label{app: Ablation study of eta}

We next examine the influence of the regularization parameter on the performance of NWB and SNWB. In this study, we set $\tau=\eta$ and vary $\eta$ on both the synthetic dataset and the MNIST dataset with $K=6$. The synthetic instances are generated following the procedure described in Appendix~\ref{app: Additional Synthetic Experiments}.

The results are reported in Figure~\ref{fig: Impact of the regularization parameter}. Overall, SNWB exhibits a clear advantage over NWB when the regularization parameter is small, particularly for $\eta=\tau=10^{-3}$ and $\eta=\tau=5\times10^{-4}$. This behavior is consistent with the fact that smaller regularization parameters typically lead to more concentrated transport probability matrices, making the sparse approximation used in SNWB more effective. In contrast, when $\eta$ is relatively large, the transport probability matrices are denser, and the benefit of sparsification becomes less pronounced. We further examine the effect of the sparsity level in Appendix~\ref{app: Ablation study of Sparsity Level}. In addition, both NWB and SNWB generally require more time to reach the prescribed tolerance as $\eta$ decreases, reflecting the increased numerical difficulty of the problem in the small-regularization regime.

\begin{figure*}[t]
{
\resizebox*{0.32 \textwidth}{!}{\includegraphics{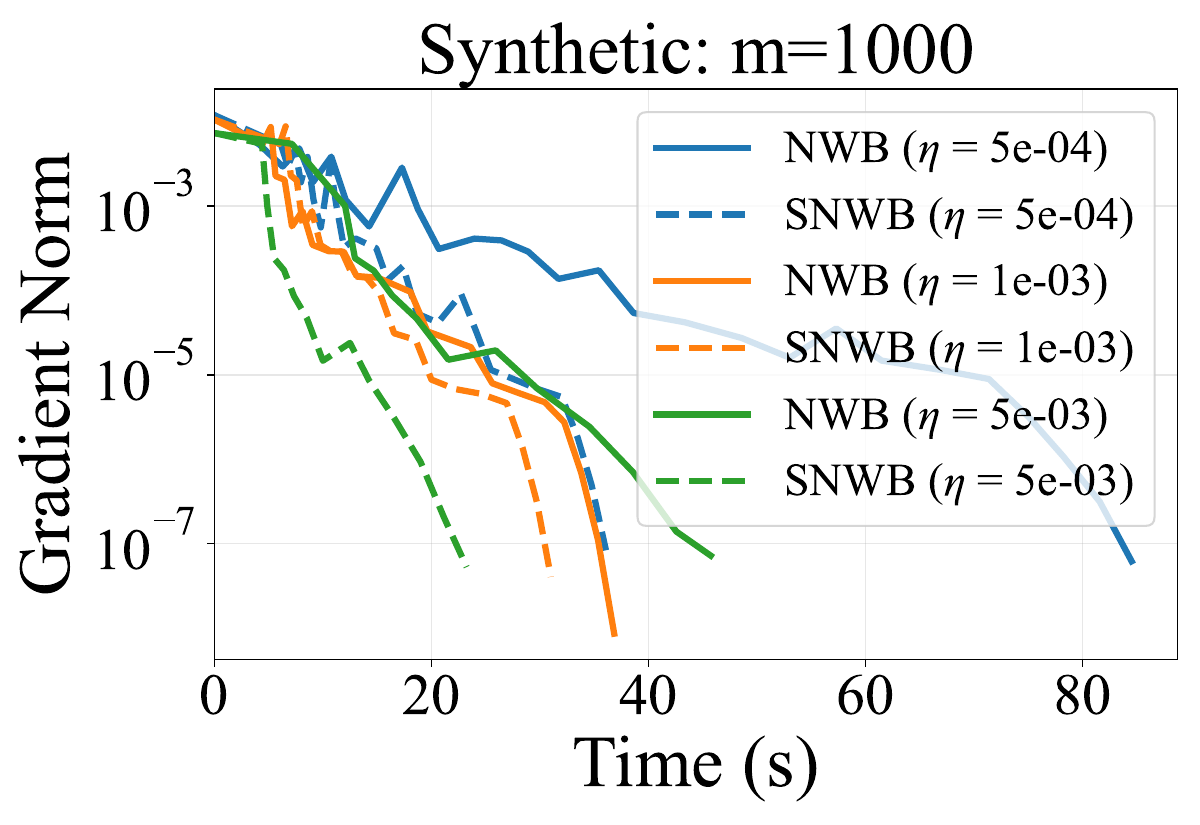}}}\hfill
{
\resizebox*{0.32 \textwidth}{!}{\includegraphics{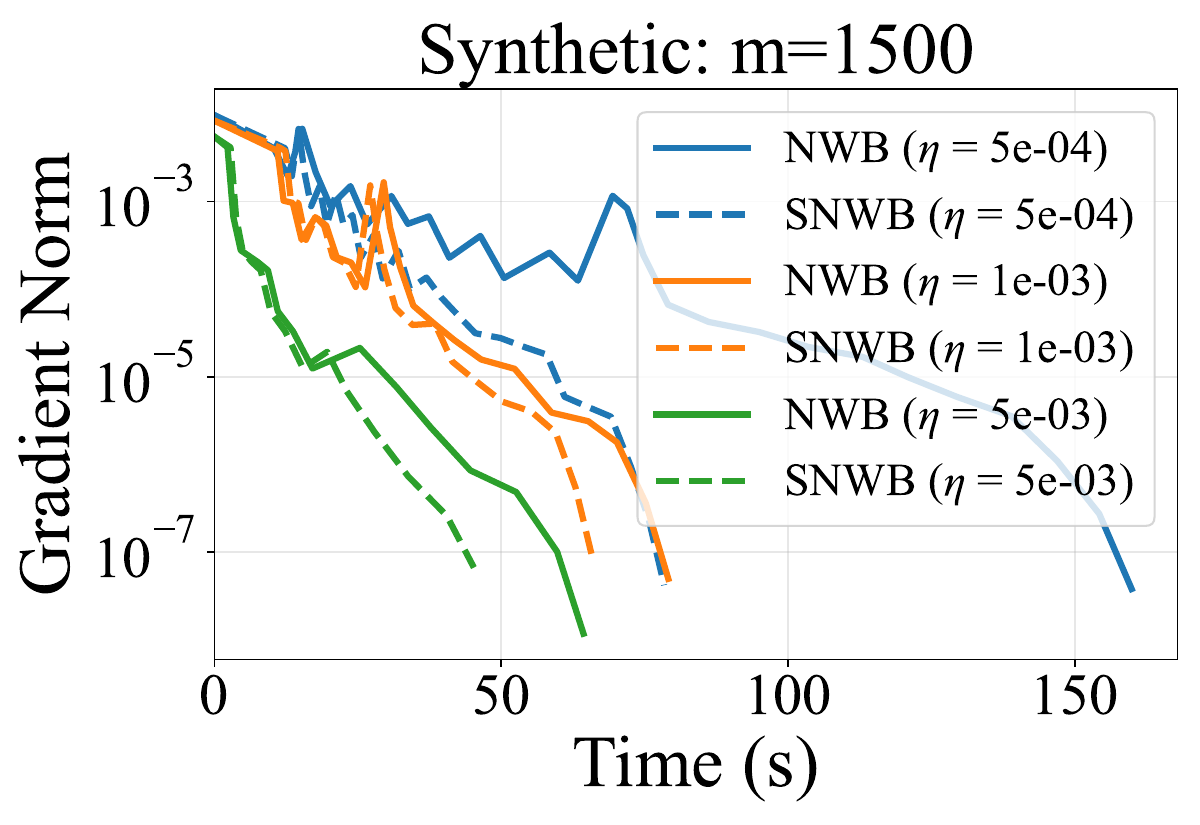}}}\hfill
{
\resizebox*{0.32 \textwidth}{!}{\includegraphics{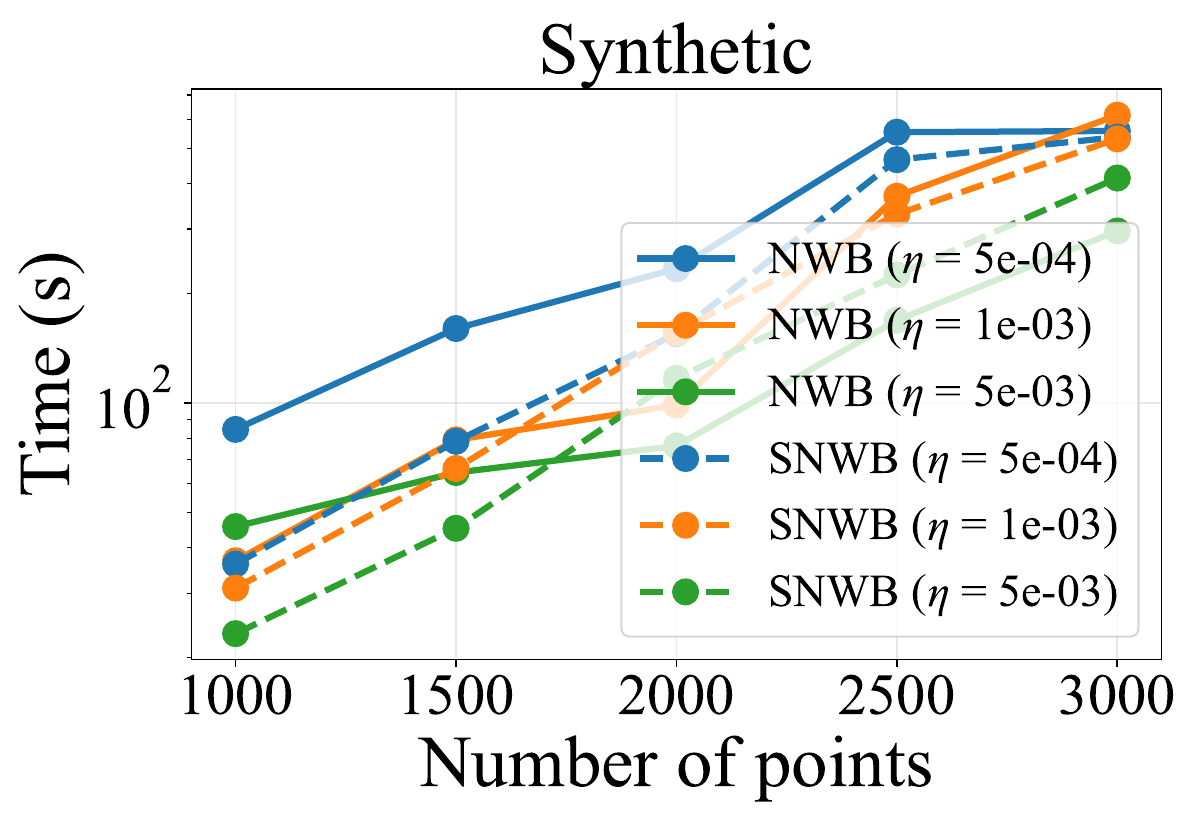}}}
{
\resizebox*{0.32 \textwidth}{!}{\includegraphics{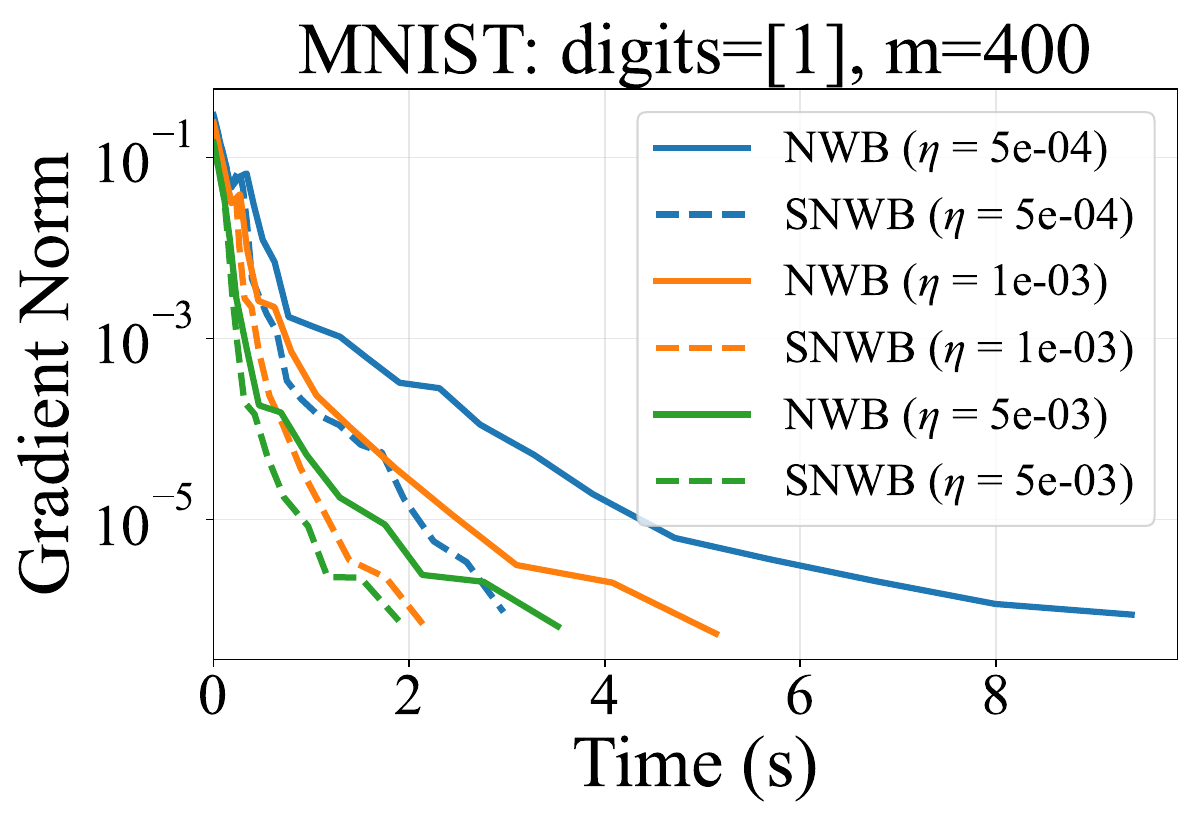}}}\hfill
{
\resizebox*{0.32 \textwidth}{!}{\includegraphics{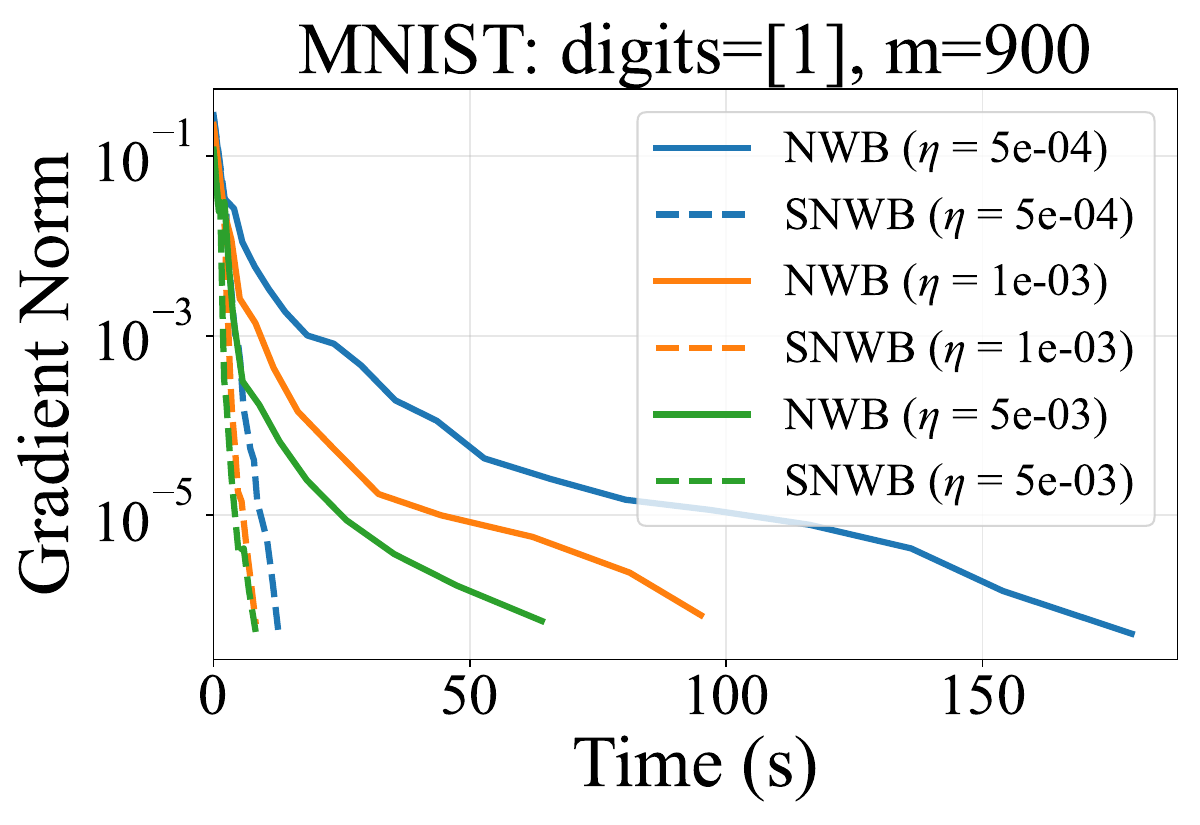}}}\hfill
{
\resizebox*{0.32 \textwidth}{!}{\includegraphics{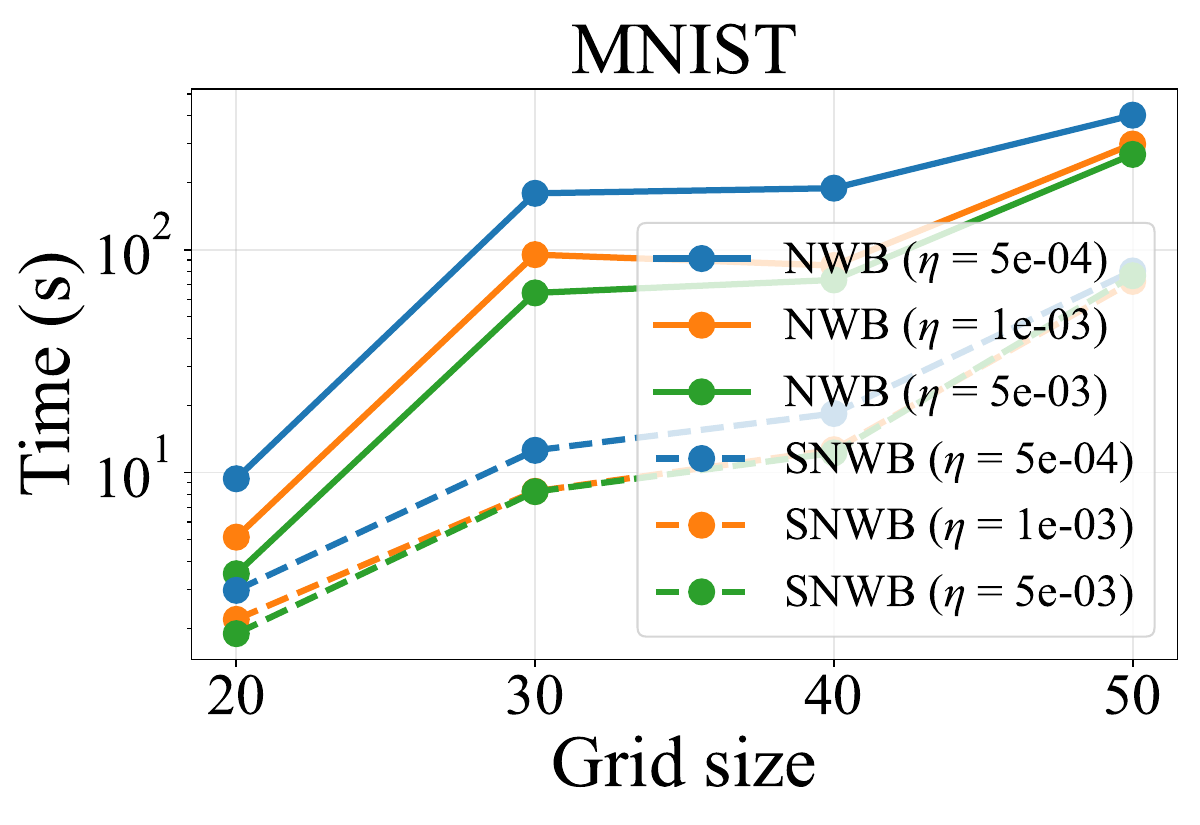}}}
\caption{Effect of the regularization parameter $\eta=\tau$ on NWB and SNWB. Top: synthetic dataset. Bottom: MNIST dataset. The left and middle panels show convergence in terms of the gradient norm over runtime for different values of $\eta$.  The right panels report the runtime required to reach $\|\vect{g}\|<10^{-7}$ as the problem size increases.  For MNIST, $m=(\text{grid size})^2$.
}
\label{fig: Impact of the regularization parameter}
\end{figure*}

\subsection{Ablation Study of Sparsity Level}
\label{app: Ablation study of Sparsity Level}

We further examine the effect of the sparsification level on the performance of SNWB. The experiments are conducted on both the synthetic instances described in Appendix~\ref{app: Additional Synthetic Experiments} and the MNIST dataset. The purpose of this study is to assess how the sparsification parameter affects the trade-off between computational efficiency and the accuracy of the sparse Hessian approximation.

Recall that in Algorithm~\ref{alg: SNWB}, the sparsification threshold is chosen as
$\rho=C_\rho\|\vect{g}\|$. We compare several choices of $C_\rho$, including
$C_\rho=0$, which corresponds to the dense NWB method, and two families of sparsification parameters:
\[
    C_\rho = S\cdot \frac{\eta}{nm},
    \qquad
    C_\rho = S\cdot \frac{\eta}{\sqrt{n}m},
    \qquad
    S\in\{10^3,10^4,10^5\}.
\]

Figure~\ref{fig: Performance comparison among different sparsification parameter} compares the convergence behavior under different sparsification levels. On MNIST, introducing sparsity leads to a substantial reduction in runtime, and the acceleration is robust across a wide range of $C_\rho$ values. This suggests that the transport probability matrices in these image-based barycenter problems are highly localized and can be accurately represented by sparse approximations. On the synthetic datasets, the advantage of sparsification is less pronounced, reflecting the denser structure of the corresponding transport probability matrices. Nevertheless, a suitable choice of the sparsification parameter, such as
$C_\rho=10^5\cdot \eta/(\sqrt{n}m)$, still achieves the fastest convergence in the reported settings.

Table~\ref{tab: Performance comparison among different sparsity parameter choices} provides a more detailed comparison of the optimization statistics. Across different sparsification levels, the average number of CG iterations, the total number of iterations, and the final gradient norms remain comparable to those of the dense baseline. This indicates that the sparse Hessian approximation preserves the essential curvature information needed for stable Newton updates. The differences in runtime are largely explained by the average proportion of nonzero entries, denoted by $\bar{p}_{nz}$. For MNIST, $\bar{p}_{nz}$ is approximately $0.01$, which leads to significant computational savings. In contrast, the synthetic datasets have a much larger proportion of nonzero entries, typically above $0.20$, and therefore yield more moderate speedups. These results show that the proposed sparsification strategy is particularly effective when the transport structure is naturally localized, while still remaining competitive in denser settings.

\begin{figure*}[t]
{
\resizebox*{0.32 \textwidth}{!}{\includegraphics{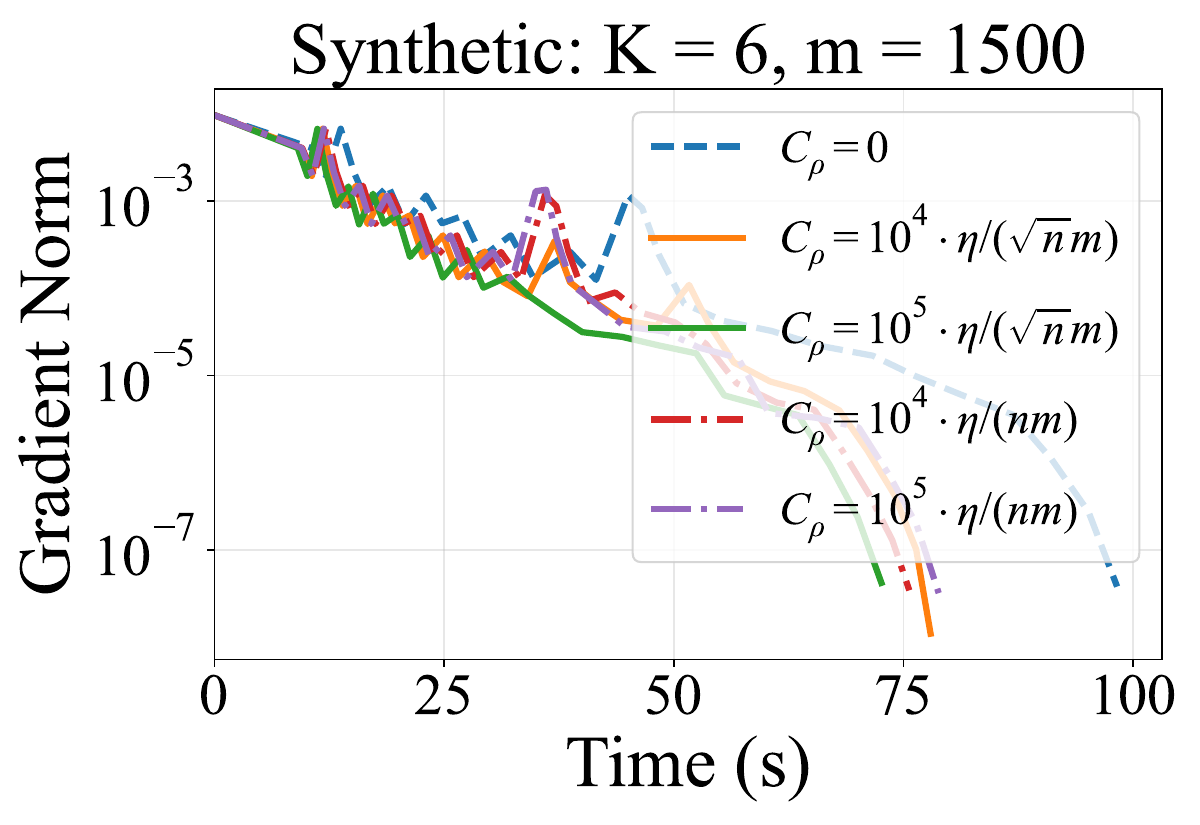}}}\hfill
{
\resizebox*{0.32 \textwidth}{!}{\includegraphics{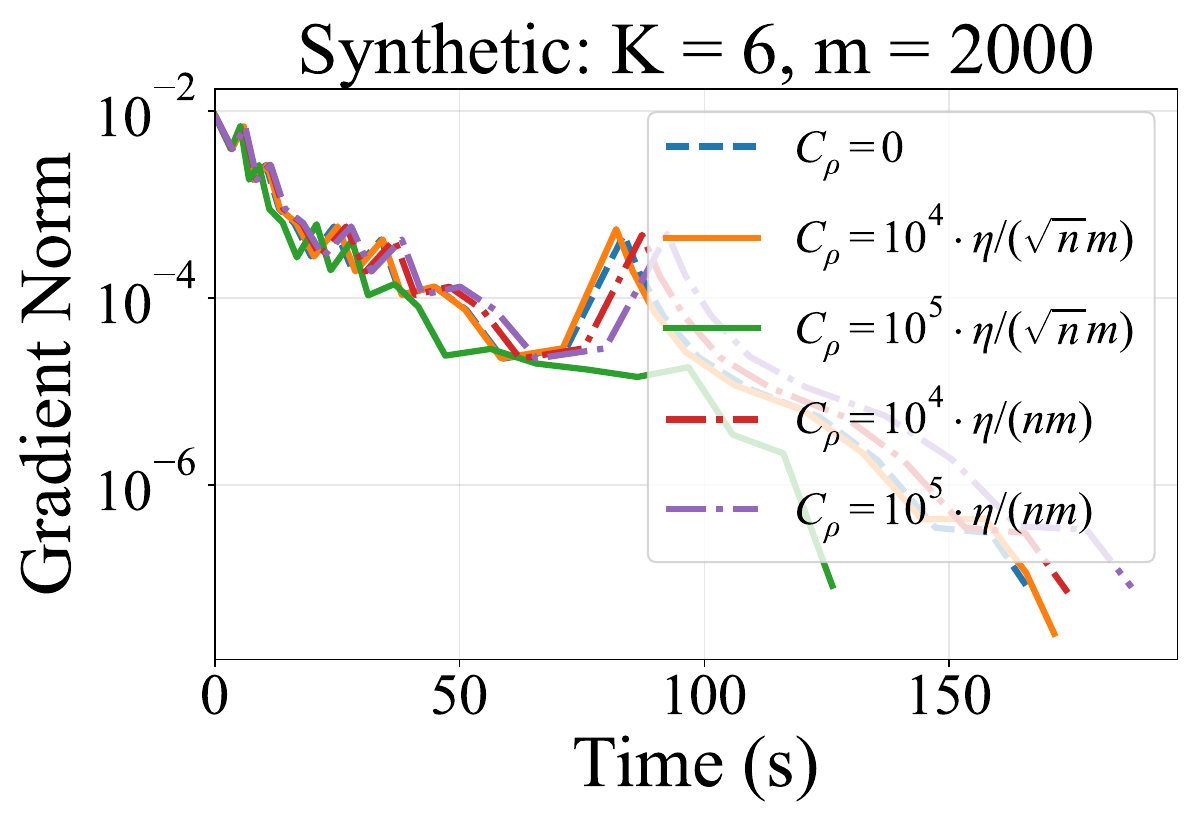}}}\hfill
{
\resizebox*{0.32 \textwidth}{!}{\includegraphics{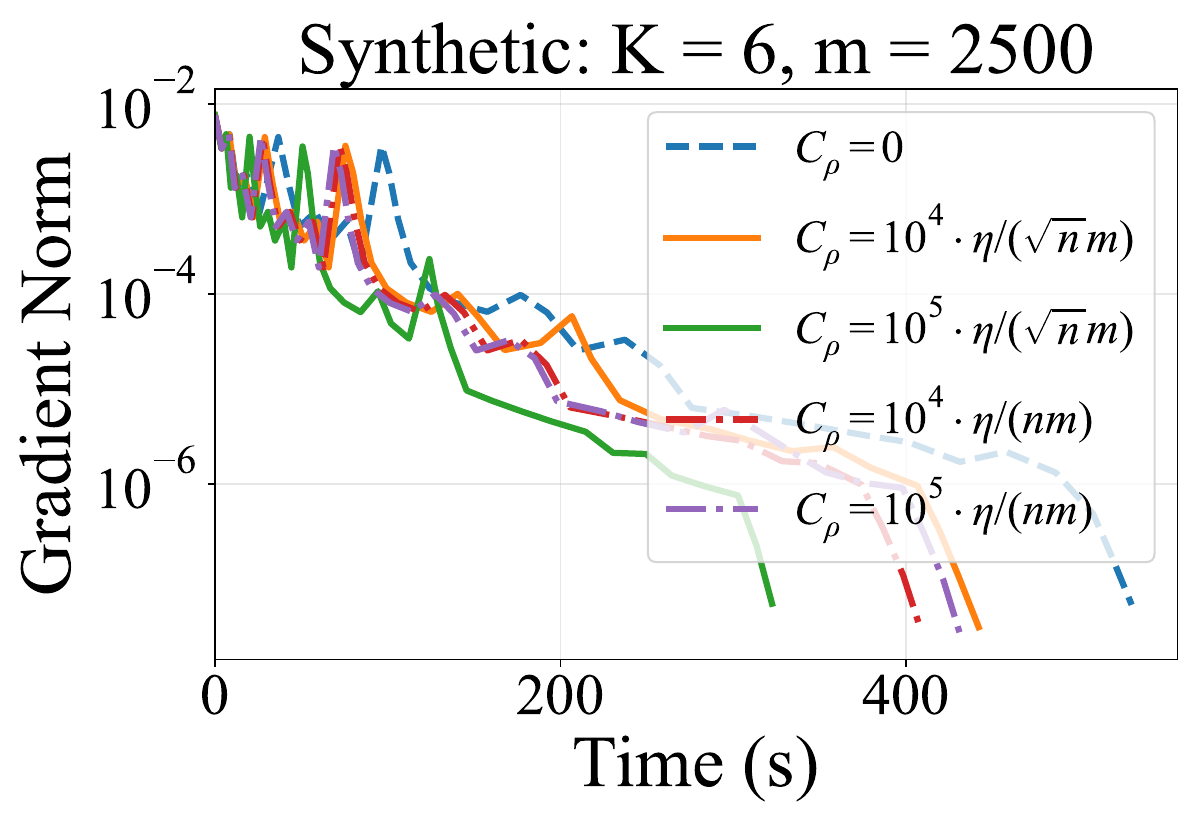}}}
{
\resizebox*{0.32 \textwidth}{!}{\includegraphics{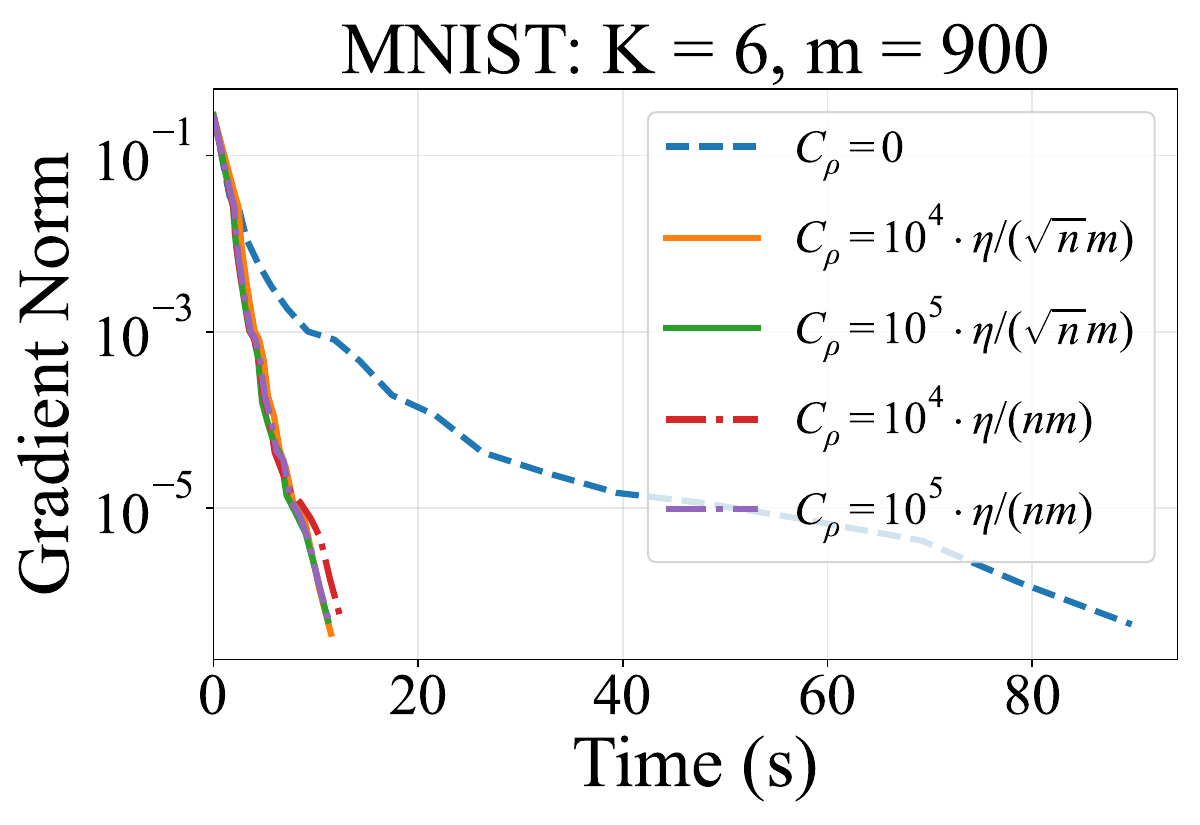}}}\hfill
{
\resizebox*{0.32 \textwidth}{!}{\includegraphics{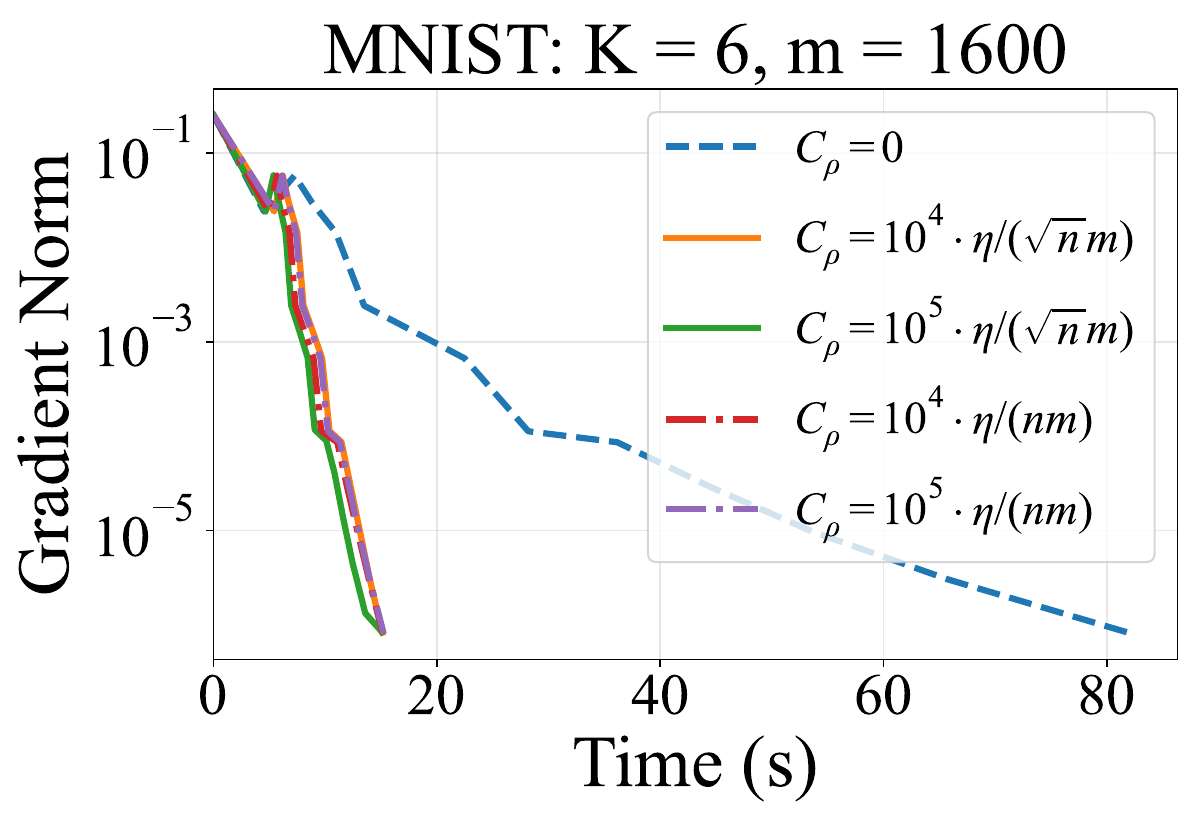}}}\hfill
{
\resizebox*{0.32 \textwidth}{!}{\includegraphics{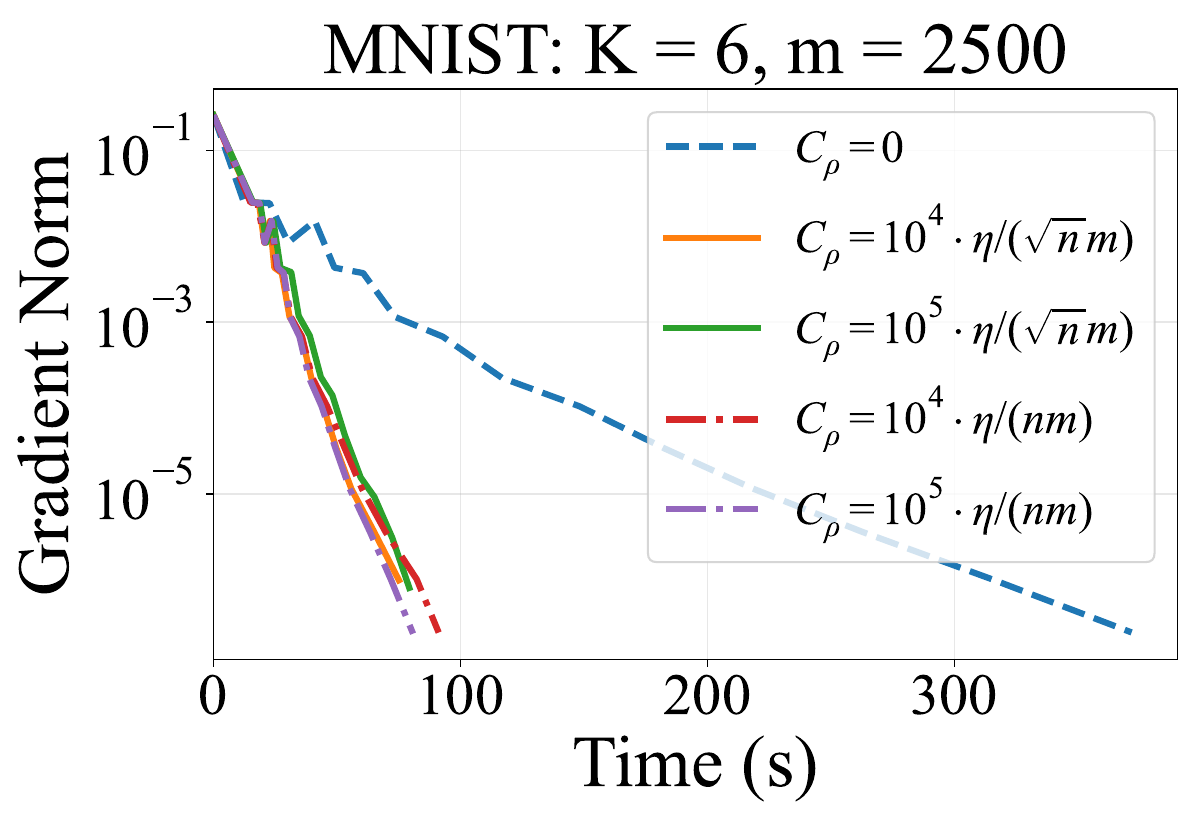}}}
\caption{Effect of the sparsification parameter $C_\rho$ on SNWB under $\eta=\tau=5\times10^{-4}$ on the synthetic dataset (Top) and the MNIST dataset (Bottom). Here, $C_\rho=0$ corresponds to the dense NWB method. }
\label{fig: Performance comparison among different sparsification parameter}
\end{figure*}

\begin{table}[!t]
\centering
\caption{Performance comparison under different sparsification parameter choices with $\eta=\tau=5\times10^{-4}$.
}
\label{tab: Performance comparison among different sparsity parameter choices}
\resizebox{\textwidth}{!}{
\begin{threeparttable}
\begin{tabular}{c|c||ccccc|ccccc}
    \toprule[2pt]
    \multicolumn{2}{c}{} & \multicolumn{10}{c}{Synthetic: $K=6$} \\ 
    \cmidrule{1-12}
    \multicolumn{2}{c||}{} & \multicolumn{5}{c|}{Size $m=1000$} & \multicolumn{5}{c}{Size $m=2000$} \\
    \cmidrule{1-12}
    Sparsification & $S$ & AveCG & $\Bar{p}_{nz}$ & Ite & GradNorm & Time (s) & AveCG & $\Bar{p}_{nz}$ & Ite & GradNorm & Time (s) \\
    \cmidrule{1-12}
    $C_\rho = 0$ & - & 422 & - & 26 & 6.18E-08 & 55.25 & 441	& -	& 24 & 7.08E-08	& 167.34 \\
    \cmidrule{1-12}
    \multirow{3}{*}{$C_\rho = S\frac{\eta}{nm}$} & $10^3$ & 421 & 0.25 & 26 & 6.19E-08 & 38.89 & 441 & 0.25 & 24 & 7.09E-08 & 172.15 \\
    & $10^4$ & 421 & 0.24 & 26 & 7.48E-08 & 37.68 & 440 & 0.25 & 24 & 7.11E-08 & 174.88 \\
    & $10^5$ & 420 & 0.23 & 26 & 5.02E-08 & 35.42 & 443 & 0.24 & 24 & 8.1E-08 & 187.89 \\
    \cmidrule{1-12}
    \multirow{3}{*}{$C_\rho = S\frac{\eta}{\sqrt{n}m}$} & $10^3$ & 419 & 0.23 & 26 & 6.37E-08 & 38.11 & 441	& 0.24 & 24	& 7.22E-08 & 164.74 \\
    & $10^4$ & 420 & 0.22 & 27& 5.67E-08 & 38.06 & 437 & 0.23 & 25 & 2.59E-08 & 172.08\\
    & $10^5$ & 432 & 0.20 & 25 & 8.34E-08& \textbf{32.77} & 476	& 0.21 & 21 & 8.43E-08 & \textbf{126.71}\\
    \cmidrule{1-12}
    \multicolumn{2}{c}{} & \multicolumn{10}{c}{MNIST: $K=6$} \\ 
    \cmidrule{1-12}
    \multicolumn{2}{c||}{} & \multicolumn{5}{c|}{Size $m=400$} & \multicolumn{5}{c}{Size $m=1600$} \\
    \cmidrule{1-12}
    Sparsification & $S$ & AveCG & $\Bar{p}_{nz}$ & Ite & GradNorm & Time (s) & AveCG & $\Bar{p}_{nz}$ & Ite & GradNorm & Time (s) \\
    \cmidrule{1-12}
    $C_\rho = 0$ & - & 1220 & - & 19 & 8.9E-07 & 9.42 & 776 & - & 12 & 8.1E-07 & 82.54\\
    \cmidrule{1-12}
    \multirow{3}{*}{$C_\rho = S\frac{\eta}{nm}$} & $10^3$ & 1257 & 0.02 & 21 & 8.4E-07 & 4.38 & 778 & 0.01 & 12 & 8.1E-07 & \textbf{14.31} \\
    & $10^4$ & 1085 & 0.01 & 19 & 6.08E-07 & 3.44 & 776 & 0.01 & 12 & 8.1E-07 & 15.51 \\
    & $10^5$ & 1099 & 0.01 & 19 & 7.67E-07 & 3.42 & 777 & 0.01 & 12 & 8.14E-07 & 15.70 \\
    \cmidrule{1-12}
    \multirow{3}{*}{$C_\rho = S\frac{\eta}{\sqrt{n}m}$} & $10^3$ & 1326 & 0.01 & 20 & 6.87E-07 & 4.30 & 778 & 0.01 & 12 & 8.11E-07 & 15.17 \\
    & $10^4$ & 1252 & 0.01 & 20 & 7.77E-07 & 4.05 & 780 & 0.01 & 12 & 8.3E-07 & 15.47 \\
    & $10^5$ & 1001 & 0.01 & 18 & 9.49E-07 & \textbf{2.95} & 916 & 0.01 & 13 & 8.47E-07 & 15.41 \\
    \bottomrule[2pt]
\end{tabular}
\begin{tablenotes}
    \footnotesize               
    \item \normalsize Bold values indicate the most favorable results. ``AveCG'' denotes the average number of CG iterations in finding a Newton direction, ``$\Bar{p}_{nz}$'' denotes the average proportion of nonzeros in $\{P_k\}_{k=1}^K$, ``Ite'' denotes the total number of iterations and ``GradNorm'' denotes the final gradient norm $\|\vect{g}\|$. 
\end{tablenotes}
\end{threeparttable}
}
\end{table}

\subsection{More Experiments under $\tau = \eta/2$}
\label{app: more_experiments_tau_eta_over_2}

We also evaluate the proposed methods under the regularization setting $\tau=\eta/2$, which has been shown to provide improved approximation properties for smooth densities~\cite{chizat2025doubly,vaskevicius2023computational}. In this setting, we compare NWB and SNWB with EDSWB~\cite{vaskevicius2023computational}, a representative first-order solver designed for this choice of regularization parameters.

Following the experimental protocols in Sections~\ref{subsec: Synthetic Data} and~\ref{subsec: Real Data}, we repeat the nested ellipses and MNIST experiments with $\tau=\eta/2$. The results are shown in Figures~\ref{fig: Nested ellipse benchmark under 1 over 2.} and~\ref{fig:fashionmnist_and_mnist_performance_evaluation_1_over_2}. Across both synthetic and real datasets, SNWB achieves the fastest convergence among the compared methods. In the nested ellipses benchmark, SNWB produces barycenters with visual quality comparable to NWB while requiring substantially less runtime. On MNIST, SNWB reaches the high-accuracy reference solution much faster than both NWB and EDSWB. These results further confirm that the proposed sparse Newton strategy remains effective beyond the setting $\tau=\eta$.

\begin{figure*}[!ht]
\centering
\setlength{\fboxrule}{0.8pt}
\setlength{\fboxsep}{4pt}

\fbox{%
\begin{minipage}{0.46\textwidth}
\centering
\includegraphics[width=0.32\textwidth]{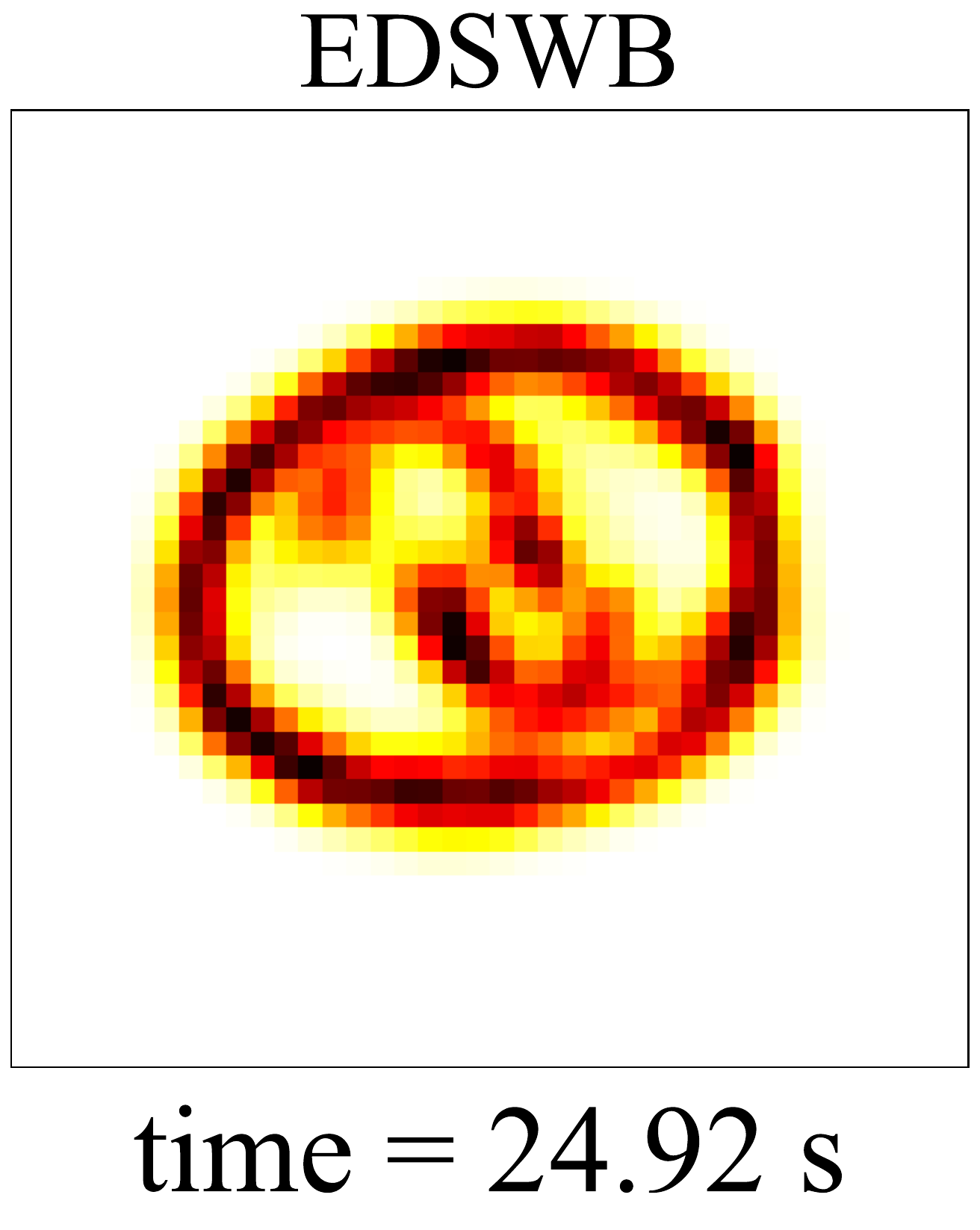}\hfill
\includegraphics[width=0.32\textwidth]{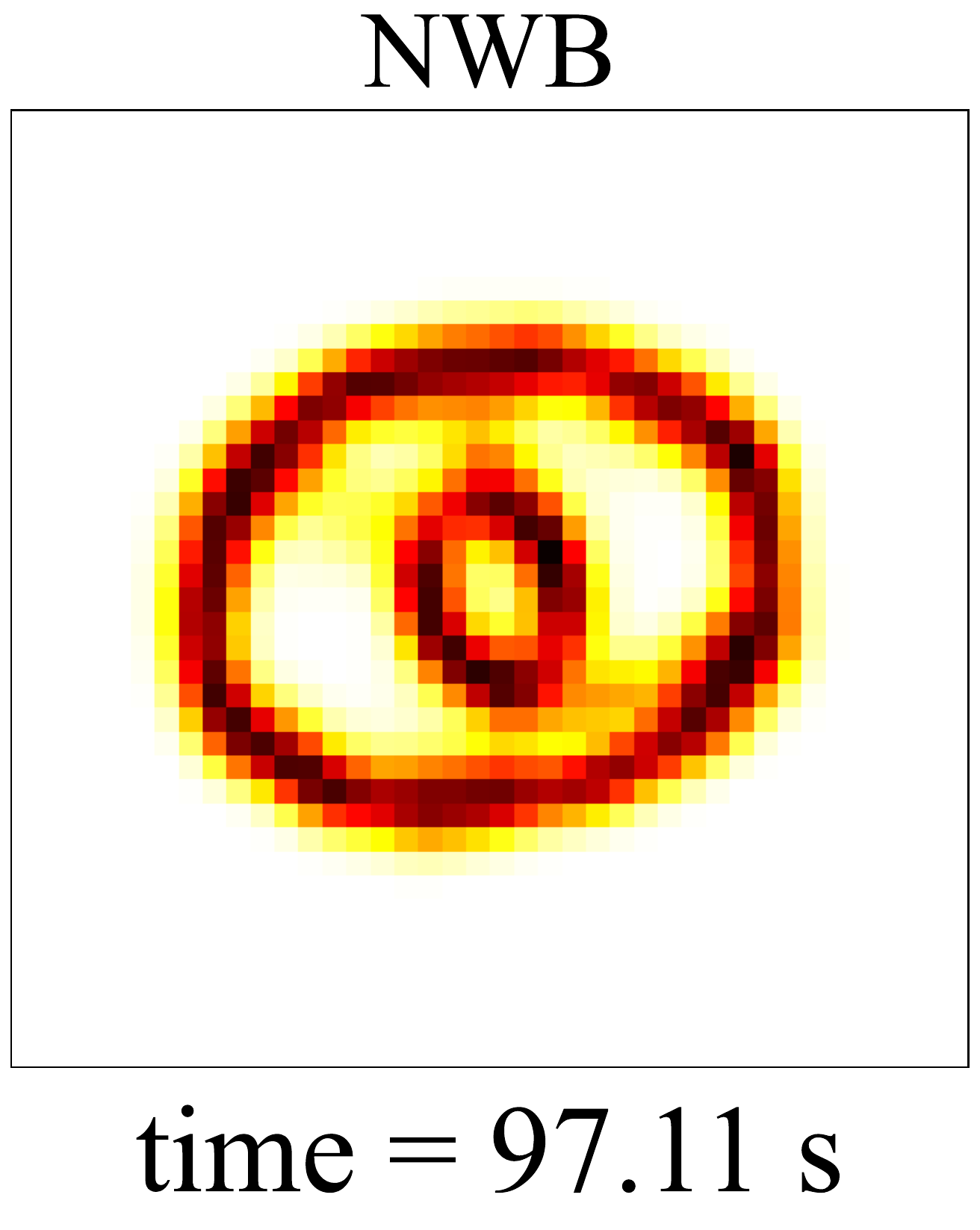}\hfill
\includegraphics[width=0.32\textwidth]{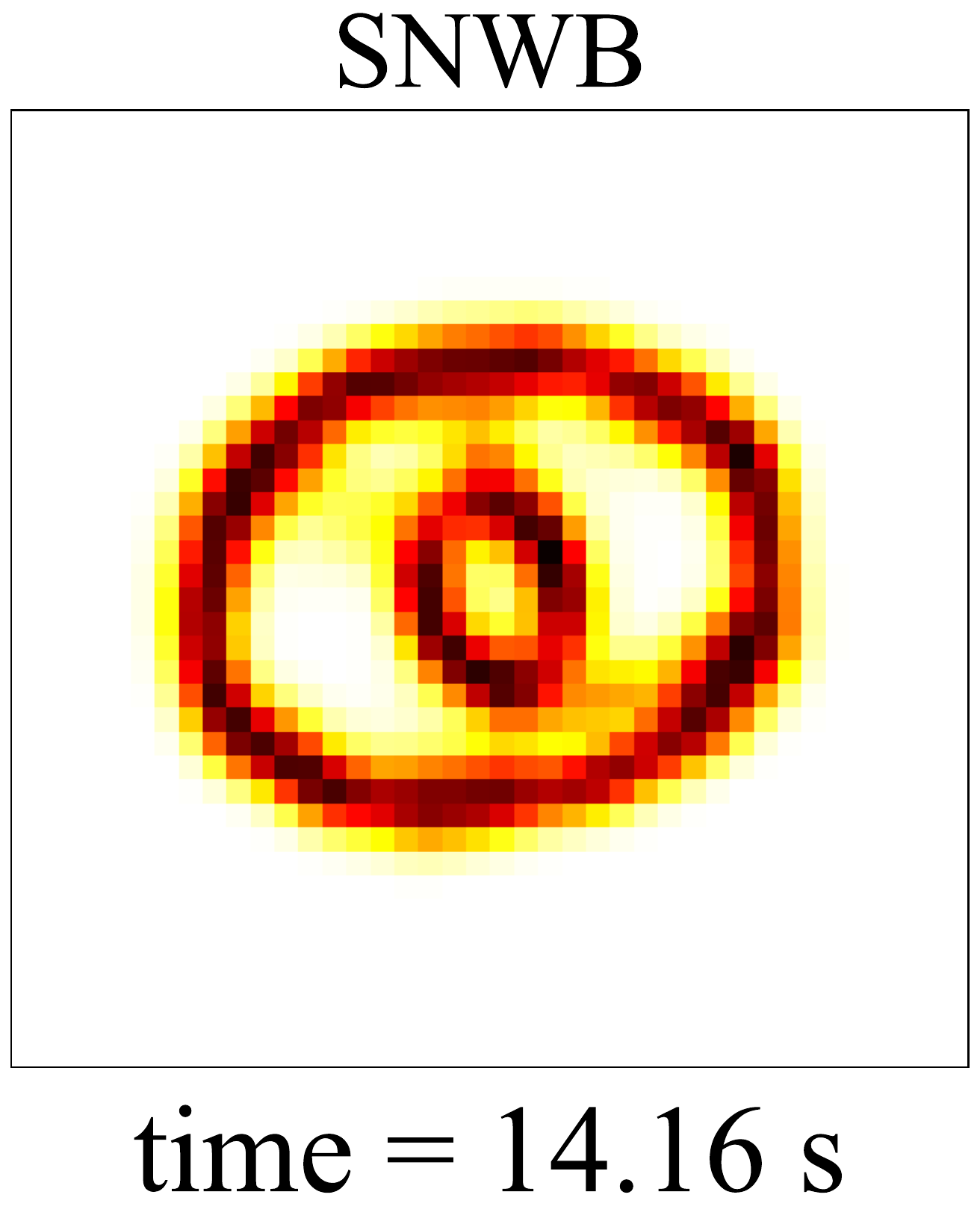}

\vspace{0.3em}
\small{(a) $40 \times 40$ pixel grids}
\end{minipage}
}
\hfill
\fbox{%
\begin{minipage}{0.46\textwidth}
\centering
\includegraphics[width=0.32\textwidth]{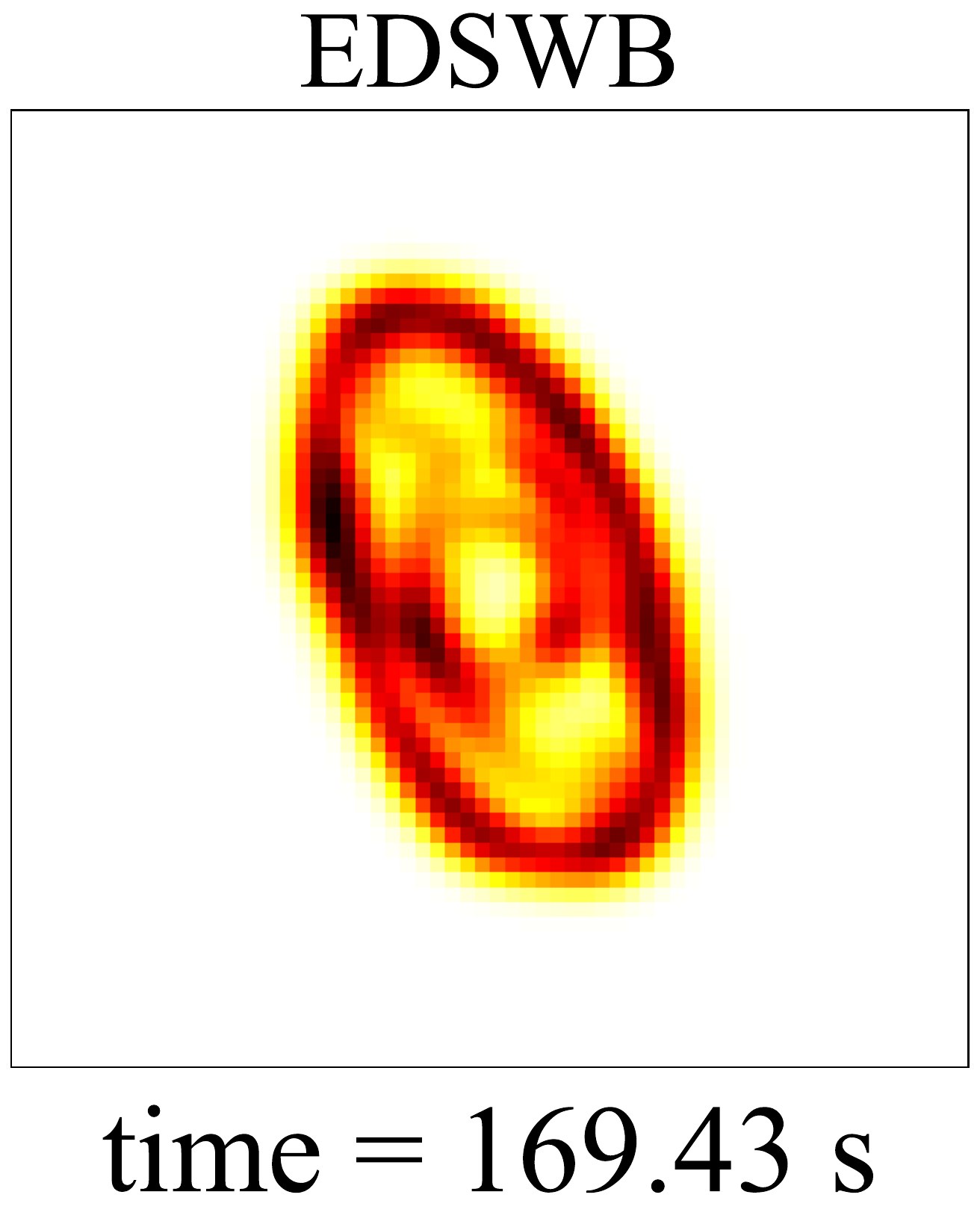}\hfill
\includegraphics[width=0.32\textwidth]{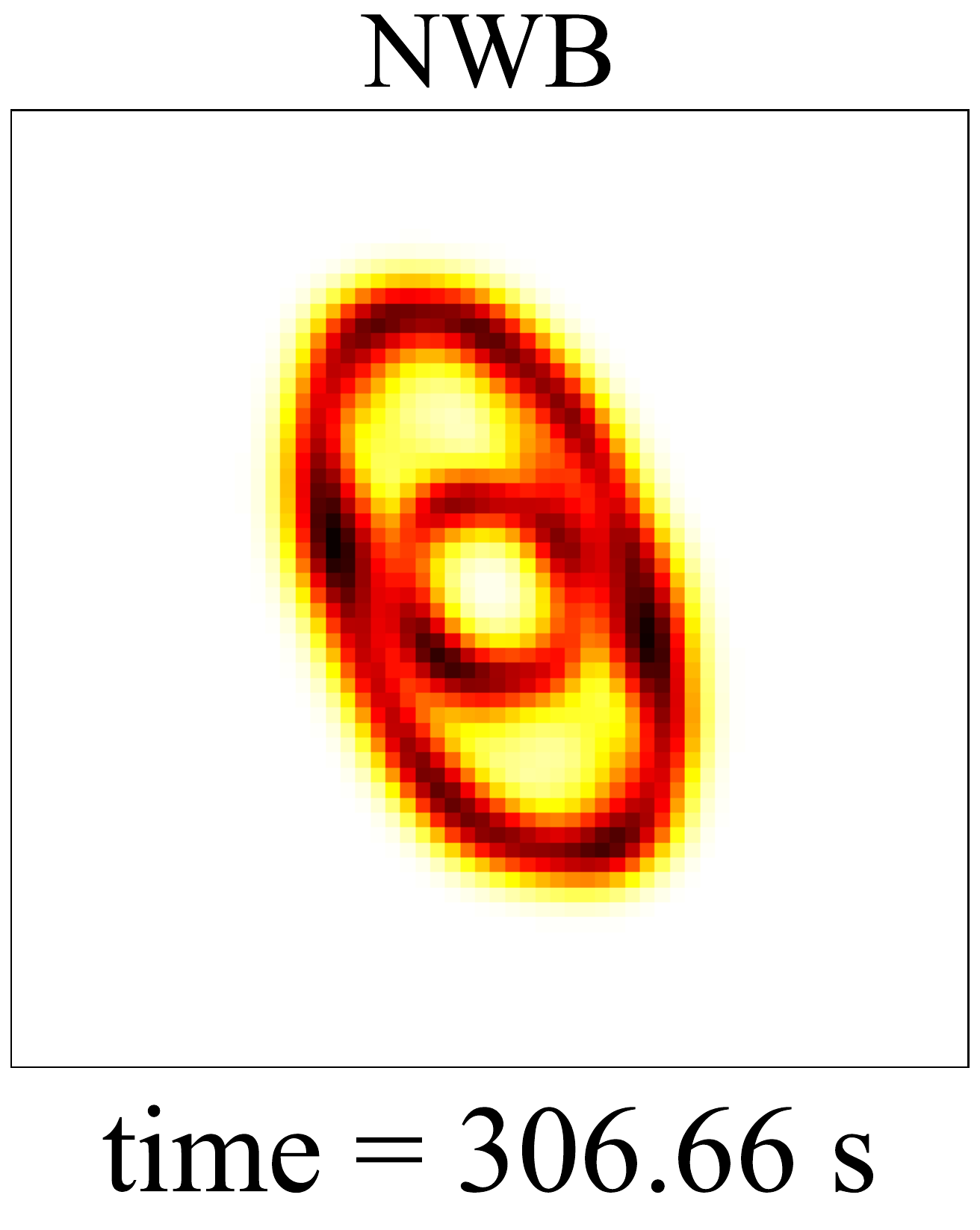}\hfill
\includegraphics[width=0.32\textwidth]{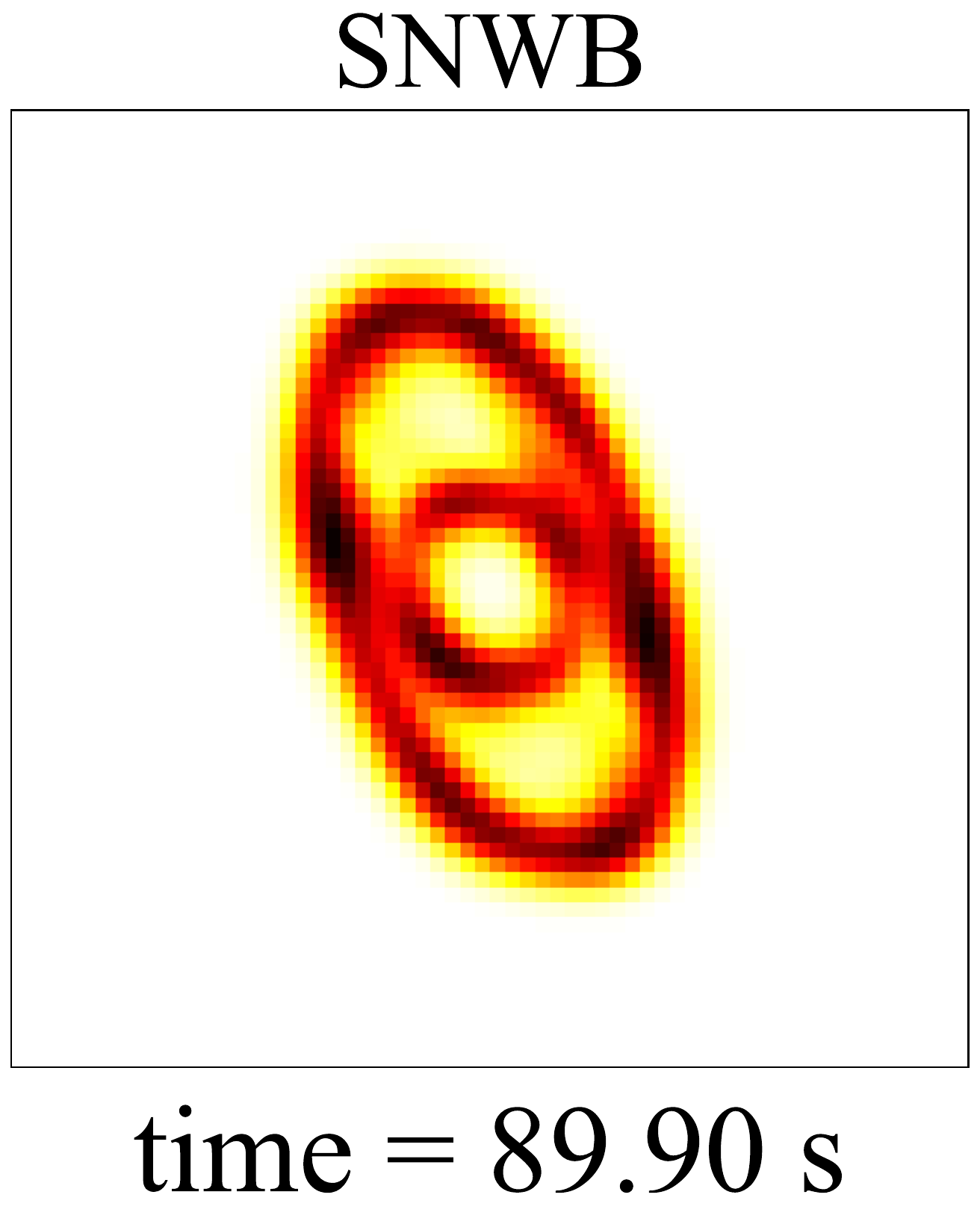}

\vspace{0.3em}
\small{(b) $64 \times 64$ pixel grids}
\end{minipage}
}
\caption{Nested ellipse benchmark under $\tau = \eta/2 = 5 \times 10^{-4}$ with $40 \times 40$ pixel grids (a) and $64 \times 64$ pixel grids (b). The input measures are the same as those used in Figures~\ref{fig: Nested ellipse benchmark. Top: Dataset of 5 nested ellipses with 40 pixel grids. Bottom: Barycenters computed by different methods.} and~\ref{fig: Nested ellipse benchmark. Top: Dataset of 5 nested ellipses with 64 pixel grids. Bottom: Barycenters computed by different methods}, respectively. }
\label{fig: Nested ellipse benchmark under 1 over 2.}
\end{figure*}

\begin{figure*}[t]
\centering

\begin{minipage}{0.23\textwidth}
\centering
\includegraphics[width=\linewidth, trim=10 10 10 10, clip]{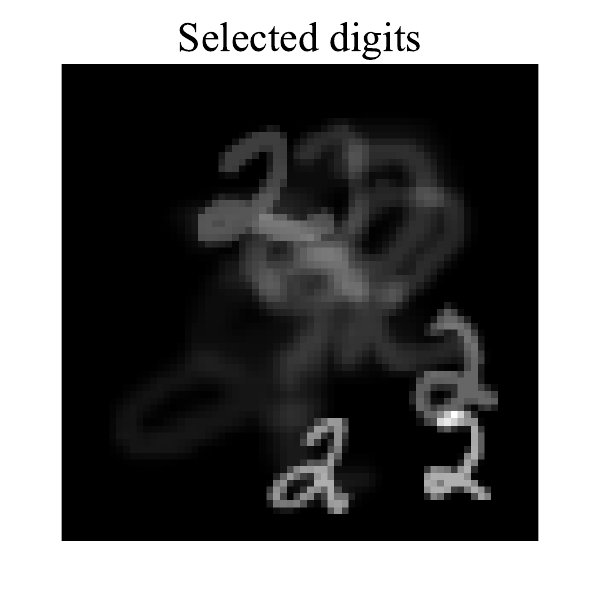}
\vspace{-0.2em}
\includegraphics[width=\linewidth, trim=10 10 10 10, clip]{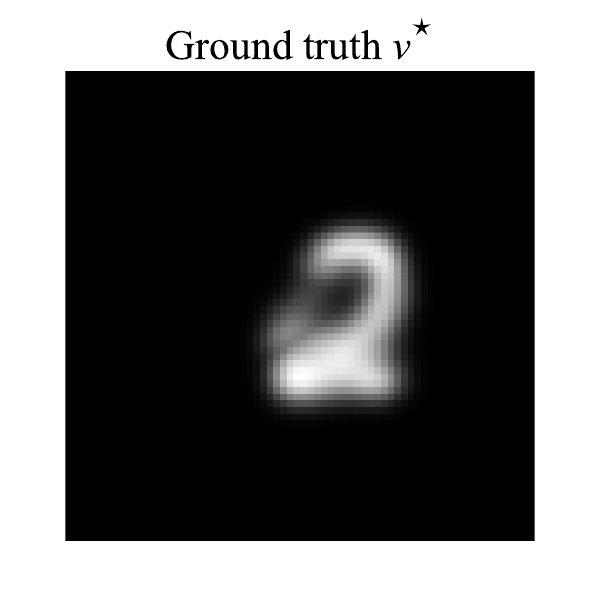}
\vspace{-0.2em}
\includegraphics[width=\linewidth, trim=10 10 10 10, clip]{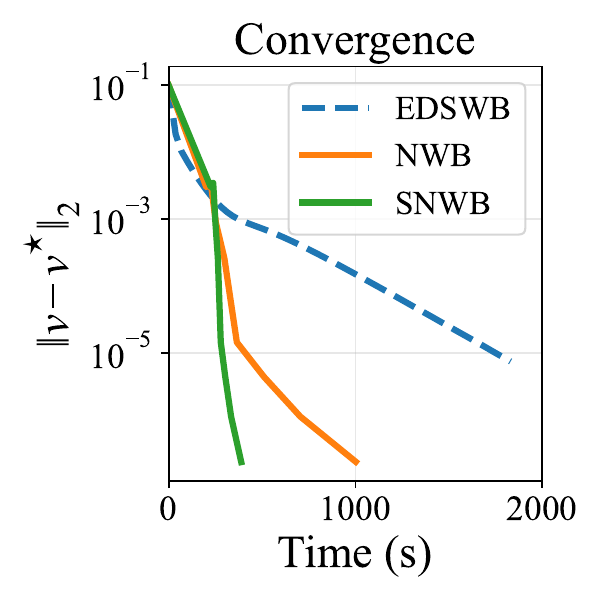}
\end{minipage}%
\hfill
\begin{minipage}{0.23\textwidth}
\centering
\includegraphics[width=\linewidth, trim=10 10 10 10, clip]{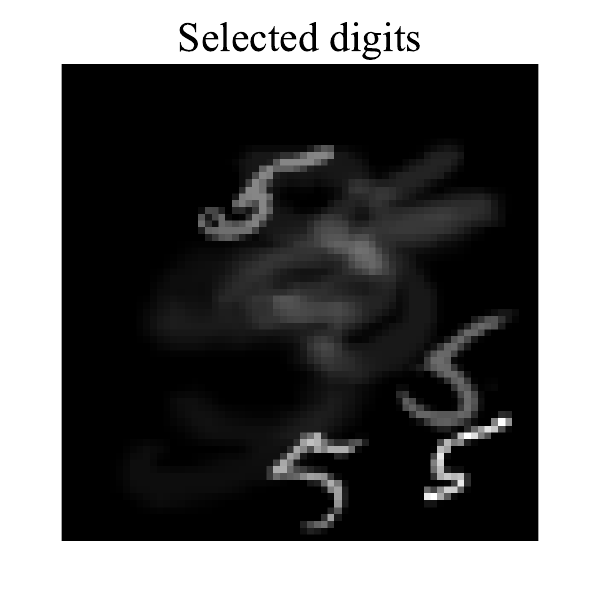}
\vspace{-0.2em}
\includegraphics[width=\linewidth, trim=10 10 10 10, clip]{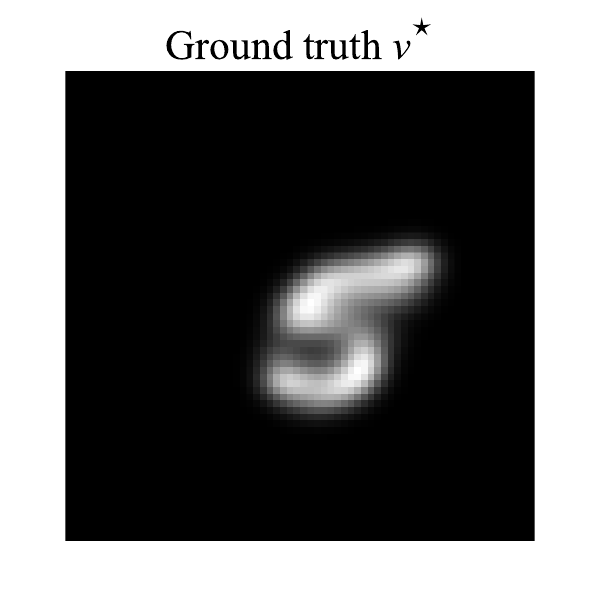}
\vspace{-0.2em}
\includegraphics[width=\linewidth, trim=10 10 10 10, clip]{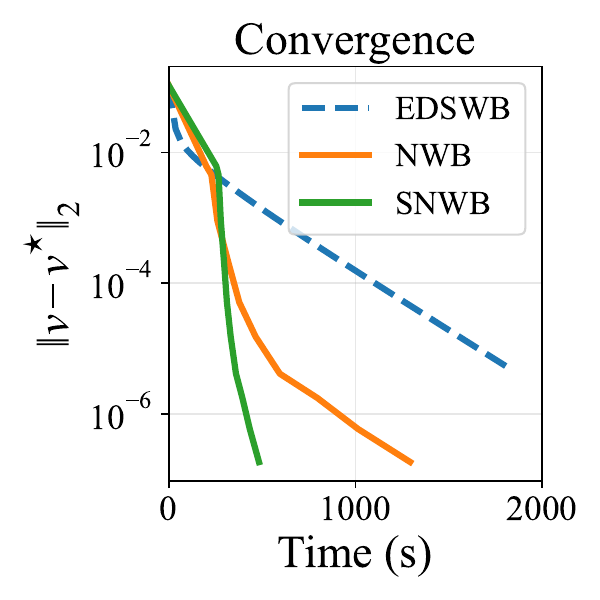}
\end{minipage}%
\hfill
\begin{minipage}{0.23\textwidth}
\centering
\includegraphics[width=\linewidth, trim=10 10 10 10, clip]{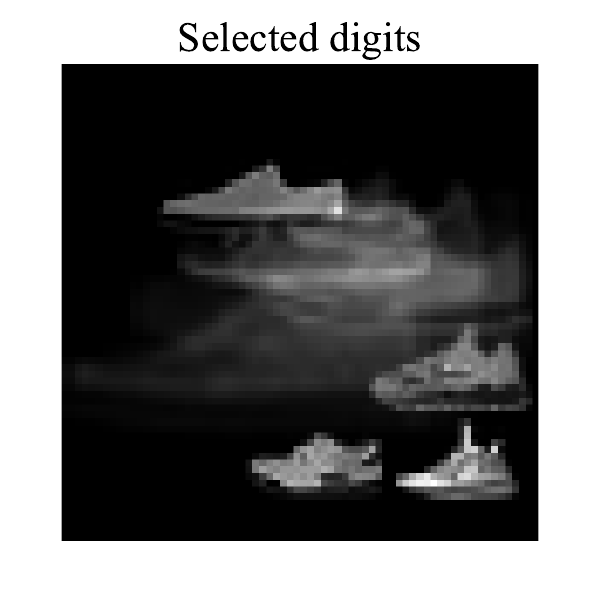}
\vspace{-0.2em}
\includegraphics[width=\linewidth, trim=10 10 10 10, clip]{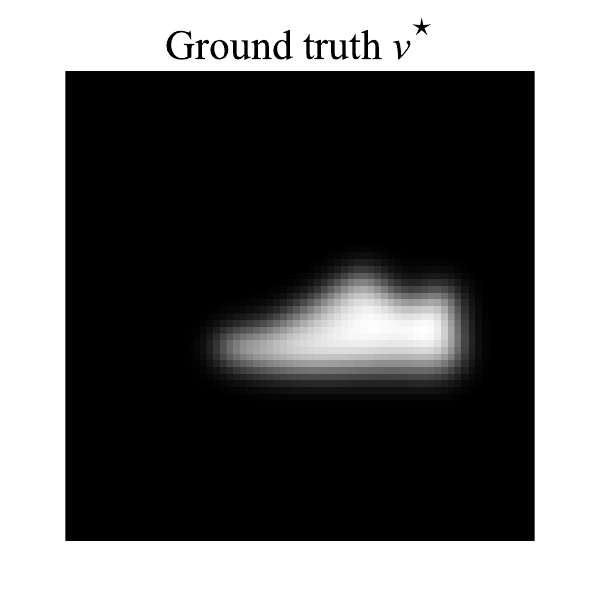}
\vspace{-0.2em}
\includegraphics[width=\linewidth, trim=10 10 10 10, clip]{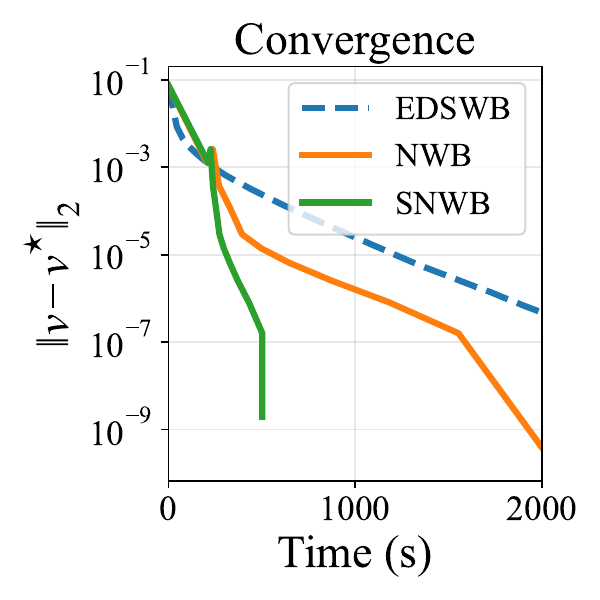}
\end{minipage}%
\hfill
\begin{minipage}{0.23\textwidth}
\centering
\includegraphics[width=\linewidth, trim=10 10 10 10, clip]{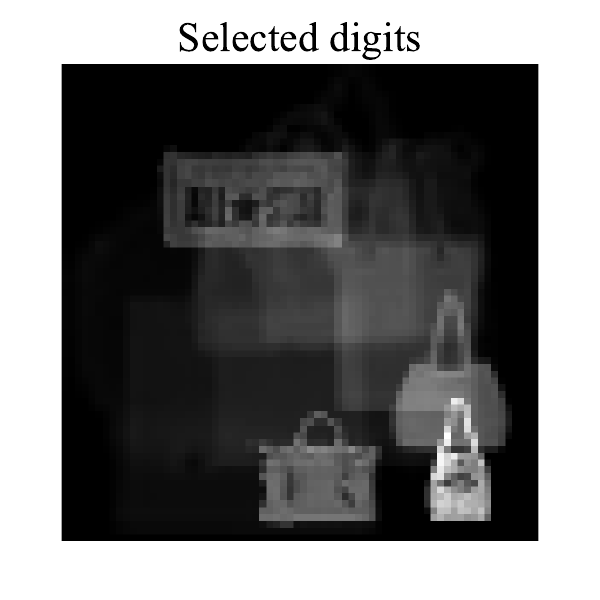}
\vspace{-0.2em}
\includegraphics[width=\linewidth, trim=10 10 10 10, clip]{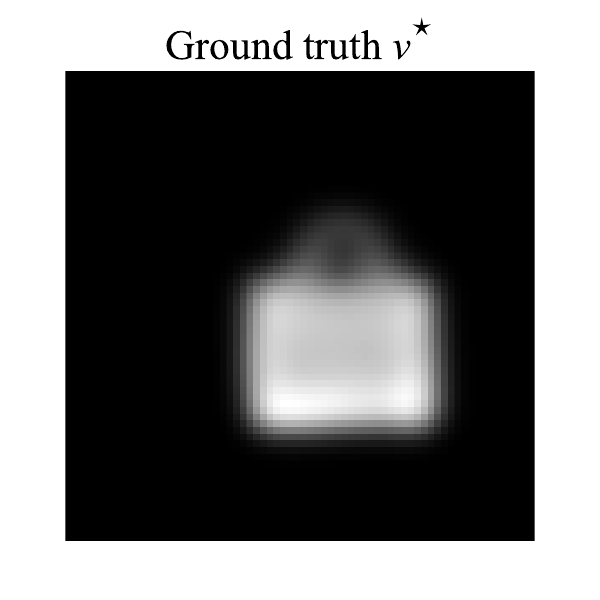}
\vspace{-0.2em}
\includegraphics[width=\linewidth, trim=10 10 10 10, clip]{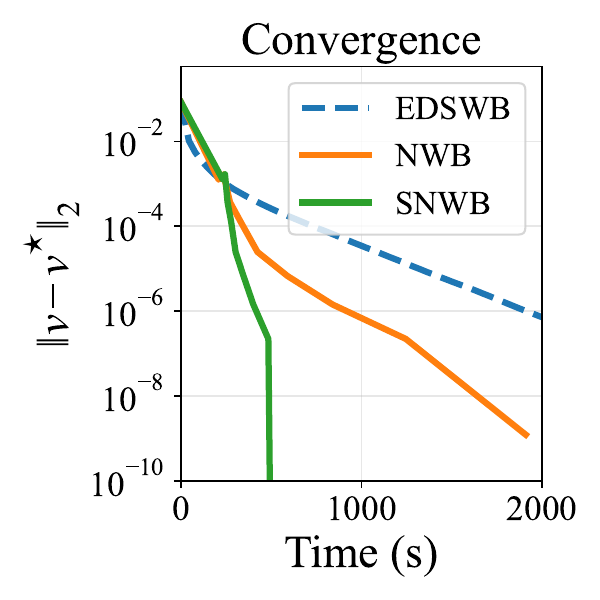}
\end{minipage}%

\caption{Qualitative and quantitative performance evaluation on MNIST and Fashion-MNIST benchmarks under $\tau = \eta / 2 = 7 \times 10^{-4}$. From top to bottom, the rows report the selected input samples, the corresponding ground-truth barycenter $v^\star$, and the convergence curves of different methods.}
\label{fig:fashionmnist_and_mnist_performance_evaluation_1_over_2}
\end{figure*}

\clearpage

\end{document}